\let\oldtocsection=\tocsection
\let\oldtocsubsection=\tocsubsection
\renewcommand{\tocsection}[2]{\hspace{0em}\oldtocsection{#1}{#2}}
\renewcommand{\tocsubsection}[2]{\hspace{1em}\oldtocsubsection{#1}{#2}}
\tikzset{node distance=3cm, auto}
\def\@secnumfont{\bfseries}
\def\section{\@startsection{section}{1}%
  \z@{.7\linespacing\@plus\linespacing}{.5\linespacing}%
  {\normalfont\Large\bfseries}}
\def\subsection{\@startsection{subsection}{2}%
  \z@{.5\linespacing\@plus.7\linespacing}{-.5em}%
  {\normalfont\large\bfseries}}
  \def\subsubsection{\@startsection{subsubsection}{3}%
  \z@{.5\linespacing\@plus.7\linespacing}{-.5em}%
  {\normalfont\bfseries}}
\newtheorem{thm}{Theorem}[subsection]
\newtheorem{lemma}[thm]{Lemma}
\newtheorem{prop}[thm]{Proposition}
\newtheorem{cor}[thm]{Corollary}
\newtheorem{remark}[thm]{Remark}
\newtheorem{rmk}[thm]{Remark}
\newtheorem{definition}[thm]{Definition}
\newtheorem{example}[thm]{Example}
\theoremstyle{remark}
\numberwithin{equation}{subsection} 
\numberwithin{figure}{section}
\numberwithin{table}{section}
\newcommand{\partitions}{\op{Part}}
\newcommand{\Ee}{\mathcal{E}}
\newcommand{\Y}{\mathcal{Y}}
\newcommand{\Ynonver}{\Y^{\op{nonver}}}
\newcommand{\Ynonhor}{\Y^{\op{nonhor}}}
\newenvironment{itemlist}
   { \begin{list} {$\bullet$}
         { \setlength{\topsep}{.5ex}  \setlength{\itemsep}{.5ex} \setlength{\leftmargin}{2.5ex} } }
   { \end{list} }
   \newcommand{\notccirc}{{\,\,{|\mskip-9.75mu\circledcirc}\,}}
   \newcommand{\s}{{\mathfrak s}}
     \renewcommand{\wr}{{{\rm w}}}
      \newcommand{\Bl}{{\rm Bl}}
   \newcommand{\PSL}{{\rm PSL}}
      \newcommand{\Tom}{{\widetilde \om}}
   \newcommand{\TJ}{{\widetilde J}}
   \newcommand{\TC}{{\widetilde C}}
   \newcommand{\TA}{{\widetilde A}}
      \newcommand{\Tz}{{\widetilde z}}
\newcommand{\bp}{{\bf p}}
\newcommand{\NI}{{\noindent}}
\newcommand{\Pp}{{\mathcal P}}
\newcommand{\Ii}{{\mathcal I}}
\newcommand{\Qq}{{\mathcal Q}}
\newcommand{\Oo}{{\mathcal O}}
\newcommand{\Cc}{{\mathcal C}}
\newcommand{\Nn}{{\mathcal N}}
\newcommand{\Mm}{{\mathcal M}}
\newcommand{\Jj}{{\mathcal J}}
\newcommand{\im}{{\rm im\,}}
\newcommand{\CZ}{{\rm CZ}}
\newcommand{\ov}{\overline}
\newcommand{\ob}{\ov{b}}
\newcommand{\al}{{\alpha}}
\newcommand{\be}{{\beta}}
\newcommand{\om}{{\omega}}
\newcommand{\de}{{\delta}}
\newcommand{\ga}{{\gamma}}
\newcommand{\io}{{\iota}}
\newcommand{\ka}{{\kappa}}
\newcommand{\si}{{\sigma}}
\newcommand{\Si}{{\Sigma}}
\newcommand{\less} {{\smallsetminus}}
\newcommand{\p}{{\partial}}
\newcommand{\MS}{{\medskip}}
\newcommand{\ord}{{\rm ord}}
\newcommand{\er}{{\Diamond}}
\newcommand{\Z}{\mathbb{Z}}
\newcommand{\R}{\mathbb{R}}
\newcommand{\Q}{\mathbb{Q}}
\newcommand{\C}{\mathbb{C}}
\newcommand{\CP}{\mathbb{CP}}
\newcommand{\eps}{\varepsilon}
\newcommand{\bdy}{\partial}
\newcommand{\calM}{\mathcal{M}}
\newcommand{\wh}{\widehat}
\newcommand{\wt}{\widetilde}
\newcommand{\ovl}{\overline}
\newcommand{\op}[1]{{\operatorname{#1}}}
\newcommand{\cz}{{\op{CZ}}}
\newcommand{\ind}{\op{ind}}
\newcommand{\ev}{{\op{ev}}}
\newcommand{\Op}{\mathcal{O}p}
\newcommand{\sss}{\vspace{2.5 mm}}
\newcommand{\gw}{\op{GW}}
\renewcommand{\lll}{\Langle}
\newcommand{\rrr}{\Rangle}
\newcommand{\sk}{{\op{sk}}}
\newcommand{\T}{\mathcal{T}}
\newcommand{\bl}{\op{Bl}}
\newcommand{\E}{\mathcal{E}}
\newcommand{\Aut}{\op{Aut}}
\newcommand{\simp}{s}
\newcommand{\wind}{\op{wind}}
\renewcommand{\sp}{\op{sp}}
\newcommand{\ex}{\op{ex}}
\newcommand{\dashover}[2][\mathop]{#1{\mathpalette\df@over{{\dashfill}{#2}}}}
\newcommand{\fillover}[2][\mathop]{#1{\mathpalette\df@over{{\solidfill}{#2}}}}
\newcommand{\df@over}[2]{\df@@over#1#2}
\newcommand\df@@over[3]{%
  \vbox{
    \offinterlineskip
    \ialign{##\cr
      #2{#1}\cr
      \noalign{\kern1pt}
      $\m@th#1#3$\cr
    }
  }%
}
\newcommand{\dashfill}[1]{%
  \kern-.5pt
  \xleaders\hbox{\kern.5pt\vrule height.4pt width \dash@width{#1}\kern.5pt}\hfill
  \kern-.5pt
}
\newcommand{\dash@width}[1]{%
  \ifx#1\displaystyle
    2pt
  \else
    \ifx#1\textstyle
      1.5pt
    \else
      \ifx#1\scriptstyle
        1.25pt
      \else
        \ifx#1\scriptscriptstyle
          1pt
        \fi
      \fi
    \fi
  \fi
}
\newcommand{\solidfill}[1]{\leaders\hrule\hfill}
\begin{document}

\title{Counting curves with local tangency constraints} 
\date{\today}

\begin{abstract}
We construct invariants for any closed semipositive symplectic manifold which count rational curves satisfying tangency constraints to a local divisor.
More generally, we introduce invariants involving multibranched local tangency constraints. We give a formula describing how these invariants arise as point constraints are pushed together in dimension four, and we use this to recursively compute all of these invariants in terms of Gromov--Witten invariants of blowups.
As a key tool, we study analogous invariants which count punctured curves with negative ends on a small skinny ellipsoid.
\end{abstract}

\author{Dusa McDuff}
\author{Kyler Siegel}
\thanks{K.S. was partially supported by NSF grant DMS--1606371.}

\maketitle

\tableofcontents

\section{Introduction}

\subsection{Motivation}\label{ss:intro}

A prototypical problem in enumerative geometry asks to count the number of curves in a space satisfying some specified geometric constraints.
These constraints are chosen so that one expects the answer to be a finite number, independent of any auxiliary choices. For example, an ancient observation is that there is exactly one line passing through any two distinct points in the plane.
A modern incarnation of this is the fact that there 
is a unique rational curve of degree one passing through any two distinct points in the complex projective plane $\CP^2$.
This naturally extends to the following question: how many rational curves of degree $d$ pass through $3d-1$ generic points in $\CP^2$?
Note that $3d-1$ is precisely the value needed to make the count a finite number $N_d$, 
and one can show that the answer does not depend on the locations of the points as long as they are in general position.
We have $N_1 = 1$ and $N_2 = 1$, the latter expressing the also long-known fact that five generic points determine a unique conic.
The computation $N_3 = 12$ was given by Steiner in 1848. The late 19th century 
brought enumerative geometry to a pinnacle and produced the number $N_4 = 620$ (see e.g. \cite{KV} for a more thorough history).
The computation of $N_d$ for all $d$ remained out of reach until the mid 1990's, when ideas from string theory began to infuse with algebraic geometry and symplectic topology, leading Kontsevich to discover the beautiful recursive formula
\begin{align}\label{eq:Kontalg}
N_d = \sum_{d_A + d_B = d} N_{d_A}N_{d_B} d_A^2d_B\left( d_B\mbinom{3d-4} {3d_A -2} - d_A\mbinom{3d-4}{3d_A-1}\right).
\end{align}
Using this formula we easily get $N_5 = 87304$, $N_6 = 26312976$, $N_7 = 14616808192$, and we can compute $N_d$ for any $d$ given enough computational power.

One of the key developments which catalyzed Kontsevich's formula was the introduction of Gromov--Witten invariants. Gromov--Witten invariants are defined in terms of stable maps from Riemann surfaces into a given target space. In favorable cases these invariants coincide with corresponding classical curve counts, although in general they count ``virtual'' objects which may not have straightforward classical interpretations. At any rate, the gain is that Gromov--Witten invariants have highly robust structural properties, and in particular they depend only on the underlying symplectic structure (up to deformation) of the target space. Rational Gromov--Witten invariants are used to cook up the quantum cup product on the cohomology of any symplectic manifold, and Kontsevich's formula follows rather directly from the observation that this product is associative.

Although Kontsevich's formula is the model success story, there are many other important enumerative problems.
We cannot possibly do this field justice in this introduction, but one natural extension is to consider a space together with a divisor, and to count curves which intersect the divisor in a specified number of points with specified tangency orders, plus possibly some additional constraints away from the divisor.
For example, it follows from the Caporaso--Harris recursion formula~\cite{CaH} that there are $7$ degree three curves in $\CP^2$ which intersect the line at infinity at a fixed point with tangency order $2$, plus pass through $5$ generic points away from the line at infinity.
The corresponding extension of Gromov--Witten invariants are called relative Gromov--Witten invariants and were implicitly defined in \cite{CaH} in the process of generalizing Kontsevich's formula to higher genus.
Relative Gromov--Witten invariants arise naturally even if one is a priori only interested in absolute counts, since they can be used to decompose the Gromov--Witten invariants of a symplectic manifold into simpler pieces via the degeneration or symplectic sum formula.

The starting point for this paper is a slightly different enumerative problem, namely the case of a {\em local} divisor rather than a global one.
The basic idea is to pick a smooth divisor $D$ defined near a point $p$ and to count curves which pass through $p$ with specified tangency order to $D$.
In symplectic geometry this idea goes back at least to the work of Cieliebak--Mohnke \cite{CM1} (we are not aware of any counterpart in the algebraic geometry literature).
As we explain in \S\ref{ss:Tanhigh}, one can adapt standard symplectic techniques to get well-defined Gromov--Witten type invariants, independent of all auxiliary choices (e.g. generic almost complex structure).
In particular, in contrast to the global divisor case the local divisor $D$ has no relevant invariants, such as degree.
For a closed symplectic manifold $M$, we will denote the resulting count of closed curves in $M$ in homology class $A$ which are tangent to $D$ at $p$ to order $m-1$\footnote{Equivalently, this corresponds to curves with contact order $m$ to $D$.} by $N_{M,A}\lll \T^{m-1} p\rrr$.

It turns out that there are some advantages to replacing several distinct point constraints with a single $\lll \T^{m-1} p\rrr $ constraint, stemming from the fact that the location of the constraint is more ``controlled''.
For instance, in \cite{CM2} Cieliebak--Mohnke compute that there are $(n-1)!$ degree one curves in $\CP^n$ satisfying a $\lll \T^{n-1}p\rrr$ constraint, 
i.e. $$
N_{\CP^n, [L]}\lll \T^{n-1}p\rrr = (n-1)!,
$$
and they use this to put strong restrictions
on the pseudoholomorphic disks bounded by Lagrangians with nonpositive curvature in $\CP^n$.
Their key idea  is that one can stretch the neck along the boundary of a Weinstein neighborhood of such a Lagrangian, with $p$ contained inside that neighborhood, and then the curves counted by  $N_{\CP^n, [L]}\lll \T^{n-1}p\rrr$ must break into quite specific configurations.
More recently, in \cite{tonk} Tonkonog uses a similar neck-stretching idea to describe a relationship between closed curve counts with $\lll \T^{m-1} p\rrr$ constraints and superpotentials of Lagrangian tori in symplectic manifolds, thereby connecting to conjectures in mirror symmetry.

For us, another main motivation for considering curves with local tangency constraints stems from their connections to symplectic embedding problems. Indeed, curves with local tangency constraints play a central role in the capacities recently defined in \cite{HSC}. 
It is explained there that, in contrast to the capacities defined with 
constraints at multiple points in the domain, 
the ones involving a single local tangency constraint are dimensionally stable. 
This principle is closely related to the observation of \cite{HK} that punctured rational curves with precisely one negative end have stable Fredholm index, a fact exploited in \cite{HK,HKerrat,CGH,Ghost,Mint} to give new obstructions for the stabilized ellipsoid embedding problem.
As we explain, there is an alternative characterization of tangency constraints via negative ends on a sufficiently skinny ellipsoid, thereby bridging our work with the punctured curves considered in \cite{HK,CGH,Ghost,Mint}. 
In a followup paper \cite{McDuffSiegel} we focus on punctured curves with tangency constraints in open symplectic manifolds and use these to give new symplectic embedding obstructions.
In a slightly different direction, the recent paper \cite{GanatraSiegel} applies these local tangency invariants to obstruct Liouville embeddings between Liouville domains.

\subsection{Main results}

We begin in \S\ref{sec:tangency} by defining the local tangency invariants for closed symplectic manifolds $(M,\omega)$, restricting our attention to the semipositive case (this includes all symplectic manifolds of dimension at most $6$; see \S\ref{ss:GW} for the precise definition).\footnote{More generally, it should be possible to define these invariants for all closed symplectic manifolds using an appropriate virtual perturbation framework such as polyfolds, but this lies beyond the scope of this paper.}
In particular, the invariant  $N_{M,A}\lll \T^{m-1}p\rrr$ counts genus zero somewhere injective $J$-holomorphic curves for a generic tame almost complex structure $J$ which are tangent to order $m-1$ to a local divisor $D$ through $p$. Using detailed index calculations in the presence of tangency constraints and some transversality results from Cieliebak--Mohnke \cite{CM1}, we prove that 
in any dimension
this count is independent of the point $p$, the local divisor $D$ and the almost complex structure $J$, and that it takes integer values.  

Our main results are in dimension four, and culminate in 
the following: 
\begin{thm}\label{thm:main_thm} For any symplectic four-manifold $M$ and homology class $A \in H_2(M;\Z)$,
there is an explicit recursive algorithm to compute the numbers $N_{M,A}\lll \T^{m-1}p\rrr$
in terms of Gromov--Witten invariants of blowups of $M$.
\end{thm}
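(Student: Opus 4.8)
The plan is to set up a recursion that reduces each invariant $N_{M,A}\lll \T^{m-1}p\rrr$ to invariants with lower tangency order together with ordinary Gromov--Witten invariants, and then to close the recursion by handling the base case. The central geometric input is the ``breaking'' formula promised in the abstract: as two point constraints are pushed together in a four-manifold, a curve satisfying $\lll \T^{k-1}p \rrr$ acquires an extra branch, and in the limit one obtains a sum over ways of distributing the constraint among the resulting configuration. So the first step is to make precise the family of invariants $N_{M,A}\lll \T^{m-1}p, \prod_i \T^{k_i-1}p_i\rrr$ involving several local tangency constraints at distinct points (the multibranched invariants of the abstract), establish their deformation invariance and integrality in dimension four via the same index and transversality package used for the single-constraint case, and record the top stratum: a single $\lll \T^{m-1}p\rrr$ constraint is the $m=1$ iterate of $m$ distinct point constraints at the level of virtual counts when $m=1$, i.e.\ $N_{M,A}\lll \T^0 p\rrr = \gw_{M,A}(\pd(\pt))$.

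Second, I would prove the pushing-together formula itself. Fix the homology class $A$ and consider a one-parameter family of almost complex structures (or a family of configurations of points) degenerating two of the point constraints $p_1, p_2$ to a single point $p$. By Gromov compactness, a sequence of curves in the relevant moduli space limits to a stable map; the key claim is that in dimension four, for the count in question, the only configurations that appear are those where the ``ghost'' or connecting components are controlled, and the limit curve is a single somewhere-injective curve satisfying a tangency constraint of the combined order $\lll \T^{k_1+k_2-1}p\rrr$ up to a combinatorial multiplicity, plus a bounded number of nodal degenerations into a pair of curves in classes $A_1 + A_2 = A$ each carrying part of the constraint. This is where the four-dimensional hypothesis is essential: positivity of intersections and adjunction pin down the local intersection pattern with $D$ and rule out the pathological strata that would otherwise contribute. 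The upshot is an identity expressing $N_{M,A}\lll \T^{m-1}p\rrr$ as a sum of terms, each involving either a strictly smaller invariant of the same type for $M$ or a product of two such invariants in classes of strictly smaller $\omega$-area.

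Third, I would close the recursion using blowups. The natural move is: a curve through $p$ with tangency order $m-1$ to $D$, after blowing up $M$ at $p$ (and iterating, blowing up the infinitely near points along $D$), becomes a curve in a fixed homology class in the iterated blowup $\wt M$ with prescribed intersection numbers with the exceptional divisors and no remaining tangency constraint. Thus the $m=1$, no-extra-point case of each invariant is literally a Gromov--Witten invariant of an iterated blowup of $M$ (with exceptional-class constraints), and Gromov--Witten invariants of four-manifolds are computable. Combining this identification of the base case with the pushing-together recursion from the second step yields the algorithm: given $(M,A,m)$, iterate the pushing-together formula to descend in $m$ and split off factors of smaller area, at each stage reading off the fully-reduced terms as Gromov--Witten invariants of blowups, and terminate because both the tangency order and the area strictly decrease.

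The main obstacle I expect is the second step, specifically controlling the limit configurations in the degeneration: one must show that no unexpected strata (in particular, multiply-covered components or chains of ghost bubbles absorbing the tangency constraint in uncontrolled ways) contribute to the count, and one must correctly compute the combinatorial multiplicities with which each surviving stratum enters. This requires the careful index bookkeeping in the presence of tangency constraints developed in \S\ref{sec:tangency}, the semipositivity/four-dimensionality to guarantee transversality after generic perturbation, and a delicate local analysis near $D$ of how a tangency of order $m-1$ distributes when the constraint point splits — which is precisely why the paper routes this through the punctured-curve picture with negative ends on a skinny ellipsoid, where the combinatorics of the ends make the splitting transparent.
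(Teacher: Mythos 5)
Your proposal correctly identifies the high-level architecture of the paper's argument (multibranched invariants, a pushing-together formula, blowup Gromov--Witten as the base case, and the skinny-ellipsoid picture as the right technical channel), but it contains two genuine gaps that would prevent the argument from closing.

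First, the shape of the recursion is wrong. You propose descending simultaneously in the tangency order $m$ and the $\omega$-area, ``split[ting] off factors of smaller area'' so that terms involving $A_1 + A_2 = A$ arise, with termination because both parameters strictly decrease. The paper's recursion does the opposite: the combination formula (Theorem~\ref{thm:ppt_intro}) works at \emph{fixed} homology class $A$ and never splits it. Pushing two point constraints together produces a sum over partitions $\Pp$ of $|\Pp_1|+|\Pp_2|$ of invariants $N_{M,A}\lll \Pp,\Pp_3,\dots\rrr$ in the \emph{same} class $A$; the two possible local behaviors (collision of marked points raising tangency order, or persistence as a double point increasing the number of branches) do not change the class. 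The paper emphasizes this contrast with Kontsevich-style recursions explicitly. In particular, the four-dimensional adjunction and positivity arguments you invoke are used to rule out curve portions with the wrong combinatorics near the ellipsoid, not to organize a decomposition of $A$.

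Second, and more seriously, you are missing the key step that makes this an \emph{algorithm} rather than an identity. The combination formula runs ``forward'': it expresses an invariant with constraints at \emph{two} distinct points in terms of invariants with constraints at \emph{one} point. To reduce $\lll \T^{m-1}p\rrr$ to simpler data you need to run this backward, i.e.\ solve the resulting linear system for the one-point invariants $N_{M,A}\lll \Pp\rrr$ in terms of two-point invariants of strictly smaller complexity. This requires showing that the combinatorial transition matrix $A_k$ (whose entries count ways of recombining rows of Young diagrams) is invertible, which is the content of Lemma~\ref{lem:invertible} ($\det A_k = \pm (k-1)!$). Without this invertibility the pushing-together formula by itself is not enough to compute anything. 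Relatedly, your intended base case differs from the paper's: you propose reading off the answer from the iterated blowup of $M$ along the infinitely-near points of $D$, which corresponds to the normal-crossing divisor $N^{(a)}$ of Lemma~\ref{lem:blow}. But the paper explicitly cautions (Remark~\ref{rmk:relGW}) that this iterated blowup carries a nongeneric almost complex structure with irregular negative-index curves and is not directly used to compute the invariants. The actual base case is invariants with all partitions equal to $(1,\dots,1)$, which by Corollary~\ref{cor:T=Er}~(ii) are Gromov--Witten invariants $\gw_{\bl^a M,\, A - n_1[\E_1] - \cdots - n_a[\E_a]}$ of $M$ blown up at $a$ \emph{distinct} points.

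Finally, a smaller point: you defer the computation of the combinatorial multiplicities in the pushing-together formula to ``delicate local analysis near $D$'' but do not identify the mechanism. In the paper these coefficients are computed by obstruction bundle gluing à la Hutchings--Taubes, after the neck-stretching argument shows that the only non-cylindrical contribution from the intermediate symplectization levels is an index-zero branched-cover pair-of-pants; this is where the gluing coefficient $1$ is established (Lemma~\ref{lem:two_ends}) by checking it on a model class that is an exceptional sphere in a blowup. The tangency picture alone does not give easy access to these multiplicities, which is precisely why the paper proves Theorem~\ref{thm:counts} first.
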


 In contrast to algorithms (such as Kontsevich's in \eqref{eq:Kontalg}) that work recursively by reducing the degree, our algorithm works for fixed degree, decomposing the single constraint $\lll \T^{m-1}p\rrr$ into several lower order multibranched constraints using Theorem~\ref{thm:ppt_intro} below.  For an explicit sample calculation, see Example~\ref{ex:deg3}.

In the important special case of $\CP^2$, let us introduce the shorthand 
$$
T_d := N_{\CP^2,d [L]}\lll \T^{3d-2}p\rrr.
$$
We note that rational Gromov--Witten invariants of blowups of $\CP^2$ are well-understood and can be computed using e.g. the recursive algorithm of G\"ottsche--Pandharipande \cite[Theorem 3.6]{GP}.
Combining this with a computer implementation of Theorem~\ref{thm:main_thm}, we find
(see \S\ref{ss:recur1} for more detailed computations):
\begin{cor}
The first few values of $T_d$ are
\[
\begin{array}{llll}
T_1 = 1, & \;\;
T_2 = 1, &\;\;
T_3 = 4, &\;\;
T_4 = 26,\\
T_5 = 217, &\;\;
T_6 = 2110,&\;\; 
T_7 =  22744,&\;\;
T_8 = 264057.\\
\end{array}
\]
\end{cor}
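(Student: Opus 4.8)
The plan is to obtain the corollary as a direct application of Theorem~\ref{thm:main_thm} to $M = \CP^2$, $A = d[L]$ and tangency order $m-1 = 3d-2$, followed by an explicit evaluation of the finitely many Gromov--Witten invariants of blowups that the algorithm outputs.

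First I would run the recursive algorithm of Theorem~\ref{thm:main_thm} symbolically in this case. Using the point-pushing formula of Theorem~\ref{thm:ppt_intro}, the single constraint $\lll \T^{3d-2}p \rrr$ is rewritten as a $\Z$-linear combination of invariants $N_{\CP^2, d[L]}$ paired against multibranched local tangency constraints whose branches have strictly smaller total order. Iterating this, and since the degree $d$ is fixed throughout, after a bounded number of steps every surviving term is one that the algorithm identifies with a genus-zero Gromov--Witten invariant of an iterated blowup $\bl_k \CP^2$ of $\CP^2$ at finitely many points, with insertions given by (powers of) the exceptional classes together with the point class. Thus $T_d$ is expressed as an \emph{explicit} finite $\Z$-linear combination of Gromov--Witten invariants of blowups of $\CP^2$, with all coefficients and homology classes recorded along the way.

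Second I would evaluate those invariants. The relevant genus-zero Gromov--Witten invariants of $\bl_k \CP^2$ are computed, for every $k$, by the G\"ottsche--Pandharipande recursion \cite[Theorem~3.6]{GP}, whose input is the Gromov--Witten invariants of $\CP^2$ itself, i.e.\ the numbers $N_d$ of \eqref{eq:Kontalg}. This renders the whole computation purely algorithmic. I would carry out $d = 1, 2, 3$ by hand: the first two reduce to the classical facts that there is a unique line through $p$ with a prescribed tangent direction and a unique conic having contact order five with a given smooth germ at $p$, giving $T_1 = T_2 = 1$, while $d = 3$ is the content of Example~\ref{ex:deg3} and yields $T_3 = 4$. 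For $d \leq 8$ I would run a computer implementation of the algorithm to obtain the listed values; the details are recorded in \S\ref{ss:recur1}.

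The main obstacle here is not conceptual but combinatorial: the multibranched constraints produced by the recursion proliferate quickly, each term carrying a nontrivial multiplicity and sign coming from Theorem~\ref{thm:ppt_intro}, and one must track these in tandem with the exceptional classes and their multiplicities that are fed into \cite{GP}; an error in a symmetry factor or a sign would be easy to make and hard to detect. I would guard against this with several consistency checks: matching the hand computations of $T_1, T_2, T_3$ above; verifying that the computed $T_d$ are independent of the order in which point constraints are pushed together and of the order in which points are blown up, both of which are genuine free choices in the algorithm; and cross-checking a few values (e.g.\ $T_4 = 26$) against any independently available counts, for instance via the skinny-ellipsoid description of tangency constraints used elsewhere in the paper or a direct enumeration over $\ov{M}_{0,1}(\CP^2, d)$ for small $d$.
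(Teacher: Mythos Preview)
Your proposal is correct and follows the same route as the paper: apply the recursive algorithm of Theorem~\ref{thm:main_thm} (driven by Theorem~\ref{thm:ppt_intro}) to reduce $T_d$ to blowup Gromov--Witten invariants of $\CP^2$, then evaluate those via G\"ottsche--Pandharipande, doing $d\le 3$ by hand (as in Example~\ref{ex:deg3}) and the rest by computer. One small inaccuracy: the linear combination produced by the recursion has \emph{rational} coefficients in general, not integer ones, since it involves the inverse of the matrix $A_k$ (see the remark following \eqref{eq:recur2}); the outputs $T_d$ are nonetheless integers.
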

\begin{remark}\label{rmk:lit_comp}\rm (i)
We point out that the local tangency invariants discussed in this paper are closely related to, but subtly different from, both stationary descendant Gromov--Witten invariants and  Gromov--Witten invariants relative to a global divisor. 
In short, the descendant counts generally involve extra contributions coming from configurations with ghost components, while global relative Gromow--Witten invariants tend to exclude curves contained in the divisor which could contribute to the analogous local tangency invariant.
This is discussed in more detail in \S\ref{subsec:new_subsec}. We note that the local tangency constraint is particularly well-suited for applications in symplectic geometry, especially for symplectic manifolds having no natural global divisors (c.f. \cite{HSC,chscI,McDuffSiegel,GanatraSiegel}).
\MS

\NI (ii)
As we explain later, some of the technical machinery entering into our proof of Theorem~\ref{thm:main_thm} works only in dimension four;
it is interesting to ask whether this result has a natural analog in higher dimensions. \hfill$\er$
\end{remark}

\MS

We now describe the main ideas and concepts underlying Theorem~\ref{thm:main_thm}, which are also of independent interest.
Our recursion is based on a general principle which describes what happens when constraints at different points in a 
 four dimensional
  target space $M$  are ``pushed together''.
A special case of this appears in the work of Gathmann \cite{Gath_blowup}. For the basic heuristic, 
 suppose that $q$ and $p$ are two points in 
 $M$, and suppose we have a curve which is constrained to pass through both $q$ and $p$.
Now consider what happens as we move $q$ towards $p$ along the tangent direction $v \in T_pM$.
In the limit as $q$ approaches $p$, there are two possibilities:
\begin{enumerate}
\item the pre-images in the domain curve remain distinct, and the limiting curve acquires a double point at $p$
\item the pre-images in the domain curve collide, and the limiting curve still passes through $p$ but now with tangent space constrained to be the complex line spanned by $v$.
\end{enumerate}
See Figure \cite[p.41]{Gath_blowup} for a cartoon.
Using his algorithm for Gromov--Witten invariants of blowups, Gathmann makes this heuristic precise by proving the following formula for curve counts in $\CP^2$:
\begin{align}\label{eq:Gath}
 \gw_{\CP^2,d [L]}\lll q,p,-\rrr  =  \gw_{\CP^2,d [L]}\lll \T p,-\rrr + 2\gw_{\bl^1\CP^2,d [L]-2[\Ee]}\lll -\rrr.
\end{align}
Here $\bl^1\CP^2$ denotes the $1$-point blowup of $\CP^2$, $[\Ee]$ denotes the homology class of the exceptional divisor, $[L]$ is the homology class of the line, and the symbol $-$ denotes some additional evaluation class constraints which we suppress from the notation.
Note that, for generic $J$, curves in $\bl^1\CP^2$ in homology class $d [L] - 2[\Ee]$ 
correspond bijectively to
 degree $d$ curves in $\CP^2$ with a simple double point at the blowup point. 
 
\MS

In \S\ref{ss:comb} we present a generalization of Gathmann's formula which describes what happens when more complicated geometric constraints at different points are pushed together.
In order to formulate it, we first introduce (in \S\ref{ss:tan_multi}) invariants counting curves 
(in a four dimensional manifold) 
with multibranched tangency constraints
\begin{align}\label{eq:NTPp0}
N_{M,A}\lll (\T^{m^1_1-1}p_1,...,\T^{m^1_{b_1}-1}p_1),...,(\T^{m^r_1-1}p_r,...,\T^{m^r_{b_r}-1}p_r)\rrr \in \Z
\end{align}
for $r \geq 1$, $b \geq 1$, and $m^i_j \geq 1$. 
Heuristically, given points  $p_1,\dots, p_r$, where $p_i$ lies  on a local divisor $D_i$,
 this invariant counts the number of rational curves in $M$ in homology class $A$ that, for each $i$,  have $b_i>0$ local branches through $p_i$ 
such that the $j$th local branch at $p_i$ is tangent to $D_i$ to order $m^i_j-1$.
Here we assume that the points $p_1,...,p_r \in M$ are pairwise distinct, and for instance the count 
$$
N_{M,A}\lll \underbrace{p_1,...,p_1}_{b_1},...,\underbrace{p_r,...,p_r}_{b_r} \rrr
$$ 
 coincides with the blowup Gromov--Witten invariant
$\gw_{M,A-b_1[\Ee_1]-...-b_r[\Ee_r]}$ (see Corollary~\ref{cor:T=Er}~(ii)).
More concisely, we will denote the invariant in \eqref{eq:NTPp0} by $$N_{M,A}\lll {\Pp_1},\dots, {\Pp_r}\rrr,$$
 where we introduce partitions $\Pp_i := (m^i_1,\dots,m^i_{b_i})$, and without loss of generality we assume $m^i_1 \geq ... \geq m^i_{b_i}$.
 We prove in \S\ref{ss:tan_multi} that these counts are well-defined and independent of all choices in dimension four (in higher dimensions our index bounds break down, necessitating transversality techniques beyond the scope of this paper).
We will see  in Example~\ref{ex:deg3} that, even in the case of cubics in $\C P^2$, these invariants are rather different from the Caporaso--Harris invariants 
mentioned earlier that count curves satisfying tangency conditions to a line.

The following theorem explicitly describes the outcome when two of the point constraints (which we can take to be $p_1,p_2$) are pushed together.

\begin{thm}\label{thm:ppt_intro} Let $(M,\om)$ be four dimensional and $A\in H_2(M;\Z)$.  
For any partitions $\Pp_1,...,\Pp_r$, we have
\begin{align*}
N_{M,A}\lll \Pp_1,\Pp_2,\Pp_3...,\Pp_r\rrr = \sum_{\Pp \in \partitions_{|\Pp_1| + |\Pp_2|}} \frac{\langle \Pp_1*\Pp_2,\Pp\rangle \;|\Aut(\Pp)|}{|\Aut(\Pp_1)| \; |\Aut(\Pp_2)|}   \;N_{M,A}\lll \Pp,\Pp_3,...,\Pp_r\rrr.
\end{align*}
\end{thm}

\NI Here $\partitions_k$ denotes the set of all partitions of $k$, $\Aut(\Pp)$ is defined in Definition~\ref{def:AutP} and the combinatorial term  $\langle \Pp_1*\Pp_2, \Pp\rangle $ is defined by \eqref{eq:*P}.

Observe that we can apply this formula iteratively to reduce constraints at $r$ different points to constraints at a single point.
In order to prove Theorem~\ref{thm:main_thm}, the idea (explained in \S\ref{sec:recur}) is to apply Theorem~\ref{thm:ppt_intro} ``in reverse'' in order to reduce curve counts with tangency constraints to curve counts without any tangency constraints, which in turn coincide with certain blowup Gromov--Witten invariants.
In fact, our recursive algorithm computes all of the multibranched tangency invariants $N_{M,A}\lll \Pp_1,\dots,\Pp_r\rrr$ for a symplectic four-manifold $M$ (detailed sample computations for the case of $\CP^2$ are provided in \S\ref{ss:recur1}).

\MS

Although one might be able to prove Theorem~\ref{thm:ppt_intro} purely in the context of tangency constraints, it turns out to be illuminating to reformulate this theorem as a result about punctured curves with ends on a  \lq\lq skinny ellipsoid'' $E_{\sk}$.
This perspective also embeds our work into the wide vista of symplectic field theory (SFT) and opens up some powerful tools such as neck stretching. 
More precisely, in \S\ref{sec:skin} we count curves in the symplectic completion of $M\less \io(E_{\sk})$  with negative ends on multiples of the short simple Reeb orbit in $\p E_{\sk}$. 
   Here, we take $\io: E_{\sk}\hookrightarrow M$ to be a symplectic  embedding of $$E_{\sk}:= E(\eps s_1, \eps s_2,\dots \eps s_n)$$ into $M$, 
   for $0 < s_1 \ll s_2 < \dots < s_n$ and $\eps > 0$ sufficiently small (see Definition~\ref{def:skin} for a more precise formulation).
We show in \S\ref{ss:skinhigh} that in any dimension there is a well-defined count of punctured curves with a single negative end on a skinny ellipsoid, and in particular this is independent of the shape parameters $\eps,s_1,\dots,s_n$ (provided that they satisfy appropriate inequalities).
In dimension four we also consider punctured curve counts involving several negative on a skinny ellipsoid, and we denote the resulting invariants $N^E_{M,A}\lll \Pp\rrr$, where $\Pp$ is a partition recording the negative asymptotic Reeb orbits.
We prove that these counts are well-defined and independent of all choices. In contrast to the closed curve counts, the presence of punctures necessitates that we utilize the SFT compactness theorem and also several new techniques, most notably a higher dimensional version of Siefring's intersection theory (see \S\ref{ss:skinhigh}) and the four-dimensional writhe estimates from embedded contact homology (see \S\ref{ss:skin4D}).

In \S\ref{ss:same}, we prove that these two approaches give the same counts, at least in dimension four. More precisely, we have:
\begin{thm}\label{thm:counts}  Let $(M,\om)$ be four dimensional and $A\in H_2(M;\Z)$.  Then for every partition $\Pp$ of $m: = c_1(A)-1$ we have
$$
N^E_{M,A}\lll \Pp\rrr  =  N_{M,A}\lll \Pp\rrr.
$$
Further, putting $\Pp = (m_1,\dots,m_b)$ and $\Pp!: = \frac {m!}{m_1!\dots m_b!}$, we have
$$
\gw_{M,A}\lll p_1,\dots,p_{m}\rrr = \sum_{\Pp \in \partitions_m} \Pp! \, N_{M,A}\lll \Pp\rrr.
$$
\end{thm}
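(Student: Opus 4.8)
The plan is to prove the two displayed identities essentially simultaneously, via a single neck-stretching argument. The key object is the symplectic embedding $\io\colon E_{\sk}\hookrightarrow M$ of a sufficiently skinny ellipsoid with $p$ inside its image, where $p$ is the point carrying the tangency constraint and the local divisor $D$ is taken (after a small isotopy) to be adapted to the ellipsoid, e.g. the image under $\io$ of a coordinate hyperplane. First I would fix a generic tame $J$ on $M$ and stretch the neck along $\io(\p E_{\sk})$. A curve in class $A$ satisfying the constraint $\lll\T^{m-1}p\rrr$ has local contact order $m=c_1(A)-1$ with $D$ at $p$; in the stretched limit, its part that entered the ellipsoid neighborhood breaks off, and by the index and Siefring-intersection analysis of \S\ref{ss:skin4D} the only surviving configuration is a connected curve in the completion $\wh{M\less\io(E_{\sk})}$ together with a collection of small covers of the trivial cylinder over the short Reeb orbit inside the ellipsoid. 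The total covering multiplicity of the negative ends is forced to equal $m$, and the possible distributions of this multiplicity among the ends are exactly the partitions $\Pp\in\partitions_m$. This is how $N^E_{M,A}\lll\Pp\rrr$ arises, and the first equality $N^E_{M,A}\lll\Pp\rrr = N_{M,A}\lll\Pp\rrr$ is obtained by checking that, for each $\Pp$, the SFT-broken configurations with asymptotics $\Pp$ are in bijection (with matching signs and automorphisms) with the curves counted by the multibranched tangency invariant $N_{M,A}\lll\Pp\rrr$ — the $b$ negative ends of multiplicities $m_1,\dots,m_b$ correspond precisely to $b$ local branches through $p$ with tangency orders $m_1-1,\dots,m_b-1$ to $D$. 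The main input here is the gluing/compactness package of \S\ref{sec:skin}, which guarantees that no other breaking occurs (no extra closed components in the ellipsoid, no nodal degenerations in $M$, no multiply-covered main component) and that the count is deformation-invariant.

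For the second identity I would run the same neck stretch but starting from the curves counted by $\gw_{M,A}\lll p_1,\dots,p_m\rrr$, i.e. genus-zero curves through $m$ distinct generic points, and push all $m$ points $p_1,\dots,p_m$ into the ellipsoid $\io(E_{\sk})$, keeping them generic there. Alternatively — and this is cleaner — I would invoke Theorem~\ref{thm:ppt_intro} iteratively: repeatedly pushing $p_2,\dots,p_m$ onto $p_1$ collapses the $m$ single-point constraints into a sum over partitions $\Pp$ of $m$, with the leading coefficient counting how many ways $m$ labelled branches organize into unordered blocks of sizes $m_1,\dots,m_b$, which is the multinomial $\Pp! = m!/(m_1!\cdots m_b!)$ up to the automorphism factors that cancel. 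Concretely, one checks that the iterated combinatorial kernel $\langle \Pp_1*\Pp_2,\Pp\rangle$ from \eqref{eq:*P}, applied to $m$ copies of the trivial partition $(1)$, telescopes to $\Pp!$; the $|\Aut(\Pp)|$ in the numerator of Theorem~\ref{thm:ppt_intro} and the $|\Aut(\Pp_i)|$ in the denominator are engineered to make this work, so the bookkeeping is routine.

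The step I expect to be the main obstacle is the precise identification of SFT-broken configurations with multibranched tangency curves, together with control of signs and multiple covers. Two subtleties stand out. First, one must rule out broken configurations that carry nontrivial topology inside the ellipsoid — extra closed components, or negative ends on longer Reeb orbits — and for this the skinniness hypothesis $s_1\ll s_2<\dots<s_n$ is essential, since it makes the actions of all Reeb orbits other than covers of the shortest one too large to appear in curves of bounded energy; this is exactly where the writhe estimates from embedded contact homology and the higher-dimensional Siefring intersection bounds of \S\ref{ss:skinhigh}–\S\ref{ss:skin4D} do the heavy lifting. Second, when a negative end has multiplicity $m_i>1$ one is looking at an $m_i$-fold cover of the trivial cylinder, and one must verify that the relevant moduli space is cut out transversally (or can be perturbed to be so within the semipositive/four-dimensional framework) and contributes with the correct sign and automorphism weight — this is what forces the precise combinatorial normalization of $N_{M,A}\lll\Pp\rrr$ and ultimately makes both displayed formulas hold on the nose rather than up to a constant. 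Granting the analytic results quoted from \S\ref{sec:skin}, everything else reduces to the index count $m=c_1(A)-1$ and the combinatorial identities above.
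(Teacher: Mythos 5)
Your proposal reproduces the \emph{heuristic} that the paper itself records just before the proof (neck stretch around $E_{\sk}$ swaps an order-$(m-1)$ tangency for an $\eta_m$ end, and conversely), but the step you flag as ``the main obstacle'' is in fact where the proposal as written has a genuine gap, and it is exactly the step the paper is designed to circumvent. To establish $N^T_{M,A}\lll\Pp\rrr = N^E_{M,A}\lll\Pp\rrr$ by a direct bijection you would need a gluing statement that takes a curve in $M\less E_{\sk}$ with a negative end of multiplicity $m_i$ and glues on an $m_i$-fold cover of the plane in $\wh{E}_{\sk}$ \emph{with a branch point at $p$}, in such a way that the glued curve still visibly satisfies the tangency constraint $\lll\T^{\Pp}p\rrr$. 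That is a gluing problem \emph{in the presence of a tangency constraint at a branch point of a multiple cover}, which is nonstandard: the $m_i$-fold cover satisfies the tangency condition in a degenerate way (and lives in a family of positive dimension from the movable branch points), so neither standard gluing nor a direct appeal to the Hutchings--Taubes obstruction bundle machinery applies as stated. The paper explicitly names this difficulty at the start of \S\ref{ss:same} and deliberately routes around it: instead of a direct bijection $N^T\leftrightarrow N^E$, it proves a cyclic chain of \emph{inequalities}
$$
N_{M,A}\;\le\; \sum_\Pp\Pp!\,N^T_{M,A}\lll\Pp\rrr\;\le\;\sum_\Pp\Pp!\,N^E_{M,A}\lll\Pp\rrr\;\le\;N_{M,A},
$$
(Lemmas~\ref{lem:push_ineq}, \ref{lem:T_to_E}, \ref{lem:glue_E}). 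Only one of these (Lemma~\ref{lem:T_to_E}, which is the degeneration direction) is a neck stretch; the return direction (Lemma~\ref{lem:glue_E}) is an obstruction bundle gluing for curves with \emph{point} constraints in $E_{\sk}$, where no tangency condition is being glued and the ECH partition conditions hold at each negative end, so \cite[Thm~1.13, Example~1.28]{HuT} applies cleanly. The chain then forces all inequalities to be equalities, giving both displayed identities simultaneously without ever gluing a tangency constraint. Your proposal, by contrast, asks for the hard half of the bijection directly, and the assertion that this is ``what the gluing/compactness package of \S\ref{sec:skin} guarantees'' is not correct: \S\ref{sec:skin} establishes well-definedness of $N^E$ but contains no gluing result of the kind you would need.

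Your alternative derivation of the second identity by iterating Theorem~\ref{thm:ppt_intro} is also problematic within the paper's logic: Theorem~\ref{thm:ppt_intro} (Theorem~\ref{thm:ppt} in \S\ref{ss:comb}) is proved for the ellipsoidal invariants $N^E$ and then \emph{translated} into a statement about the tangency invariants using the very equality $N^T=N^E$ from Theorem~\ref{thm:counts}. Invoking it to prove Theorem~\ref{thm:counts} would be circular unless you first reformulate and reprove \eqref{eq:ppt} purely for $N^E$ (which is possible, and is essentially what \S\ref{ss:comb} does), and then separately show that $N^E_{M,A}\lll(1^{\times m})\rrr$ agrees with the Gromov--Witten count --- but this last fact is precisely the content of Lemma~\ref{lem:glue_E} (the case of trivial partitions) and is where the obstruction bundle gluing input lives. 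In short: the combinatorial telescoping you describe is fine, but it does not by itself discharge the analytic input, and the direct-bijection plan for the first identity needs to be replaced by the paper's cyclic-inequality argument or by a genuinely new gluing theorem for tangency constraints at branch points.
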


\begin{remark}\rm
It follows from Corollary~\ref{cor:count_pos} that we have $T_d > 0$ for all $d \in \Z_{\geq 1}$.
This observation, which is not manifest from the recursion, is important for applications to stabilized symplectic embeddings, and it is equivalent to the nontrivial fact that there exist degree $d$ punctured curves in $\CP^2 \setminus E_{\sk}$ with one negative end on the $(3d-1)$-fold cover of the short simple Reeb orbit --- see \cite{HKerrat,Mint}.
\end{remark}

Our proof of Theorem~\ref{thm:counts} in \S\ref{ss:same} utilizes 
the delicate methods of embedded contact homology (ECH)
in order to bypass technical difficulties about gluing curves with tangency conditions, 
while Wendl's automatic transversality results  implies that both of these counts are nonnegative integers.  
In \S\ref{ss:comb}, we prove Theorem~\ref{thm:ppt_intro} by combining Theorem~\ref{thm:counts} with a neck stretching argument and an application of the obstruction bundle gluing results of Hutchings--Taubes.
 Although most of these methods work only in dimension four,  Moreno--Siefring \cite{Sief2,MorS} have extended some 
 parts of the theory to higher dimensions, a result that we use in our proof in \S\ref{ss:skinhigh} that the count of curves with one end on a skinny ellipsoid is well defined in all dimensions. 
Finally, in \S\ref{ss:recur2} we present the recursive algorithm of Theorem~\ref{thm:main_thm} as a combinatorial application of Theorem~\ref{thm:ppt_intro}.

\vspace{1em}

\NI \textbf{Acknowledgements} We wish to thank the referees very warmly for their (sometimes very detailed) comments, which have all greatly helped to improve the accuracy and clarity of the paper.  
We also want to acknowledge informative discussions with Melissa Liu.

\section{Curves with local tangency constraints}\label{sec:tangency}

In this section we define Gromov--Witten type invariants counting 
pseudoholomorphic curves with various types of tangency constraints in any semipositive symplectic manifold. After reviewing the symplectic approach to Gromov--Witten theory in the semipositive case in \S\ref{ss:GW}, we consider the case of a single tangency constraint at a point in \S\ref{ss:Tanhigh}. In \S\ref{ss:tan_multi}, we mostly restrict to dimension four and generalize this to tangency constraints involving several branches of a curve passing through the same point in the target.

\subsection{Review of symplectic Gromov--Witten theory}\label{ss:GW}

As a prelude to defining curve counts with local tangency constraints, in this subsection we give a brief review of Gromov--Witten theory for semipositive symplectic manifolds as in \cite{JHOL}.
Let $(M^{2n},\omega)$ be a closed symplectic manifold.
We will denote its (primary) Gromov--Witten invariants by 
$$
\gw_{M,g,A}\lll \gamma_1,...,\gamma_k\rrr \in \Q,
$$ 
where $A \in H_2(M;\Z)$ is a homology class and $\gamma_1,...,\gamma_k \in H^*(M;\Z)$ are cohomology classes. 
These numbers are independent of all choices made during the construction (most notably that of an almost complex structure) and are invariant under symplectomorphisms and a fortiori under symplectic deformation equivalences.\footnote
{Recall that two symplectic manifolds $(M,\omega)$ and $(M',\omega')$ are said to be symplectic deformation equivalent if there is a diffeomorphism $\Phi: M \rightarrow M'$ such that $\Phi^*(\omega')$ can be joined to $\omega$ by a $1$-parameter family of symplectic forms on $M$.} 
Roughly, these invariants count (for a suitable meaning of ``count'')  the number of genus $g$ pseudoholmorphic curves in $M$ in the homology class $A$ which pass through cycles in $M$ representing the Poincar\'e dual homology classes of $\gamma_1,...,\gamma_k$.
Although Gromov--Witten invariants have been defined in both the algebraic and symplectic categories in great generality, for concreteness we will restrict ourselves to the case that $(M,\omega)$ is {\bf semipositive}, i.e. for any $A \in H_2(M;\Z)$ with $[\omega] \cdot A > 0$ and $c_1(A) \geq 3 - n$ we have $c_1(A) \geq 0$.\footnote{Here we denote by $c_1(A)$ the pairing of the first Chern class of the symplectic manifold $(M,\omega)$ with the homology class $A$.} 
We will also restrict throughout to the case of rational (i.e. genus zero) curves and hence we will omit $g$ from the notation.
The discussion here of Gromov--Witten invariants and the subsequent generalization to curve counts with tangency constraints can be extended to more general symplectic manifolds using various virtual perturbation schemes or e.g. the approach of \cite{CM1}, but we will not need to do so in this paper.
We note that the class of semipositive symplectic manifolds is already quite large and includes all symplectic manifolds of dimension less than or equal to six. 

Let $J$ be an $\omega$-tame almost complex structure.\footnote{Recall that an almost complex structure is {\em $\omega$-tame} is $\omega(v,Jv) > 0$ for all nonzero tangent vectors $v \in TM$. An almost complex structure is furthermore called {\em $\omega$-compatible} if $\omega(-,J-)$ is a Riemannian metric. Tameness tends to be sufficient for most purposes.}
We denote by $\calM_{M,A}^{J}$ the moduli space\footnote{By default ``moduli space'' will mean that we have already quotiented out by the relevant group of biholomorphisms of the domain acting by reparametrizations.} of rational $J$-holomorphic spheres in $M$ representing the homology class $A \in H_2(M;\Z)$.
That is, we put
\begin{align*}
\calM_{M,A}^J := \left\{ u: S^2 \rightarrow M \;:\; du \circ j = J \circ du,\; [u] = A\right\} / \Aut(S^2),
\end{align*}
where $j$ denotes the standard (integrable) almost complex structure on the Riemann sphere $S^2$ and $\Aut(S^2) = \PSL(2,\C)$ is its biholomorphism group.
We denote by 
$$ \calM^{J,\simp}_{M,A} \subset \calM^J_{M,A}$$
the subspace of {\bf simple} (or equivalently {\bf somewhere injective}) curves, i.e. those not factoring through any branched cover of the domain.
By a standard argument involving the Sard--Smale theorem, for generic $\omega$-tame $J$ every curve in $\calM^{J,\simp}_{M,A}$ is regular (i.e. its associated linearized Cauchy--Riemann operator is surjective), and hence $\calM_{M,A}^{J,\simp}$ is a smooth (but not necessarily compact) manifold. 
Here and henceforth we say that a condition holds for {\em generic} $\omega$-tame $J$ if the subset of all $\omega$-tame almost complex structures $J$ for which the condition holds is Baire, i.e. is a countable intersection of dense open sets.
Moreover, the (real) dimension of $\calM_{M,A}^{J,\simp}$ is given by the Fredholm index of any representative curve $u$, which can be shown (using say Riemann--Roch or Atiyah--Singer) to be 
$$ \ind(u) = 2n - 6 + 2c_1(A).$$

Similarly, we define the moduli space $\calM_{M,A,k}^{J}$ of $J$-holomorphic spheres in $M$ in the homology class $A$ with $k$ distinct (ordered) marked points,
as well as the subspace $\calM_{M,A,k}^{J,\simp} \subset \calM_{M,A,k}^{J}$ of simple curves.
We have a natural evaluation map
$$ \ev: \calM^{J}_{M,A,k} \rightarrow \underbrace{M \times ... \times M}_k =: M^{\times k}.$$
The key point of semipositivity is that it leads to the following result.

\begin{prop}\label{prop:pseu_basic}{\rm(\cite{JHOL})}
Let $(M,\omega)$ be a semipositive symplectic manifold, and let $A \in H_2(M;\Z)$ be a homology class.
Assume further that $A$ is not of the form $\kappa B$ for $B \in H_2(M;\Z)$ with $c_1(B) = 0$ and $\kappa > 1$.
For generic $\omega$-tame $J$, the evaluation map $\ev$ defines a pseudocycle in $M^{\times k}$ of dimension $2n-6+2c_1(A) + 2k$. Moreover, this pseudocycle is independent of $J$ up to pseudocycle bordism.
\end{prop}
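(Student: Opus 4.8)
The plan is to follow the standard pseudocycle argument from \cite{JHOL} adapted to the semipositive setting. First I would establish that for generic $\omega$-tame $J$ the moduli space $\calM^{J,\simp}_{M,A,k}$ of simple $k$-marked $J$-holomorphic spheres is a smooth oriented manifold of the expected dimension $2n-6+2c_1(A)+2k$; this follows from the Sard--Smale argument already recalled in the text (transversality of the universal Cauchy--Riemann operator on the open stratum of somewhere injective curves), together with the fact that adding marked points raises the dimension by $2$ each. Orientability comes from the complex-linear structure of the leading term of the linearized operator. The evaluation map $\ev\colon \calM^{J,\simp}_{M,A,k}\to M^{\times k}$ is then a smooth map from a manifold of the correct dimension, so what remains is (a) controlling the failure of compactness — i.e. identifying the ``boundary'' $\Omega_{\ev}:=\overline{\im(\ev)}\setminus\im(\ev)$ and showing $\dim\Omega_{\ev}\le \dim\calM^{J,\simp}_{M,A,k}-2$ — and (b) the cobordism statement.

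For step (a), the key tool is Gromov compactness: any sequence in $\calM^{J,\simp}_{M,A,k}$ has a subsequence converging to a stable map whose domain is a tree of spheres, with total homology class $A$ and the $k$ marked points distributed among the components. The images of such limits lie in finite unions of sets of the form $\ev'(\calM^{J,\simp}_{M,B_i,k_i})$ for decompositions $A=\sum B_i$ into classes represented by simple curves (here one uses that every stable map component, after reparametrizing a multiple cover, is simple, absorbing multiplicities into the classes). The dimension of each such stratum is $\sum_i(2n-6+2c_1(B_i)+2k_i)$ plus matching conditions that impose at least $2n$ constraints per node; a counting argument — this is exactly where \textbf{semipositivity is used} — shows each such configuration space has codimension at least $2$ in $M^{\times k}$ compared to the main stratum, \emph{provided} no component is a multiple cover of a class $B$ with $c_1(B)<0$. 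Multiple covers of such ``bad'' classes are precisely what semipositivity rules out: if $c_1(B)=0$ the extra hypothesis that $A\ne\kappa B$ with $\kappa>1$ is needed (and handles the one remaining borderline case), while $c_1(B)<0$ with $[\omega]\cdot B>0$ cannot occur on a semipositive manifold with $c_1(B)\ge 3-n$, and the cases $c_1(B)\le 2-n$ are excluded by a separate index estimate since the ghost/bubble components already present force the complementary piece to have too-negative index. I would organize this as a lemma bounding $\dim\Omega_{\ev}$, citing the corresponding statement in \cite[Ch.~6--7]{JHOL}.

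For step (b), independence of $J$: given two generic tame almost complex structures $J_0,J_1$, choose a generic path $\{J_t\}_{t\in[0,1]}$ and form the parametrized moduli space $\calM^{\{J_t\},\simp}_{M,A,k}=\bigcup_t \{t\}\times\calM^{J_t,\simp}_{M,A,k}$, which for generic path is a smooth oriented manifold with boundary of dimension $2n-5+2c_1(A)+2k$ whose boundary is $\calM^{J_1,\simp}_{M,A,k}\sqcup(-\calM^{J_0,\simp}_{M,A,k})$; the same compactness-plus-dimension-counting argument (now one dimension higher, and allowing at most one parameter-dependent bubbling event) shows the evaluation map on this parametrized space is a pseudocycle bordism between $\ev_{J_0}$ and $\ev_{J_1}$. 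The main obstacle throughout is the bookkeeping in step (a): carefully enumerating the strata of broken configurations and verifying the codimension-$\ge 2$ bound in every case, especially handling ghost components and iterated bubbling, and pinning down exactly where the semipositivity hypothesis and the $A\ne\kappa B$ assumption enter. Since this is entirely parallel to \cite{JHOL}, I would keep the exposition brief and refer there for the detailed stratification estimates, emphasizing only the points where the local-tangency context (later in the paper) will require modification.
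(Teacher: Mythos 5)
Your proposal is correct and follows essentially the same route as the paper: Sard--Smale for the main stratum, Gromov compactness to identify the limit set, passage from multiply-covered/nodal configurations to the underlying simple configurations, a dimension count using semipositivity (with the $A\neq\kappa B$ hypothesis handling the $c_1=0$ borderline case), and a generic one-parameter family for the bordism. The paper's sketch separates the multiple-cover-with-smooth-domain case from the nodal case more explicitly and spells out the arithmetic $0<\tfrac{1}{\kappa}c_1(A)<c_1(A)$ showing $\ind(\ovl{u})\le\ind(u)-2$, but the substance is the same.
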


Recall (see \cite[Ch.6]{JHOL}) that a $d$-dimensional {\bf pseudocycle} in a smooth manifold $Q$ is by definition
a smooth map $f: V \rightarrow Q$, with $V$ an oriented $d$-dimensional smooth manifold, such that
$\ovl{f(V)} \subset Q$ is compact and $\dim \Omega_f \leq d-2$.
Here $\Omega_f$ is the limit set, defined by 
$$\Omega_f := \bigcap\limits_{\substack{K \subset V\\ K\text{ compact}}}\ovl{f(V \setminus K)},$$
and the inequality $\dim \Omega_f \leq d-2$ means that $\Omega_f$ is contained in the image of a smooth map $g: W \rightarrow Q$ for $W$ a smooth manifold of dimension $\dim W \leq d-2$.
A bordism between $d$-dimensional pseudocycles $f_0: V_0\rightarrow Q$ and $f_1: V_1 \rightarrow Q$ is a smooth map $F: W \rightarrow X$
with $W$ a smooth manifold of dimension $d+1$ such that $\bdy W = V_1 \cup (-V_0)$, $F|_{V_0} = f_0$, $F|_{V_1} = f_1$, and
$\dim \Omega_F \leq d-1$.

\begin{proof}[Proof sketch of Proposition \ref{prop:pseu_basic}]
The basic idea is to show that the evaluation map $\ev: \calM_{M,A,k}^{J,\simp} \rightarrow M^{\times k}$ extends to a compactification
$\ovl{\calM}_{M,A,k}^J$  in such a way that the image of the 
added points has codimension at least two, i.e.
\begin{align*}
\dim \ev\left( \ovl{\calM}^J_{M,A,k} \setminus \calM_{M,A,k}^{J,\simp}\right) \leq 2n - 8 + 2c_1(A) + 2k.
\end{align*}
We take $\ovl{\calM}^J_{M,A,k}$ to be the usual stable map compactification of $\calM^J_{M,A,k}$, which is defined by allowing multiple covers and by adding various nodal configurations indexed by decorated graphs.
First consider curves $u \in \calM^J_{M,A,k} \setminus \calM^{J,\simp}_{M,A,k}$,
i.e. multiple covers with smooth domain in homology class $A$. 
Since $u$ is multiply covered, we cannot necessarily assume that it is regular for generic $J$, so it might appear in a family with higher than expected dimension. However, note that the image of $u$ under the evaluation map is the same as that of its underlying simple curve $\ovl{u}$.
Since we can assume that the curve $\ovl{u}$ 
is regular and hence appears with its expected dimension, it suffices to show that we have $\ind(\ovl{u}) \leq \ind(u) - 2$.
Say that $u$ is a $\kappa$-fold cover of $\ovl{u}$ for some $\kappa > 1$, so that $\ovl{u}$ lies in the homology class $A/\kappa$.
We can also assume that the curve $\ovl{u}$ after forgetting the $k$ marked points is regular, and hence its index is nonnegative, i.e. we have
$2c_1(A)/\kappa + 2n - 6 \geq 0$.
Therefore by semipositivity we must have $0 \leq \frac{1}{\kappa}c_1(A) \leq c_1(A)$.
By the assumption on the homology class $A$ we must in fact have strict inequalities $0 < \frac{1}{\kappa}c_1(A) < c_1(A)$.
This implies that we have $\ind(\ovl{u}) \leq \ind(u) - 2$, as desired.

Now we consider nodal configurations $u \in \ovl{\calM}_{M,A,k}^J \setminus \calM_{M,A,k}^J$.
Due to the presence of a node, the expected codimension of $u$ is at least two, but since $u$ may involve one or more multiply covered components, it could appear in a family with higher than expected dimension. Fortunately, every nodal configuration has an underlying simple configuration with the same image under $\ev$.
Roughly, this is obtained by replacing each multiply covered component by its underlying simple component, and
discarding repetitions of components when two or more have the same image.
 With some care, we can arrange that the result is still a connected stable curve modeled on a tree; see \cite[Prop.6.1.2, Thm.6.6.1]{JHOL} for more details.
Each such simple configuration is regular for generic $J$ and hence appears with its expected dimension. Arguing as above, semipositivity then guarantees that the dimension of the image of the corresponding boundary stratum of $\ovl{\calM}_{M,A,k}^J$ is at least two less than that of the main stratum $\calM_{M,A,k}^{J,\simp}$.

As for the bordism statement, the argument is similar. 
Namely, given two generic $\omega$-tame almost complex structures $J_0$ and $J_1$, we can pick a family $\{J_t\;:\; t \in [0,1]\}$ of $\omega$-tame almost complex structures interpolating between them. 
We can then consider the $t$-parametrized moduli space
$\calM_{M,A,k}^{\{J_t\},\simp}$ consisting of all pairs $(t,u)$ with $t \in [0,1]$ 
and $u \in \calM_{M,A,k}^{J_t,\simp}$. For a generic such family $\{J_t\}$, this parametrized moduli space is a smooth manifold of dimension $2n - 5 + 2c_1(A)$ and comes with a natural evaluation map to $M^{\times k}$.
Arguing as above, this defines a pseudocycle bordism between the pseudocycles defined for the $t=0$ and $t=1$ data.
\end{proof}

It was shown by Schwarz~\cite{Sch} that pseudocycles up to bordism are equivalent to integral homology classes.
Thus we
 get 
a well-defined integral homology class, which we denote by 
$$
\bigl[\ovl{\calM}_{M,A,k}\bigr]\;\;\in\;\; H_{2n - 6 + 2c_1(A) + 2k}(M^{\times k};\Z).
$$
Given cohomology classes $\gamma_1,...,\gamma_k \in H^*(M;\Z)$ of total summed degree $2n-6+2c_1(A) + 2k$, we define Gromov--Witten invariants as per the usual prescription by
$$
\gw_{M,A}\lll \gamma_1,...,\gamma_k\rrr := (\pi_1^*\gamma_1\cup ... \cup \pi_k^*\gamma_k) \cdot \bigl[\ovl{\calM}_{M,A,k}\bigr] \in \Z,
$$
where $\pi_i: M^{\times k} \rightarrow M$ denotes the projection to the $i$th factor.
Strictly speaking, we must have $k \geq 1$ in order to have an evaluation map and hence a pseudocycle, but in the case $c_1(A) = 3-n$ we can define the invariant $\gw_{M,A} \in \Z$ not involving any marked points as follows:
\begin{align}\label{eq:k=00}
 \gw_{M,A} : =  \frac 1{\ga(A)} \gw_{M,A,1} \lll \ga\rrr\;\in\; \Z,
\end{align}
where $\ga \in H^2(M;\Z)$ is any cohomology class such that $\ga(A) \ne 0$.
This behaves as expected and is independent of the choice of $\gamma$ in light of the divisor axiom of Gromov--Witten theory (see \cite[Prop. 7.5.7]{JHOL}), which reads 
$$
\gw_{M,A,k+1}(\gamma_1,\dots,\gamma_k,\gamma) = {\rm GW}_{M,A,k}(\gamma_1,\dots,\gamma_{k})\cdot \int_A \gamma
$$
for any $k \geq 1$ and cohomology classes $\gamma_1,\dots,\gamma_k,\gamma \in H^2(M;\Z)$.
Note that if $\gamma$ is Poincar\'e dual to a $J$-holomorphic submanifold $D_\ga$, then for generic $J$ every $A$-curve (not entirely contained in $V$) meets $D_\ga$ transversely in $\int_A \gamma$ distinct points. 
Moreover, by \cite[Prop.8.11]{CM1}, given $A$ one can choose $J, D_\ga$ so that no holomorphic curve of energy $\le \om(A)$ is entirely contained in $D_\ga$.

\begin{remark}\label{rmk:multcover0}\rm
As explained in \cite[Ch.6.7]{JHOL}, one can also remove the assumption on the homology class $A$ in Proposition \ref{prop:pseu_basic} while still working in the realm of classical transversality techniques. 
This involves using domain-dependent almost complex structures and generally leads to rational (rather than integral) Gromov--Witten invariants.
Also, note that in dimension four the Gromov--Witten invariant $\gw_{M^4,A}$ can only be nonzero if $c_1(A) \geq 1$, i.e. the situation $c_1(A) = 0$ does not arise for index reasons.
\hfill$\er$
\end{remark}

\subsection{Curves with a single tangency constraint}\label{ss:Tanhigh}

Let $(M,\omega)$ be a semipositive symplectic manifold.
In this subsection we define the numbers $N_{M,A}\lll \T^{m-1}p\rrr$
counting pseudoholomorphic spheres in $M$ in homology class $A$ satisfying an order $m-1$ local tangency constraint at a point $p \in M$, where $m = c_1(A)-1$.
As in the case of Gromov--Witten invariants, these numbers are independent of all choices involved in the construction, including the point $p \in M$ (but we find it helpful to nevertheless include it in the notation).
Although these invariants could in principle be defined in much greater generality, in the semipositive case they are defined rather concretely as counts of somewhere injective curves and take values in the integers.

Following Cieliebak--Mohnke \cite{CM1}, let $J$ be a $\omega$-tame almost complex structure on $M$ which is integrable in a small neighborhood $\Op(p)$\footnote{In general we will adopt Gromov's convention that $\Op(p)$ denotes a small unspecified open neighborhood of $p$.} of a point $p \in M$,
and let $D$ be a smooth complex codimension one holomorphic submanifold in $\Op(p)$ which passes through $p$.
Given a Riemann surface $\Sigma$ with a marked point $z$ and a $J$-holomorphic map $u: \Sigma \rightarrow M$ with $u(z) = p$, 
we say that $u$ has {\bf tangency order} $m-1$\footnote{We note that by tangency order $m-1$ we will always mean tangency order $m-1$ {\em or greater}, unless explicitly stated otherwise.}
 (or equivalently {\bf contact order} $m$) to $(D,p)$ at the marked point $z$ if we have
\begin{align}\label{eq:gu}
 \frac{d^j (g\circ u\circ f)}{d\zeta^j}\bigr|_{\zeta=0} = 0\;\;\;\;\;\mbox{ for } j = 1,...,m-1,
\end{align}
where $f: \C \supset \Op(0) \rightarrow \Op(z) \subset \Sigma$ is a choice of local complex coordinates for $\Sigma$ with $f(0) = z$,
and $g: M \supset \Op(p) \rightarrow \C$ is a holomorphic function such that $D = g^{-1}(0)$ and $dg(p)\ne 0$.
As shown in \cite{CM1}, this notion is independent of the choice of $f$ and $g$,
and it only depends on the germ of $D$ near $p$.
Assuming $m$ is maximal such that $u$ is tangent to $(D,p)$ to order $m-1$ at $z$, we will denote by 
\begin{align}\label{eq:orddef}
\ord(u,D;z) = m
\end{align}
the {\bf local contact order} of $u$ to $(D,p)$ at $z$. It is shown in \cite[Prop~7.1]{CM1} that $\ord(u,D;z)$ can indeed be interpreted as the local intersection number of $\im u$ with $D$ in the following sense: if $U$ is a small neighborhood of $z$ such that $u^{-1}(p) \cap U = \{z\}$, then any small generic $J$-holomorphic perturbation of $u|_{U}$ has precisely $\ord(u,D;z)$ transverse intersection points with $D$. 

Let $\calM^{J,\simp}_{M,A}\lll \T^{m-1}p\rrr$ denote the moduli space of simple rational curves in $M$ in homology class $A$ which are tangent to order $m-1$ to $(D,p)$ at a marked point $z$. 
Let $\Jj_D$ denote the set of $\omega$-tame almost complex structures on $M$ which are integrable on $\Op(p)$ and for which $D$ is a holomorphic submanifold. 
 We have: 

\begin{lemma} \label{lem:cusps} {\rm (\cite[Prop. 6.9]{CM1})}
For generic $J \in \Jj_D$, the 
 space $\calM^{J,\simp}_{M,A}\lll \T^{m-1}p\rrr$ is a smooth, oriented  (but not necessarily compact) manifold of dimension 
\begin{align}\label{eq:indA}
2(n-3) +2c_1(A) + (2-2n) - 2(m-1) = 2c_1(A) - 2 -2m.
\end{align}
Moreover, if  $J_0, J_1\in \Jj_D$ are two such generic elements they may be joined by a generic $1$-parameter family $J_t$ such that the manifold 
$\bigcup\limits_{t\in [0,1]}\calM^{J_t,\simp}_{M,A}\lll \T^{m-1}p\rrr$ provides an oriented  (but not necessarily compact) cobordism between the manifolds at $t=0,1$.
\end{lemma}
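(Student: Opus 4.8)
The plan is to exhibit $\calM^{J,\simp}_{M,A}\lll \T^{m-1}p\rrr$ as the fiber over $J$ of a universal moduli space and then apply the Sard--Smale theorem. Fix $p$, a holomorphic coordinate chart near $p$, and the local divisor $D = g^{-1}(0)$ as in \eqref{eq:gu}. Working with $W^{\ell,q}$-maps $u\colon (S^2,z_0)\to M$ in class $A$ from a sphere with one marked point $z_0$ (and quotienting the residual reparametrizations fixing $z_0$ at the end, as usual), we consider over $\calB\times\Jj_D$ the Cauchy--Riemann section $(u,J)\mapsto \ov{\partial}_J u$ together with the evaluation $u\mapsto u(z_0)$ and the jet map
\[
\Phi(u) := \Bigl( \tfrac{d(g\circ u\circ f)}{d\zeta}(0),\ \dots,\ \tfrac{d^{\,m-1}(g\circ u\circ f)}{d\zeta^{m-1}}(0)\Bigr),
\]
which is defined on the locus $u(z_0)\in\Op(p)$ (where $J$ is integrable, so $u$ is honestly holomorphic there) and, by \cite{CM1}, has vanishing locus independent of the auxiliary choices of $f,g$, depending only on the germ of $D$ at $p$. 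Let $\mathcal S$ be the subset of pairs $(u,J)$ with $\ov{\partial}_J u = 0$, $u$ simple, $u(z_0)=p$, and $\Phi(u)=0$. The main analytic task is to show $\mathcal S$ is a Banach manifold, i.e. that at every point the combined linearization --- the linearized Cauchy--Riemann operator $D_u$ on a deformation $\xi$, the infinitesimal change of $\ov{\partial}_J u$ under a variation $Y$ of $J$, the evaluation $\xi\mapsto\xi(z_0)$, and the linearization of $\Phi$ --- is surjective onto $W^{\ell-1,q}(\Omega^{0,1}\otimes u^*TM)\oplus T_pM\oplus\C^{m-1}$. Granting this, $\mathcal S\to\Jj_D$ is Fredholm, a generic fiber is a smooth manifold, and its dimension is $\ind(D_u)$ plus $2$ for the marked point, minus $2n$ for $u(z_0)=p$, minus $2(m-1)$ for the jet constraints, i.e. $2(n-3)+2c_1(A)+(2-2n)-2(m-1)=2c_1(A)-2-2m$ exactly as in \eqref{eq:indA}.

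For the surjectivity I would argue by a cokernel computation, following \cite[\S6--7]{CM1}. A nonzero cokernel element is a triple $(\eta, v_0, w)$, with $\eta$ an $L^{q^*}$-dual $(0,1)$-form, $v_0\in T_p^*M$ and $w\in(\C^{m-1})^*$, that annihilates the image. Because $J$ is prescribed on $\Op(p)$, the admissible variations $Y$ are supported away from $\Op(p)$; since $u$ is somewhere injective and its image cannot lie inside $\Op(p)$ (a closed $J$-holomorphic curve in the integrable region would yield a nonconstant bounded holomorphic map $S^2\to\C$, which is impossible), $u$ has an injective point $z_1$ with $u(z_1)\notin\overline{\Op(p)}$. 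Pairing $\eta$ against $Y$-variations supported near $u(z_1)$ forces $\eta\equiv 0$ near $z_1$; away from $z_0$ the form $\eta$ solves the formal adjoint equation $D_u^*\eta=0$, so Aronszajn's unique continuation gives $\eta\equiv 0$, hence $\eta=0$ as an $L^{q^*}$-section. It then remains to observe that the purely pointwise map $\xi\mapsto(\xi(z_0),\,\text{jet of }\langle dg,\xi\rangle\text{ at }z_0)$ is already surjective onto $T_pM\oplus\C^{m-1}$ on the space of all $W^{\ell,q}$-sections --- prescribing the value and first $m-1$ derivatives of $\xi$ at $z_0$ suffices, using $dg(p)\ne 0$ and the $\C$-linearity of $dg$ --- which forces $v_0=0$ and $w=0$. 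This is precisely where the hypothesis that $D$ and $p$ are \emph{fixed} enters harmlessly: the part of a global divisor away from $p$ plays no role in \cite[Prop.~6.9]{CM1}, and the same computation applies once $p$ is pinned down on $D$, as noted in \cite[Prop.~3.1]{CM2}.

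Finally, the cobordism statement follows from the standard one-parameter version of the same argument, exactly as in \cite[Thm.~3.1.7]{JHOL} (cf. \cite[Prop.~10.2]{CM1}): given generic $J_0,J_1\in\Jj_D$, pick a generic path $\{J_t\}_{t\in[0,1]}$ in $\Jj_D$ and repeat the universal-moduli-space argument with the additional parameter $t$; transversality of the parametrized problem makes $\bigcup_{t\in[0,1]}\calM^{J_t,\simp}_{M,A}\lll \T^{m-1}p\rrr$ a smooth manifold of one more dimension whose boundary is the disjoint union of the $t=0$ and $t=1$ moduli spaces. The orientations are obtained in the usual way: the constrained linearized operator differs by a compact operator from a complex-linear Fredholm operator --- here one uses that $\Phi$ and the evaluation are complex-linear, since $g$ is holomorphic and $J$ integrable near $p$, and that $T_pM$ and $\C^{m-1}$ carry their complex orientations --- so its determinant line is canonically trivialized, inducing coherent orientations on the moduli spaces and on the cobordism. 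I expect the jet-transversality step to be the only genuine obstacle: since $J$ is frozen near $p$ one cannot perturb it there, and the fact that perturbations supported outside $\Op(p)$ still suffice rests on the careful analysis in \cite{CM1} of the tangency condition --- its coordinate-independence, its interpretation as a local intersection multiplicity \cite[Prop.~7.1]{CM1}, and the structure of the strata of contact order $\geq k$ --- which I would invoke rather than reprove.
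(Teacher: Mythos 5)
The paper itself does not prove this lemma: it is cited to \cite[Prop.~6.9]{CM1}, with a footnote observing that the passage from a global to a local divisor with fixed intersection point is minor (cf.\ \cite[Prop.~3.1]{CM2}) and that the one-parameter statement follows as in \cite[Thm.~3.1.7]{JHOL}. Your sketch is a faithful reconstruction of the Cieliebak--Mohnke transversality argument (universal moduli space, cokernel killed by varying $J$ at an injective point outside $\Op(p)$ plus unique continuation, then pointwise jet-freedom, then Sard--Smale and its parametric version), so it is substantively correct and consistent with the approach the paper delegates. One small imprecision worth fixing: to rule out $u(S^2)\subset\Op(p)$, the correct statement is that $\Op(p)$ is biholomorphic to an open subset of $\C^n$, so each coordinate of $u$ would be a bounded holomorphic function on $S^2$ and hence constant by the maximum principle --- not literally a ``nonconstant bounded holomorphic map $S^2\to\C$''; and strictly speaking tangent vectors $Y$ to $\Jj_D$ need only preserve integrability and $D$-holomorphicity near $p$, but restricting to those supported outside $\Op(p)$ is sufficient for the cokernel argument and is what you in fact use.
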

\begin{proof}
This lemma slightly extends \cite[Prop. 6.9]{CM1} since our divisor $D$ is local rather than global and 
we have fixed the intersection point $p$ on $D$.   However, once $p$ is fixed the rest of the divisor is irrelevant, and the extension to the case when $p$ is fixed is mentioned in the followup paper~\cite[Prop~3.1]{CM2}.
Moreover, as mentioned in \cite[Prop. 10.2]{CM1} the extension from the case of generic fixed $J$ to generic $1$-parameter families of $J$ follows exactly as in the analogous result in \cite[Thm.3.1.7]{JHOL}.\end{proof}

Note that the summand $(2-2n)$ corresponds to the point constraint at $p$ and the summand $-2(m-1)$ corresponds to the tangency condition to $D$,
so that the resulting  formula is independent of the dimension $n$.
We can also consider the moduli space $\calM^{J,\simp}_{M,A,k}\lll \T^{m-1} p\rrr$ defined in the same way except that the domains of curves are equipped with $k$ additional (ordered) marked points.

\begin{prop}\label{prop:tanpseud}
Let $(M,\omega)$ be a semipositive symplectic manifold of dimension $2n$.
Then, for generic $J \in \Jj_D$, the evaluation map $\ev: \calM^{J,\simp}_{M,A,k}\lll \T^{m-1} p \rrr \rightarrow M^{\times k}$
defines a pseudocycle of dimension $ 2c_1(A) - 2 - 2m + 2k$
which is independent of $J$,$p$, and $D$ up to pseudocycle bordism.
\end{prop}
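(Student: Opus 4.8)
The plan is to upgrade Lemma~\ref{lem:cusps} from a statement about smooth manifolds and cobordisms to a statement about pseudocycles and pseudocycle bordisms, by controlling the compactification of $\calM^{J,\simp}_{M,A,k}\lll \T^{m-1}p\rrr$ exactly as in the proof of Proposition~\ref{prop:pseu_basic}. First I would recall from Lemma~\ref{lem:cusps} that for generic $J \in \Jj_D$ the open moduli space $\calM^{J,\simp}_{M,A,k}\lll \T^{m-1}p\rrr$ is a smooth oriented manifold; after adding the $k$ marked points its dimension is $2c_1(A) - 2 - 2m + 2k$, and it carries the evaluation map $\ev$ to $M^{\times k}$. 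By the Gromov/SFT-type compactness theorem it admits a stable-map compactification $\ovl{\calM}^{J}_{M,A,k}\lll \T^{m-1}p\rrr$ whose image under $\ev$ has compact closure, so what must be shown is the codimension-two bound
\begin{align*}
\dim \ev\left( \ovl{\calM}^J_{M,A,k}\lll \T^{m-1}p\rrr \setminus \calM^{J,\simp}_{M,A,k}\lll \T^{m-1}p\rrr\right) \leq 2c_1(A) - 4 - 2m + 2k,
\end{align*}
together with the analogous codimension-one bound for a generic path $\{J_t\}$, which then yields the bordism statement.

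Next I would analyze the boundary strata. A limiting stable map consists of a tree of sphere components carrying the point constraint at $p$ and the order $m-1$ tangency to $D$ distributed among them, plus the $k$ extra marked points. As in Proposition~\ref{prop:pseu_basic}, replace each multiply covered component by its underlying simple curve and discard repeated images to obtain an underlying simple (connected, tree-modeled) configuration with the same $\ev$-image; for generic $J$ each such simple configuration is regular. The key index bookkeeping is: each node contributes $-2$ to the expected dimension; the point constraint at $p$ contributes $2-2n$ and must be carried by exactly one component (or be a node); and the tangency constraint $\T^{m-1}$, when split as a constraint of order $m_\nu - 1$ on the $\nu$th component meeting $(D,p)$ with $\sum (m_\nu) \geq m$ (local intersection numbers add, by \cite[Prop.~7.1]{CM1}), contributes $-2(m_\nu-1)$ on that component. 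Here the crucial tool beyond the closed case is that Cieliebak--Mohnke transversality (the cited extensions of \cite[Prop.~6.9]{CM1}) applies to each simple component with its inherited tangency constraint, so every component of the underlying simple configuration appears with its expected dimension; summing these, the node count forces a drop of at least $2$ (resp.\ at least $1$ in the parametrized case), exactly as before. The only genuinely new subtlety relative to Proposition~\ref{prop:pseu_basic} is that components lying \emph{inside} $D$ near $p$ must be excluded, which is precisely what \cite[Prop.~8.11]{CM1} (and the choice of $J$, $D$ therein) guarantees: for the relevant energy bound no holomorphic curve is contained in $D$, so the intersection-number interpretation of $\ord(u,D;z)$ and the additivity of contact orders over components are valid.

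The main obstacle I anticipate is the combinatorial/index argument at multiply-branched boundary strata: one must check that distributing the single constraint $\T^{m-1}p$ across several components — especially when two or more components pass through $p$, or when a node sits at $p$ — never produces a stratum of codimension less than two. This requires using semipositivity (to handle multiply covered components exactly as in Proposition~\ref{prop:pseu_basic}, via $0 \le \tfrac{1}{\kappa}c_1 \le c_1$) \emph{simultaneously} with the tangency index formula $\ind = 2c_1 - 2 - 2m$ on each constrained component, and verifying that the total tangency order available, $m = c_1(A)-1$, together with the node count, leaves a deficit; the bound \eqref{eq:indA}, being independent of $n$, is what makes this work uniformly. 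Once the codimension bounds are established, the pseudocycle property and $J$-independence follow verbatim from the definitions recalled after Proposition~\ref{prop:pseu_basic}, and independence of $p$ and $D$ follows from the last clause of Lemma~\ref{lem:cusps} (joining two choices of $(J,p,D)$ by a generic path, using that the germ of $D$ at $p$ is all that matters).
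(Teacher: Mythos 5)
Your overall architecture matches the paper's: compactify by closure in $\ovl{\calM}^J_{M,A,k+1}$, pass to underlying simple configurations, and verify codimension $\geq 2$ by index bookkeeping, with the parametrized version for bordism. However, two things go wrong.

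First, you misidentify "the only genuinely new subtlety." You claim it is that curves might lie inside $D$ and invoke \cite[Prop.~8.11]{CM1} to exclude them. But $D$ is a \emph{local} divisor defined only on $\Op(p)$, so no nonconstant holomorphic sphere can be contained in it for trivial reasons (this is remarked explicitly in \S\ref{subsec:new_subsec}). That reference to \cite[Prop.~8.11]{CM1} appears in the paper only in the unrelated $k=0$ divisor-axiom discussion, and the additivity of contact orders (which is \cite[Lemma~7.2]{CM1}, not Prop.~7.1) does not depend on any such exclusion. So this part of your argument addresses a nonexistent problem.

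Second, and more seriously, you correctly flag the actual hard case --- the constrained marked point landing on a ghost component, with the tangency constraint redistributing to adjacent nonconstant components --- as "the main obstacle I anticipate," but you do not resolve it. The paper's resolution is nontrivial: after pruning ghost components and marking the adjacent special points $\wt z_i$, the inequality $\sum_i \ord(u,D;\wt z_i) \geq m$ from \cite[Lemma~7.2]{CM1} guarantees the constraint codimension is preserved, but one must still show the stratum drops by two. The argument is that stability forces each ghost to carry at least three special points, so either some unconstrained $z_j$ also maps to $p$ (codimension $2n-2 \geq 2$) or the pruned curve is disconnected with $a>1$ nonconstant branches (each node costing $2$); one then separately handles the possibility that an adjacent $u_i$ is itself multiply covered. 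Your paragraph gestures at "verifying that the total tangency order $\dots$ together with the node count leaves a deficit," but the disconnected case and the $a>1$ stability dichotomy are precisely where the count is delicate, and they are not supplied. Finally, a minor point: the multiple-cover inequality $\ind(\ovl u) \leq \ind(u) - 2$ here does \emph{not} run "exactly as in Proposition~\ref{prop:pseu_basic}"; it uses $\ovl m \geq m/\kappa$ and the chain $0 \leq 2(c_1(A)-m)/\kappa - 2$ forcing $c_1(A)-m \geq \kappa \geq 2$, which is why (unlike Proposition~\ref{prop:pseu_basic}) no hypothesis excluding multiples of $c_1 = 0$ classes is needed --- the tangency constraint already forces $c_1(A)>0$.
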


\begin{proof}
We proceed along similar lines to the proof of Proposition~\ref{prop:pseu_basic}.
The basic strategy is to show that the evaluation map 
$\ev:\calM^{J,\simp}_{M,A,k}\lll \T^{m-1} p\rrr \rightarrow M^{\times k}$ extends to a compactification
$\ovl{\calM}^{J}_{M,A,k}\lll \T^{m-1} p \rrr$ in such a way that the image of the added points has  codimension at least two, i.e.
$$
\dim \ev\left(\ovl{\calM}^J_{M,A,k}\lll \T^{m-1} p \rrr \setminus \calM^{J,\simp}_{M,A,k}\lll \T^{m-1}  p\rrr \right) \;\; \leq \;\; 2c_1(A) - 4 - 2m + 2k.
$$
Note that there is a forgetful inclusion of $\calM_{M,A,k}^{J,\simp}\lll \T^{m-1}p\rrr$ into $\ovl{\calM}_{M,A,k+1}^J$, where the last marked point comes from the one satisfying the $\lll\T^{m-1}p\rrr$ constraint. 
We define the compactification $\ovl{\calM}_{M,A,k}^J$ to be simply the closure of $\calM_{M,A,k}^{J,\simp}\lll \T^{m-1}p\rrr$ in this ambient compact space.
This compactification contains  curves of four types
\begin{itemize}\item[-]  multiply covered curves with smooth domain;
\item[-] nodal curves in which the main component (i.e. that  containing the last marked point) is not constant;
\item[-]  nodal curves in which the main component is constant but no adjacent component (see below) is multiply covered or repeated;
\item[-] nodal curves in which the main component is constant and some adjacent component (see below) is multiply covered or repeated.
\end{itemize}
We now show that in each of these cases the dimension of the image of the relevant stratum under the evaluation map is at least two less than that of the main stratum.  It then follows as in \cite[Thm.6.6.2]{JHOL} that the evaluation map on the main stratum defines a pseudocycle.

Firstly, consider curves $$
u \in \calM^J_{M,A,k}\lll\T^{m-1}p\rrr \setminus \calM^{J,\simp}_{M,A,k}\lll \T^{m-1}p\rrr,
$$
 i.e. multiply covered curves with smooth domain.
If $u$ is a $\kappa$-fold cover of its underlying simple curve $\ovl{u}$, note that $\ovl{u}$ lies in the homology class $A/\kappa$ and satisfies the constraint $\lll\T^{\ovl{m}-1}p\rrr$ for some $\ovl{m} \geq m/\kappa$.
Now equip $\ov u$ with all possible choices of marked points
$(z_1,\dots,z_k)\in (S^2)^k$. 
By regularity $\ovl{u}$ appears with its expected dimension and it suffices to show that $\ind(\ovl{u}) \leq \ind(u) - 2$.
Since $\ovl{u}$ is simple and $J$ is generic, we can further assume that the same curve $\ovl{u}$ after forgetting the $k$ marked points is regular and hence has nonnegative index, so we have 
\begin{align}\label{eq:ovum0} \notag
0 &\leq   2c_1(A)/\kappa - 2 -  2\ov{m} \\ \notag
& \le 2(c_1(A) -m)/\kappa - 2 \\&\le 2(c_1(A) -m) - 4,
\end{align}
where the last inequality holds because $0\le (c_1(A) -m)/\kappa - 1$ implies that   $c_1(A) -m\ge \ka \ge 2$.
We then have 
\begin{align}\label{eq:ovum} \notag
\ind(\ovl{u}) &= 2c_1(A)/\kappa - 2 - 2\ovl{m} + 2k\\  \notag
&\leq 2c_1(A) - 4 - 2m + 2k\\ &= \ind(u) - 2,
\end{align}
as desired.

Next, we must consider curves $u \in \ovl{\calM}^{J}_{M,A,k}\lll \T^{m-1}p\rrr \setminus \calM^{J}_{M,A,k}$,
i.e. nodal degenerations of curves with smooth domain satisfying the tangency constraint $\lll\T^{m-1}p\rrr$.
We will refer to the component of $u$ containing the last marked point $z$ (i.e. the one satisfying the tangency constraint) as the ``main'' one.
Firstly, consider the case that the main component of $u$ is not constant. 
Although $u$ might have one or more multiply covered components, as in the proof of 
Proposition~\ref{prop:pseu_basic} we can pass to the underlying simple configuration $\ovl{u}$. We can assume that $\ovl{u}$ is regular and hence appears in a family of the expected dimension. 
By semipositivity, each component of $u$ lies in a class $A_\al$ with $c_1(A_\al)\ge 0$, 
so that the total Chern class of 
$\ovl{u}$ is at most that of $u$ and hence at most $c_1(A)$. 
 But because $\ovl{u}$ is nodal, the dimension of its image is at least $2$ less than  that of a nonnodal curve; see the proof of \cite[Thm.6.2.6]{JHOL}.
 Thus the expected codimension of $\ovl{u}$ is at least two.

Secondly, we must consider the possibility that the main component of $u$ is a ghost (i.e. constant).  
This case is slightly more subtle because at first glance the tangency constraint appears to be lost, which makes it difficult to argue that $u$ only appears 
in a stratum of the moduli space whose image has codimension at least two.
Indeed, note that, according to the definition in ~\eqref{eq:gu},  a constant map at $p$ is automatically tangent to $(D,p)$ to all orders. 
On the other hand, it turns out that the tangency constraint $\lll \T^{m-1}p\rrr$ gets redistributed amongst the components of $u$ which are ``close to'' the main one. 
More precisely, let $u_1,...,u_a$ (for $a\ge 1$)  denote the nonconstant components of $u$ which are adjacent to the main component $u_0$, or more generally are adjacent to some ghost component of $u$ which is connected to $u_0$ through ghost components.
For $i = 1,...,a$, let $\wt{z}_i \in {\rm dom\,} u_i$ denote the relevant special point of $u_i$ which participates in the node realizing this adjacency. 
Then at the marked point $\wt{z}_i$
we have $u_i(\wt{z}_i) = p$ and $u_i$  
has contact with $D$
to some order $\ord(u,D;\wt{z}_i) \geq 1$,
where $\ord(u,D;\wt{z}_i)$ is defined in \eqref{eq:orddef}.
According to \cite[Lemma 7.2]{CM1}, in this situation we have 
\begin{align}\label{eq:ord}
\sum_{i=1}^a \ord(u,D;\wt{z}_i)  \geq m,
\end{align}
i.e. the total local contact order to $D$ is nondecreasing as a family of smooth curves in $\calM^{J}_{M,A,k}\lll \T^{m-1}  p\rrr$ degenerates to $u$
(see Lemma~\ref{lem:cusp_deg} below for a slightly more general statement).
Further, after we have discarded the ghost components that are connected to $u_0$, the curve has $a$ connected branches, each containing one of the points $\Tz_i$.

Now consider  the (nodal, possibly disconnected) curve $\ovl{u}$ obtained as follows:
\begin{enumerate}
\item remove the main component of $u$, along with all other ghost components which are connected to it through ghost components
\item 
add a new marked point at each of the nearby special points $\wt{z}_i$ on a nonconstant component $u_i$ as above
\item replace each multiply covered component by its underlying simple curve, 
discard extra 
components (other than the $u_i$) in the same branch when two or more have the same image, and finally parametrize each 
branch  of the resulting nodal curve by a new tree   (as in the proof of Proposition~\ref{prop:pseu_basic}).
 \end{enumerate}
Observe that $\ovl{u}$ has the same image under the evaluation map as $u$ (technically we may have removed some of the original $k$ unconstrained marked points but using the fact that they all map to $p$ we recover the missing components of the evaluation map).
If we assume that the maneuver (3) does not involve any components adjacent to the removed ghost components,
 then since $\ovl{u}$ satisfies the tangency constraint $\lll \T^{\ord(u,D;\wt{z}_i)-1}p\rrr$ at each of the new marked points $\wt{z}_i$, 
it follows from \eqref{eq:ord} 
 that steps (1) and (2) do not reduce the codimension of the constraint,
so the index of $\ovl{u}$ is at most that of $u$, 
which implies in turn that its image has dimension at most that of $u$. 
To see that we in fact have $\ind(\ovl{u}) \leq \ind(u) - 2$,
observe that  
because, by stability, each ghost component has at least three special points,
 one of the following must occur:
\begin{enumerate}[label=(\alph*)]
\item  one or more of the unconstrained marked points $z_1,...,z_k$ maps to $p$, or
\item  $\ovl{u}$ is disconnected (recall that the  main component has been removed) and has  $a > 1$ nonconstant components.
\end{enumerate}
Case (a) clearly involves an additional constraint of codimension at least two.
So does case (b) 
because as in \cite[Thm.6.2.6]{JHOL} each node reduces the index by $2$.
(For example, instead of a single curve that is tangent to $D$ at $p$ [of codimension $4$], we might have a nodal curve whose two components both go through $p$ [of codimension $6$].)
 Since such curves $\ovl{u}$ are simple, we can assume that they are regular and hence appear with the expected dimension. It follows that 
 the image under the evaluation map of
 the stratum containing $u$ has codimension at least two.
 
 It remains to check that the above claims still hold if one (or more) of the 
 nonconstant 
 components $u_i$ that carry some part of the  tangency constraint is multiply covered.   In this case, we replace $u_i$ satisfying $\lll\T^{m_i-1}p\rrr$ with $\ov u_i$ 
 satisfying $\lll\T^{\ov m_i-1}p\rrr$ where $u_i = \ka\ov u_i$ and $m_i \le \ka \ov m_i$.  The calculation in \eqref{eq:ovum} shows that 
 the contribution of the component $\ov u_i$ to the total index is at least two less  than that of $u_i$.  Hence again the image under the evaluation map of the stratum containing such curves   has codimension at least two.

As for the bordism statement, the argument is similar.
Namely, suppose we have local divisors $D_0,D_1$ at points $p_0,p_1 \in M$ respectively, and let $J_0,J_1$ be corresponding generic almost complex structures on $M$ such that $J_i$ is integrable near $p_i$ and makes $D_i$ holomorphic. 
Then we can pick a path $p_t$ from $p_0$ to $p_1$, a family $D_t$ of local divisors at $p_t$ interpolating between $D_0$ and $D_1$, and a family $J_t \in \Jj_{D_t}$ of $\omega$-tame almost complex structures interpolating between $J_0$ and $J_1$ such that $J_t$ is integrable near $p_t$ and makes $D_t$ holomorphic.
We can then consider the $t$-parametrized moduli space $\calM^{\{J_t\},\simp}_{M,A,k}\lll \T^{m-1}  p\rrr$ consisting of all pairs $(t,u)$ with $t \in [0,1]$ and $u \in \calM^{J_t,\simp}_{M,A,k}\lll \T^{m-1}p\rrr$.
As in the case of a single almost complex structure, if we take the family $J_t$ to be generic, we have that 
$\calM^{\{J_t\},\simp}_{M,A,k}\lll \T^{m-1}  p\rrr$ is a 
smooth manifold of dimension $$
2c_1(A) - 1 - 2m + 2k,
$$
and it comes equipped with a natural evaluation map to $M^{\times k}$.
Arguing exactly above, this defines a pseudocycle bordism between the pseudocycles defined for the $t = 0$ and $t=1$ data.
\end{proof}

\begin{remark}
\rm
We point out that, in contrast to Proposition~\ref{prop:pseu_basic}, in Proposition~\ref{prop:tanpseud} we do not need 
any assumption ruling out multiples of homology classes with vanishing first Chern class, since  the presence of the point constraint at $p$ forces $c_1(A)>0$.
\hfill$\er$ 
\end{remark}

It follows from Proposition~\ref{prop:tanpseud} that we have a well-defined homology class 
$$
\bigl[\ovl{\calM}_{M,A,k}\lll \T^{m-1}p\rrr\bigr]\;\;\in\;\; H_{2c_1(A) - 2 - 2m + 2k}(M^{\times k};\Z).
$$
Given cohomology classes $\gamma_1,...,\gamma_k \in H^*(M;\Z)$ with total summed degree $2c_1(A) - 2 - 2m + 2k$, we define numerical invariants by
$$
N_{M,A}\lll \T^{m-1}p,\gamma_1,...,\gamma_k\rrr := (\pi_1^*\gamma_1 \cup ... \cup \pi_k^*\gamma_k) \cdot \bigl[\ovl{\calM}_{M,A,k}\lll \T^{m-1}p\rrr\bigr]\;\; \in\;\; \Z.
$$
We are most interested in the special case that $k = 0$ and $c_1(A) = m-1$.
As in \S\ref{ss:GW}, we define the invariants in this case as follows:
\begin{align}\label{eq:k=0}
 N_{M,A} \lll \T^{m-1}p\rrr: =  \frac 1{\ga(A)} N_{M,A,1} \lll \T^{m-1}p,\ga\rrr\;\in\; \Z
\end{align}
where $\ga\in H^2(M,\Z)$ is such that $\ga(A)\ne 0$; see the dicussion after
\eqref{eq:k=00}.

 \sss
 
Given distinct points $p_1,...,p_r \in M$ and corresponding local divisors $D_1,...,D_r$, this construction also straightforwardly extends to define invariants 
\begin{align}\label{eq:Tr}
N_{M,A}\lll \T^{m_1-1}p_1,...,\T^{m_r-1}p_r\rrr \in \Z
\end{align}
and so on.
In a slightly different direction, we can generalize the constraint $\lll \T^{m-1}p\rrr$ as follows. 
As above, let $J$ be an $\omega$-tame almost complex structure on $M^{2n}$ which is integrable near $p \in M$.
Let $D_1,...,D_n$ be smooth holomorphic local divisors near $p$ which are in general position in the sense that their normal bundles span the tangent space $T_pM$.
  (This condition implies that we can choose local holomorphic coordinates $(z_1,\dots,z_n)$ near $p$ such that each $D_i$ is given by $z_i = const$.)
For $c_1,...,c_n \geq 1$, we denote by $$
\calM^{J,\simp}_{M,A}\lll \T_{D_1}^{c_1-1}...\T_{D_n}^{c_n-1}p\rrr
$$
 the moduli space of simple $J$-holomorphic spheres $u: S^2 \rightarrow M$ in the homology class $A$ with a marked point $z$ such that $u$ is tangent to order $c_i-1$ to $(D_i,p)$ at the marked point $z$ for $i = 1,...,n$.
Note that in the case $c_1 = m$, $D_1 = D$, and $c_2 = ... = c_n = 1$, this reduces to the constraint $\lll \T^{m-1}p\rrr$ from earlier. On the other hand, in the case $n = 2$ and $c_1 = c_2 = 2$, the constraint 
$\lll \T^1_{D_1}\T^1_{D_2}p\rrr$ 
is equivalent to having a cusp at $p$, i.e. $u(z) = p$ and $du$ vanishes at the marked point $z$. More generally, for any $n$ the constraint $\lll \T_{D_1}^{k-1}...\T_{D_n}^{k-1}p\rrr$ corresponds to having $u(z) = p$ and the first $k-1$ derivatives of $u$ 
 at $z$ identically equal to zero.

The moduli space $\calM^{J,\simp}_{M,A,k}\lll \T_{D_1}^{c_1-1}...\T_{D_n}^{c_n-1}p\rrr$ is defined in the same way, with an additional $k$ unconstrained marked points.
Denote by $\Jj_{D_1,...,D_n}$  the set of $\omega$-tame almost complex structures on $M$ which are integrable on $\Op(p)$ and for which the local divisors $D_1,...,D_n$ are holomorphic submanifolds.
In this setting, 
we have:

\begin{prop}\label{prop:tanpseud2}
For generic $J \in \Jj_{D_1,...,D_n}$, the evaluation map 
$$
\ev: \calM_{M,A,k}^{J,\simp}\lll \T_{D_1}^{m_1-1}...\T_{D_n}^{m_n-1}p\rrr \rightarrow M^{\times k}
$$ 
defines a pseudocycle of dimension $2c_1(A) - 4 - 2\sum_{i=1}^n (m_i-1) + 2k$ which is independent of $J,p,D_1,...,D_n$ up to pseudocycle bordism (provided that the divisors $D_1,...,D_n$ are in general position).
\end{prop}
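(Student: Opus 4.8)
The plan is to mimic the proof of Proposition~\ref{prop:tanpseud} almost verbatim, since the constraint $\lll \T_{D_1}^{c_1-1}\dots \T_{D_n}^{c_n-1}p\rrr$ has the same structural features as $\lll \T^{m-1}p\rrr$: it is a local condition at the single point $p$, it is preserved under the limiting operations appearing in stable map compactification (thanks to the Cieliebak--Mohnke analysis of contact order), and it forces $c_1(A) > 0$ so no assumption on multiples of classes with vanishing $c_1$ is needed. First I would invoke the (straightforward generalization of the) Cieliebak--Mohnke transversality result to know that for generic $J \in \Jj_{D_1,\dots,D_n}$ the moduli space $\calM_{M,A,k}^{J,\simp}\lll \T_{D_1}^{c_1-1}\dots \T_{D_n}^{c_n-1}p\rrr$ is a smooth oriented manifold of the stated dimension $2c_1(A) - 4 - 2\sum_{i=1}^n(c_i-1) + 2k$, and that generic $1$-parameter families give cobordisms; here the dimension count is the one of Lemma~\ref{lem:cusps} with the single constraint contribution $-2(m-1)$ replaced by $-2\sum_{i=1}^n(c_i-1)$, using that the divisors being in general position means the tangency conditions to the different $D_i$ are independent (in local coordinates $(z_1,\dots,z_n)$ with $D_i = \{z_i = \mathrm{const}\}$, contact order $c_i$ to $D_i$ is $c_i - 1$ independent linear conditions on the jet of $u$).

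Next I would show that the evaluation map extends to the closure $\ovl{\calM}^J_{M,A,k}\lll \T_{D_1}^{c_1-1}\dots \T_{D_n}^{c_n-1}p\rrr$ inside the ordinary stable-map compactification $\ovl{\calM}^J_{M,A,k+1}$ (the extra marked point being the constrained one), and that the added boundary strata have image of codimension at least two under $\ev$. As in the proof of Proposition~\ref{prop:tanpseud}, one splits into four cases: multiply covered curves with smooth domain, nodal curves with non-constant main component, nodal curves with constant main component and no multiply covered/repeated adjacent component, and nodal curves with constant main component and a multiply covered/repeated adjacent component. The key input for the constant-main-component cases is again a contact-order redistribution statement: if a family of smooth curves satisfying $\lll \T_{D_1}^{c_1-1}\dots \T_{D_n}^{c_n-1}p\rrr$ degenerates to a nodal curve $u$ with ghost main component, then the sum over the adjacent non-constant branches of the local contact orders to each $D_i$ is at least $c_i$ for every $i$ --- i.e.\ \cite[Lemma 7.2]{CM1} applied simultaneously to each divisor $D_1,\dots,D_n$ (or, more precisely, to the configuration jet at $p$). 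Granting this, the rest of the bookkeeping — passing to the underlying simple configuration, adding marked points at the nearby special points, deleting repeated components, and checking that either one of the unconstrained marked points maps to $p$ or the configuration is disconnected with more than one non-constant component — goes through exactly as before, and the multiply-covered-component subcase is handled by the index estimate \eqref{eq:ovum} applied with $c_1(A)$ replaced appropriately. The bordism statement follows the same template: one interpolates $p_t$, $D_{i,t}$, $J_t$ and runs the same argument on the $t$-parametrized moduli space of dimension one higher.

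The main obstacle I anticipate is the $n$-fold version of the contact-order monotonicity lemma \cite[Lemma 7.2]{CM1}: one must be sure that when a curve satisfying independent tangency conditions to $D_1,\dots,D_n$ at $p$ degenerates, \emph{each} condition is separately inherited (possibly distributed over several branches) by the limit, so that no codimension is lost. Since the $D_i$ are in general position and in suitable local holomorphic coordinates the joint constraint is literally a constraint on the truncated Taylor jet of $u$ at $p$ (prescribing vanishing of certain mixed coefficients), this should reduce to the single-divisor statement applied coordinate by coordinate, together with the fact — already used in \cite{CM1} — that contact order is upper semicontinuous under Gromov limits. A secondary technical point is verifying that the four-type classification of boundary strata is exhaustive and that, after the deletion/reparametrization maneuver of step (3) in the proof of Proposition~\ref{prop:tanpseud}, the resulting simple configuration still carries enough of the constraint; but this is identical to the single-constraint case and requires no new idea. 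I would therefore write the proof as: ``The argument is identical to that of Proposition~\ref{prop:tanpseud}, replacing the single tangency order $m-1$ by the collection $(c_1-1,\dots,c_n-1)$ and using the evident $n$-divisor generalization of \cite[Lemma 7.2]{CM1}; we leave the details to the reader,'' and spell out only the modified index inequalities.
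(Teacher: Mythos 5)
Your proposal is correct and follows essentially the same route as the paper: both reduce to the single-divisor argument of Proposition~\ref{prop:tanpseud} after choosing local holomorphic coordinates $(z_1,\dots,z_n)$ in which the $D_i$ become coordinate hyperplanes, so that the tangency conditions decouple and the contact-order monotonicity of \cite[Lemma~7.2]{CM1} can be applied direction by direction (the paper packages exactly this $n$-direction statement as Lemma~\ref{lem:cusp_deg}). The paper's own proof is a short sketch that makes the same two observations you do and leaves the remaining bookkeeping to the reader.
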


\begin{proof}[Sketch of proof]  This follows by essentially the same argument used to prove
 Proposition~\ref{prop:tanpseud}.  Although \cite{CM1} only considers the case of one divisor per marked point, we may choose local coordinates $z_1,\dots,z_n$ at $p$ so that the divisors $D_1,\dots,D_n$ coincide with the coordinate planes  $z_1=0,\dots,z_n=0$.  Hence
 the derivatives in different directions are independent of each other,  and the lower bound in \eqref{eq:ord} holds independently in each of the $n$ directions --- see Lemma~\ref{lem:cusp_deg} for a more precise statement.
The proof of Proposition~\ref{prop:tanpseud} then applies without essential change.  We leave further details to the reader.\end{proof}

\noindent Using this,  we can define invariants of the form $N_{M,A}\lll \T_{D_1}^{c_1-1}...\T_{D_n}^{c_n-1}p\rrr \in \Z$ and so on.

 \begin{rmk}\rm \label{rmk:tanpseud2}  (i) We have stated Proposition~\ref{prop:tanpseud2} for the sake of completeness, making no direct use of it in our arguments below.    However, it does 
 imply that for generic $J$ the curves counted by $N_{M,A}\lll \T^{m-1}p\rrr$ do not have vanishing derivative at 
   the marked point $z$ satisfying the constraint $\lll\T^{m-1}p\rrr$. 
To see this, note that such a curve would satisfy the stronger constraint $\lll\T_D^{m-1}\T_{D_2}\dots\T_{D_n} p\rrr$, which increases the codimension by at least two. 
For more general curve counts, by adding an additional marked point and imposing the vanishing derivative condition at that point, one can show that 
cusps only occur in codimension at least two.
This basic observation will be an essential ingredient in the proof that the multibranched tangency constraints considered in the next subsection are well defined.  For a precise statement see 
Lemma~\ref{lem:cusp_deg} below.

\MS

\NI (ii) In principle, we could also use Proposition~\ref{prop:tanpseud2} to define other curve counts involving cusps, or by considering multidirectional vanishing conditions for derivatives in higher dimensions. However, we do not pursue this here.
 \hfill$\er$
\end{rmk}

\subsection{Curves in dimension four with multibranched tangency constraints}\label{ss:tan_multi}

We now generalize the previous subsection by defining curve counts with constraints involving multiple branches through the same point in the target. For example, the simplest case will be denoted by $\lll p,p\rrr$, which corresponds to a curve having a double point at $p \in M$. 
More generally, given a local divisor $D$ near $p$, we will define invariants of the form $$
N_{M,A}\lll \T^{m_1-1}p,...,\T^{m_b-1}p\rrr \in \Z,\quad m_1,...,m_b \geq 1.
$$
We will assume without loss of generality that we have $m_1 \geq ... \geq m_b$, and we view $\Pp := (m_1,...,m_b)$ as a partition of $|\Pp| := \sum_{i=1}^b m_i$. 
For brevity we will often use the shorthand notation
\begin{align*}
\lll \T^\Pp p \rrr := \lll \T^{m_1-1}p,...,\T^{m_b-1}p\rrr.
\end{align*}
We will also occasionally  write $\T_D^{m-1}p$ instead of $\T^{m-1}p$ if we wish to make the choice of local divisor more explicit.

Using a similar setup to the previous two subsections, let $(M^{4},\omega)$ be a 
four-dimensional symplectic manifold, let $D$ be a local divisor near a point $p \in M$, let $A \in H_2(M;\Z)$ be a homology class, and pick an almost complex structure $J \in \Jj_D$.
For $m_1 \geq ... \geq m_b \geq 1$, we define $$
\calM^{J,\simp}_{M,A,k}\lll \T^{m_1-1}p,...,\T^{m_b-1}p\rrr
$$
 to be the moduli space of simple $J$-holomorphic spheres $u: S^2 \rightarrow M$
  (modulo reparametrizations) 
  in the homology class $A$ with 
\begin{itemize}
\item $k$ marked points $z_1,...,z_k \in S^2$
\item an additional $b$ marked points $z_{k+1},...,z_{k+b} \in S^2$ such
that $u$ is tangent to order $m_i-1$ to $(D,p)$ at $z_{k+i}$ for $i = 1,...,b$.
\end{itemize}
Note that as usual the marked points are all distinct and ordered, 
while the entries in the partition $\Pp$ are ordered but may not be distinct.  Therefore, if any of the multiplicities $m_i$ are repeated, the corresponding branches of $u$ are ordered. 
The main result of this subsection is:

\begin{prop}\label{prop:Ppseudo}  Let $(M,\om)$ have dimension four.
For generic $J \in \Jj_D$, the evaluation map $$
\ev: \calM^{J,\simp}_{M,A,k}\lll \T^\Pp p\rrr \rightarrow M^{\times k}
$$ defines a pseudocycle of dimension
\begin{align}\label{eq:indA1}
 2c_1(A) - 2 - 2\sum_{i=1}^b m_i + 2k.
\end{align}
 which is independent of $J,p,D$ up to pseudocycle bordism. Further, if
 $ 2c_1(A) - 2 - \sum_{i=1}^b m_i =0$ the (signed) count $\wh{N}_{M,A}\lll\Pp\rrr$ of curves 
 in the moduli space  $\calM^{J,\simp}_{M,A,0}\lll \T^\Pp p\rrr$  is  well defined. 
 \end{prop}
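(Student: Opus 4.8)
The plan is to rerun the argument of Proposition~\ref{prop:tanpseud}, carrying the $b$ branches along. It has three parts: transversality of the open moduli space; a codimension-two estimate on its stable-map compactification, which yields the pseudocycle; and the one-parameter version of both, which gives the bordism statement and hence well-definedness of the signed count once the moduli space is zero-dimensional. The dimension \eqref{eq:indA1} is the one expected on the nose: in dimension four $\calM_{M,A,k+b}$ has dimension $2c_1(A)-2+2k+2b$, and each branch imposes the point constraint $u(z_{k+i})=p$ of real codimension $4$ together with a tangency condition of codimension $2(m_i-1)$, a total correction of $\sum_i(2m_i+2)=2\sum_i m_i+2b$.

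For transversality I would argue exactly as for Lemma~\ref{lem:cusps}: every curve in $\calM^{J,\simp}_{M,A,k}\lll\T^\Pp p\rrr$ is simple and hence has an injective point outside the finite set $u^{-1}(p)$, so perturbing $J$ near such a point while keeping $J$ fixed and integrable on $\Op(p)$ makes the evaluation-and-jet conditions at the marked points $z_{k+1},\dots,z_{k+b}$ cut out a transverse constraint. This is the natural several-marked-point extension of \cite[Prop.~6.9]{CM1}, using that the jets of a simple curve at distinct marked points are jointly unobstructed for such perturbations; it does not use dimension four. The outcome is that for generic $J\in\Jj_D$ the space $\calM^{J,\simp}_{M,A,k}\lll\T^\Pp p\rrr$ is a smooth oriented manifold of dimension \eqref{eq:indA1}.

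The crux --- and the step I expect to be the main obstacle --- is to show that $\ev$ extends over the closure $\ovl{\calM}^{J}_{M,A,k}\lll\T^\Pp p\rrr$ of this space inside $\ovl{\calM}^{J}_{M,A,k+b}$ so that the added configurations have image of codimension at least two. Following Proposition~\ref{prop:tanpseud}, these split into multiple covers with smooth domain and nodal configurations, the latter sorted by whether the ``main'' components (those carrying the constrained marked points) are non-constant, are ghosts whose non-constant neighbours are neither multiply covered nor repeated, or are ghosts with a multiply covered or repeated neighbour. The genuinely new feature is that the $b$ constrained marked points and their tangency orders may scatter over several main components and several ghost trees, and two branches at $p$ can collide only by forming a node, since the domain marked points stay distinct in the compactification. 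To control the ghost strata one replaces each component by its underlying simple curve, deletes each maximal ghost tree carrying a constrained marked point, and re-attaches new marked points at the nearby special points of its non-constant neighbours; the essential input is Lemma~\ref{lem:cusp_deg} (the promised generalization of \eqref{eq:ord}), which ensures that the total contact order with $D$ inherited by these non-constant ``heirs'' is at least $\sum_i m_i$, so this surgery never lowers the codimension of the constraint. The extra codimension two then comes, as in Proposition~\ref{prop:tanpseud}, from one of: an unconstrained marked point also landing on $p$; the resulting simple configuration being nodal or disconnected, each node costing two by \cite[Thm.~6.2.6]{JHOL}; or a multiply covered branch being replaced by its simple model, in which case the index bound \eqref{eq:ovum0}--\eqref{eq:ovum} loses at least two. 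It is in carrying out this bookkeeping for several simultaneous branches that the restriction to dimension four becomes essential; in higher dimensions the analogous index estimates fail, and one would need virtual perturbation techniques. Granting the estimate, $\ev$ on the main stratum is a pseudocycle as in \cite[Thm.~6.6.2]{JHOL}.

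The bordism statement is the one-parameter analogue: for a generic path $(p_t,D_t,J_t)$ with $J_t\in\Jj_{D_t}$, the parametrized moduli space is a smooth manifold of dimension one more than \eqref{eq:indA1}, and the same degeneration analysis makes its evaluation map a pseudocycle bordism, giving independence of $J$, $p$ and $D$. Finally, when $\calM^{J,\simp}_{M,A,0}\lll\T^\Pp p\rrr$ is zero-dimensional its pseudocycle is a compact oriented $0$-manifold, i.e.\ a finite set of signed points, so $\wh N_{M,A}\lll\Pp\rrr\in\Z$ is well defined, and independent of all choices by the bordism statement.
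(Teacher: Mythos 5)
Your sketch follows essentially the same route as the paper's proof: split the compactification strata into multiple covers with smooth domain and nodal configurations, treat ghost trees carrying constrained marked points via the surgery (delete ghosts, re-mark, pass to simple models) together with Lemma~\ref{lem:cusp_deg} to redistribute the tangency orders, and close via an index estimate showing the resulting simple configuration drops by at least two. Two points deserve comment.

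First, where you invoke \eqref{eq:ovum0}--\eqref{eq:ovum} for the multiple-cover index drop, the multibranched case actually requires the new calculation \eqref{eq:ovum2}. That estimate, and not merely the single-branch one, is precisely where the four-dimensionality is used: in dimension four the index of $\calM^{J,\simp}_{M,A,k}\lll\T^\Pp p\rrr$ depends only on $|\Pp|$, not on the number of parts $b$, and this is what makes $\ind(\ovl u) \leq \ind(u) - 2\kappa + 2$ go through when $\ovl u$ carries a constraint $\lll\T^{\ovl\Pp}p\rrr$ with $\kappa\sum_j\ovl m_j \geq \sum_i m_i$ but an uncontrolled number of parts $\ovl b$. (Compare Remark~\ref{rmk:multcover}.) You correctly identify that the restriction to dimension four is essential, but the citation should be to the multibranch estimate.

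Second, and more substantively, your treatment of the $k=0$ count is not justified as written. When $k=0$ there is no evaluation map to a nontrivial target, so the codimension-two estimate on images of boundary strata collapses to the tautology $\dim(\text{pt}) = 0 \not\leq -2$; the pseudocycle framework does not directly yield compactness of the zero-dimensional moduli space. The paper's proof avoids this by defining $\wh N_{M,A}\lll\T^\Pp p\rrr$ through the divisor axiom, i.e.\ as $\frac{1}{\gamma(A)}N_{M,A,1}\lll\T^\Pp p,\gamma\rrr$ for any $\gamma\in H^2(M;\Z)$ with $\gamma(A)\neq 0$, and the well-definedness then follows from the $k=1$ pseudocycle. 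One could alternatively argue that the index estimates force the boundary strata to be literally empty (not merely of low-dimensional image) when the expected dimension is zero, hence $\calM^{J,\simp}_{M,A,0}\lll\T^\Pp p\rrr$ is compact; but that argument is not the one you have given, and simply asserting that the zero-dimensional moduli space ``is a compact oriented $0$-manifold'' begs the question. Either route would do, but as stated the last step of your proof is a gap.
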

\NI Note that  the above index depends on $|\Pp|$ but not on the specific partition $\Pp$. By contrast, for $n >2$ the index depends on the number of parts $b$ in the partition, which causes a key index inequality to fail: see Remark~\ref{rmk:multcover} below.  
Further, we will see in Lemma~\ref{lem:blow} that each element of 
$\calM^{J,\simp}_{M,A,0}\lll \T^\Pp p\rrr$ in fact counts positively.

In order to prove Proposition~\ref{prop:Ppseudo}, we define the compactification $\ovl{\calM}^J_{M,A,k}\lll \T^\Pp p\rrr$ by noting that there is a natural forgetful inclusion of $\calM^{J,\simp}_{M,A,k}\lll \T^\Pp p\rrr$ into $\ovl{\calM}^J_{M,A,k+b}$, and we take the closure in this ambient compact space.
In order to show that we have a pseudocycle, it suffices to show that the image under the evaluation map of all of the strata added by this compactification have codimension two or greater. Compared to Proposition \ref{prop:tanpseud}, we have the new complication that a constraint involving two or more branches could degenerate into a constraint involving  a single branch. 
Therefore we need to prove that this type of degeneration can only occur in codimension two or greater.   
For example, given a double point singularity $\lll p,p\rrr$, in principle the two marked points in the domain could collide, leaving a curve with only one branch through $p$. 
To deal with this, the key observation is that the limiting curve necessarily 
has a cusp at $p$.  From the discussion at the end of the last subsection, we know that cusps appear with high codimension for generic $J$. 

For example,  suppose that in the situation considered above the corresponding two marked points, say $z_{k+1}, z_{k+2}$ lie together on a ghost component $u_0$.
Suppose that $u_1$ is an adjacent nonconstant component, and let $\wt{z}_1$ denote the special point of $u_1$ which participates in this adjacency.
Then we may apply \eqref{eq:ord} at $\Tilde z_1$ in each of the $n$ directions\footnote
{Since this part of the argument works in all dimensions, we will work temporarily in dimension $2n$.} to conclude that 
$$
\ord(u,D_i; \Tilde {z}_1)\ge 2,\quad i=1,\dots,n.
$$
Now observe that the condition of having two marked points both mapping to $p$ is codimension $2(2n-2)$, 
whereas having a single marked point satisfying $\lll \T_{D_1}\dots \T_{D_n}p\rrr$ (i.e. a cusp at $p$) is codimension $2n-2 + 2n$, which is two greater.
The proof of Proposition~\ref{prop:Ppseudo} will generalize this idea to the case of more complicated  cusp-type degenerations.
The following lemma, which follows directly from \cite[Lemma 7.2]{CM1}, describes more precisely the singularities arising as degenerations of $\lll \T^\Pp p\rrr$ constraints.

\begin{lemma}\label{lem:cusp_deg}
Let $D_1=D,D_2,\dots,D_n$ be local divisors at $p \in M$, and pick an almost complex structure $J \in \Jj_{D_1,\dots,D_n}$.
Given a curve $u \in \ovl{\calM}_{M,A,k}^{J}\lll \T_{D}^{m_1-1}p,\dots,\T_D^{m_b-1}p\rrr$, suppose that at least one of its constrained marked points lies on a ghost component $u_0$ of $u$.
Let $u_1,...,u_a$ denote the nonconstant components of $u$ which are adjacent to $u_0$, or more generally are adjacent to some ghost component of $u$ which is connected to $u_0$ through ghost components, and suppose that $\{z_{k+i}: i\in \Ii\}$ is the collection of constrained marked points\footnote{Recall that our ordering is such that the marked points $z_1,...,z_k$ are unconstrained, while 
the map $u$ satisfies a tangency condition to $(D,p)$ at each of the marked points $z_{k+1},...,z_{k+b}$.}
that lie somewhere on this collection of ghost components.
For $j=1,...,a$, let $\wt{z}_j \in u_j$ denote the relevant special point of $u_j$ which participates in the node realizing this adjacency.
Then we have 
\begin{align*}
&\sum_{j=1}^a \ord(u,D;\wt{z}_j) \geq \sum_{i\in \Ii} m_i,\\
&\sum_{j=1}^a \ord(u,D_r;\wt{z}_j) \geq |\Ii|,\quad  r=2,...,n.
\end{align*}
 \end{lemma}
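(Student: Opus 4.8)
### Proof proposal for Lemma~\ref{lem:cusp_deg}

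The plan is to deduce this directly from \cite[Lemma 7.2]{CM1}, using the observation (as in Proposition~\ref{prop:tanpseud2}) that after choosing local holomorphic coordinates $(z_1,\dots,z_n)$ at $p$ in which $D_r = \{z_r = 0\}$, the tangency conditions in the $n$ coordinate directions are mutually independent. The key structural fact is that in the stable map $u$ the ghost components connected to $u_0$ form a tree, at whose outer nodes sit the adjacent nonconstant components $u_1,\dots,u_a$, and each constrained marked point $z_{k+i}$ with $i \in \Ii$ sits somewhere on this ghost tree.

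First I would reduce to the case of a single constrained marked point by an inductive/additive argument: Cieliebak--Mohnke's Lemma 7.2 bounds the contact order accumulated at the outgoing nodes of a ghost tree by the contact order of the incoming constrained point, and since the various constrained points $z_{k+i}$ lie on a common ghost tree, their contributions simply add up. Concretely, for each fixed direction $D$ (resp.\ $D_r$) one runs the CM estimate on the maximal ghost subtree containing all the $z_{k+i}$, $i\in\Ii$; the ``input'' is the sum $\sum_{i\in\Ii} m_i$ of the prescribed contact orders at those marked points (for $D$), or $\sum_{i\in\Ii} 1 = |\Ii|$ in each transverse direction $D_r$ (since every curve through $p$ automatically has contact order $\geq 1$ with each $D_r$, and a ghost map at $p$ has infinite contact order with all divisors), and the ``output'' is $\sum_{j=1}^a \ord(u,D;\wt z_j)$ (resp.\ $\sum_{j=1}^a \ord(u,D_r;\wt z_j)$) since $u_1,\dots,u_a$ are exactly the nonconstant components hanging off this ghost subtree.

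The main technical point — and the step I would be most careful about — is the bookkeeping of contact orders across a ghost \emph{tree} rather than a single ghost component, together with the fact that a ghost component may carry several of the constrained marked points and may be adjacent to several of the $u_j$. Here one uses the local picture near $p$: working in the coordinates above, $g\circ u$ vanishes identically on every ghost component (it is constant equal to $p$), so the CM order estimate \cite[Lemma 7.2]{CM1}, applied component-by-component and summed over the ghost tree (telescoping the internal nodes, which contribute to two components each with opposite ``sign'' in the accounting), yields the asserted inequality. Since \cite{CM1} states the estimate for one divisor per marked point, one invokes the coordinate-splitting remark of Proposition~\ref{prop:tanpseud2} to run the argument independently in each of the $n$ directions, giving both displayed inequalities simultaneously. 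I would then remark that the special case $a=1$, $|\Ii|=1$ recovers exactly \eqref{eq:ord}, so the lemma is the promised generalization, and that no further transversality input is needed — this is a purely local statement about $J$-holomorphic maps near $p$ where $J$ is integrable.
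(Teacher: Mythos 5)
Your proposal follows the same route as the paper, which offers no proof at all beyond the remark that the lemma ``follows directly from \cite[Lemma 7.2]{CM1}.'' You invoke the same ingredients: the coordinate-splitting at $p$ so that the $n$ directions can be treated independently (the observation already recorded in the proof of Proposition~\ref{prop:tanpseud2}), the ghost-tree structure, and the Cieliebak--Mohnke contact-order estimate. So the approach matches.

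One sentence deserves a caveat. You write that since the constrained points lie on a common ghost tree, ``their contributions simply add up,'' as if one could apply \cite[Lemma 7.2]{CM1} once per constrained marked point and then sum the conclusions. Literally doing that gives only $\sum_j \ord(u,D;\wt{z}_j)\geq \max_i m_i$, not $\geq \sum_{i\in\Ii} m_i$, since each application bounds the \emph{same} left-hand side. The correct mechanism, which your ``concretely\dots'' paragraph does identify, is to work at the level of local intersection multiplicities: by \cite[Prop.~7.1]{CM1}, $\ord(u,D;z)$ is a local intersection number, and in a one-parameter degeneration the smooth curves near the forming ghost tree carry total local intersection multiplicity with $D$ at least $\sum_{i\in\Ii} m_i$ (one contribution of $\geq m_i$ from each $z_{k+i}$); this total is conserved in the limit and, because the ghost tree maps constantly to $p$, it is carried entirely by the finitely many exit nodes $\wt z_j$ on the nonconstant components. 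Run independently in each coordinate direction $z_r$, using $\ord(u,D_r;z_{k+i})\geq 1$ for $r\geq 2$, this gives both displayed inequalities. With that reading your argument is correct; without it, the first sentence is a genuine (if small) gap.
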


We now return to dimension $4$.    
By combining the previous lemma with our previous arguments, we obtain the following.

\begin{lemma} \label{lem:cusp_deg1} 
If $\dim M = 4$,   $J \in \Jj_D$ is generic and $\Pp = (m_1,\dots,m_b)$ is a partition, the 
 space $\calM^{J,\simp}_{M,A}\lll \T^{\Pp}p\rrr$ is a smooth, oriented  (but not necessarily compact) manifold of dimension 
\begin{align}\label{eq:indA2}
d: = 2c_1(A) - 2 -2 \sum_{i=1}^b m_i.
\end{align}
If $J_0, J_1\in \Jj_D$ are two such generic elements, they may be joined by a generic $1$-parameter family $J_t$ such that the manifold 
$\bigcup_{t\in [0,1]}\calM^{J_t,\simp}_{M,A}\lll \T^{\Pp}p\rrr$ provides an oriented  (but not necessarily compact) cobordism between the manifolds at $t=0,1$.   Further, if $d= 0$ we may assume that all the curves in 
$\bigcup_{t\in [0,1]}\calM^{J_t,\simp}_{M,A}\lll \T^{\Pp}p\rrr$ are immersed.
\end{lemma}
\begin{proof}  
As in the case of   Lemma~\ref{lem:cusps}, the first claim is a slight extension of results from 
\cite{CM1,CM2}.  Lemma~6.5 in \cite{CM1} establishes an analogous result for the moduli space of somewhere injective, rational curves $u$ with $b$ marked points $z_1,\dots,z_b$ such that $u(z_i)$ is tangent 
to order $m_i$ to a given $J$-holomorphic submanifold $Z_i$. This argument adapts readily to our case in which 
the manifolds $Z_i$ are all the same and we also  
 require that $u(z_1)=\dots= u(z_b) = p \in Z_1$.
Since each degeneration of  the constraint $\lll \T^{\Pp}p\rrr$ has codimension at least $2$
by Lemma~\ref{lem:cusp_deg}, such degenerations do not occur along a generic $1$-parameter family of $J$.
Hence,  the standard techniques explained for example in \cite[Ch.3.2]{JHOL} can be adapted to establish the second claim.

Finally, to see that generically the elements in a moduli space of dimension $0$ or $1$ are immersed, notice first that
by \cite[Prop.E.2.4]{JHOL} every $J$-holomorphic curve $u$ can be perturbed to a nearby $J'$-holomorphic immersed curve $(u',J')$ that coincides with the original $(u,J)$ except near the singularity. It then follows from the arguments explained in Remark~\ref{rmk:tanpseud2}~(i) that cusps occur  in codimension at least two.
  \end{proof}

\begin{proof}[Proof of Proposition~\ref{prop:Ppseudo}]  
In view of Lemma~\ref{lem:cusp_deg1},  it remains to
prove that, when the moduli space is compactified,  the image under the evaluation map of the added points 
$$
\ovl{\calM}^J_{M,A,k}\lll \T^\Pp p \rrr \setminus \calM^{J,\simp}_{M,A,k}\lll \T^\Pp p \rrr
$$
consists of a union of strata of codimension at least two. 
  
Firstly, consider the case of a multiply covered curve with smooth domain, i.e. $u \in \calM^J_{M,A,k}\lll \T^\Pp p\rrr \setminus \calM^{J,\simp}_{M,A,k}\lll \T^\Pp p\rrr$.
Although $u$ is not necessarily regular, the underlying simple curve $\ovl{u}$ is regular for generic $J$. 
Assuming that $u$ is a $\kappa$-fold cover of $\ovl{u}$, it follows that $\ovl{u}$
 satisfies the constraint $\lll \T^{\ovl{m}_1-1}p,...,\T^{\ovl{m}_{\ob}-1}p\rrr$ with 
 $\ka \sum_{j=1}^{\ov b}\ovl{m}_j \ge  \sum_{i=1}^b {m}_i$. 
As in the proof of Proposition~\ref{prop:tanpseud} we then have
\begin{align}\notag
\ind(\ovl{u}) &= 2c_1(A)/\kappa - 2 - 2\sum_{j=1}^{\ob}\ovl{m}_j + 2k\\ \notag
&\leq 2c_1(A)/\kappa - 2 - 2\sum_{i=1}^b m_i/\kappa + 2k\\ \notag
&\leq 2c_1(A) - 2 - 2\sum_{i=1}^b m_i - 2(\ka - 1) + 2k \\ \label{eq:ovum2}
&\leq \ind(u) - 2, 
\end{align}
where  the penultimate inequality  uses the fact that 
the curve $\ovl{u}$ (after forgetting the $k$ unconstrained marked points) has nonnegative index,
and the last one uses $\ka>1$.

Now consider a nodal curve $u \in \ovl{\calM}^J_{M,A,k}\lll \T^\Pp p \rrr \setminus \calM^{J}_{M,A,k}\lll \T^\Pp p \rrr$. 
Note that because ghost components satisfy tangency constraints of arbitrarily large order
they may well appear
 with higher than expected dimension. Therefore, in order to show that $u$ lies in a stratum of codimension at least two, we will need to trade the constraints $\lll \T^{m_1'-1}p,\dots,\T^{m'_{b'}-1}p\rrr$ for constraints on the nearby nonconstant components of $u$.
In more detail, similar to the proof of Proposition~\ref{prop:tanpseud}, let
$\ovl{u}$ denote the (nodal, possibly disconnected) curve obtains as follows:
\begin{enumerate}
\item remove each ghost component of $u$ with a constraint of the form $$
\lll \T^{m_1'-1}p,\dots,\T^{m'_{b'}-1}p\rrr\quad\mbox{ where }\;\;  b' \geq 1,\;  m_1',...,m_{b'}' \geq 1,
$$ 
as well as all other ghost components that are connected to it through ghost components
\item add a new marked point at each of the nearby special points on a nonconstant component
\item replace each multiply covered component by its underlying simple curve, 
discard extra 
components in the same branch
when two or more have the same image, and finally parametrize each component of the resulting nodal curve by a new tree.
\end{enumerate}
Then $\ovl{u}$ has the same image under the evaluation map as $u$.
Note that each of the removed ghost components belongs to some maximal tree of ghost components, and we denote these by $T_1,...,T_k$.
For each $T_i$, let $\wt{z}_1^i,...,\wt{z}_{a_i}^i$ denote the corresponding special points of the nonconstant components of $u$ which are adjacent to $T_i$.
By Lemma~\ref{lem:cusp_deg}, each new marked point $\wt{z}^i_j$ satisfies a corresponding constraint 
$$
\lll \T_D^{\ord(u,D;\wt{z}^i_j)-1}\T_{D_2}^{\ord(u,D_2;\wt{z}^i_j)-1} p\rrr.
$$
By using Lemmas~\ref{lem:cusp_deg} and~\ref{lem:cusp_deg1}, and arguing as in the proof of Proposition~\ref{prop:tanpseud}, it is now easy to check that the moduli space of curves of the same combinatorial type as $\ovl{u}$ and satisfying the same tangency constraints 
has dimension at most $\dim \calM_{M,A,k}^{J,\simp}\lll \T^\Pp\rrr - 2$.
It follows as before that $\ev_k, k\ge 1,$ defines a pseudocycle whose bordism class is independent of choices. 
\MS

Finally in the case $k=0$ we may define the count $\wh{N}_{M,A}\lll \T^\Pp p \rrr$ of curves in 
$\calM_{M,A,0}^{J,\simp}\lll \T^\Pp p\rrr$ by using the divisor axiom as explained above in the discussion following  \eqref{eq:k=0}. 
\end{proof}

In the case $k=0$ it is also useful to consider the count 
$N_{M,A}\lll \T^\Pp p \rrr $ of the number of curves satisfying the multibranched constraint $\lll \T^\Pp p\rrr$ where the branches that have the same order of tangency to $D$ are unordered. 
Thus we have
\begin{align}\label{eq:noHatN}
N_{M,A}\lll \T^\Pp p \rrr \in \Z,\qquad N_{M,A}: = \frac1{|\Aut(\Pp)|} \wh N_{M,A},
\end{align}
where $\Aut(\Pp)$ is defined to be the {\bf automorphism group} of $\Pp$, that is, the group of permutations of the entries of $\Pp= (m_1,\dots,m_b)$  that preserve the inequalities $m_1\ge m_2\ge\dots \ge m_b>0$: see Definition~\ref{def:AutP}.  
We will sometimes also denote these invariants by simply $N_{M,A}^T \lll \Pp\rrr, \wh N_{M,A}^T \lll \Pp\rrr$ if the point $p$ is clear from the context or immaterial.

Given local divisors $D_1,\dots,D_r$ at distinct points $p_1,\dots,p_r$  for $i = 1,...,r$, and partitions
$\Pp_i = (m^i_1,...,m^i_{b_i})$, we can straightforwardly extend the above construction to define invariants 
\begin{align}
\wh N_{M,A}\lll \T^{\Pp_1}p_1,\dots,\T^{\Pp_r}p_r\rrr \in \Z
\end{align}
for four dimensional $M$,  which are independent of all choices. 
As we will see in \S\ref{sec:recur}, the recursive algorithm of Theorem~\ref{thm:main_thm}
will  involve invariants of this form even if we are only interested in counting curves with constraints at a single point in $M$.

\sss

\begin{rmk}\label{rmk:multcover}\rm
We restrict Proposition~\ref{prop:Ppseudo} to dimension four  because our method
of showing that the boundary  strata $ \calM^J_{M,A,k}\lll \T^\Pp p\rrr \setminus \calM^{J,\simp}_{M,A,k}\lll \T^\Pp p\rrr$ do not interfere with our count is so primitive.  All parts of the above proof work in higher dimensions except the crucial  
 index calculation~\eqref{eq:ovum2}  showing that the index of a multiple cover is strictly larger than the index of the underlying curve.  The analog of \eqref{eq:indA1} in dimension $2n\ge 6$ is 
$$
\ind (u) = 2n-6 + 2c_1(A) - 2\sum_{i=1}^b (n+m_i-2) + 2k,
$$ 
and it is now possible to have
a multiple cover $u = \ka\ov u$ where $\ind(u) = 0$ and $\ind(\ov u)\ge 0$.  For example, if $n=3$ and
 all $m_i=1$, then
 $\ka = b$ so that $\ov b = 1$, so that if we assume   $k=0$ and $\ind (u)=0$, then  we would also have
$\ind (\ov u) = 0$. 
The difficulty here is that we have no a priori control over the ramification of the cover $\Pp\to \ov \Pp$.
As pointed out in Remark~\ref{rmk:multcover0} a similar problem with multiple covers occurs with standard  point constraints
in the case $2n=6$ and $c_1(A) = 0$.  In this case, the problem may be resolved by using domain dependent perturbations but we do not pursue this here.
\hfill$\er$
\end{rmk}

The next step is to extend the  theory of automatic transversality to  curves that satisfy multi-branched constraints.
One illuminating approach is to transform the tangency constraints by a suitable blow up process into 
a normal crossing divisor, so that  the tangency constraint can be reinterpreted as a constraint relative to this divisor.
Then the automatic transversality statement is immediate. 
  This point of view also helps us understand the structure of curves that satisfy certain constraints: see Remark~\ref{rmk:dualpart}.
 However,  although  it certainly is relevant to the combination result Theorem~\ref{thm:ppt_intro}, it does not seem an easy way to 
 approach this question; see Remark~\ref{rmk:relGW}~(ii).

To explain this new approach,  consider the  symplectic manifolds $M^{(j)}, j\ge 1,$ defined as follows.
Let $M^{(1)}$ denote the blowup of $M$ at the point $p$, with resulting exceptional divisor $\E_1$, and let $D^{(1)}$ denote the proper transform of the local divisor $D$.
Inductively, let $M^{(j)}$ denote the blowup of $M^{(j-1)}$ at the point $D^{(j-1)} \cap \E_{j-1}$, with resulting exceptional divisor $\E_{j}$, and let $D^{(j)}$ denote the proper transform of the local divisor $D^{(j-1)}$.
Note that $M^{(j)}$ is  symplectomorphic to the $j$-fold blowup $\bl^jM$, and as such its homology naturally splits as $H_2(M^{(j)};\Z) = H_2(M;\Z) \oplus \Z\langle [\E_1],\dots,[\E_j]\rangle$.
By slight abuse of notation, we denote by $\E_1 - \E_2,\E_2-\E_3,\dots, \E_{j-1} - \E_j$ the proper transforms of $\E_1,\dots \E_{j-1}$ respectively in $M^{(j)}$
(the notation is justified by the fact that these lie in the homology classes $[\E_1]-[\E_2],\dots,[\E_{j-1}] - [\E_{j}]$ respectively).
Because $J$ is integrable in $\Op(p)$  there is a natural associated almost complex structure  $J^{(j)}$ on $ M^{(j)}$ which is integrable near the normal crossing divisor 
$$
N^{(j)}: = (\E_1 - \E_2)\cup (\E_2-\E_3)\cup \dots \cup (\E_{j-1} - \E_j)\cup \E_j.
$$
Further, by using the methods explained for example in \cite[Ch.7.1]{INTRO} it is not hard to see that one can put a symplectic form $\Tom^{(j)}$ on $M^{(j)}$ that tames $J^{(j)}$ and  that agrees with the pullback of $\om$ outside a small neighbourhood of $N^{(j)}$.
Note that the components $\E_{i-1} - \E_i$ of $N^{(j)}$ are not regular as $J^{(j)}$-holomorphic curves since they have normal bundle of Chern class $-2$.

\begin{lemma}\label{lem:blow}  Given $\Pp = (m_1,\dots, m_b)$ with $a: = m_1\ge \dots \ge m_b$, define 
\begin{align}\label{eq:nj}
n_j = \#\{i\;|\; m_i \geq j\},\quad 1\le j \le a, \qquad \wt{A} := A - \sum_{j=1}^a n_j[\E_j] \in H_2(M^{(a)}).
\end{align}
Then, for each $J_t\in \Jj_D$ in a generic $1$-parameter family  there is a bijective correspondence between the curves counted by $N^T_{M,A}\lll \Pp \rrr$ 
and the set of $J_t^{(a)}$-holomorphic curves in $(M^{(a)}, N^{(a)})$ in the class $\wt{A}$.  
Further each such $J_t^{(a)}$-holomorphic curve is immersed, regular and positively oriented. 
In particular, $N^T_{M,A}\lll \Pp \rrr$ equals the number of such curves.
\end{lemma}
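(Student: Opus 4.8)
The strategy is to set up an explicit blow-down/blow-up dictionary between $J$-holomorphic curves in $M$ through $p$ with tangency data recorded by $\Pp$, and $J^{(a)}$-holomorphic curves in $M^{(a)}$ that meet the iterated exceptional configuration $N^{(a)}$ in the prescribed intersection pattern encoded by $\wt A = A - \sum_j n_j[\E_j]$. The key local model is the standard one for blowups: a holomorphic branch through $p$ with contact order $m$ to the local divisor $D$, after blowing up at $p$, meets $\E_1$ transversally (reducing its homology by $[\E_1]$) and its proper transform now has contact order $m-1$ to $D^{(1)}$ at the point $D^{(1)}\cap\E_1$; iterating this $a$ times, the $i$-th branch of $\Pp$ contributes $1$ to the intersection number with $\E_j$ for each $j\le m_i$, which after summing over branches is exactly $n_j$. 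Conversely, given a $J^{(a)}$-holomorphic curve $C$ in class $\wt A$ not contained in $N^{(a)}$, one checks via positivity of intersections (Remark~\ref{rmk:tanpseud2}(i) rules out worse singularities for generic $J$, so in dimension four curves in the relevant moduli space are as nice as claimed) that $C$ meets each component $\E_j - \E_{j+1}$ and $\E_a$ in exactly the expected number of points, and the blow-down of $C$ is a curve in $M$ through $p$ with contact data exactly $\Pp$. This gives the bijection.

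First I would record the local normal form: since $J$ is integrable near $p$ and $D$ is $J^{(0)}$-holomorphic, working in local coordinates $(z_1,z_2)$ with $D = \{z_2 = 0\}$, a branch with contact order $m$ to $D$ is $\zeta \mapsto (\zeta, c\zeta^m + O(\zeta^{m+1}))$ up to reparametrization, and I would track how this transforms under the standard blowup charts, verifying that the point $D^{(j)}\cap\E_j$ is always the relevant center and that contact order drops by one at each stage. Second, I would check the homology bookkeeping: a branch of contact order $m_i$ is met by $[\E_j]$ with multiplicity $1$ for $j = 1,\dots,m_i$, so the proper transform lies in $A - \sum_{i=1}^b\sum_{j=1}^{m_i}[\E_j] = A - \sum_{j=1}^a n_j[\E_j] = \wt A$, using the definition $n_j = \#\{i : m_i \ge j\}$. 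Third, for the converse direction, I would use that $c_1^{M^{(a)}}$ evaluated on $\wt A$ together with adjunction and the negative self-intersection classes $[\E_{j-1}]-[\E_j]$ of Chern class $-2$ force any $J^{(a)}$-curve in class $\wt A$ to intersect $N^{(a)}$ transversally and with the minimal intersection number dictated by the homology class, so that blowing down recovers a curve with exactly the tangency profile $\Pp$ (not more, not less). Fourth, immersedness, regularity and positivity: immersedness and the positive count follow from adjunction in $M^{(a)}$ (the curve in class $\wt A$ has the expected genus zero with no singularities once the interaction with $N^{(a)}$ is controlled), while regularity should follow from automatic transversality à la Wendl once one observes that $c_1^{M^{(a)}}(\wt A)$ is large enough relative to the (vanishing) genus — this is the point where one passes from "constrained curve in $M$" to "honest relative curve in $(M^{(a)}, N^{(a)})$" and the automatic transversality statement becomes immediate, as advertised in the text.

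**Main obstacle.** The delicate step is the converse direction combined with the claim that every such $J^{(a)}$-curve is immersed, regular and positively oriented. One must rule out $J^{(a)}$-holomorphic curves in class $\wt A$ that have components inside $N^{(a)}$, or that meet some $\E_j - \E_{j+1}$ with excess intersection (which would blow down to a curve with a cusp or an extra tangency, i.e. the degenerate configurations of Remark~\ref{rmk:tanpseud2}); controlling this requires the intersection-theoretic/index estimates from the earlier sections (Proposition~\ref{prop:tanpseud2}, Lemma~\ref{lem:cusp_deg}) transported to the blowup, plus genericity of $J$ — and crucially one needs this uniformly along a generic $1$-parameter family $J_t$, so that the bijection is not just pointwise but cobordism-compatible, which is what makes $N^T_{M,A}\lll\Pp\rrr$ an honest count. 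The dimension-four hypothesis enters here exactly as in Proposition~\ref{prop:Ppseudo}: it is what guarantees positivity of intersections and the automatic transversality input. I would also need to be careful that the correspondence is equivariant for reordering of branches of equal multiplicity, so that it matches the (unordered-within-blocks) count $N^T_{M,A}\lll\Pp\rrr$ rather than $\wh N_{M,A}$; this is a bookkeeping point but worth stating explicitly.
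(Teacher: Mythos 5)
Your proposal follows essentially the same route as the paper's proof: blow up $a$ times at the chain of points $p$, $D^{(1)}\cap\E_1,\dots$, track how each branch of contact order $m_i$ contributes $-([\E_1]+\dots+[\E_{m_i}])$ to the class of the proper transform, sum over branches to land in $\wt A$, and invoke automatic transversality/positivity of intersections for the converse and the sign.

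The one place where the two arguments are phrased differently is the regularity and positivity step. You invoke Wendl's automatic transversality (Theorem~\ref{thm:auttrans}); the paper instead appeals to the spectral-flow description of the orientation, observing that for an immersed genus-zero curve of index zero the linearized normal Cauchy--Riemann operator $D^t$ has vanishing kernel by Riemann--Roch, so the spectrum never crosses zero and every curve counts with sign $+1$. For a closed immersed genus-zero curve of index zero in a four-manifold these are two presentations of the same fact, and here $c_1(\wt A)=c_1(A)-|\Pp|=1$ so both apply; but be aware that the version of automatic transversality you cite is stated for punctured curves, while $\TC$ is closed, so you would need to appeal to the closed-curve version (Hofer--Lizan--Sikorav / Gromov, as in \cite{JHOL}) or to the operator-theoretic argument the paper gives. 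Your converse discussion (ruling out components in $N^{(a)}$ and excess intersections via positivity of intersections and genericity of the family $J_t$) matches what the paper leaves implicit; the remark about equivariance under reordering branches of equal multiplicity is a correct bookkeeping check that the paper also does not make explicit.
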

\begin{proof}  Let $C$ be a curve in $M$ that contributes to the count $N_{M,A}\lll \Pp \rrr$, and denote by $C^{(1)},\dots, C^{(a)}$ its proper transforms in $M^{(1)},\dots, M^{(a)}$.
  Since the path $t\mapsto J_t$ is assumed generic, 
we may assume by Lemma~\ref{lem:cusp_deg1}  that $C$ is immersed, and that it has $b$ branches through $p$ where the $i$th branch is tangent 
 with order exactly $m_i$ both to $D$ and to the $j$th branch for $j>i$.  Therefore, under these successive blowups, the $i$th branch first meets $\E^{(1)}$ at its intersection point with   $D^{(1)}$, then meets 
 $\Ee^{(2)}$ at its intersection point with   $D^{(2)}$, until finally it meets $\Ee^{(m_i)}$ at a point different from its intersection with $D^{(i)}$, so that it remains unchanged by subsequent blowups.  Therefore, after blowup this branch  contributes $-([\E_1] + \dots + [\E_{m_i}])$ to the class of $C^{(a)}$.  Moreover, the different branches of $C^{(a)}$ 
 meet each divisor  $\Ee^{(i)}$ in distinct points.   Summing over all branches, we see that $[C^{(a)}] = \TA$.   Thus each such curve $C$ 
 gives rise to a unique $J^{(a)}$-holomorphic immersed curve $C^{(a)}$  in $(M^{(a)}, N^{(a)})$ in class $\TA$ that meets each divisor  $\Ee^{(i)}$ in distinct points.
 Conversely,  each such $J^{(a)}$-holomorphic  curve in class $\TA$ blows down to a unique immersed $A$-curve in $M$ that satisfes the multibranched constraint $\lll \T^\Pp p\rrr$ with tangency orders precisely
 given by $\Pp$.  

Thus there is a bijective correspondence between the curves $C, \TC$.   
Because $\TC$ is an immersed curve in  a zero-dimensional moduli space, one can use standard methods both to prove that it is regular and to orient it.  As explained for example \cite[Rmk.3.2.5]{INTRO}), the orientation is determined by the spectral flow of a  deformation $D^t, t\in [0,1],$ of the linearized normal Cauchy--Riemann operator $D^0$ to a complex linear operator.  In the genus zero case it follows from 
the Riemann-Roch theorem (\cite[Thm.C.1.10]{JHOL}) that each $D^t$ has no kernel.  Hence the curve is regular 
(which means precisely that  $D^0$ is surjective), and because the spectrum of $D^t$ never crosses zero, its normal orientation does not change. Thus, the number of curves counted by 
$N^T_{M,A}\lll \Pp \rrr$  does not vary as $J_t$ varies in a generic $1$-parameter family.   Moreover, each curve counts positively, since complex linear objects are considered to be positively oriented.\footnote
{
This argument is a geometric explanation of why the (holomorphic) tangency constraints $\T_D^\Pp p$ imposed on $C$ always count positively.}
\end{proof}

\begin{rmk}\label{rmk:relGW} \rm (i)  One could interpret the number of $J^{(a)}$-holomorphic $\wt{A}$-curves in $(M^{(a)}, N^{(a)})$ 
as a relative Gromov--Witten invariant for the pair $(M^{(a)}, N^{(a)})$, in which we count the number of rational curves   in $M^{(a)}$ that are holomorphic for a generic almost complex structure on the pair $(M^{(a)}, N^{(a)})$ and have no component in $N^{(a)}$.   This invariant counts spheres $\TC$ in class $\TA \in H_2(M^{(a)})$ that meet each $N_i$ in $k_i$ unspecified points
where $c_1(\TA)= 1$ and $\TA\cdot N_i =k_i\ge 0$  for each component $N_i$ of $N^{(a)}$.  Since there is no easy reference for this theory even in dimension $4$, we prefer not to use it explicitly.
However, note that if the class $\TA$ is that of an exceptional sphere (i.e. if $\TA\cdot \TA = -1$ so that $\TC$ is embedded)  this count is always $1$ and, in particular, does not change  when we allow $\TJ$ to be an arbitrary tame almost complex structure on $M^{(a)}$ for which $N^{(a)}$ is no longer holomorphic.  
This explains the observation in Remark~\ref{rmk:dualpart}  about the counts for partitions $\Pp$ such that the dual class $A_\Pp$ is exceptional. \MS

\NI (ii)
It is also interesting to compare the curves counted by $N_{M,A}(\Pp)$ when $\Pp = (1,1)$ and $\Pp=(2)$.
Blowing up twice to obtain the divisor $\Nn^{(2)} = (\Ee_1-\Ee_2)\cup \Ee_2$, the first counts curves in  class $A - 2[\Ee_1]$  that intersect $\Ee_1-\Ee_2$ in two distinct points, while the second counts curves in class
$A-[\Ee_1]-[\Ee_2]$ that meet $\Ee_2$ at a point $q\in \Ee_2 - (\Ee_1-\Ee_2)$.   If $q$ happens to lie on $ (\Ee_1-\Ee_2)$ then the latter curve generates into the union of $(\Ee_1-\Ee_2)$ with a curve $C_2$ in class $A - 2[\Ee_1]$.  
This nodal curve has two parametrizations as a genus zero stable map since the image of the node can be either 
of the two intersections of $C_2$ with $(\Ee_1-\Ee_2)$.  
Thus the number of elements in the full moduli space of $J^{(2)}$-holomorphic maps in class $A-[\Ee_1]-[\Ee_2]$
is $\Nn^{(2)} + 2\Nn^{(1,1)}$, a number that does not change as we allow $J^{(2)}$ to become a generic
almost complex structure on $M^{(2)}$ with disjoint exceptional divisors  $\Ee_1, \Ee_2$.\footnote
{This can be proved by the kind of elementary gluing argument in \cite{Mcacs}.}
  This is a very special case of the combination formula in Theorem~\ref{thm:ppt_intro}.\hfill$\er$
 \end{rmk}

\begin{rmk}
\label{rmk:dual}\rm 
(i) Given a partition $\Pp = (m_1,\dots,m_b)$, we define its {\bf dual partition} $\Qq = (n_1,\dots,n_a)$ by $n_j = \#\{i\;:\; m_i \geq j\}$. 
In the above proof, for a curve $C$ satisfying a tangency constraint $\lll \T^\Pp p\rrr$, the curve $C^{(a)}$ in $M^{(a)}$ lies in the homology class $A - \sum_{j=1}^a n_j[\E_j]$ determined by $\Qq$. 
We note that in terms of the Young diagram representation of partitions discussed in \S\ref{ss:comb} below, 
$\Qq$ is simply the partition determined by the columns rather than the rows of the partition.
\MS

\NI (ii)
  In higher dimensions there should be an analogous interpretation of the (single-branched) invariant  $N_{M,A}\lll \T^{m-1} p\rrr$.  However, although the first blowup  is at the point $p$, the second blowup is along the copy of $\C P^{n-2}$ formed by the tangent lines to the local divisor $D$, with similar descriptions for the subsequent blowups.
  \hfill$\er$
\end{rmk}

Although for simplicity we stated Lemma~\ref{lem:blow} for a single constraint $\T^\Pp$ an analogous statement also holds for invariants such as 
$$
N^T_{M,A}\lll \Pp_1,\Pp_2,...,\Pp_r\rrr
$$
with constraints at different points.   In particular, all curves contribute positively to the invariant.
As an immediate corollary of this observation, we have the following:

\begin{cor}\label{cor:count_pos2}
For partitions $\Pp_1,\Pp_2,...,\Pp_r$ with $\Pp_1 = (1^{\times b})$, we have
$$ 
N_{M,A}^T\lll \Pp_1,\Pp_2,...,\Pp_r\rrr = N^T_{\bl^1M,A-b[\E]}\lll \Pp_2,...,\Pp_r\rrr.
$$
\end{cor}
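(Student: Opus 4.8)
The plan is to read this off from Lemma~\ref{lem:blow}, in the form extended to invariants with constraints at several distinct points. When $\Pp_1 = (1^{\times b})$ every entry of $\Pp_1$ equals $1$, so in the notation of Lemma~\ref{lem:blow} we have $a = m_1 = 1$ and $n_1 = b$; hence the blowup tower attached to the constraint $\lll \T^{\Pp_1}p_1\rrr$ degenerates to the single blowup $\bl^1 M \to M$ at $p_1$, with exceptional divisor $\E$, and the associated class shift is $A \mapsto A - b[\E]$. The multi-point version of Lemma~\ref{lem:blow} (asserted immediately after its proof, with all curves contributing positively) then provides a bijection --- preserving immersedness, regularity and orientation --- between the curves counted by $N^T_{M,A}\lll \Pp_1,\dots,\Pp_r\rrr$ and the holomorphic curves in $\bl^1 M$ (for the induced structure $J^{(\bullet)}$) in class $A - b[\E]$ that meet $\E$ transversally in $b$ distinct points and satisfy the constraints $\lll \T^{\Pp_2}p_2\rrr,\dots,\lll\T^{\Pp_r}p_r\rrr$. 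Here one uses that $p_2,\dots,p_r$ are distinct from $p_1$, so the first blowup leaves those constraints and their own blowup towers untouched, and the remaining blowups over $\bl^1 M$ are performed exactly as in Lemma~\ref{lem:blow}.

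The remaining point is that the incidence condition ``meets $\E$ in $b$ distinct points'' is generically vacuous and can be dropped. Since $[\om]\cdot A > 0$, the class $A - b[\E]$ is not a positive multiple of $[\E]$, so no somewhere injective $J$-holomorphic $(A-b[\E])$-curve lies in $\E$; by positivity of intersections any such curve meets $\E$ in exactly $[\E]\cdot(A - b[\E]) = b$ points counted with multiplicity, and a routine transversality argument shows that for generic $J$ on $\bl^1 M$ these $b$ intersections are distinct and transverse. Consequently the count of such curves with the ``$b$ distinct points on $\E$'' condition imposed equals the count defining $N^T_{\bl^1 M, A - b[\E]}\lll \Pp_2,\dots,\Pp_r\rrr$, and combining this with the bijection of the previous paragraph yields
\[
N^T_{M,A}\lll \Pp_1,\dots,\Pp_r\rrr = N^T_{\bl^1 M, A - b[\E]}\lll \Pp_2,\dots,\Pp_r\rrr .
\]

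Both ingredients are essentially bookkeeping: the only substantive input is the multi-point extension of Lemma~\ref{lem:blow}, which has already been granted. The one thing to handle with a little care is genericity --- one must check that a single generic $J$ on $M$ (together with the structures it induces on the blowups) simultaneously computes both sides and makes the dropped incidence condition automatic --- but this is immediate since all the relevant conditions are open and dense, so their finite intersection is again generic. I do not expect any genuine obstacle here.
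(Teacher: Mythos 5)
Your argument is essentially the paper's: both read the claim off from the (multi-point extension of) Lemma~\ref{lem:blow}, observing that when $\Pp_1 = (1^{\times b})$ the iterated blowup at $p_1$ collapses to a single blowup with exceptional divisor $\E$ and class shift $A \mapsto A - b[\E]$, under which immersed curves with $b$ transverse branches through $p_1$ correspond bijectively to curves in class $A - b[\E]$. The paper simply states the correspondence directly rather than separately discussing and then dropping the incidence condition on $\E$, but that difference is presentational, not substantive.
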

\begin{proof}
If
 $\Pp_1 = (1^{\times b})$, the corresponding invariant
$N_{M,A}^T\lll (1^{\times b}), \Pp_2,\dots, \Pp_r\rrr$ does not involve any tangency conditions at $p_1$ but simply counts curves with $b$ branches through $p_1$ and other tangency constraints at $p_2,\dots, p_r$.  If we blow up once at $p_1$, then for generic $J$ there is a bijective correspondence between these curves and the curves in class $A - b[\Ee]$ that satisfy the constraints
$\T^{\Pp_2}p_2,..., \T^{\Pp_r}p_r$.  Indeed, each curve counted by  $N_{M,A}^T\lll (1^{\times b}), \Pp_2,\dots, \Pp_r\rrr$ is immersed and has $b$ branches through $p_1$.  Hence when we blow up the intersection of the proper transform with the exceptional divisor $\Ee$ is precisely $b$, so that this proper transform lies in the class $A - b[\Ee]$.
\end{proof}

The next corollary shows that in dimension four the invariants $N_{M,A}\lll \T^{m-1}p\rrr$ counting curves with a single tangency constraint of full codimension  are almost always nonzero. 
This fact plays an important role in the closed curve upper bounds for the capacities defined in \cite[\S6.2.3]{HSC},
and by the equivalence between tangential and skinny ellipsoid constraints proved in \S\ref{ss:same} gives a different approach
to the existence result proved in \cite{HKerrat,Mint} for degree $d$ curves  in $\C P^2\less \io(E(\eps, \eps x))$ with $x>3d-1$ and one negative end.\footnote
{
Notice that this ellipsoid is skinny for the given constraint.} 
The recursion algorithm of \S\ref{sec:recur}  will compute these numbers explicitly, although since the matrix $A_k^{-1}$ in Lemma~\ref{lem:invertible} has negative entries the nontriviality of $N_{M,A}\lll \T^{m-1}p\rrr$ is not manifest.

\begin{cor}\label{cor:count_pos}  Let $M$ be a symplectic four-manifold, and $A \in H_2(M;\Z)$ be a homology class
with $c_1(A)>0$ that can be represented by a symplectically immersed sphere with positive self-intersections.
Then $N_{M,A}\lll \T^{c_1(A)-2}p\rrr > 0$.  
\end{cor}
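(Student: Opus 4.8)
The plan is to use the blowup correspondence of Lemma~\ref{lem:blow} to convert the tangency count $N_{M,A}\lll \T^{c_1(A)-2}p\rrr$ into a count of holomorphic curves in an iterated blowup, and then exhibit such a curve explicitly. Write $m := c_1(A)-1$, so the single-branch partition is $\Pp = (m)$ with $a = m$ parts equal to $1$ appearing as dual data; concretely, by Lemma~\ref{lem:blow} (applied with $\Pp = (m)$, so that $n_j = 1$ for $1 \le j \le m$) the invariant $N_{M,A}\lll \T^{m-1}p\rrr$ equals the number of $J^{(m)}$-holomorphic curves in $(M^{(m)}, N^{(m)})$ in the class $\wt A = A - [\E_1] - \dots - [\E_m]$, all of which are immersed, regular, and count positively. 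Since these curves count positively, it suffices to produce a \emph{single} such curve for some choice of $J \in \Jj_D$; by invariance the count is then strictly positive.

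To produce the curve, I would start from the hypothesis: $A$ is represented by a symplectically immersed sphere $C$ with positive self-intersections. First I would use positivity of intersections and a genericity/perturbation argument to arrange that, after an isotopy of $C$ through symplectically immersed spheres, $C$ passes through the point $p$ with a single smooth local branch tangent to the local divisor $D$ to order $m-1$ at $p$ (there is enough freedom here because an immersed sphere in class $A$ with $c_1(A) = m+1$ has a positive-dimensional deformation space, and imposing tangency order $m-1$ at $p$ cuts down the dimension by exactly $2(n-3) + (2-2n) + 2c_1(A) - 2(m-1)$ worth — which is nonnegative precisely when $c_1(A) \ge 1$, matching the hypothesis $c_1(A)>0$). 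Then I would choose $J \in \Jj_D$ making this immersed sphere $J$-holomorphic; this uses the standard fact that a symplectically immersed sphere can be made holomorphic for a suitable tame almost complex structure, together with the compatibility of that $J$ with the local integrable structure near $p$ and with $D$. Passing to the blowup $M^{(m)}$, the proper transform $\wt C$ is a $J^{(m)}$-holomorphic curve in class $\wt A$ meeting each $\E_j$ in the required way, so it contributes to the count.

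Alternatively, and perhaps more robustly, I would argue directly on $M^{(m)}$: the class $\wt A = A - \sum_{j=1}^m [\E_j]$ has $c_1(\wt A) = c_1(A) - m = 1$ and is represented by a symplectically immersed sphere with nonnegative self-intersection number $\wt A \cdot \wt A = A\cdot A - m$ (one checks this is $\ge -1$, indeed the immersed sphere representing $\wt A$ obtained by taking proper transforms has the expected positivity properties since $C$ had positive self-intersections and each blowup subtracts at most what is needed). For a class of this type in a symplectic four-manifold, the relevant Gromov--Witten-type / relative count is nonzero — this follows from the structure of the moduli space of immersed spheres of index zero together with Wendl's automatic transversality (the curves are regular and isolated, and a standard compactness argument shows the moduli space is nonempty because one can deform a generic $J$ to one for which an explicit representative exists). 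Combined with the positivity of the count from Lemma~\ref{lem:blow}, this gives $N_{M,A}\lll \T^{c_1(A)-2}p\rrr > 0$.

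The main obstacle I anticipate is the first route's step of arranging the immersed representative to pass through $p$ with a clean single branch of the prescribed tangency order while staying symplectically immersed and compatible with a tame $J \in \Jj_D$: one must be careful that imposing the local tangency constraint does not force the representative to degenerate (e.g. sprout a bubble or acquire the wrong multiplicity at $p$), and one must check the constraint is generically achievable, i.e. that the count of tangency conditions does not exceed the dimension of the deformation space — which is exactly where the hypothesis $c_1(A) > 0$ enters. The second route sidesteps some of this by working in the blowup where the relevant class has $c_1 = 1$ and index $0$, but then one must invoke the nonemptiness of the corresponding moduli space, for which I would cite the existence results in \cite{HKerrat,Mint} (as the footnote in the statement suggests) or give a direct construction via the neck-stretching/ECH equivalence of \S\ref{ss:same}.
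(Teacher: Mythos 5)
Your proposal has the logic backwards, and this is exactly the gap you yourself flag as ``the main obstacle.'' In your first route you fix $p$, $D$, and $J \in \Jj_D$, and then try to isotope the symplectically immersed representative $C$ of $A$ so that it passes through $p$ with contact order $m$ to $D$. This is the hard direction: the constraint is full index zero (the dimension count you wrote comes out to exactly $0$ once $m = c_1(A)-1$, not to something controlled by the sign of $c_1(A)$), so there is no dimensional slack, and showing that the constrained locus is nonempty for a given $(p,D,J)$ is essentially the statement to be proved. Neither Lemma~\ref{lem:blow} nor positivity of the count helps until you have a curve in hand, and your appeal to ``deform a generic $J$ to one for which an explicit representative exists'' is circular without a construction of that representative. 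Your second route has the same issue at one remove: to exhibit a curve in class $\wt A = A - \sum_j [\E_j]$ meeting the divisors correctly in the blowup, you need to know $C$ satisfies the tangency constraint downstairs.

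The paper's proof resolves this by reversing the order of choices. Since the invariant is independent of $p$, $D$, and $J$, one is free to choose all three \emph{after} fixing $C$: first perturb $C$ to have only ordinary double points, choose a tame $J$ making $C$ holomorphic, pick a point $p \in C$ on a single branch, perturb $J$ to be integrable near $p$, and then simply take $D$ to be a local holomorphic divisor through $p$ with the same $(m-1)$-jet as the branch of $C$. With these choices $C$ itself lies in $\calM^{J,\simp}_{M,A}\lll \T^{m-1}p\rrr$, so the moduli space is nonempty, and positivity from Lemma~\ref{lem:blow} (equivalently, automatic transversality) gives the strict inequality. This is a short, noncircular argument because the local divisor is an auxiliary piece of data that can be fit to the curve, not the other way around. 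If you rewrite your proof to make the choices in this order, both of the difficulties you raised disappear.
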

 \begin{proof} Let $C\subset M$ be a symplectically immersed sphere with positive self-intersections in class $A$.  After a slight perturbation, we may assume that no more than two branches of $C$ go through any point, and then choose  an $\om$-tame $J$ so that $C$ is $J$-holomorphic.  Choose a point $p\in C$ with only one branch through it, and perturb $J$ so that it is holomorphic near $p$.  Finally, choose a holomorphic local divisor $D$ through $p$ that is tangent to $C$ to order exactly $c_1(C)-2$.  Then the moduli space  
 $\calM^{J,\simp}_{M,A}\lll \T^{c_1(A)-2}p\rrr$ is nonempty.  Moreover, the argument sketched in the second paragraph of the proof of Lemma~\ref{lem:blow}  shows that for generic $J$ its elements are regular and positively oriented.  
 Hence $\# \calM^{J,\simp}_{M,A}\lll \T^{c_1(A)-2}p\rrr > 0$, as claimed.
 \end{proof}

 \begin{rmk}\label{rmk:count_pos}\rm  If an immersed curve $C$ represents $A$ as in the above corollary,
a similar argument applies 
 for any constraint $\lll \T^\Pp p\rrr $ that can be chosen so as to be satisfied by $C$.  For example, if $C$ has $b$ transverse branches through some point $p$ and 
 $\Pp = (m-b+1,\underbrace{1,\dots,1}_{b-1})$
 (where $m = c_1(A) - 1$)
then $$
N_{M,A}\lll \T^\Pp p\rrr > 0.
$$  Thus, since degree $d$ curves of genus zero in $\C P^2$ have one double point, we may conclude that
$
N_{\C P^2, 3[L]}\lll   \T^{(6)} p, p\rrr > 0.
$
\hfill $\er$
\end{rmk}

For a partition $\Pp = (m_1,\dots,m_b)$ with $m_1 \geq ... \geq m_b$, we introduce the notation
\begin{align}\label{eq:delta_def}
\delta(\Pp) := \sum_{i=1}^b (i-1)m_i.
\end{align}
As we show in the
 next lemma, in dimension four this number can be interpreted as the count of double points near $p$ that arise after a generic perturbation of a curve $C \in \calM_{M,A}^{J,\simp}\lll \Pp \rrr$, where we perturb $C$ holomorphically near $p$ and keep it symplectically embedded, but allow  $J$ to vary as needed away from $p$ so that the perturbed curve is still $J$-holomorphic.
After replacing tangency constraints with ellipsoidal ends by the procedure described in 
\S\ref{ss:same},  we will see in \eqref{eq:wnegPp} that this number is also related to the total resulting writhe at the negative ends.

\begin{lemma} \label{lem:dePp}   Consider $C \in \calM_{M,A}^{J,\simp}\lll \Pp \rrr$ where $M$ has dimension four and $J$ is generic.  Then $\de(\Pp)$  is the number of double points near $p$ of a generic perturbation of $C$.
\end{lemma}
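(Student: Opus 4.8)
The plan is to pass to the iterated blow-up of Lemma~\ref{lem:blow}, where the tangential intersections of $C$ at $p$ have been resolved, and to read off the number of double points created by a generic perturbation from the local branches of $C$ through $p$ (the homological data in $M^{(a)}$ gives the same answer).

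First I would record the local picture. For $J$ in a generic one-parameter family, Remark~\ref{rmk:tanpseud2}(i) and the proof of Lemma~\ref{lem:blow} show that $C$ is immersed and consists near $p$ of $b$ embedded holomorphic branches $B_1,\dots,B_b$, with $B_i$ having contact order exactly $m_i$ to $D$, and with distinct branches of equal contact order separated by the successive blow-ups. In local holomorphic coordinates $(z,w)$ on $\Op(p)$ with $D=\{w=0\}$ we may write $B_i=\{w=c_iz^{m_i}+O(z^{m_i+1})\}$ with $c_i\ne0$, so that for $i<j$ (hence $m_i\ge m_j$, with $c_i\ne c_j$ if $m_i=m_j$) the local intersection number of the two branches is
\[
(B_i\cdot B_j)_p=\ord_{z=0}\bigl(c_iz^{m_i}-c_jz^{m_j}+\cdots\bigr)=m_j .
\]
Equivalently, one may deduce $(B_i\cdot B_j)_p=m_j$ from Lemma~\ref{lem:blow}: blowing up $p$ and then $m_j-1$ further times along the $D$-direction drops this intersection number by one at each step, and after the $m_j$-th blow-up the proper transforms of $B_i$ and $B_j$ already meet $\E_{m_j}$ in distinct points.

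Next I would use positivity of intersections in dimension four. A generic perturbation $C'$ of $C$ that is holomorphic near $p$, and $J'$-holomorphic for a tame $J'$ agreeing with $J$ near $p$, has near $p$ only transverse double points and no triple points or higher tangencies; since the local intersection number of $B_i$ with $B_j$ is invariant along $J_t$-holomorphic families, the branches of $C'$ meet near $p$ in exactly $\sum_{1\le i<j\le b}(B_i\cdot B_j)_p$ double points. Hence
\[
\#\{\text{double points of }C'\text{ near }p\}=\sum_{1\le i<j\le b}m_j=\sum_{j=1}^b(j-1)\,m_j=\delta(\Pp),
\]
since for each $j$ there are exactly $j-1$ indices $i<j$. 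As a cross-check I would note that, with $\Qq=(n_1,\dots,n_a)$ the dual partition ($n_j=\#\{i:m_i\ge j\}$, cf. Remark~\ref{rmk:dual}), the same sum equals $\sum_{j\ge1}\binom{n_j}{2}$ --- both count triples $(i,i',j)$ with $i<i'$ and $j\le m_{i'}$ --- which is exactly the number of double points of $C^{(a)}$ forced by adjunction in $M^{(a)}$, using $[C^{(a)}]^2=A^2-\sum_j n_j^2$, $c_1([C^{(a)}])=c_1(A)-\sum_j n_j$, and the fact from Lemma~\ref{lem:blow} that $C^{(a)}$ is immersed with all of its double points off $N^{(a)}$, these being in bijection with the double points of $C$, and of $C'$, away from $p$.

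The step that will need the most care is the second one: that a generic perturbation really does turn each order-$m_j$ tangency of two branches at $p$ into exactly $m_j$ transverse double points and nothing worse. This relies on the genericity already built into Lemma~\ref{lem:blow} (for generic $J$ the branches of $C$ are no more mutually tangent than the constraint $\lll\T^\Pp p\rrr$ forces), on positivity of intersections of $J'$-holomorphic curves in a four-manifold, and on choosing the perturbation generic enough to avoid the codimension-two loci of triple points and higher-order tangencies. Granting these standard facts, the remaining content is the elementary identity $\sum_{i<j}m_j=\delta(\Pp)$.
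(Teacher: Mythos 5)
Your proof is correct and is essentially the paper's argument, just presented non-inductively: where the paper peels off the shortest branch $B_b$ and shows it meets each $B_i$ ($i<b$) in $m_b$ double points after perturbation, then applies the inductive hypothesis to $\Pp'=(m_1,\dots,m_{b-1})$, you compute the pairwise local intersection numbers $(B_i\cdot B_j)_p=m_j$ for all $i<j$ directly and sum. The key facts (contact order exactly $m_i$ with $D$, hence $B_i\cdot B_j=\min(m_i,m_j)$, and positivity of intersections turning this into transverse double points under generic perturbation) are the same in both.
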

\begin{proof}  We prove this by induction on $b$, where $\Pp: = (m_1,\dots, m_b)$.
When $b=1$, we may assume that $C$ has  a single immersed branch through $p$, and
 the claim holds because $\de(\Pp) = 0$.  Suppose, inductively, that it holds for $b-1$ and consider $\Pp$
 with $m_1\ge\dots\ge m_b>0$.  The curve $C$ has $b$ branches through $p$, say $B_1,\dots, B_b$, 
 and we can assume that
  $B_i$ has contact order 
 exactly
 $m_i$ with $D$.  Because $m_b\le m_i$ for all $i$, it follows that  $B_b$  has contact of order $m_b$ with   $B_i$ for all $i<b$.  Hence one can perturb $B_b$ (without moving the other branches) to a curve $C'$ whose branch $B_b'$ has $m_b$ intersection points near $p$ with  each of the other branches.  Further $C'$ satisfies the constraint  $\lll\T^{\Pp'}p\rrr$, where $\Pp' = (m_1,\dots, m_{b-1})$.    By the inductive hypothesis, a further perturbation of $C'$ yields $\de(\Pp')$ other double points near $p$.  Therefore we have
 $$
 (b-1)m_b + \de(\Pp') = \de(\Pp)
 $$
 double points in all. \end{proof}

\begin{rmk}\label{rmk:dePp} \rm One can use Lemma~\ref{lem:dePp}  to show that certain counts must be zero.  Here is a simple example.  
It follows from the adjunction formula (see for example \eqref{eq:adjuc}) that
the number  of double points $\de(A)$ of an immersed $J$-holomorphic (rational) $A$-curve is 
\begin{align}\label{eq:deA}
\de(A) = \tfrac 12(A^2 - c_1(A)) + 1.
\end{align}
On the other hand,  if $C$ is a $J$-holomorphic curve
that satisfies the constraint $\lll \T^{\Pp}p\rrr$ for some $\Pp = (m_1,\dots,m_b)\in \partitions_{c_1(A)-1}$, 
the above lemma shows that $C$ can be perturbed to have at least $\de(\Pp)$ double points.  
Thus $N^T_{M,A}\lll\Pp\rrr = 0$ if $\de(\Pp)> \de(A)$. For example, no degree $3$ curve in $\C P^2$ can satisfy the constraint $\lll \T^{(6,2)}p\rrr$. \hfill$\er$
\end{rmk}

\subsection{Comparison with other invariants}\label{subsec:new_subsec}

Let $M$ be a semipositive symplectic manifold and $A \in H_2(M)$ a homology class. In this subsection we compare the local tangency invariant $N_{M,A}\lll \T^{m-1}p\rrr$ to several closely related invariants appearing in the literature.

\subsubsection{Stationary descendants and modified descendants}

As observed by Tonkonog in \cite[\S2.2]{tonk} (see also \cite[Remark 5.5]{HSC}), the local tangency invariant $N_{M,A}\lll \T^{m-1}p\rrr$ resembles the $1$-point gravitational descendant Gromov--Witten invariant $\gw_{M,A}\lll \psi^{m-1}p\rrr$.
The latter is readily computable; for instance for $d \in \Z_{\geq 1}$ we have $$\gw_{\CP^2,d [L]}\lll \psi^{3d-2}p\rrr = \frac{1}{(d!)^{3}}.$$
 The basic observation is that $\psi$ classes are defined as first Chern classes of natural complex line bundles with fiber corresponding to the cotangent line at a given marked point, and there are natural sections whose vanishing loci involves curves with tangency constraints to a given local divisor $D$.
 However, this heuristic is complicated by the fact that these sections additionally vanish along certain boundary strata of the stable map compactification, leading to additional discrepancy terms. This same phenomenon occurs in the analogous case of Gromov--Witten invariants relative to a global divisor, in which case the discrepancy terms for descendants are quite complicated but well-understand (see e.g. \cite{Gath_rel}).
 
 It would be interesting to give a similar description of the discrepancy terms in the local divisor context.
 For example, in the case of $\CP^2$ one expects the count $T_d$ to agree with $(3d-2)!\,\gw_{\CP^2,d [L]}\lll \psi^{3d-2}p\rrr$, after subtracting off correction terms corresponding to certain nodal configurations for which the marked point lies on a ghost component.
 See Table~\ref{table:CP2_1} for an explicit comparison.
We note that there are some technical challenges in trying to make this comparison precise, since the descendants are not easily defined using the pseudocycle approach of this paper.

\begin{remark}\rm
In \cite{graber2002descendant}, the authors give a modified definition of $\psi$ classes which is more easily interpreted in terms of tangency constraints, and they use it to compute characteristic numbers of rational 
curves in homogeneous projective varieties. These modified $\psi$ classes are defined by 
pulling back  the usual $\psi$ classes under the forgetful maps which forget all but one marked point.
However, this modification is vacuous if there is only one marked point, which is the case most relevant to the invariant $N_{M,A}\lll \T^{m-1}p\rrr$.
\hfill$\er$
\end{remark}

\subsubsection{Gromov--Witten invariants relative to a global divisor}

Recall that the definition of $N_{M,A}\lll \T^{m-1}p\rrr$ involves a choice of a point $p \in M$ and local divisor $D \subset \Op(p)$. 
Since $D$ could be chosen to be a piece of a global divisor $\wt{D}$ in $M$, it is natural to ask whether $N_{M,A}\lll \T^{m-1}p\rrr$ agrees with the analogous relative Gromov--Witten invariant with respect to $\wt{D}$.
Namely, let $R_{M,A}\lll \T_{\wt{D}}^{m-1}p\rrr$ denotes the relative Gromov--Witten invariant counting genus zero curves in $M$ in homology class $A$ which intersect $\wt{D}$ at a specified point $p$ and with contact order $m$, and with all other intersections with $\wt{D}$ unspecified.
Note that there are several technical approaches to Gromov--Witten theory using both algebraic geometry or symplectic geometry, although in the special case of a line in $\CP^2$ these should all 
coincide with the genus zero enumerative invariants studied by Caporaso--Harris \cite{CaH},
after imposing additional point constraints away from the line to cut down the index to zero.

A key feature of relative Gromov--Witten theory is that one effectively forbids curves which are entirely contained in the divisor, whereas in the local divisor case there simply cannot be any (nonconstant) curves contained in $D$.
For instance, let $D$ be a piece of a line $\wt{D}$ passing through $p \in \CP^2$.
Then we have $R_{\CP^2,[L]}\lll \T_{\wt{D}}p\rrr = 0$, since the only line tangent to $\wt{D}$ is $\wt{D}$ itself.
By contrast, we have $N_{\CP^2,[L]}\lll \T p\rrr = 1$, with solution curve $\wt{D}$ (this choice of local divisor and almost complex structure is nongeneric, but the count will persist after a small perturbation).
Similarly, $R_{\CP^2,d[L]}\lll \T^{3d-2}_{\wt{D}}p\rrr$ vanishes for $d > 1$, since a degree $d$ plane curve has total contact order $d$ to any line, whereas $N_{\CP^2,d[L]}\lll \T^{3d-2}p\rrr = T_d > 0$.

On the other hand, if we take $\wt{D}$ to be a smooth curve in $\CP^2$ whose degree is sufficiently large relative to $d$, then there are no degree $d$ curves contained in $\wt{D}$. (In fact, in this example it suffices to take $\deg(\wt{D}) \geq 3$, since in that case $\wt{D}$ has positive genus and hence contains no spheres.)
It seems natural to conjecture that for any fixed $d$ and $\wt{D}$ a smooth divisor with $\deg(\wt{D}) \gg d$, we have $R_{\CP^2,d[L]}\lll \T_{\wt{D}}^{3d-2}p\rrr = N_{\CP^2,d[L]}\lll \T^{3d-2}p\rrr$, and in particular the former is independent of $\wt{D}$.
Verifying this conjecture and its analogues for other symplectic manifolds $M$ could open up further algebro-geometric computational techniques such as localization.

\begin{remark}\rm
Another naive approach is to blow up $M$ at a point and consider Gromov--Witten invariants relative to the resulting exceptional divisor $E$. However, in the example of $\CP^2$, there are no curves in the homology class $d[L] - k[E]$ for $k > d$, and hence most of these counts necessarily vanish.

In \S\ref{ss:tan_multi}, we describe a iterated blowup procedure in dimension four which does allow a reinterpretation of $N_{M,A}\lll \T^{m-1}p\rrr$ as a count of curves relative to a certain global divisor; see in particular Remark~\ref{rmk:relGW}.
However, since this iterated blowup is nongeneric and introduces (necessarily irregular) curves of negative index, we find it too unwieldy for our purposes.\hfill$\er$
\end{remark}

\section{Curves with negative ends on skinny ellipsoids}\label{sec:skin}

In this section we consider another enumerative problem, defined in terms of curves with negative ends on a skinny ellipsoid.
Namely, given a closed symplectic manifold $(M^{2n},\omega)$, we remove a small neighborhood symplectomorphic to a skinny ellipsoid $E^{2n}_\sk$ (defined more precisely below) and consider punctured pseudoholomorphic curves in the symplectic completion of $M \setminus E_\sk$ with negative asymptotic ends \`a la SFT.
We first show in \S\ref{ss:skinhigh} that, for $M$ semipositive, the counts with one negative end on a skinny ellipsoid give well-defined invariants which are independent of all choices involved. Subsequently, in \S\ref{ss:skin4D}, we restrict to dimension four and define more general counts involving multiple negative ends on a skinny ellipsoid, which we also prove are independent of all choices involved. 
There are now two reasons why in this case we restrict to dimension four: besides the problem caused by the index of multiple covers as in Remark~\ref{rmk:multcover}, there is also no analog of Lemma~\ref{lem:cusp_deg}.
Instead we rule out the complicated degenerations that might arise in a one-parameter family in the case of multiple negative ends by using
the relative adjunction formula and writhe estimates from embedded contact homology.
These tools are special to dimension four and are briefly reviewed in \S\ref{ss:skin4D}.

The reader should note the formal parallels between curve counts with one negative end on a skinny ellipsoid and a single tangency constraint, and between several negative ends on a skinny ellipsoid and a multibranched tangency constraint. 
Indeed, in \S\ref{sec:relationships} we will prove that these analogous counts are indeed equivalent 
 in dimension four. 
This means that a posteriori we can think of tangency constraints and skinny ellipsoidal constraints as being essentially interchangeable.
Even though tangency constraints are arguably more natural from an enumerative point of view, for various reasons it is fruitful to have both perspectives.
For one, as we explain in more detail in \S\ref{sec:relationships}, by working with skinny ellipsoidal constraints we can bypass some technical analytic questions about gluing curves satisfying tangency constraints and instead appeal to the obstruction bundle gluing framework of Hutchings--Taubes \cite{HuT}.
Also, on a more conceptual level, the ellipsoidal point of view places our enumerative invariants into a broader SFT framework and suggests various relationships between different types of curve constraints. This perspective plays a role in \S\ref{sec:relationships} and will be further developed in \cite{McDuffSiegel}.

\subsection{Curves with a single end on a skinny ellipsoid}\label{ss:skinhigh}

Let $(M^{2n},\omega)$ be a semipositive symplectic manifold.
We consider a small skinny ellipsoid $$
E(\eps s_1, \eps s_2,...,\eps s_n) \;\;\subset\;\; \C^n,
$$
 with $s_2,\dots,s_n \gg s_1$ and $\eps$ sufficiently small.
 Recall here that the ellipsoid with area factors $a_1,\dots,a_n \in \R_{> 0}$ is defined by
 $$E(a_1,\dots,a_n) := \{ (z_1,\dots,z_n) \in \C^n \;:\; \sum_{i=1}^n \pi|z_i|^2/a_i \leq 1\}.$$
 Although not essential, it will be convenient to assume that $s_1,\dots,s_n$ are rationally independent and we have $1 = s_1 \ll s_2 < \dots < s_n$.
In the sequel we will often denote this ellipsoid simply by $E_\sk^{2n}$ when the precise values of $\eps$ and the $s_i$  are immaterial.
We will denote by $\eta_1$ the simple Reeb orbit of $\bdy E_\sk$ of least action, and let $\eta_2,\eta_3,...$ denote its iterates.
When discussing $E_\sk$ we will typically ignore the other Reeb orbits of $\bdy E_\sk$, which all have action at least $\eps s_2$ and Conley--Zehnder index that is too large to be relevant; see Lemma~\ref{lem:skin}. We will accordingly refer to these as the ``long orbits'' of $\bdy E_\sk$, and we refer to the orbits $\eta_1,\eta_2,\eta_3,\dots$ as the ``short orbits''.

Let $\alpha$ denote the standard contact form on $\bdy E_\sk$ given by the restriction of the Liouville form $\sum_{i=1}^n \tfrac{1}{2}(x_idy_i - y_idx_i)$ on $\C^n$. 
After picking a trivialization of the symplectic vector bundle $(\ker \alpha,d\alpha)$ restricted to $\eta_1$, we can assign to each of the orbits $\eta_i$ a Conley--Zehnder index $\cz_\tau(\eta_i) \in \Z$. By default we will pick the (unique up to homotopy) trivialization $\tau_\ex$ which extends over a spanning disk  in $\p E_\sk$ 
for $\eta_1$ (in which case we will simply write $\cz = \cz_{\tau_\ex}$).
With this choice, we have\footnote{We warn the reader that in the case of ellipsoids there is another natural trivialization $\tau_\sp$ for which we have $\cz_\tau(\eta_m) = n-1$ for all $m < s$.  For more detail see \S\ref{ss:skin4D}.}
\begin{align}\label{eq:CZind}
\cz(\eta_m) = \cz_{\tau_\ex}(\eta_m) = n-1 + 2m, \quad \mbox{ if } \;\; s_2>m.
\end{align}
More generally, for any ellipsoid $E(a_1,\dots,a_n)$ with $1 \leq a_1 < \dots < a_n$ rationally independent, we have precisely $n$ simple Reeb orbits, which have actions $a_1,\dots,a_n$, and the Conley--Zehnder index of the $m$-fold iterate $\ga_{k,m}$  of the $k$th simple orbit is 
\begin{align}\label{eq:CZindn}
\cz(\ga_{k,m}) = \cz_{\tau_\ex}(\ga_{k,m})=  n-1 + 2\sum_{i=1}^n \left\lfloor \frac{ma_k}{a_i} \right\rfloor.
\end{align}
In particular, note that the Conley--Zehnder indices of all Reeb orbits of an ellipsoid have the same parity.

Now suppose we have a symplectic embedding $\io: E_\sk\hookrightarrow M$.
\begin{definition}\label{def:admiss}
The {\bf symplectic completion}
of $M \setminus \io(E_\sk)$ is obtained by gluing in the negative cylindrical end $((-\infty,0] \times \bdy E_\sk,d(e^r \alpha))$,
where $r$ is the coordinate on $(-\infty,0]$.
An almost complex structure $J$ on such a  completion
 is called {\bf admissible} if it is compatible with the symplectic form and if on some neighborhood of  the cylindrical end it is $r$-translation invariant, preserves the contact hyperplanes $\ker \alpha$, and sends $\frac{\p}{\p r}$ 
   to the Reeb vector field for $\alpha$. 
 \end{definition}
 \NI 
 Note that all punctured pseudoholomorphic curves will occur in symplectic completions, and therefore we will 
usually  suppress the completion process from the terminology when no confusion should arise.

Given a generic admissible $J$ and a homology class $A \in H_2(M,  \io(E_{\sk});\Z) \cong H_2(M;\Z)$,
let 
$$
\calM_{M \setminus \io(E_{\sk}),A}^{J,\simp}(\eta_{m}) 
$$
denote the moduli space of simple $J$-holomorphic $A$-planes in the symplectic completion of $M \setminus \io(E_{\sk})$ with one negative end asymptotic to $\eta_{m}$.
This is a smooth (but not necessarily compact) oriented manifold of
dimension
\begin{align}\label{eq:indell}
\ind\, \calM_{M \setminus \io(E_{\sk}),A}^{J,\simp}(\eta_m) 
= 2c_1(A) - 2 -2m.
\end{align}
Notice that this is the same as the dimension given in \eqref{eq:indA} for curves satisfying the single tangency constraint 
$\lll \T^{m-1}p\rrr$.
More generally, recall from \cite{BEHWZ} that the Fredholm index of an 
$A$-curve
 $C$ of genus $g$ with $k$ positive ends asymptotic to Reeb orbits $\gamma^+_1,\dots,\gamma^+_k$  and
$l$ negative ends asymptotic to Reeb orbits $\gamma^-_1,\dots,\gamma^-_\ell$ 
is given by
\begin{align}\label{eq:Find}
\ind(u) = (n-3)\chi(C) + 2c_\tau(A) + \sum_{i=1}^k \CZ_\tau(\gamma_i^+) - \sum_{i=1}^\ell \CZ_\tau(\gamma_j^-).
\end{align}
where $\chi(C) = 2-2g-k-\ell$ is the Euler characteristic of the domain of $C$.
Here $c_\tau(A)$ denotes the relative first Chern number of $A$ with respect to the trivialization $\tau$ along its ends, and one can check that the overall expression does not depend on the choice of $\tau$.  Moreover,  if we use the trivialization $\tau_{\ex}$ mentioned earlier, $c_\tau(A)$ is just the usual first Chern class, which explains \eqref{eq:indell}.\footnote
{For more detail on these formulas see~\eqref{eq:thetasp}.}

\begin{definition}\label{def:skin}    We say that an ellipsoid $E(\eps, \eps s_2,\dots, \eps s_n)$ is {\bf $A$-skinny} 
(or simply  {\bf skinny}) if $s_1 = 1,s_2,\dots,s_n$ are rationally independent and $c_1(A)-1<s_2 < \dots < s_n$.
\end{definition}

\begin{lemma}\label{lem:skin}   If $E: = E(\eps, \eps s_2,\dots, \eps s_n)$ is  $A$-skinny, then every (rational) curve $C$ in $M\less \io(E)$ 
in a class $A'$ with 
 $c_1(A') \le c_1(A)$ and at least one negative end on a long orbit has index $\le -2$.
\end{lemma}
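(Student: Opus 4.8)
The plan is to feed the Fredholm index formula \eqref{eq:Find} the explicit Conley--Zehnder computation \eqref{eq:CZindn} for Reeb orbits of an ellipsoid. First I would pin down the shape of such a curve $C$: since the symplectic completion of $M\less \io(E)$ has only a negative cylindrical end, $C$ has no positive punctures, and being rational with, say, $\ell\ge 1$ negative punctures asymptotic to Reeb orbits $\ga^-_1,\dots,\ga^-_\ell$, its domain has Euler characteristic $\chi(C)=2-\ell$. Taking the trivialization $\tau_{\ex}$, for which $c_{\tau_{\ex}}(A')=c_1(A')$, the index formula \eqref{eq:Find} becomes
\[
\ind(C)=(n-3)(2-\ell)+2c_1(A')-\sum_{j=1}^\ell \cz(\ga^-_j).
\]

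The second step is to bound the $\cz(\ga^-_j)$ below using \eqref{eq:CZindn} with $a_1=1$, $a_i=s_i$. Every Reeb orbit of $\p E$ satisfies $\cz\ge n-1+2=n+1$. More importantly, if $\ga=\ga_{k,m}$ is a \emph{long} orbit --- the $m$-fold iterate of the $k$th simple orbit with $k\ge 2$, $m\ge 1$ --- then among the summands of $\sum_{i=1}^n\lfloor m s_k/s_i\rfloor$ the diagonal term $\lfloor m s_k/s_k\rfloor$ equals $m\ge 1$, while the term with $i=1$ equals $\lfloor m s_k\rfloor$, which is $\ge c_1(A)-1$ because $m s_k\ge s_k\ge s_2>c_1(A)-1$ and $c_1(A)-1\in\Z$; this is exactly where the $A$-skinny hypothesis of Definition~\ref{def:skin} enters. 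Since $k\ne 1$ these two summands are distinct, so $\cz(\ga_{k,m})\ge n-1+2c_1(A)$.

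Finally I would combine the estimates. Write $\ell_L\ge 1$ for the number of negative ends on long orbits and $\ell_s=\ell-\ell_L\ge 0$ for those on short orbits. Substituting $\cz\ge n+1$ at the short ends and $\cz\ge n-1+2c_1(A)$ at the long ends, and using $c_1(A')\le c_1(A)$, the index formula rearranges to
\[
\ind(C)\le 2n-6+2c_1(A')-(2n-2)\ell_s-(2n-4+2c_1(A))\ell_L.
\]
If $c_1(A)\ge 2$ then $2n-4+2c_1(A)>0$, so dropping the nonpositive $\ell_s$-term and using $\ell_L\ge 1$ gives $\ind(C)\le 2(c_1(A')-c_1(A))-2\le -2$; if instead $c_1(A)\le 1$, then already the crude bound $\cz\ge n+1$ on all $\ell\ge 1$ ends yields $\ind(C)\le 2c_1(A')-4\le 2c_1(A)-4\le -2$. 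Either way $\ind(C)\le -2$. I do not expect a genuine obstacle here: the whole argument is bookkeeping with two standard formulas, and the only point demanding care is to extract the extra $+1$ from the diagonal term $\lfloor m s_k/s_k\rfloor$ of a long orbit, without which the estimate would only give $\ind(C)\le 0$ --- which is consistent anyway with the parity computation behind \eqref{eq:Find} forcing $\ind(C)$ to be even.
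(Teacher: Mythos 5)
Your proof is correct and follows essentially the same route as the paper: plug the Conley--Zehnder formula \eqref{eq:CZindn} for ellipsoid orbits into the Fredholm index formula \eqref{eq:Find}, using the $A$-skinny hypothesis to bound the CZ index of any long orbit from below. The only real difference is presentational: you explicitly account for an arbitrary number of negative ends and for the degenerate range $c_1(A)\le 1$, whereas the paper's displayed computation is written for a single long end (with the observation that extra ends only decrease the index left implicit), so your argument is if anything a bit more complete.
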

\begin{proof}  
 By \eqref{eq:CZindn}, if $\gamma$ is a long orbit of $\bdy E_\sk$ we have
 $$
 \CZ(\ga) \ge  n-1 + 2(\lfloor s_2 \rfloor + 1).
 $$
 Therefore by \eqref{eq:Find} we have
\begin{align*}
\ind(C) &\le (n-3) + 2c_1(A) - (n+ 1+ 2\lfloor s_2 \rfloor) \\&= 2(c_1(A) - \lfloor s \rfloor -2) \le -2,
\end{align*}
as claimed.
\end{proof}  

\begin{rmk}\label{rmk:skin}\rm  By using the Morse--Bott form of the index calculation (as in \cite[Lemma~3.8]{HK}), one can prove the analog of Lemma~\ref{lem:skin} for any ellipsoid $E(\eps, \eps s,\dots, \eps s)$ with $s>c_1(A)-1$.
Hence we may also take $E_{\sk} = E(\eps, \eps s,\dots, \eps s)$.
\hfill$\er$
\end{rmk}

Now consider the case when  $m = c_1(A) -1$ so that 
$\calM_{M\setminus \io(E_\sk),A}^{J,s}(\eta_m)$
 is a discrete set of signed points. The following proposition, together with Proposition \ref{prop:s_indep} below, shows that the signed count of points in this set is finite and independent of all choices.
We will denote this count by $N^E_{M,A}\lll (m)\rrr \in \Z$.\footnote{As usual, all of our invariants are defined to be zero when they correspond to a count of curves of nonzero Fredholm index (after taking into account all relevant constraints).}
Here we are viewing $\Pp = (m)$ as a partition of length one (more general partitions will be considered in the next subsection).

\begin{prop}\label{prop:oneend}  Let $(M,\om)$ be a semipositive symplectic manifold of dimension $2n$.
For $m = c_1(A) -1$, the count of curves in $\calM_{M \setminus \io(E_{\sk}),A}^{J,\simp}(\eta_m)$ is finite and independent of the choice of 
generic $J$, 
the embedding $\io$
and the parameter $\eps$, provided that $s_2$ is sufficiently large.
\end{prop}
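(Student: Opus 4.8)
The plan is to follow the pseudocycle arguments of Propositions~\ref{prop:pseu_basic} and~\ref{prop:tanpseud}, with the SFT compactness theorem of~\cite{BEHWZ} in place of Gromov compactness. Since $m=c_1(A)-1$, formula~\eqref{eq:indell} gives $\ind\,\calM^{J,\simp}_{M\setminus\io(E_\sk),A}(\eta_m)=0$, so it suffices to prove (i) this moduli space is compact, hence a finite signed set of points, and (ii) for two admissible choices of $(J,\io,\eps)$ the resulting counts are related by an oriented cobordism. Throughout we keep $s_2>c_1(A)-1$, so that $E_\sk$ is $A$-skinny and Lemma~\ref{lem:skin} applies.

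For compactness, I would take a sequence in the moduli space and apply SFT compactness to extract a limiting holomorphic building: a top level, which is a (possibly nodal) punctured genus-zero curve in the completion of $M\setminus\io(E_\sk)$, together with finitely many levels in the symplectization $\R\times\partial E_\sk$, all fitting together along matching Reeb orbits with a single outermost negative end. For generic admissible $J$ every somewhere injective component is regular; as in Proposition~\ref{prop:pseu_basic}, each multiply covered or repeated component is replaced by its underlying simple curve without changing the building's image, and semipositivity bounds the relative first Chern numbers of the simple pieces. Three mechanisms then rule out any genuine degeneration. First, Lemma~\ref{lem:skin} shows that any component with a negative end on a long orbit has index $\le-2$, hence does not occur for generic $J$ (nor in a generic $1$-parameter family). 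Second, using the index formula~\eqref{eq:Find} together with $c_1(A)-m=1$: if the unique planar component carrying the negative end were a $\kappa$-fold cover, its underlying simple curve would be a plane in class $A/\kappa$ with negative end on $\eta_{m/\kappa}$ and index $\tfrac2\kappa(c_1(A)-m)-2=\tfrac2\kappa-2<0$ when $\kappa\ge2$, contradicting regularity. Third, summing~\eqref{eq:Find} over the simple pieces of a genuinely broken or nodal configuration (each regular, hence of nonnegative index, with the symplectization levels contributing their reduced dimension) forces the total index to drop below $0$, which is impossible. Hence the building reduces to the original plane and the moduli space is compact.

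Independence of $J$ is then the standard cobordism argument: connect two generic admissible $J_0,J_1$ by a generic path $\{J_t\}_{t\in[0,1]}$, form the $1$-dimensional parametrized moduli space $\bigcup_t\calM^{J_t,\simp}_{M\setminus\io(E_\sk),A}(\eta_m)$, and run the same index bookkeeping, now with all expected dimensions raised by one; the identity $c_1(A)-m=1$, semipositivity and Lemma~\ref{lem:skin} still force every would-be once-broken stratum to have negative virtual dimension, so the parametrized moduli space is a compact oriented $1$-manifold with boundary $\calM^{J_0,\simp}(\eta_m)\sqcup(-\calM^{J_1,\simp}(\eta_m))$, and counting boundary points with sign gives the equality. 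Independence of $\eps$ is essentially formal: enlarging the removed ellipsoid from $E(\eps'\vec s)$ to $E(\eps\vec s)$ (with $\eps'<\eps$) only pre-glues a compact collar of the symplectization, so the two completions are canonically identified and the counts literally agree. For independence of $\io$, I would first use $\eps$-independence to shrink until $\io_i(E_\sk)$ lies in a Darboux chart near $\io_i(0)$; since $M$ is connected the points $\io_0(0),\io_1(0)$ are joined by a path, along which the tiny ellipsoid can be transported through Darboux charts, and inside a single chart any two symplectic embeddings of a sufficiently small ellipsoid are symplectically isotopic (shrinking toward the origin reduces this to connectedness of the linear symplectic group and a Moser argument). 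This yields a path $\{\io_t\}$ of embeddings together with a compatible family of admissible almost complex structures, whose parametrized moduli space provides the required cobordism as in the $J$-independence step.

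I expect the main obstacle to be the breaking analysis in the SFT compactification: one must certify that the limit of a sequence in a rigid (or $1$-dimensional) moduli space contains no hidden nontrivial symplectization level and no nodal breaking of the top-level curve. The delicate point is that multiply covered components are not regular, so the argument must proceed purely through the index inequality $\ind(\text{underlying simple})\le\ind(\text{cover})$, semipositivity, and the bound of Lemma~\ref{lem:skin}; it is precisely the equality $c_1(A)-m=1$ that makes each such estimate strict enough to conclude. A secondary point, handled exactly as in~\cite{JHOL} and with the orientation conventions already in force for $\calM^{J,\simp}_{M\setminus\io(E_\sk),A}(\eta_m)$, is keeping orientations coherent along the cobordisms.
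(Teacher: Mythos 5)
Your overall strategy (SFT compactness, index bookkeeping, then a $1$-parameter cobordism for invariance) matches the paper's, and your treatment of long orbits via Lemma~\ref{lem:skin}, of the single-cover-plane case via the equality $c_1(A)-m=1$, and of $\eps$-independence by rescaling are all essentially the paper's arguments. The $\io$-independence step you give via a parametrized moduli space is a slight detour; the paper simply observes that after shrinking $\eps$ the two embeddings are Hamiltonian isotopic, giving a symplectomorphism of completions that identifies the two moduli spaces outright, with no cobordism needed.

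The genuine gap is in what you call the ``third mechanism.'' Ruling out multilevel buildings cannot be reduced to the slogan that ``summing~\eqref{eq:Find} over the simple pieces forces the total index to drop below $0$,'' for two reasons. First, the component carrying $\eta_m$ in a broken building is not a plane: it has one unpaired negative end plus some number of paired ends feeding into symplectization or lower levels, so your ``second mechanism'' (which analyzes a $\kappa$-fold cover of a plane) does not apply to it; one needs the general multi-end estimate $\ind(C)\ge (4-2n)(b-1)$ of Lemma~\ref{lem:index_bd}, which for $n>2$ is negative when $b>1$ and therefore must be balanced against positive contributions from elsewhere. Second, the closed-sphere components $S_i$ attached by nodes each carry an overhead of $2n-6$ in their Fredholm index that does not appear in the index of the total building, so the correct statement is that the \emph{formal} sum $\ind(C_0)+\sum_i\ind(C_i)+\sum_i\ind(S_i)$ must equal $a(2n-6)$ (where $a$ is the number of sphere components), not $0$. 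The paper's proof hinges on the chain of Lemmas~\ref{lem:index_bd}, \ref{lem:mc0}, \ref{lem:mc1}, \ref{lem:mc2} establishing that this formal sum is $\ge a(2n-6)$, with equality only when all symplectization levels consist of trivial cylinders and all $S_i$ are unbroken, at which point SFT stability is violated. Without this matched-component decomposition and the accompanying lower bounds, the claim that nontrivial buildings are excluded has not actually been proved; the case $n>2$ in particular is not a direct ``total index $<0$'' argument because sphere components of vanishing Chern number have strictly positive Fredholm index $2n-6$ and the crucial cancellation against the $(4-2n)(b-1)$ term in Lemma~\ref{lem:index_bd} is exactly what the paper's lemmas are designed to control.
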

\NI Note that we could also formulate a more general statement involving additional marked points and pseudocycles as in \S\ref{sec:tangency}, but for simplicity we only consider the index $0$ case here.

\MS

In the following, we will make heavy use of the SFT compactness theorem (see \cite{BEHWZ} for more details).
In the case of punctured pseudoholomorphic curves in $M \setminus \io(E_\sk)$, the compactification adds pseudoholomorphic buildings  in a many leveled ambient space 
with a  main level  $M \setminus \io(E_\sk)$ plus one or more  symplectization levels $\mathbb{R} \times \bdy E_\sk$.   
For two consecutive levels, the negative Reeb orbit asymptotics of the part of the building  in the upper level are paired with the positive Reeb orbit asymptotics of that in the lower level.  
The reader should keep in mind that the curves in each level can potentially be disconnected and/or nodal (although for us typically the total topological type of the building will be connected and genus zero).
In addition to the usual stability condition appearing in the definition of the stable map compactification for closed curves, the buildings arising in the SFT compactification are forbidden from having any symplectization levels consisting entirely of trivial cylinders.\footnote{Recall that a {\em trivial cylinder} in a symplectization $\R \times Y$ is an index zero pseudoholomorphic cylinder of the form $\R \times \gamma$, where $\gamma$ is a Reeb orbit in $Y$.}
Terminologically, we will say that a genus zero curve is ``connected'' if the domain parametrizing it is connected but possibly nodal, ``smooth'' if its domain is without nodes, and ``irreducible'' if its domain is both connected and smooth.
By default, ``component'' of a curve means an irreducible component.

For the purpose of index arguments, it will also be convenient to adopt the terminology of {\bf matched components} used for example in \cite[Def~3.3.2]{Ghost}.
Namely, in the context of a pseudoholomorphic building, a matched component is the result after formally gluing together some collection of curve components lying in various levels along a collection of paired ends.
A matched component naturally has an overall domain which is a punctured Riemann surface and is required to be smooth and connected (although it does not have a well-defined conformal structure unless we further pick gluing parameters). 
We define the index of a matched component to be simply the sum of the (Fredholm) indices of each of the constituent curve components.  
The index of a matched component can computed as though it were an honest irreducible pseudoholomorphic curve in a single level with same overall topological type, homology class, and Reeb orbit asymptotics.   
The key point here is that each term in \eqref{eq:Find} is appropriately additive under such matching.
The same remark applies to the {\bf energy}\footnote{Here we using the same conventions for the energy of punctured curves as in \cite[\S3.1]{HSC}. In particular, the energy of a pseudoholomorphic curve in a symplectization only measures the variation in the contact slice direction, and it vanishes for branched covers of trivial cylinders.}
of a matched component.

By way of terminology, a matched component in a symplectization $\R \times Y$ is a matched component as above whose underlying building consists of with one or more levels, each lying in the symplectization $\R \times Y$. Similarly, a matched component in a symplectic cobordism $X$ is by definition a matched component whose underlying building consists of some number (possibly zero) of levels in the symplectization $\R \times \bdy^- X$, a ``main'' level in $X$, and some number (possibly zero) of levels in the symplectization $\R \times \bdy^+ X$.
By default, when we refer to the positive or negative ends of a matched component we mean only those unpaired ends not participating in the formal gluing.

\sss

In this paper we are only considering pseudoholomorphic curves which satisfy a homogeneous version of the Cauchy--Riemann equation, i.e. without the Hamiltonian perturbations or virtual perturbations typically encountered in SFT or Floer theory.
In this setting we can still apply the SFT compactness theorem to produce compactified moduli spaces of curves, but these will not typically be tranversely cut out for a generic almost complex structure, meaning that various boundary strata might appear with higher-than-expected dimension. In what follows, we will rule out such occurrences (at least in a semipositive context) by careful index arguments. 
Similar to \S\ref{sec:tangency}, the basic idea is to use the fact that simple curves are indeed regular for generic $J$, and
although multiply covered curves with negative index do potentially appear,
we can assume their underlying simple curves have nonnegative index. 

We will also utilize the neck stretching procedure from SFT to decompose symplectic cobordisms into simpler pieces (see \cite[\S3.4]{BEHWZ} for details).
Recall that, given symplectic cobordisms $X^+,X^-$ such that the positive contact boundary of $X^-$ and the negative contact boundary of $X^+$ are both identified with a fixed contact manifold $Y$, we can glue along this common contact boundary to construct the concatenated  
 symplectic cobordism $X^- \circledcirc X^+$ from the positive boundary of $X^+$ to the negative boundary of $X^-$.
\footnote{Technically, in order to say that the energy of curves decomposes as expected we should work with contact manifolds with fixed contact forms; see \cite[\S3.1]{HSC} for a more detailed treatment. Notice also that we use the notation $X^- \circledcirc X^+$ to denote the glued (single level) manifold with  negative end  $X^-$ and positive end $X^+$, while
$X^- \notccirc X^+$ denotes the two (or more) level  structure   in which the manifolds $X^-$ and $X^+$ form separate levels.
}
Conversely, given a symplectic cobordism $X$ containing a separating contact hypersurface $Y$, we can split along $Y$ to obtain two symplectic cobordisms $X^+,X^-$. 
Roughly, neck stretching along $Y$ proceeds by defining a one-parameter family of almost complex structures $J_t$ on $X$, $t \in [0,1)$, such that in the limit as $t \rightarrow 1$ curves are forced to degenerate into pseudoholomorphic buildings in the split symplectic cobordism. 
In a context where transversality holds, one expects to get a cobordism of moduli spaces relating curves in $X$ to split buildings in $X^- \notccirc X^+$, i.e. buildings with levels in $X^-$ and $X^+$ and also possibly some number of intermediate levels in the relevant symplectizations $\R \times \bdy^+X^+$, $\R \times Y$, $\R \times \bdy^-X^-$. 
In our setting without perturbations, neck stretching can often give rise to degenerations with dimension that is higher than expected. At any rate, given regular curves in $X^-$ and $X^+$ with paired Reeb orbit asymptotics, we can perform a standard gluing\footnote
{
See for example \cite[Thm.2.54]{Pardcnct} for a precise formulation of the needed regularity assumptions and for a proof.
}
 along cylindrical ends to produce $J_t$-holomorphic curves in $X$ for all $t$ sufficiently close to $1$. 
In \S\ref{ss:same} we will also consider more general obstruction bundle gluing problems which involve curves in $X^-$ and $X^+$ along with a branched cover of a trivial cylinder in $\R \times Y$.

\MS

\begin{lemma}\label{lem:index_bd}
Let $C$ be a connected rational curve in $M \setminus \io(E_\sk)$  that is holomorphic for a generic $J$
and has negative ends $\eta_{m_1},...,\eta_{m_b}$ for some $b,m_1,...,m_b \geq 1$.
Then we have $\ind(C) \geq (4-2n)(b-1)$. 
 In particular, $\ind(C) \ge 0$ if $b=1$.
\end{lemma}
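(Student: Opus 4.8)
The plan is to analyze $C$ component by component using the index formula \eqref{eq:Find} together with semipositivity. First I would decompose $C$ into its irreducible components: write $C = C_0 \cup C_1 \cup \dots \cup C_s$, where the $C_i$ are the irreducible components of the (possibly nodal) rational curve. Since $C$ is rational and connected, the dual graph is a tree, so if there are $s+1$ components there are exactly $s$ nodes. Each node contributes $2-2n$ to the total index (the standard gluing/normal-crossing contribution, exactly as in \cite[Thm.6.2.6]{JHOL}), so $\ind(C) = \sum_{i=0}^s \ind(C_i) + s(2-2n)$, where each $\ind(C_i)$ is computed via \eqref{eq:Find} with the nodal points counted among the "ends" appropriately — or, more cleanly, I would first reduce to the case where $C$ is a matched component (smooth domain) by absorbing nodes, and argue directly on a curve with smooth connected rational domain and $b$ negative ends on the short orbits.

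Next, the key input: for generic $J$, every simple component is regular, hence has nonnegative index; and for a multiply covered component, its underlying simple curve (after forgetting extra structure) has nonnegative index by semipositivity — this is the same mechanism used in the proof of Proposition~\ref{prop:pseu_basic} and Proposition~\ref{prop:tanpseud}, where one uses that $c_1$ of the underlying class is nonnegative. Combined with the fact that the short Reeb orbits $\eta_m$ have Conley--Zehnder index $\cz(\eta_m) = n-1+2m \geq n+1 > 0$ (by \eqref{eq:CZind}, using skinniness so that $s_2 > m$ for all relevant $m$), subtracting a negative end from a curve of smooth domain changes $\chi$ by $-1$ (contributing $3-n$ to the index) and subtracts $\cz(\eta_m) \geq n+1$. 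So passing from a plane to a curve with $b$ negative ends, relative to the "one end" situation, costs at least $(b-1)\bigl((n-3) + (n+1)\bigr)$... which is the wrong sign. The correct bookkeeping is the reverse: I want a \emph{lower} bound, so I would instead argue that a smooth rational curve with $b$ ends on short orbits, all of whose components (or underlying simple components) have nonnegative index, must itself have index bounded below. The cleanest route: induct on $b$. For $b=1$ this is Proposition-level (a simple rational plane is regular, index $\geq 0$; a multiply covered one covers a simple plane of nonnegative index, and one checks the cover has index $\geq 0$ too using $\cz$ parity and the ellipsoid index formula, as in the proof of Proposition~\ref{prop:tanpseud}). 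For the inductive step, I would locate a component $C_j$ carrying at least one negative end, split the tree at a node adjacent to $C_j$, apply the formula $\ind(C) = \ind(C_j) + \ind(C') + (2-2n)$ where $C'$ is the rest (with one fewer component and at most $b-1$... more precisely $b - (\text{ends on }C_j)$ ends), and use $\ind(C_j) \geq (4-2n)(b_j - 1)$ for the number $b_j$ of ends on $C_j$ — which for a single component follows from \eqref{eq:Find}, semipositivity, and $\cz(\eta_m) \leq n-1 + 2m$ pushed the right way, together with $c_1 \geq 0$ of the underlying simple class.

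The main obstacle I expect is handling multiply covered components cleanly: a component of $C$ may be a multiple cover of a trivial cylinder piece or of a simple curve, and for such covers one does not have regularity, only nonnegativity of the index of the underlying simple curve. I would deal with this exactly as in Proposition~\ref{prop:tanpseud}: replace each multiply covered component by its underlying simple curve, use that $c_1$ of the underlying class is nonnegative (semipositivity) so that the index does not increase under this replacement, and use the parity constraint from \eqref{eq:CZind}--\eqref{eq:CZindn} (all Reeb orbits of an ellipsoid have $\cz$ of the same parity) to rule out an index drop by $1$. Once every component is effectively simple and of nonnegative index, the tree-counting identity $\ind(C) = \sum \ind(C_i) + (\#\text{nodes})(2-2n)$ combined with the elementary combinatorial fact that a rational tree with $b$ total negative ends distributed among its components satisfies $\sum_i (b_i - 1) + (\#\text{nodes}) = b - 1$ (where $b_i$ is the number of negative ends on $C_i$) yields $\ind(C) \geq \sum_i (4-2n)(b_i-1)$ ... which I would reconcile with the node contributions to land exactly on $(4-2n)(b-1)$. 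The case $b=1$ then gives $\ind(C) \geq 0$ immediately, as stated.
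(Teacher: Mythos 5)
Your proposal wanders between two settings that are not the same, and the core computation never gets carried out.

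First, a point of framing: in the paper, $C$ in this lemma is a single curve with smooth connected rational domain, possibly a multiple cover --- not a nodal configuration. (It is applied, in Lemmas~\ref{lem:mc0} and \ref{lem:mc1}, to a curve component $C_0$ in the main level.) You correctly note at one point that you ``would first reduce to the case where $C$ is a matched component (smooth domain) by absorbing nodes, and argue directly,'' but then you revert to the tree/node decomposition, which does not apply once the domain is smooth. The tree-counting identity $\sum_i (b_i - 1) + (\#\mathrm{nodes}) = b-1$ and the node contribution $(2-2n)$ are simply not relevant to this lemma.

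Second, and more seriously, your inductive plan is circular. The inductive step requires $\ind(C_j) \geq (4-2n)(b_j - 1)$ for a single irreducible component $C_j$, which you say ``follows from \eqref{eq:Find}, semipositivity, and $\cz(\eta_m)\leq n-1+2m$ pushed the right way.'' But that is precisely the statement of the lemma for a single irreducible curve with $b_j$ negative ends; there is nothing to induct on. The only thing the lemma ever needs is that single-component bound, and you never prove it --- you assert it in the middle of the argument. The earlier attempt you flag as ``the wrong sign'' is symptomatic of the same gap: you tried to control the index by adding ends one at a time, but $c_1(A)$ is not fixed as you do so, so this bookkeeping cannot close on its own.

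The paper's proof is a single direct manipulation with no induction and no nodal decomposition: write $C$ as a $\kappa$-fold cover of its underlying simple curve $\ovl{C}$ with $\ovl{b}$ negative ends, use the index formula \eqref{eq:Find} to express
$$\ind(C) = \kappa\,\ind(\ovl{C}) + (2n-6)(1-\kappa) + (4-2n)(b-\kappa\ovl{b}),$$
invoke $\ind(\ovl{C})\geq 0$ (genericity of $J$ and simplicity of $\ovl{C}$ --- semipositivity is not needed here), and then use $\kappa\geq 1$, $\ovl{b}\geq 1$, $n\geq 2$ to reduce to $(4-2n)(b-\ovl{b})\geq (4-2n)(b-1)$. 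Note in particular that the final bound $(4-2n)(b-1)$ arises from $\ovl{b}\geq 1$, not from any count of nodes or from a per-end cost; that is the step your plan is missing.
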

 \begin{proof}
We
 suppose that $C$ is a $\kappa$-fold cover of its underlying simple curve $\ovl{C}$, which we can assume to have nonnegative index.
Suppose that $\ovl{C}$ has negative ends $\eta_{\ovl{m}_1},...,\eta_{\ovl{m}_{\ovl{b}}}$ for some $\ovl{b},\ovl{m}_1,...,\ovl{m}_{\ovl{b}} \geq 1$. Note that we have $\sum_{i=1}^b m_i = \kappa \sum_{i=1}^{\ovl{b}} \ovl{m}_i$.
Letting $A$ denote the homology class of $C$ (so that $A/\kappa$ is the homology class of $\ovl{C}$), we therefore have
\begin{align*}
\ind(C) &= (n-3)(2-b) + 2c_1(A) - \sum_{i=1}^b (n-1+2m_i)\\
&= (2n-6) + (4-2n)b + 2c_1(A) - 2\sum_{i=1}^b m_i\\
&= \kappa\, \ind(\ovl{C}) + (2n-6)(1-\kappa) + (4-2n)(b - \kappa \ovl{b})\\
&\geq 2n-6 + \kappa(6-2n-\ovl{b}(4-2n)) + (4-2n)b\quad \mbox{since }  \ind(\ovl{C})\ge 0\\
& = (\ka-1) (6-2n+ (2n-4)\ov b) -\ovl{b}(4-2n) + (4-2n)b\\ 
& \geq -\ovl{b}(4-2n) + (4-2n)b\\
& \geq (4-2n)(b-1),
\end{align*}
as claimed.
 \end{proof}

\begin{lemma}\label{lem:mc0}
Let $S$ be a matched component in $M \setminus \io(E_\sk)$
\footnote{That is, $S$ is given by formally gluing curves which live in the main level $M \setminus \io(E_\sk)$ and in some number of symplectization levels $\R \times \bdy E_\sk$. Since it has no punctures, it is closed and hence cannot lie entirely in $\R\times \p E_{sk}$.}
which is connected, genus zero, and without punctures,
 and that is holomorphic for a generic $J$.
 Then we have $\ind(S) \geq 2n-6$, and a fortiori $\ind(S) \geq 2n-2$ provided that $S$ does not consist of a single curve component in $M \setminus \io(E_\sk)$.
\end{lemma}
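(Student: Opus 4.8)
The plan is to reduce both inequalities to lower bounds on $c_1(A)$, where $A := [S] \in H_2(M;\Z)$, and to extract these from semipositivity together with the regularity of simple curves for generic $J$. Since $S$ is connected, genus zero and has no punctures, the index‑additivity for matched components recalled above gives $\ind(S) = 2n-6+2c_1(A)$, so the weak bound $\ind(S)\geq 2n-6$ is equivalent to $c_1(A)\geq 0$, and the sharp bound $\ind(S)\geq 2n-2$ (needed when $S$ is not a single component of the main level) is equivalent to $c_1(A)\geq 2$.

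First I would observe that $S$ must contain at least one component in the main level $M\setminus\io(E_\sk)$: a closed matched component built entirely from symplectization levels $\R\times\bdy E_\sk$ cannot exist, since every nonconstant finite‑energy curve in a symplectization has at least one positive puncture while $S$ has none. I would then split into two cases. If $S$ consists of a single main‑level component, it is a closed $J$‑holomorphic sphere in class $A$; passing to its underlying simple curve $\ov C$ (a $\kappa$‑fold cover for some $\kappa\geq 1$, in class $A/\kappa$) and using that $\ov C$ is regular for generic $J$ gives $0\leq\ind(\ov C)=2n-6+2c_1(A/\kappa)$, hence $c_1(A/\kappa)\geq 3-n$; since $[\om]\cdot(A/\kappa)>0$, semipositivity promotes this to $c_1(A/\kappa)\geq 0$, so $c_1(A)=\kappa\,c_1(A/\kappa)\geq 0$. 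This settles the first inequality.

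If instead $S$ is not a single main‑level component, then (matched components being smooth and $S$ connected) $S$ is assembled from at least two curve components matched along paired Reeb orbits, and every main‑level component $C$ of $S$ carries at least one negative end: main‑level components sit at the top of the building so all their ends are negative, and $C$ must be matched to the rest of $S$ along at least one of them. Passing to the simple curve $\ov C$ under $C$ --- still a main‑level curve, with $\ov b\geq 1$ negative ends on Reeb orbits $\ga_1^-,\dots,\ga_{\ov b}^-$ of $\bdy E_\sk$ --- regularity of $\ov C$ and \eqref{eq:Find}, together with the fact that every Reeb orbit of an ellipsoid has $\cz\geq n+1$ (immediate from \eqref{eq:CZindn}), yield
\begin{align*}
0\;\leq\;\ind(\ov C)\;=\;(n-3)(2-\ov b)+2c_1([\ov C])-\sum_{i=1}^{\ov b}\cz(\ga_i^-)\;\leq\;(n-3)(2-\ov b)+2c_1([\ov C])-\ov b\,(n+1),
\end{align*}
so $2c_1([\ov C])\geq 2\ov b\,(n-1)-2(n-3)\geq 4$ for every $\ov b\geq 1$ (and $n\geq 2$). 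Summing over the main‑level components of $S$, each of which contributes $\kappa_C\,c_1([\ov C])\geq 2$ to $A=\sum_C\kappa_C[\ov C]$ (symplectization components being nullhomologous), one obtains $c_1(A)\geq 2$, hence $\ind(S)\geq 2n-2$.

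The hard part will be the bookkeeping around multiply covered curves and branched covers of trivial cylinders in the symplectization levels: without abstract perturbations these can appear with excess dimension, so the argument must consistently pass to underlying simple curves before invoking regularity --- exactly as in the proof of Lemma~\ref{lem:index_bd} --- and one must check that this reduction is compatible with the matching structure and with the homological identity $A=\sum_C\kappa_C[\ov C]$. The claim that every main‑level component of $S$ carries a negative end (in the second case) also deserves a careful statement: it rests on the facts that $M\setminus\io(E_\sk)$ has no positive contact boundary, so main‑level components have only negative ends, and that a main‑level component with no ends would be a closed sphere, necessarily disconnected from the rest of $S$.
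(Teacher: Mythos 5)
Your proof is correct, and it takes a genuinely different route from the paper's in the second (nontrivial) case. The paper fixes one main-level component $C_0$ with $b\geq 1$ negative ends, decomposes the rest of $S$ into $b$ plane-like matched components $D_1,\dots,D_b$ hanging off those ends, bounds $\ind(C_0)\geq (4-2n)(b-1)$ via Lemma~\ref{lem:index_bd}, bounds each $\ind(D_i)\geq (n-3)+ (n-1+2m_i)$ from \eqref{eq:Find} together with $c_1\geq 0$ from semipositivity, and adds everything to get $\ind(S)\geq 2b-4+2n\geq 2n-2$. You instead observe that a closed connected matched sphere satisfies $\ind(S)=2n-6+2c_1(A)$, that $c_1(A)$ is the sum of the $c_1$'s of the main-level components (symplectization pieces having $c_{\tau_\ex}=0$), and that the underlying simple curve of any main-level component carries $\ov b\geq 1$ negative ends on $\bdy E_\sk$, whence regularity together with $\cz\geq n+1$ forces $c_1\geq 2$; summing gives $c_1(A)\geq 2$ directly. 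Your argument sidesteps Lemma~\ref{lem:index_bd} entirely and replaces the per-component index bookkeeping with a single homological inequality, which is arguably more transparent; the paper's approach yields the slightly sharper intermediate bound $\ind(S)\geq 2b-4+2n$ in $b$, though this is not needed for the lemma. Both arguments rely on the same two ingredients (regularity of simple curves for generic $J$, and the Conley--Zehnder floor $\cz\geq n+1$ on an ellipsoid), and your passage to underlying simple curves and the accompanying homological identity $A=\sum_C\kappa_C[\ov C]$ are handled correctly.
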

\begin{proof}
If $S$ is a single curve component in $M \setminus \io(E_\sk)$, then we have
$$
\ind(S) = 2n-6+2 c_1(A) \geq 2n-6
$$
since $J$ is generic and $(M,\om)$ is  semipositive.\footnote
{
The semipositivity condition in \cite[Def.6.4.1]{JHOL} states that if $A$ is a  spherical class with $n-3+ c_1(A)\ge 0$ then $c_1(A)\ge 0$. Note that classes with $n-3+ c_1(A)< 0$ are not represented for generic $J$.}
Otherwise, let $C_0$ be one of the curve components of $S$ that lie in  
$M \setminus \io(E_\sk)$ and suppose it has $b\ge 1$ negative ends of multiplicities $m_1,...,m_b$.  Then all the other components of $S$ may be grouped into $b$ matched components,  $D_1,...,D_b$, each of which is topologically a plane with exactly one positive end that is attached to $C_0$. We  calculate $\ind(D_i)$ by
\eqref{eq:Find} using the fact that the Conley--Zehnder indices of all the interior matched ends  of $D_i$ cancel. 
Note also that if  $A'$ is the class represented by any part of $D_i$ that lies in $M \setminus \io(E_\sk)$
then we must have $c_1(A')\ge 0$ by semipositivity since this part of $D_i$ can always be capped off to  a sphere.
Hence, by
Lemma~\ref{lem:index_bd}, we have
\begin{align*}
\ind(S) &= \ind(C_0) +{\textstyle  \sum_i} \ind(D_i)\\
&\geq (4-2n)(b-1) + {\textstyle (n-3)(\sum_{i=1}^b \chi(D_i))  + \sum_{i=1}^b(n-1+2m_i)}\\
&\geq (4-2n)(b-1)  + (n-3)b  + b(n+1)\quad \mbox{since } m_i\ge 1\\
&= 2b -4+2n \\
&\geq 2n-2 \quad \mbox{since } b\ge 1.
\end{align*}
This completes the proof.
\end{proof}

\begin{lemma}\label{lem:mc1}
Let $C$ be a matched component in $M \setminus \io(E_\sk)$
 which is  $J$-holomorphic for generic $J$ and has genus zero.  Assume further that
 it has exactly one negative end $\eta_m$, which in addition
 lies on a curve component in the main level. 
Then we have $\ind(C) \geq 0$,
and the inequality is strict  
whenever
the curve components of $C$ that lie in the main level 
have at least one end other than $\eta_m$.
\end{lemma}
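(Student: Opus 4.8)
The plan is to argue exactly as in the proof of Lemma~\ref{lem:mc0}, decomposing $C$ along the main-level curve component that carries its unique negative end and bounding the Fredholm index of each piece of the decomposition, using that the index formula \eqref{eq:Find} is additive under matching.

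Let $C_0$ be the curve component of $C$ lying in $M\less\io(E_\sk)$ on which the negative end $\eta_m$ lies, and suppose $C_0$ has $b\ge 1$ negative ends, namely $\eta_m$ together with $\eta_{k_1},\dots,\eta_{k_{b-1}}$. Since $C$ is connected, genus zero, and has $\eta_m$ as its only free end, removing $C_0$ from the (tree-like) building breaks the remaining components into $b-1$ matched components $S_1,\dots,S_{b-1}$, where $S_j$ is attached to $C_0$ along the end $\eta_{k_j}$; each $S_j$ has genus zero, exactly one positive end (asymptotic to $\eta_{k_j}$), and no free negative end, so the overall domain of $S_j$ has Euler characteristic $1$. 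Additivity of \eqref{eq:Find} gives $\ind(C)=\ind(C_0)+\sum_{j=1}^{b-1}\ind(S_j)$, and evaluating the same formula on $C$ itself (genus zero, one negative end) gives $\ind(C)=2c_1([C])-2-2m$.

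A preliminary point, used repeatedly, is that every $J$-holomorphic curve component $v$ appearing in the main level $M\less\io(E_\sk)$ has non-negative first Chern number of its homology class: such a $v$ has at least one negative end (it cannot be a closed component without disconnecting the connected curve $C$), its underlying simple curve $\ovl v$ is regular for generic $J$, and every Reeb orbit of $\bdy E_\sk$ has Conley--Zehnder index at least $n+1$ with respect to $\tau_\ex$ by \eqref{eq:CZind}--\eqref{eq:CZindn}; feeding these into $\ind(\ovl v)\ge 0$ forces $c_1([\ovl v])\ge 2$, hence $c_1([v])\ge 2$. In particular $c_1([C])\ge c_1([C_0])$ (the other main-level components contribute non-negatively), and the underlying simple curve $\ovl C_0$ of $C_0$ satisfies $c_1([\ovl C_0])\le c_1([C])$, so by Lemma~\ref{lem:skin} $\ovl C_0$, and therefore $C_0$, has all its ends on the short orbits $\eta_k$. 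Consequently Lemma~\ref{lem:index_bd} applies to $C_0$ and gives $\ind(C_0)\ge(4-2n)(b-1)$. For each cap $S_j$, its positive end $\eta_{k_j}$ is short with $\cz(\eta_{k_j})=n-1+2k_j\ge n+1$, the overall domain of $S_j$ has $\chi=1$, and $c_1([S_j])\ge 0$ (its main-level components have $c_1\ge 0$ and its symplectization-level components are null-homologous in $M$), so $\ind(S_j)=(n-3)+2c_1([S_j])+\cz(\eta_{k_j})\ge 2n-2$. Summing, $\ind(C)\ge(4-2n)(b-1)+(b-1)(2n-2)=2(b-1)\ge 0$.

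It remains to pin down the strict case. Equality in $\ind(C)\ge 2(b-1)$ forces $b=1$; then no cap is attached to $C_0$, so $C=C_0$ and the only main-level curve component of $C$ is $C_0$, whose single end is $\eta_m$. Thus if some curve component of $C$ in the main level has an end other than $\eta_m$ we necessarily have $b\ge 2$ and hence $\ind(C)\ge 2>0$. The part I expect to require the most care is the bookkeeping for multiply covered components --- relating the index, first Chern number and end multiplicities of such a component to those of its underlying simple curve, exactly as in the proof of Lemma~\ref{lem:index_bd} --- together with the verification (via Lemma~\ref{lem:skin}) that no component carries a long orbit as an end; once these are settled the index arithmetic above is routine.
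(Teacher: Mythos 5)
Your proof is correct and follows essentially the same route as the paper: identify the main-level curve component $C_0$ carrying the unique negative end $\eta_m$, apply Lemma~\ref{lem:index_bd} to $C_0$ (after using Lemma~\ref{lem:skin} plus genericity to rule out long-orbit ends), decompose the rest of $C$ into $b-1$ plane-like matched caps attached to the other ends of $C_0$, bound each cap's index from below via $c_1\ge 0$ and $\cz(\eta_{k_j})\ge n+1$, and sum to get $\ind(C)\ge 2(b-1)$. The only cosmetic difference is that you front-load a preliminary claim $c_1\ge 2$ for main-level components (the paper uses the weaker $c_1\ge 0$, invoking semipositivity more casually), but this changes nothing in the estimate.
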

\begin{proof}
Suppose that the negative end of $C$ is $\eta_m$ and lies on a curve component $C_0$ in $M \setminus \io(E_\sk)$. 
We suppose that $C_0$ has $b$ negative ends in all.  Notice that the ``skinny'' assumption implies that all of these ends must lie on a short orbit of $\p E_{\sk}$. 
Indeed, otherwise the underlying simple curve still has a negative end on a long orbit and hence has negative index by Lemma~\ref{lem:skin}, contradicting the fact that $J$ is generic. 
If $b=1$ then $\ind(C_0) = \ind(C) \geq 0$ by Lemma~\ref{lem:index_bd}.
 Otherwise,
 all the other ends of $C_0$ are attached to matched components, say $D_1,\dots, D_{b-1}$  
which, as in Lemma~\ref{lem:mc0}, must each be a plane and have positive ends of multiplicities $m_1,\dots,m_{b-1}$.
  Therefore,    
by  Lemma~\ref{lem:index_bd}, we have
\begin{align*}
\ind(C) &\geq (4-2n)(b-1) + (n-3)(b-1) + \sum_{i=1}^{b-1}(n-1 + 2m_i)\\
&\geq (4-2n)(b-1) + (n-3)(b-1) + (b-1)(n+1)\\
&= 2(b-1) \geq 0,
\end{align*}
and the inequality is strict provided that $b>1$.
\end{proof}

\begin{lemma}\label{lem:mc2}
Let $C$ be a matched component of genus zero in the symplectization
 $\mathbb{R} \times \bdy E_\sk$ having one negative end, with all of its ends asymptotic to short orbits in $\bdy E_\sk$.
Then  $\ind(C) \geq 0$, with equality if and only if each component of $C$ is a trivial cylinder.
\end{lemma}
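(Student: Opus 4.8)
The plan is to bypass any case analysis on the level or nodal structure of $C$ and instead derive the bound from a single index identity. Since $C$ lies in the symplectization of $\bdy E_\sk \cong S^{2n-1}$ and all its ends are on short orbits, I would first note that $c_1(\ker\alpha) = 0$ in $H^2(\bdy E_\sk)$ and that $\tau_\ex$ extends over the spanning disk of $\eta_1$, hence over a capping disk (a branched cover of that disk) for each iterate $\eta_m$; it follows that the relative first Chern number of $C$ with respect to $\tau_\ex$ vanishes. Feeding this into the index formula \eqref{eq:Find} — which, as recalled above, is additive under the formal gluing defining a matched component — together with $\chi(C) = 1 - k$ for a genus-zero curve with one negative and $k$ positive punctures, and the values $\cz(\eta_m) = n-1+2m$ from \eqref{eq:CZind}, I would obtain after a one-line simplification (in which the dependence on $n$ cancels entirely)
\[
\ind(C) \;=\; 2(k-1) + 2\,\delta(C),\qquad \delta(C):=\sum_{p=1}^{k}a_p-b,
\]
where $\eta_{a_1},\dots,\eta_{a_k}$ are the positive asymptotics of $C$ and $\eta_b$ the negative one.

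Next I would invoke two standard facts about $\delta(C)$. By Stokes' theorem the integer $\delta(C)$ equals the total $d\alpha$-energy of $C$ divided by the action of $\eta_1$; as this energy is the sum of the (nonnegative) $d\alpha$-energies of the curve components making up $C$, we get $\delta(C)\ge 0$, with equality if and only if each component has zero $d\alpha$-energy, i.e. is a branched cover of a trivial cylinder, necessarily the one over $\eta_1$ (only iterates of $\eta_1$ are short). The same positivity forces $k\ge 1$, since a matched component with no positive end would have strictly negative total $d\alpha$-energy. These two observations give $\ind(C)=2(k-1)+2\delta(C)\ge 0$ at once.

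For the equality clause, $\ind(C)=0$ forces $k=1$ and $\delta(C)=0$. From $\delta(C)=0$ every component is a branched cover of $\R\times\eta_1$, so $C$ is a connected genus-zero branched cover of the cylinder $\R\times\eta_1$; having exactly one positive and one negative puncture, Riemann--Hurwitz gives branching number $k+1-2=0$, so $C$ is an unbranched cover of a cylinder, i.e. a single trivial cylinder $\R\times\eta_d$, and in particular each of its components is a trivial cylinder. The converse direction is immediate from additivity of the index and the fact that a trivial cylinder has index zero.

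I expect the only delicate point to be the bookkeeping behind the displayed identity: verifying that the matched ends contribute cancelling Conley--Zehnder and relative first Chern terms, and that $c_{\tau_\ex}(C)=0$ really does hold once the (possibly multiply covered) short orbits are capped compatibly with $\tau_\ex$. In contrast to the matched-component lemmas for curves in $M\setminus\io(E_\sk)$, no genericity of $J$ and no analysis of the level or nodal structure of $C$ is needed here: the one-negative-end hypothesis enters only through $\chi(C)=1-k$, and everything else is topological or follows from energy positivity.
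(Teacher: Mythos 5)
Your index identity $\ind(C)=2(k-1)+2\delta(C)$ is the same formula the paper derives from \eqref{eq:Find}, \eqref{eq:CZind}, and $c_{\tau_\ex}(C)=0$, and the positivity $\delta(C)\ge 0$ via energy additivity over the constituent components and Stokes is also exactly the paper's argument; so up to this point the two proofs coincide. The difference is in the equality case. The paper concludes by inducting down the levels of the building, balancing energies level by level to show each level is a single trivial cylinder; you instead invoke Riemann--Hurwitz once. That route is cleaner, but as written it is slightly imprecise: a matched component with several levels is \emph{not} literally a single branched cover of $\R\times\eta_1$, so "Riemann--Hurwitz gives branching number $k+1-2=0$" applied to $C$ as a whole is not a rigorous statement. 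The honest version of your argument is additivity of $\chi$ and of ramification over the tree: with $\delta(C)=0$ each constituent curve $C_i$ is a genus-zero branched cover of the simple trivial cylinder with ramification $r_i = k_i^+ + k_i^- - 2 \ge 0$, where $k_i^\pm$ are its numbers of positive/negative ends; since $\chi(C_i)=-r_i$ and $\sum_i \chi(C_i)=\chi(C)=1-k=0$, one gets $\sum_i r_i=0$, hence each $r_i=0$, so each $C_i$ has exactly one positive and one negative puncture and is therefore an unbranched cover of the simple trivial cylinder, i.e.\ a trivial cylinder over some $\eta_{d_i}$. That also disposes of the leap "so $C$ is a single trivial cylinder, and in particular each of its components is a trivial cylinder," which in your formulation does not follow without the additivity bookkeeping. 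With that repair the proposal is a valid alternative to the paper's induction on levels; otherwise it is the same proof.
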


\begin{proof}    
Suppose first that $C$  consists of a single curve component, with negative end $\eta_m$ and top ends  $\eta_{m_1},...,\eta_{m_a}$ for some $a,m_1,...,m_a \geq 1$. 
Using \eqref{eq:Find} one can check
$$
\ind(C) = 2(a-1 + \sum_{i=1}^a m_i - m),
$$
which we note is independent of $n$.
Furthermore, by nonnegativity of energy we have $\sum_{i=1}^a m_i \geq m$, and hence this index is strictly positive unless $a=1$ 
and $m_1= m$.
In this case, the energy of $C$ is zero, and this implies that $C$ is a (possibly branched) cover of a trivial cylinder.
In fact, 
since its domain has genus zero
$C$ is necessarily an unbranched cover, and hence it is the trivial cylinder over $\eta_m$.

If $C$ is a building with several levels,
the same argument applied to the whole building shows that it has nonnegative 
index.  Moreover, if it has zero index 
it must have a single positive end
of multiplicity  $m$.  In this case,  the top level of $C$ must consist of a single cylinder, because its unique negative end that is attached to 
the bottom level cannot have multiplicity  $<m$ (since the matched component that connects it to the lowest level has  nonnegative energy) or $> m$
(since the top level has nonnegative energy), and so must have multiplicity precisely $m$.  Working inductively down 
the levels of the building, we find that all levels must be cylinders, as claimed.   
\end{proof}

\begin{proof}[Proof of Proposition~\ref{prop:oneend}]
The setup is similar to the proof of Proposition \ref{prop:tanpseud},
except that since our curves are punctured we must replace Gromov's compactness theorem with the SFT compactness theorem \cite{BEHWZ}. 
To see that the count is finite 
when $J$ is generic,
we consider the compactification $\ovl{\calM}_{M\setminus \io(E_{\sk}),A}^J(\eta_k)$ provided by the SFT compactness theorem, and we need to show that  
this coincides with the uncompactified moduli space $\calM_{M\setminus \io(E_{\sk}),A}^{J,\simp}(\eta_k)$.
Let $C$ denote an element of $\ovl{\calM}_{M\setminus \io(E_{\sk}),A}^J(\eta_k)$.
Recall that in general $C$ could be a multilevel pseudoholomorphic building, with top level consisting of a punctured pseudoholomorphic curve (with domain possibly disconnected and/or nodal) in $M \setminus \io(E_{\sk})$, plus some number of levels in the symplectization $\R \times \bdy E_\sk$.
By Lemma~\ref{lem:skin} we may assume that the ends of every curve component lie on short orbits.

First, suppose that $C$ has a single level. That is, $C$ is a nodal curve in $M \setminus \io(E_\sk)$, with one component $C_0$ a plane in $M \setminus \io(E_\sk)$ with negative end $\eta_m$, and the remaining components spheres $S_1,...,S_r$ lying entirely in $M \setminus \io(E_\sk)$.
As in \S\ref{sec:tangency}, we can pass to the underlying simple configuration $\ovl{C}$, and it is easy to show using semipositivity that $\ind (\ovl{C}) \leq \ind(C)$.
Since each node increases the expected codimension by two, if $k \geq 1$ we must have $\ind(\ovl{C}) \leq -2$, so $\ovl{C}$ (and hence $C$) does not appear for generic $J$.  (Compare with the discussion concerning 
\eqref{eq:k=0}.)

Now suppose that $C$ is a building with two or more levels. 
Because the whole building has one (unpaired) negative end and  genus zero, we
 can formally glue the components of $C$ into matched components so that we have
\begin{itemize}
\item a  matched  component $C_0$ with all its constituent curves in $\R \times \bdy E_\sk$ 
and negative end $\eta_m$
 (as in Lemma~\ref{lem:mc2})
that has   $r \geq 1$ positive ends 
that are attached  to the negative boundary of the main level of $C$;
\item matched components $C_1,\dots,C_r$ in $M \setminus \io(E_\sk)$, each of which is topologically a plane with one negative end  on $C_0$ (as in Lemma~\ref{lem:mc1});
\item matched components $S_1,...,S_a$ in $\bigl(M \setminus \io(E_\sk)\bigr)$, each of which is an unpunctured sphere
(as in Lemma~\ref{lem:mc0}).
\end{itemize}

By \eqref{eq:Find}, and using the fact that the Euler characteristics of the curves  $C_i, i\ge 0,$ add to one, and that the index contributions from the top ends of $C_0$ are cancelled by those from the negative ends 
of the $C_i, i\ge 1,$, we have
\begin{align*}
&  \ind(C_0) + \sum_{i=1}^r \ind(C_i) + \sum_{j=1}^a \ind(S_j)  = (n-3)\chi(C_0) - 2m \\
&\qquad\qquad\qquad  + \sum_{i=1}^r \bigl((n-3)\chi(C_i) + 2c_1(A_i)\bigr) +  \sum_{j=1}^a \bigr(2(n-3) + 2c_1(A_j)\bigr)\\
& \qquad\qquad = (n-3) + 2c_1(A) -2m + 2a(n-3) = 2a(n-3)
\end{align*}
since the elements of 
$ \calM^{J,\simp}_{M \setminus \io(E_\sk),A}(\eta_m)$ have index zero.
On the other hand, according to the preceding lemmas we have
$$
\ind(C_0) + \sum_{i=1}^r \ind(C_i) + \sum_{j=1}^a \ind(S_j) \geq a(2n-6),
$$
with the inequality strict unless each $S_j$ is entirely contained in $M \setminus \io(E_\sk)$ and $C_0$ is entirely composed of trivial cylinders.
However, in this case the building $C$ violates the SFT stability condition.  Hence this case does not occur.

Next, to show that the count of curves in $\calM_{M\setminus \io(E_{\sk}),A}^{J,\simp}(\eta_m)$ is independent of $J$, 
suppose that $J_0$ and $J_1$ are two generic admissible almost complex structures on (the completion of)  ${M \setminus \io(E_{\sk})}$, joined by a generic $1$-parameter family $J_t$. 
As in the proof of Proposition \ref{prop:tanpseud}, we consider the associated $t$-parametrized moduli space 
$\calM_{M\setminus \io(E_{\sk}),A}^{\{J_t\},\simp}(\eta_m)$, which is a smooth one-manifold with boundary
$$ 
\calM_{M\setminus \io(E_{\sk}),A}^{J_1,\simp}(\eta_m) \bigcup \left(-\calM_{M\setminus \io(E_{\sk}),A}^{J_0,\simp}(\eta_m) \right).
$$
We claim that the SFT compactification $\ovl{\calM}_{M\setminus \io(E_{\sk}),A}^{\{J_t\}}(\eta_m)$ does not add anything new.
Indeed, a priori we must add pseudoholomorphic buildings consisting of a $J_t$-holomorphic top level in ${M \setminus \io(E_{\sk})}$ for some $t \in (0,1)$,
together with some number of symplectization levels in $\R \times \bdy E_\sk$. 
Since each $J_t$ is admissible as in Definition~\ref{def:admiss},
for each $t$  all symplectization levels appear with the same almost complex structure, and hence the previous index considerations still apply.
By genericity of the family $J_t$, we can also assume that all simple curves in $M \setminus \io(E_\sk)$ have index at least $-1$.
On the other hand, a closer inspection of the previous index argument in fact shows that any nontrivial building arising in $\ovl{\calM}_{M\setminus \io(E_{\sk}),A}^{\{J_t\}}(\eta_m)$ would have to involve an underlying simple component of index at most $-2$.
Hence we may conclude as before that such degenerations do not arise.

Now we consider the dependence on the parameter $\eps$.    Suppose that $0 < \wt{\eps} < \eps$ and that
$\wt{\io}$ is the restriction of $\io$ to $\wt{E}_{\sk}= E(\wt{\eps},\wt{\eps}s_2,...,\wt{\eps} s_n)$.
Then there is a natural symplectomorphism between the completions of ${M \setminus \wt{\io}(\wt{E}_{\sk})}$ and ${M \setminus \io(E_{\sk})}$,
under which an admissible almost complex structure $J$ pulls back to an admissible almost complex structure $\wt{J}$ on ${M \setminus \wt{\io}(\wt{E}_{\sk})}$.
This shows that counts of curves in $\calM_{M\setminus \io(E_\sk),A}^{J,\simp}(\eta_{m})$
and $\calM_{M\setminus \wt{\io}(\wt{E}_\sk),A}^{\wt{J},\simp}(\eta_{m})$ coincide.

Similarly, if $\io$ and $\wt{\io}$ are two symplectic embeddings of
skinny ellipsoids  $E(\eps,\eps s_2,...,\eps s_n)$ and
$E(\wt{\eps}, \wt{\eps} s_2,...,\wt{\eps} s_n)$ respectively, then $\io$ and $\wt{\io}$ are Hamiltonian isotopic after possibly shrinking both $\eps$ and $\wt{\eps}$ to some common sufficiently small value $\eps'$.
Then we have a symplectomorphism between the completions of ${M \setminus \wt{\io}(E_{\sk})}$ and ${M \setminus \io(E_{\sk})}$,
and hence the corresponding counts again coincide.
\end{proof}

\sss
We complete this subsection by discussing independence of the parameters $s_i$.  Since we are interested in counting curves of index zero with negative end on $\eta_m$, the proof of Lemma~\ref{lem:skin} shows that 
$E(\eps,{\eps s_2,\dots,\eps s_n})$ is skinny provided that $s_2>m$.

\begin{prop}\label{prop:s_indep}
For $m = c_1(A) - 1$, the count of curves in $\calM^{J,\simp}_{M\setminus \io(E_\sk),A}(\eta_m)$ is independent of the choice of  parameters $s_2< \dots < s_n$ provided that $s_2>m$.
\end{prop}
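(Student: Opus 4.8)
The plan is to adapt, almost verbatim, the $J$-independence argument from the proof of Proposition~\ref{prop:oneend}, replacing the path of admissible almost complex structures by a path in the space of shape parameters. Fix two admissible parameter vectors $(s_2,\dots,s_n)$ and $(s_2',\dots,s_n')$ with $1<s_2<\dots<s_n$ (resp.\ primed) and $s_2,s_2'>m$. Since the region $\{(s_2,\dots,s_n):m<s_2<s_3<\dots<s_n\}$ is convex, I would join the two vectors by a straight-line path $t\mapsto (s_2(t),\dots,s_n(t))$, $t\in[0,1]$, which stays inside this region and in particular has $s_2(t)>m$ for all $t$. I would not insist that the $s_i(t)$ stay rationally independent; at the finitely many times where they become rationally dependent the boundary contact form is Morse--Bott, which I return to below. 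Choosing $\eps>0$ small enough that every ellipsoid $E_t:=E(\eps,\eps s_2(t),\dots,\eps s_n(t))$ embeds symplectically into one fixed Darboux ball in $M$, one gets a smooth family of embeddings $\io_t:E_t\hookrightarrow M$ and hence a smooth family of completed symplectic manifolds $M\setminus\io_t(E_t)$.

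Next I would pick a generic $1$-parameter family $\{J_t\}$ of admissible almost complex structures and form the parametrized moduli space $\calM^{\{J_t\},\simp}_{M\setminus\io_t(E_t),A}(\eta_m):=\{(t,C):C\in\calM^{J_t,\simp}_{M\setminus\io_t(E_t),A}(\eta_m)\}$, which for generic $\{J_t\}$ is a smooth oriented $1$-manifold with boundary $\calM^{J_1,\simp}_{M\setminus\io_1(E_1),A}(\eta_m)\cup\bigl(-\calM^{J_0,\simp}_{M\setminus\io_0(E_0),A}(\eta_m)\bigr)$. The crux is to show its SFT compactification adds nothing, so that it furnishes a compact oriented cobordism between the $t=0$ and $t=1$ moduli spaces and the two signed counts agree. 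This is exactly where the hypothesis $s_2(t)>m$ does all the work: since $E_t$ is $A$-skinny for every $t$, and since the long-orbit index estimate of Lemma~\ref{lem:skin} persists in the Morse--Bott case (cf.\ Remark~\ref{rmk:skin} and \cite[Lemma~3.8]{HK}), every curve component in a limiting building has all of its ends on short orbits of $\partial E_t$. Therefore the index estimates of Lemmas~\ref{lem:index_bd}, \ref{lem:mc0}, \ref{lem:mc1} and \ref{lem:mc2} apply without change — they used only semipositivity, the Conley--Zehnder index formula \eqref{eq:CZind} for short orbits (valid for any $s_2>m$), and the exclusion of long orbits — and as in the proof of Proposition~\ref{prop:oneend} one concludes that any nontrivial building arising in the compactified parametrized moduli space would have to contain an underlying simple component of index at most $-2$, or else a symplectization level built entirely of trivial cylinders, violating the stability condition. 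For generic $\{J_t\}$ no such configuration occurs, so the parametrized moduli space is already compact, which finishes the argument.

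The main obstacle is the bookkeeping at the parameter values where $s_2(t),\dots,s_n(t)$ become rationally dependent, where the Reeb flow on $\partial E_t$ is degenerate: there one must either invoke the Morse--Bott version of the SFT compactness theorem together with the Morse--Bott index computation to check that long orbits are still ruled out and that the short-orbit index analysis is unaffected, or perturb the path locally to cross such walls while preserving $s_2(t)>m$. Apart from this point the proof is a routine reprise of the $J$-independence part of the proof of Proposition~\ref{prop:oneend}, and I would leave the remaining details to the reader.
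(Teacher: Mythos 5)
Your proposal takes a genuinely different route from the paper and, as written, has a gap that is more serious than the Morse--Bott bookkeeping you flag at the end.

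The paper does \emph{not} deform the shape parameters in a family. It fixes a smaller ellipsoid $\wt{E}_\sk$ inside both $E^1_\sk$ and $E^2_\sk$, proves independence of $\io$ and $\eps$ (already done in Proposition~\ref{prop:oneend}) so that it suffices to compare $\calM^{J,\simp}_{M\setminus\io^k(E^k_\sk),A}(\eta_m)$ with $\calM^{J,\simp}_{M\setminus\wt\io(\wt E_\sk),A}(\wt\eta_m)$ for each $k$, then stretches the neck along $\bdy E^k_\sk$ and identifies the two moduli spaces by a standard gluing argument. The decisive technical input is Lemma~\ref{lem:unique_cob_cyl}: there is a \emph{unique} cylinder in $E_\sk\setminus\wt E_\sk$ from $\eta_m$ to $\wt\eta_m$, proved via the higher-dimensional Siefring intersection theory of Moreno--Siefring. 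This entire step has no counterpart in your argument, which should already be a warning sign.

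The gap in your approach is that $J$-independence and $s$-independence are not analogous deformation problems, even though they look superficially similar. In the $J$-independence argument the target $\widehat{M\setminus\io(E_\sk)}$, the contact form $\alpha$ on $\bdy E_\sk$, and the asymptotic Reeb orbit $\eta_m$ are all \emph{fixed}; only the auxiliary almost complex structure moves, and the parametrized moduli space lives over a fixed Banach manifold of maps. In your proposal the geometry itself moves: as $t$ varies, the completed manifold $\widehat{M\setminus\io_t(E_t)}$, its cylindrical end $(-\infty,0]\times\bdy E_t$ with symplectization form $d(e^r\alpha_t)$, the Reeb flow, and the orbit $\eta_m^{(t)}$ to which the curves are asymptotic all change with $t$. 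There is no fixed ambient space in which ``the parametrized moduli space $\{(t,C)\}$'' lives, and the standard Fredholm and SFT-compactness package is set up for a fixed contact boundary and fixed asymptotic orbits, not a family of them. You would need a parametrized Fredholm theory over a family of completions with varying weighted Sobolev setup, uniform elliptic estimates as $t$ varies, and a family version of SFT compactness in which the symplectization levels themselves change with $t$. None of this is standard, and ``a routine reprise of the $J$-independence part of the proof of Proposition~\ref{prop:oneend}'' does not cover it. This is precisely the technical mess that the paper's cobordism approach — comparing each $E^k_\sk$ to a common smaller ellipsoid via an honest symplectic cobordism and a gluing map — is designed to avoid. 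The Morse--Bott crossings that you do discuss are a secondary difficulty layered on top of this one.

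If you do want to salvage the deformation idea, the way to make the statement invariant is essentially to rediscover the cobordism argument: any two nearby ellipsoids are related by a Liouville cobordism sitting inside $\C^n$, and the functoriality of the counts under such cobordisms is what the neck-stretching plus Lemma~\ref{lem:unique_cob_cyl} actually establishes. At that point you are doing the paper's proof.
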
 
\begin{proof}    Given any two sets of parameters $ s^k_2< \dots < s^k_n$ with $s^k_2>m$ for $k=1,2$, choose parameters $\wt{s_i}, \wt{\eps},  \eps$ so that
\begin{align*} 
m< \wt{s_2}<\dots < \wt{s}_n < \min(s^1_2, s_2^2),\quad  0 <\frac{m-1}{m} \eps < \wt{\eps} < \eps, 
\end{align*}
and then define
$$
E^k_\sk: = 
E^{2n}_\sk = 
E(\eps, \eps s^k_2,\dots,\eps s^k_n),\quad \wt{E}_\sk: = \wt{E}^{2n}_\sk =
 E(\wt{\eps},{\wt{\eps} \wt{s_2},\dots, \wt{\eps} \wt{s}_n}). 
$$
We view $\wt{E}_\sk$ as a subdomain of each $E^k_\sk$ in the obvious way, and denote by $\wt{\io}: \wt{E}_\sk \hookrightarrow M$ the restriction to $\wt{E}_\sk$ of the symplectic embedding $\io^k: E^k_\sk \hookrightarrow M$.
Since we saw above that the count is independent of the choice of $\io$ and $\eps$, 
the counts for $E^1_\sk$ and $E^2_\sk$ will be equal  provided that
 $$
 \calM^{J,\simp}_{M\setminus \io^k(E^k_\sk),A}(\eta_m) = \calM^{J,\simp}_{M\setminus \wt{\io}(\wt{E}_\sk),A}(\eta_m), \qquad k=1,2.
 $$

To prove this, we simplify notation by writing  $E_\sk: = E^k_\sk$, and
 consider the result of neck stretching along $\bdy E_\sk$, which we realize by a family of almost complex structures $J_t$, $t \in [0,1)$, on the symplectic completion of $M \setminus \io(\wt{E}_\sk)$.  
We choose $J_t$ so that both inside and below the neck region the noncompact symplectic divisors $D_i := \{z_i = 0\}$ are  holomorphic for $i=1,\dots,n$.  (This makes sense  
because these divisors  in $\Op(E_\sk)$ can be assumed invariant under neck stretching.) 
However, we assume that $J_t$ is otherwise generic so that all somewhere injective $J_t$-holomorphic curves are regular.
For clarity, we will denote the Reeb orbits of $\bdy E_\sk$ by $\eta_i$, $i \in \Z_{> 0}$ and the Reeb orbits of $\bdy \wt{E}_\sk$ by $\wt{\eta}_i$, $i \in \Z_{> 0}$.
 By arguing as in the proof of Proposition~\ref{prop:oneend} we find that the only possible limiting configurations as $t \rightarrow 1$ are two-level pseudoholomorphic buildings with
\begin{itemize}
\item top level in $M \setminus \io(E_\sk)$ consisting of a plane with negative end $\eta_m$
\item bottom level in $E_\sk \setminus \wt{E}_\sk$ consisting of a cylinder with positive end $\eta_m$ and negative end $\wt{\eta}_m$.
\end{itemize}
Indeed, the arguments involving formally gluing curve components work equally well if we replace the symplectization $\mathbb{R} \times \bdy E_\sk$ with the symplectic cobordism $E_\sk \setminus \wt{E}_\sk$. 
The only place where some care is needed is in the analog of Lemma~\ref{lem:mc1}, which is used to rule out index zero curves in $E_\sk \setminus \wt{E}_\sk$ with more than one positive end, since this involves an energy argument. 
Specifically, note that an irreducible rational curve in $E_\sk \setminus \wt{E}_\sk$ with positive ends $\eta_{m_1},\dots,\eta_{m_b}$ and negative end $\wt{\eta}_m$ has index
$$ 2(b-1) + 2(\sum_{i=1}^b m_i - m).$$
This is positive provided that we have $\sum_{i-1}^b m_i - m \geq 0$.
By nonnegativity of energy, we have
$$ 
\sum_{i=1}^b \eps m_i - \wt{\eps} m \geq 0,
$$
which implies that  $\sum_{i-1}^b m_i - m \geq 0$ 
since by assumption
 $\frac{\wt{\eps}}{\eps} > \frac{m-1}{m}$.

We now complete the argument as follows.  
The next lemma shows that
we may choose  $J$ so that all curves are regular and 
there  is a unique cylinder in $E_\sk \setminus \wt{E}_\sk$ with positive end $\eta_m$ and negative end $\wt{\eta}_m$.  By Pardon~\cite[Thm.2.54]{Pardcnct}, we can uniquely glue this cylinder to any given regular curve in $\calM^{J,\simp}_{M \setminus \io(E_\sk),A}(\eta_m)$ to get a curve in $\calM^{J_t,\simp}_{M \setminus \io(\wt{E}_\sk),A}(\wt{\eta}_m)$ for $t$ sufficiently close to $1$.  Therefore this establishes a bijection between $\calM^{J,\simp}_{M \setminus \io(E_\sk)}(\eta_m)$ and $\calM^{\wt{J},\simp}_{M \setminus \wt{\io}(\wt{E}_\sk)}(\wt{\eta}_m)$.
\end{proof}

\begin{lemma}\label{lem:unique_cob_cyl}
Let $J$ be a generic almost complex structure on the symplectic completion of the symplectic cobordism $E_\sk \setminus \wt{E}_\sk$
such that  the symplectic divisors 
$D_i:= \{z_i = 0\}$ are $J$-holomorphic for $i=1,\dots,n$. 
Then there is a unique $J$-holomorphic cylinder which is positively asymptotic to $\eta_m$ and negatively asymptotic to $\wt{\eta}_m$.
\end{lemma}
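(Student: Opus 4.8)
The plan is to write the cylinder down explicitly, to observe that every $J$-holomorphic cylinder with the prescribed asymptotics is regular for index reasons, and then to use positivity of intersections to show there is only one. Set $L := D_2 \cap \dots \cap D_n = \{z_2 = \dots = z_n = 0\}$. Since the $D_i$ are $J$-holomorphic and meet transversally, $L$ is a $J$-holomorphic submanifold, and $\calA := L \cap (E_\sk \setminus \wt{E}_\sk)$ is a $J$-holomorphic annulus whose two boundary circles are the short simple orbits $\eta_1 \subset \bdy E_\sk$ and $\wt\eta_1 \subset \bdy \wt{E}_\sk$. Its connected $m$-fold cover $C_0^{(m)}$ is again a cylinder (as $\pi_1(\calA) = \Z$) and is positively asymptotic to $\eta_m$ and negatively asymptotic to $\wt\eta_m$, which gives existence. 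Moreover, as in the proof of Lemma~\ref{lem:blow}, any cylinder $C$ with these asymptotics has genus zero and Fredholm index $0$ (cf.\ the index formula in the proof of Proposition~\ref{prop:s_indep}), so by Riemann--Roch its normal linearized operator has trivial kernel --- here one uses skinniness, which forces the normal winding along the ends to be small --- whence $C$ is regular and counts with sign $+1$. In particular the signed count of such cylinders equals their actual number.

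For uniqueness, the crucial step is to show that any $J$-holomorphic cylinder $C$ asymptotic to $\eta_m$ and $\wt\eta_m$ is contained in $L$. I would argue this one divisor at a time: for each $i = 2,\dots,n$, the total intersection number of $C$ with $D_i$ --- interior intersections together with the asymptotic contributions at the two ends, in the sense of Siefring's intersection theory (as employed in \S\ref{ss:skinhigh}) --- depends only on the relative homology class of $C$ and on the asymptotic data at its ends; since $C$ and $C_0^{(m)}$ carry identical asymptotic data and the completion of $E_\sk \setminus \wt{E}_\sk$ has vanishing $H_2$ (it retracts onto $\bdy E_\sk \cong S^{2n-1}$), this number equals the corresponding one for $C_0^{(m)}$, which is zero because $C_0^{(m)} \subset L \subset D_i$. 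As $C$ and $D_i$ are both $J$-holomorphic, every interior intersection contributes positively, and since the ends of $C$ lie on orbits contained in $D_i$ the asymptotic contributions are also nonnegative; hence all of them vanish, and the vanishing of the asymptotic contributions forces $C \subseteq D_i$. Intersecting over $i = 2,\dots,n$ gives $C \subseteq L$, so $C$ factors through the annulus $\calA$. A proper holomorphic map from a cylinder onto $\calA$ with ends the $m$-fold covers of $\bdy_\pm \calA$ is a branched cover of degree $m$; filling in the two punctures and applying Riemann--Hurwitz shows there is no interior branching, and an unbranched degree-$m$ cover of an annulus corresponds to the unique index-$m$ subgroup $m\Z \subset \pi_1(\calA) = \Z$. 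Hence $C = C_0^{(m)}$ up to reparametrization.

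The main obstacle is exactly this confinement statement $C \subseteq L$: interior positivity is classical, but the assertion that the \emph{asymptotic} intersection of an end lying on an orbit in $D_i$ is strictly positive unless $C$ is locally contained in $D_i$ is the delicate point, and relies on the (higher-dimensional) Siefring asymptotic analysis referenced in \S\ref{ss:skinhigh}. One can soften this: for the standard integrable structure $J_0$ the confinement is elementary, since $z_i|_C$ is then a genuine holomorphic function tending to $0$ at each puncture, and this yields a unique cylinder for $J_0$. Since, by the first paragraph and the compactness and index/energy estimates in the proof of Proposition~\ref{prop:s_indep}, the cylinders from $\eta_m$ to $\wt\eta_m$ form a compact regular $0$-manifold with nonnegative signed count for every generic $J$ making the $D_i$ holomorphic, and the space of such $J$ is connected, this count is locally constant, hence identically equal to its value $1$ at $J_0$.
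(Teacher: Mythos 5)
Your overall strategy for uniqueness --- use higher-dimensional Siefring intersection theory to confine $C$ to $L = D_2 \cap \dots \cap D_n$, then reduce to the elementary annulus case --- is exactly the paper's. The problem is the confinement step. You assert that the asymptotic contributions to the Siefring intersection of $C$ with $D_i$ are nonnegative, conclude they vanish, and then claim that ``the vanishing of the asymptotic contributions forces $C \subseteq D_i$.'' This last implication is not a theorem in the cited literature, and it is false in general: an end of a curve asymptotic to a Reeb orbit inside a $J$-holomorphic divisor can perfectly well have vanishing asymptotic intersection with that divisor (it approaches at the extremal winding dictated by the asymptotic operator) without the curve being contained in the divisor. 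What the paper actually does is sharper and more specific: it invokes the winding estimates of Moreno--Siefring (their Cor.~2.4) together with the computation that the \emph{normal} Conley--Zehnder index of $\eta_m$ in the skinny ellipsoid is $1$. Because that index is odd, the floor/ceiling asymmetry yields $\wind_{\tau}^+(C,D) \le \lfloor 1/2 \rfloor = 0$ and $\wind_{\tau}^-(C,D) \ge \lceil 1/2 \rceil = 1$, hence $\wind_\tau(C,D) \le -1$. Since $C_\tau \cdot D = 0$ (homologically) and $C \cdot D \ge 0$ by interior positivity of intersections whenever $C \not\subseteq D$, the relation $C_\tau \cdot D = C \cdot D - \wind_\tau(C,D)$ gives $0 \ge 0 - (-1) = 1$, a contradiction. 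You need this \emph{strict} bound, which comes from the skinny-ellipsoid CZ computation; mere nonnegativity of asymptotic contributions does not close the argument, and ``zero asymptotic contribution $\Rightarrow$ containment'' does not hold as a general principle.

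Two smaller points. First, your claim that the multiply covered cylinder $C_0^{(m)}$ is regular ``by Riemann--Roch'' with ``small normal winding'' is unconvincing in dimension $2n > 4$; the normal linearized operator is a rank-$(n-1)$ Cauchy--Riemann type operator over a cylinder and a vanishing theorem is not automatic --- though note the lemma as stated only claims uniqueness of the cylinder, not its regularity, so this is somewhat beside the point. Second, the alternative ``soften'' argument (compute at the integrable $J_0$, then use connectedness of the space of admissible $J$) is a genuinely different route, but it appeals to compactness and index/energy estimates ``in the proof of Proposition~\ref{prop:s_indep}''; since that very proof invokes the present lemma, you would need to extract those estimates independently and also rule out broken configurations in $1$-parameter families --- doable but not free, and not what the paper does.
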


\begin{proof}
The basic idea is to use positivity of intersections to argue that any $J$-holomorphic curve $C$ with the given ends must be entirely contained in the divisor $D = D_n$. This inductively reduces the problem to the two-dimensional case, where the result easily follows by branched cover considerations. 
Observe, however, that this argument is complicated by the fact that both $C$ and $D$ are noncompact, and hence there is no canonical homological intersection number between them.
Since $C$ is asymptotic to $D$, this necessitates a discussion of ``intersections at infinity'', and one expects a well-defined homological intersection number after specifying the behavior at infinity. However, since the behavior of $C$ at infinity is a priori unknown, we need a way of controlling its asymptotic behavior in terms of known quantities.

We resolve these issues using the higher dimensional analog of Siefring's intersection theory \cite{Sief,Sief2} for punctured pseudoholomorphic curves, as described in \cite{MorS}. 
If $C$ is not entirely contained in $D$ then we claim that by \cite[Thm.2.2]{MorS} the set $C\cap D$ is compact.
Since we are not precisely in the setting of that result, we explain in more detail why it applies.
That result concerns the asymptotic behavior of finite energy curves in a symplectization $(\R\times X, J)$ (where as always $J$ is $\R$-invariant near infinity) that is equipped with a $J$-holomorphic divisor $D$ that has the form $[R,\infty)\times V^+$ (resp. 
$(-\infty,R]\times V^-$) near the positive (resp. negative) end for suitable codimension two submanifolds $V^\pm$ of $X$.\footnote
{
The paper  \cite{MorS}  actually considers a more general situation in which the divisor $\Tilde V$ is suitably asymptotic to the products $[R,\infty)\times V^+, [-R,-\infty)\times V^-$.}
Our manifold $(M,J)$  is not a global symplectization, though it is such outside a compact set.  However, that suffices for  the proof since that involves analyzing the structure of a $J$-holomorphic curve of finite energy near its two ends that are assumed asymptotic to $D$.
The main result in \cite[Thm.2.2]{MorS} is that if $C$ is not entirely contained in $D$ then 
near infinity
it decays exponentially towards $D$; in fact its distance from $D$ is specified in their formula  (8) which does not vanish.  
Hence, as in \cite[Cor.2.3]{MorS}, 
it follows that  $C\cap D$ is compact.

Next note that because  both $D$ and $C$ are $J$-holomorphic each intersection of $C$ with $D$ counts positively.  Since $D$ is embedded, this is the easy case of positivity of intersections that follows, for example, from the local normal form for the curve $C$ given in \cite[Eq.(E.1.1)]{JHOL}. Therefore, if  $C$ is not entirely contained in $D$, we may conclude that $C \cdot D \geq 0$.

Now observe  that $D$ is naturally identified with the symplectic completion of $E^{2n-2}_\sk \setminus \wt{E}^{2n-2}_\sk$. Moreover, $\bdy E_\sk^{2n-2}$ naturally sits inside $\bdy E_\sk^{2n}$ as a contact submanifold, and the restriction of the contact structure $\xi$ of $\bdy E_\sk^{2n}$ to $\bdy E_\sk^{2n-2}$ naturally splits as $\xi^T \oplus \xi^N$. Here $\xi^T$ denotes the contact structure on $\bdy E_\sk^{2n-2}$ and $\xi^N$ denotes its symplectic orthogonal.
There is a natural trivialization $\tau$ of $\xi^N$ along $\eta_1$, coming from identifying it with the extra $\mathbb{C}$ factor of $E_\sk^{2n}$ compared to $E_\sk^{2n-2}$.
Using this trivialization, any Reeb orbit $\gamma$ in $$
\p E_\sk^{2n-2}\times \{0\}\subset  \p E_\sk^{2n}
$$
has a {\bf normal Conley--Zehnder index} $\cz_\tau^N(\gamma)$,
which measures the rotation of the $2$-dimensional symplectic vector spaces $\xi^N$ along $\gamma$.
For $s_2$ sufficiently large compared to $m$, we have $\cz_\tau^N(\eta_m) = 1$ (c.f. the index computations in \cite[\S2.1]{Gutt-Hu}).

Following \cite{MorS}, let $C_\tau$ denote a perturbation of the curve $C$ which at infinity is specified by the trivialization $\tau$, 
and is given by pushing $C_\tau$  in a direction that is constant w.r.t $\tau$.
 Then $C_\tau$ is disjoint from $D$ near infinity, and can be disjoined from $D$ completely by a further
 perturbation (this time  possibly large, but  compactly supported) in the direction of the extra $\mathbb{C}$ factor
 of $\wt{E}^{2n}_\sk\subset \wt{E}^{2n-2}_\sk\times \mathbb{C}.$   It follows that
 $C_\tau \cdot D = 0$ since this quantity is invariant under   perturbations of compact support.
   On the other hand, as explained after the statement of   \cite[Thm~2.5]{MorS}, one can prove the following formulas
\begin{align*}
C_\tau \cdot D &= C \cdot D - \wind_\tau(C,D), \\
\wind_\tau(C,D) &:= \wind_\tau^+(C,D) - \wind_\tau^-(C,D),
\end{align*}
where $\wind_\tau^+(C,D)$ (resp. $\wind_\tau^-(C,D)$) is defined to be the winding number at positive (resp. negative) infinity of $C$ about $D$ as measured by the trivialization $\tau$.
Since  $C\cdot D\ge 0$ while $C_\tau \cdot D =0$, 
we must have $\wind_\tau(C,D) \ge 0$.
But according to \cite[Cor~2.4]{MorS}, we have the estimates
\begin{align*}
\wind_\tau^+(C,D) &\leq \lfloor \cz^N_\tau(\eta_m)/2 \rfloor\\
\wind_\tau^-(C,D) &\geq \lceil \cz^N_\tau(\wt{\eta}_m)/2\rceil,
\end{align*}
which, because $\cz_\tau^N(\eta_m) = 1$, imply that $\wind_\tau(C,D) \leq -1$.  
This contradiction shows that $C$ must be entirely contained in $D=D_n$, as desired.
\end{proof}

\begin{rmk}\label{rmk:io}\rm    Since we have now shown that the invariants are independent of the inclusion $\io$ and the  parameters $s_i,\eps $, we will often simplify notation by removing $\io$ from the notation, writing $M \setminus E_\sk$ instead of $M \setminus \io(E_\sk)$ and so on.\hfill$\er$
\end{rmk}

\subsection{Curves in dimension four with multiple ends on a skinny ellipsoid}\label{ss:skin4D}

We now restrict to the case that $M$ is a four-dimensional symplectic manifold. 
As in \S\ref{ss:tan_multi}, we consider partitions 
$$
\Pp = (m_1,...,m_b), \qquad m_1\ge m_2\ge \dots \ge m_b  \geq 1
$$ 
of $|\Pp| := \sum_{i=1}^b m_i$, 
and we will define numbers 
$$
N^E_{M,A}\lll \Pp \rrr \in \mathbb{Z}
$$
as follows. 
As above, let $\io: E_\sk \to M$ be 
 a symplectic embedding of a small skinny ellipsoid $E_\sk = E(\eps,\eps s)$ for $s > 1$ sufficiently large and $\eps$ sufficiently small, denote by $\eta_1$ the minimal action simple Reeb orbit of $\bdy E_\sk$, and by $\eta_2,\eta_3,...$ its iterates.
Let $J$ denote an admissible almost complex structure on (the symplectic completion of) $M \setminus \io(E_\sk)$.
We define
$$
\Mm_{M\setminus E_\sk,A}^{J,\simp}\lll \Pp\rrr
$$
 to be  the moduli space of simple (i.e. somewhere injective)  $J$-holomorphic $A$-spheres in $M \setminus \io(E_\sk)$ with $b$ negative punctures asymptotic to $\eta_{m_1},...,\eta_{m_b}$ respectively. 
 This moduli space is naturally oriented and, by \eqref{eq:Find}, has dimension
\begin{align}\label{eq:Aind}
2c_1(A) - 2 - 2\sum_{i=1}^b m_i.
 \end{align}
In the case that this index is zero, for $J$ generic 
 $\calM_{M\setminus E_\sk,A}^{J,\simp}\lll \Pp\rrr$ 
is a smooth zero-dimensional manifold.  We define $N^{E}_{M,A} \lll \Pp \rrr$
  to be the count
  of its elements.
The following proposition shows that this count is well-defined and independent of all choices involved in its construction. 

\begin{prop}\label{prop:multneg}  Let $(M,\om)$ be a symplectic four-manifold.  For any partition $\Pp$ of $c_1(A)-1$, the count $N_{M,A}^E\lll \Pp \rrr$ is finite and independent of the choice of generic $J$.
 It is also independent of the embedding $\io$ 
 and the parameters $\eps$ and $s$, provided that $s$ is sufficiently large and $\eps$ is sufficiently small.
 Moreover, each curve in the moduli space is immersed and counts positively. 
\end{prop}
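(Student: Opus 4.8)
The plan is to run the strategy of Propositions~\ref{prop:oneend} and~\ref{prop:s_indep} — SFT compactness, organization of limiting buildings into matched components, and index bookkeeping — but to replace the index estimates of Lemmas~\ref{lem:index_bd}--\ref{lem:mc2}, which by themselves no longer exclude all bad degenerations once several negative ends are present (cf.\ Remark~\ref{rmk:multcover}, and the absence of any analogue of Lemma~\ref{lem:cusp_deg}), by the two genuinely four-dimensional tools reviewed in this subsection: the relative adjunction formula and the writhe estimates from embedded contact homology.

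First I would fix generic admissible $J$, so that all somewhere injective curves are regular and $\calM_{M\setminus E_\sk,A}^{J,\simp}\lll\Pp\rrr$ is a smooth oriented $0$-manifold, and apply the SFT compactness theorem. As in the proof of Proposition~\ref{prop:oneend}, Lemma~\ref{lem:skin} forces every end of every component onto a short orbit of $\bdy E_\sk$, and any limiting building decomposes into closed matched spheres $S_1,\dots,S_a$ in $M\setminus E_\sk$ (each of index $\geq 2$ by Lemma~\ref{lem:mc0} when not a single component in the main level), matched planes $C_1,\dots,C_r$ in $M\setminus E_\sk$ with one negative end, and a matched component in $\R\times\bdy E_\sk$ carrying the $b$ outgoing ends $\eta_{m_1},\dots,\eta_{m_b}$. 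The componentwise index bounds of Lemmas~\ref{lem:index_bd}--\ref{lem:mc2} still apply, so the total formal index is $\geq 0$, with a strict gain from each node, each extra level, and each symplectization piece that is not a union of trivial cylinders.

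The new issue — and the step I expect to be the main obstacle — is that a matched component in $\R\times\bdy E_\sk$ with negative ends on $\eta_{m_1},\dots,\eta_{m_b}$ can fail to be a union of trivial cylinders even when its Fredholm index vanishes, because branched covers of the trivial cylinder over $\eta_1$ occur in positive-dimensional families; equivalently, in a one-parameter family two of the negative ends could try to merge. To exclude this I would use the four-dimensional writhe estimates together with the relative adjunction formula for somewhere injective punctured curves in $M\setminus E_\sk$: writing the adjunction defect as $\de(C)\geq 0$ and bounding the writhe of the braid at each negative end $\eta_{m_i}$ in terms of its Conley--Zehnder data, one finds that the only curves of the expected index have their negative ends asymptotic to \emph{distinct} iterates $\eta_{m_1},\dots,\eta_{m_b}$ and satisfying the ECH partition conditions, so the only admissible symplectization pieces are trivial cylinders. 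Such a building violates SFT stability, and together with the node count this shows the compactified moduli space equals $\calM_{M\setminus E_\sk,A}^{J,\simp}\lll\Pp\rrr$; hence the count is finite. Running the same argument along a generic path $J_t$ (where simple curves have index $\geq -1$ and the same estimates still force any genuine limiting building to contain an underlying simple component of index $\leq -2$) gives independence of $J$.

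For positivity, given $C\in\calM_{M\setminus E_\sk,A}^{J,\simp}\lll\Pp\rrr$ of index $0$, the relative adjunction formula and the writhe bounds force $\de(C)=0$ and pin down the asymptotic braids, so $C$ is embedded and its normal Cauchy--Riemann operator satisfies Wendl's automatic transversality criterion; as in the proof of Lemma~\ref{lem:blow}, the determinant line then carries a canonical orientation for which the relevant spectral flow never crosses zero, so $C$ contributes $+1$. Finally, independence of $\io$, $\eps$ and $s$ follows exactly as at the end of \S\ref{ss:skinhigh}: shrinking $\eps$ or moving $\io$ is absorbed by a symplectomorphism of completions, and to change $s$ one neck-stretches along an intermediate skinny ellipsoid $\wt E_\sk\subset E_\sk$. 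By the energy and writhe arguments above the only limiting configuration is a top-level curve in $M\setminus E_\sk$ glued to $b$ cylinders in $E_\sk\setminus\wt E_\sk$, one over each $\eta_{m_i}$; the analogue of Lemma~\ref{lem:unique_cob_cyl}, applied to each short-orbit multiplicity separately via the normal Conley--Zehnder index and Siefring intersection theory, produces a unique such cylinder, and Pardon's gluing theorem then gives the desired bijection of moduli spaces.
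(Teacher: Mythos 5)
Your proof follows the paper's overall strategy: SFT compactness, formal gluing into matched components, the componentwise index inequalities forcing every piece to have index zero, and then the relative adjunction formula with the ECH writhe estimates --- applied to a curve portion cut just above the neck --- to exclude a matched component in $\R\times\bdy E_\sk$ with one positive and several negative ends. Two imprecisions are worth flagging. First, a top-level curve with $b>1$ negative ends does \emph{not} satisfy the ECH partition condition for negative ends on $\bdy E_\sk$ (which would force $b=1$), so the phrase ``satisfying the ECH partition conditions'' is not what you mean; what rules out the degenerate splitting is the explicit contradiction \eqref{eq:w+M}--\eqref{eq:wcontrad}. Second, for independence of $s$ you propose Siefring intersection theory plus Pardon's gluing, which is defensible and closely parallels the proof of Proposition~\ref{prop:s_indep}; but the paper deliberately replaces that route with the adjunction-and-writhe Claims~A and~B, precisely so as to work with generic admissible $J$ (no holomorphic coordinate divisors in the neck) and so as not to rely on gluing for both directions. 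You also omit the bookkeeping over $s,\wt s,\eps,\wt\eps$ needed to bootstrap the one-sided inequality \eqref{eq:ineq} to an equality, since that inequality is first established only in a bounded parameter range.

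The genuine error is in your positivity argument. You claim that adjunction and the writhe bounds force $\delta(C)=0$, hence that $C$ is embedded. This is false. For $C\in\calM^{J,\simp}_{M\setminus E_\sk,A}\lll\Pp\rrr$, the sharp writhe estimate and relative adjunction give $2\delta(C) = 2 + A\cdot A - c_1(A) - 2\delta(\Pp)$, which is typically positive --- for instance a degree-$3$ curve in $\CP^2\setminus E_\sk$ with one negative end on $\eta_8$ has $\delta(C)=1$, so it has a double point. What Wendl's criterion \eqref{eq:aut_trans_cond} actually requires is that $C$ be \emph{immersed}, and the inequality $2g-2+h_+<\ind$ is insensitive to double points. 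The paper obtains immersion by an independent, elementary codimension count: a simple index-zero curve cannot be multiply covered by \eqref{eq:Aind1}, and having a cusp somewhere on the curve is a codimension-two condition, hence absent for generic $J$ in a zero-dimensional moduli space. Replace your false embeddedness claim with this immersion argument; then Wendl's automatic transversality applies and the spectral-flow orientation argument, as in Lemma~\ref{lem:blow}, gives the positive sign.
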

\NI We emphasize that the proof of Proposition~\ref{prop:multneg} uses four-dimensional techniques in an essential way, as we explain below. In principle it might be possible to use intersection theory for punctured curves to bootstrap our arguments to higher dimensions (c.f. the proof of Proposition~\ref{prop:s_indep}), but we will not attempt this.

\begin{remark} \rm
One subtle point is that for generic $J$ there may be nontrivial pseudoholomorphic buildings in the SFT compactified moduli space $\ov{\calM}_{M\setminus \io(E_\sk),A}^{J}\lll \Pp\rrr$.    
In such situations the compactification 
is not transversely cut out, and one expects to require a virtual perturbation scheme in order to extract counts.
However, these buildings 
will not contribute to $N^E_{M,A}\lll \Pp \rrr$, since this is {\it defined} in dimension four to be a count of curves in ${\calM}_{M\setminus \io(E_\sk),A}^{J,\simp}\lll \Pp\rrr$ and thereby excludes buildings and multiple covers.
Notice that typically the closure of ${\calM}_{M\setminus \io(E_\sk),A}^{J,\simp}\lll \Pp\rrr$ 
in the SFT compactification $\ov{\calM}_{M\setminus \io(E_\sk),A}^{J}\lll \Pp\rrr$ will be a proper subspace.
It is not a priori clear whether analogous counts can be defined in higher dimensions, since we rely on four dimensional technique to rule out various undesirable degenerations.

As a simple example, consider the moduli space
$\calM_{\CP^2 \setminus E_\sk,[L]}^{J,\simp}\lll (1,1)\rrr$ consisting of curves
 in the symplectic completion of $\CP^2 \setminus \io(E_\sk)$ in the class of a line with two negative ends both asymptotic to $\eta_1$.
One can use the methods of  ECH (explained below) to conclude that this moduli space is empty. However, the full SFT compactification $\ovl{\calM}_{\CP^2 \setminus \io(E_\sk)}^J\lll (1,1)\rrr$ includes the two-level building whose top level lies in $\CP^2 \setminus \io(E_\sk)$ and consists of a plane with one negative end on $\eta_2$, and whose bottom level is a pair of pants in the symplectization $\R \times \bdy E_\sk$ given by a double branched cover of the trivial cylinder over $\eta_1$.
\hfill$\er$
\end{remark} 

\MS

Before giving the proof of Proposition~\ref{prop:multneg}, we  
recall some key facts about punctured pseudoholomorphic curves in dimension four.
Together these tools play a central role in the theory of embedded contact homology (ECH). 
For a more detailed introduction see~\cite{Hlect}; a shorter summary of relevant information can also be found in \cite[\S2.2]{Ghost}.
 Most of  these formulas are specific to dimensions three and  four.  However, below we also discuss relevant index formulas   for curves in ellipsoids that are valid in all dimensions.
\begin{itemlist}
\item {\bf Relative adjunction formula.} If $C$ is a somewhere injective punctured pseudoholomorphic curve in a completed four-dimensional symplectic cobordism with positive and negative Reeb orbit asymptotic ends, there is a relative adjunction  formula
 \begin{align}\label{eq:adjuc} c_\tau(C) = \chi(C) + Q_\tau(C) + \wr_\tau(C)  - 2\delta(C).
 \end{align}
Here $\tau$ is a choice of trivialization of the contact distribution $\xi$ over the ends of $C$, and the terms involved are as follows:
 \begin{itemize}
 \item
 $c_\tau(C)$ is the relative first Chern class of $C$;
 \item
 $\chi(C)$ is the Euler characteristic of $C$;
 \item 
 $Q_\tau(C)$ is the ``relative intersection pairing'' of $C$ (a generalization of the homological self-intersection number $[C] \cdot [C]$ in the case that $C$ is closed);
 \item
 $\wr_\tau(C) = \wr_\tau^+(C) - \wr_\tau^-(C)$, where $\wr_\tau^+$ (resp. $\wr_\tau^-)$ is the writhe of the asymptotic link determined by the positive (resp. negative) ends of $C$ (for more detail about these terms,
 see \eqref{eq:w+},~\eqref{eq:w-});
 \item 
 $\delta(C) \geq 0$ is an algebraic count of the singularities of $C$ (with each ordinary double point contributing $+1$).
\end{itemize}

 It is useful to note that this is a topological formula, and so in particular it applies to any smooth compact subdomain of a curve 
  in a symplectically complete manifold  
  (sometimes called a {\bf curve portion} below)  that is sufficiently large for its  boundary components to correspond naturally to braids about Reeb orbits in some limiting contact $3$-manifold.  In this situation,  the braids  are determined by the numerical data satisfied by the ends, giving formulas such as \eqref{eq:w+},~\eqref{eq:w-}.
We will apply this idea in the proof of Proposition~\ref{prop:multneg} below in the context of a $1$-parameter family of smooth curves degenerating to an SFT-type building when the \lq neck' around a contact hypersurface is stretched. In this situation, 
by cutting a curve that is close to breaking just above or below the neck region
we can find a curve portion just before degeneration which 
approximates 
some matched component of the limiting building; and,  as in the discussion before \eqref{eq:w+M} below,  one can often use knowledge of the limiting building to estimate the terms in \eqref{eq:adjuc}.     This will allow us to
rule out certain degenerations by showing that the existence of the corresponding curve portion would violate the relative adjunction formula. For other applications of this technique, see the proof of Lemma~\ref{lem:ppt_breakings}.

\item {\bf Rotation angles.} Let $Y^3$ be a contact three-manifold with a contact form $\alpha$ with nondegenerate Reeb orbits.
We can associate to each Reeb orbit $\gamma$ a linearized return map $P_\gamma$ which is a symplectomorphism of the contact plane $(\xi|_{\gamma(0)},d\alpha)$ without $1$ as an eigenvalue.
The Reeb orbit is {\em elliptic} if the eigenvalues of $P_\gamma$ lie on the unit circle, {\em positive hyperbolic} if the eigenvalues are real and positive, and {\em negative hyperbolic} if the eigenvalues are real and negative.
As we traverse $\gamma$, we can view the eigenspaces of $P_\gamma$ as rotating with respect to a trivialization $\tau$ by a total amount $2\pi\theta$, where $\theta:
 = \theta_\tau$ 
is the ``rotation angle'' measured with respect to $\tau$.   Here $\theta$ is irrational in the elliptic case, an integer in the positive hyperbolic case, and an integer plus $1/2$ in the negative hyperbolic case.
Using this same trivialization $\tau$, we have the formula
$$ \cz_\tau(\gamma) =  \lfloor \theta_\tau\rfloor + \lceil \theta_\tau \rceil.$$

\item {\bf Computations for ellipsoids (in all dimensions).}
In this paper we are mostly interested in either closed symplectic manifolds or symplectic cobordisms with one or more ends modeled on the boundary of an ellipsoid.
Consider $Y = \bdy E(a_1,...,a_n)$ with $a_1 \leq \dots \leq a_n$ rationally independent.
Let $\gamma_{k,m}$ be the $m$-fold iterate of the $k$th simple Reeb orbit, which has action $ma_k$.
If we use the trivialization $\tau_\ex = \tau_{\rm extend}$ which extends over a spanning disk in $Y$ for 
$\gamma_{k,m}$, 
 then
as mentioned in \S\ref{ss:skinhigh} we have 
$$
\cz_{\tau_\ex}(\gamma_{k,m}) = n-1 + 2\sum_{i=1}^n \left\lfloor \frac{ma_k}{a_i} \right\rfloor.
$$ 
There is also another convenient trivialization $\tau_\sp: = \tau_{\rm split}$ which comes from the observation that the $k$th simple Reeb orbit is identified with $Y \cap \{z_k = 0\}$, and therefore the contact distribution along it is naturally identified with the remaining $n-1$ factors of $\C^n$.
Since the latter trivialization respects the product structure on $\C^n$,  the rotation angle of $\gamma_{k,m}$ in the $j$th factor 
 is well defined for $j \neq k$,  and  is given by 
 \begin{align}\label{eq:rotang}
\theta_{\sp}(\gamma_{k,m}) =  ma_k/a_j,
\end{align} 
so that we have
(see \cite[\S2.1]{Gutt-Hu})
\begin{align}\label{eq:thetasp}
\cz_{\tau_\sp}(\gamma_{k,m}) =n-1 + 2\sum_{j\neq k} \left\lfloor \frac{ma_k}{a_j} \right\rfloor.
\end{align}
Correspondingly, because the formula \eqref{eq:Find} is independent of the choice of trivialization, a plane $C$ in the symplectic completion of $E(a_1,...,a_n)$ with top boundary on $\ga_{k,m}$ has
\begin{align}\label{eq:Cspex}
c_{\tau_{\sp}}(C) = m,\qquad  c_{\tau_{\ex}}(C) =  0.
\end{align}

Now consider the special case of $X = M \setminus E_\sk$ with $M^{2n}$ a closed symplectic manifold,
and let  $\eta_m$ be the $m$-fold iterate of the short orbit on $\p E_{\sk}$.
Then by the above (and assuming
as always  that $E_{\sk} = \eps E(1,s_2,\dots,s_n)$ for $s_2$ sufficiently large compared to $m$)  we have
\begin{align}\label{eq:CZ}
\cz_{\tau_\ex}(\eta_m) = n-1+ 2m,\;\;\;\;\; \cz_{\tau_\sp}(\eta_m) = n-1.
\end{align}
Further, given  a curve $C$ in (the symplectic completion of) $X$ with negative ends 
$(\eta_{m_1},\dots,\eta_{m_b})$, we
 can naturally associate to $C$ a homology class $A \in H_2(M;\Z)$, such that we have
\begin{align}\label{eq:c1}
c_1(C) := c_{\tau_\ex}(C) = c_1(A),\qquad Q_{\tau_\ex}(C) = A \cdot A,
\end{align}
while with respect to the trivialization $\tau_\sp$ and bottom partition $\Pp$, we have
\begin{align}\label{eq:c1sp}
c_{\tau_\sp}(C) &= c_1(A) - |\Pp| = c_1(A) - \sum_{i=1}^b m_i,\quad 
Q_{\tau_\sp}(C) = A \cdot A.
\end{align}

\item {\bf Writhe bounds.} 
For a somewhere injective curve $C$ in a four-manifold $X$ that is asymptotic at both ends 
 to some Reeb orbits, the asymptotic writhes $\wr_\tau^\pm(C)$ are sums of contributions from each asymptotic orbit.  
In principle the asymptotic links arising at the ends of $C$ are not uniquely determined by the Reeb orbit asymptotics and homology class of $C$, making them difficult to control in practice. 
It turns out that each end of $C$ is approximately an eigenfunction for an associated ``asymptotic operator'' which depends only on the linearized Reeb flow (see \cite{HWZ_propertiesI}).
This can be used to give bounds for $\wr_\tau^\pm(C)$ in terms of the multiplicities and rotation angles at each end.
More precisely, if $C$ has several positive ends of multiplicities $m_1,\dots,m_b$ on a given simple elliptic Reeb orbit $\al$ with rotation angle $\theta (= \theta_\tau)$, then we have
\begin{align}\label{eq:w+}
 \wr_\tau^+ \leq \sum_{i,j=1}^b \max(\lfloor m_i\theta\rfloor m_j,\lfloor m_j\theta\rfloor m_i) - \sum_{i=1}^b \lfloor m_i\theta \rfloor.
\end{align}
 Similarly, for negative ends, we have
\begin{align}\label{eq:w-}
 \wr_\tau^- \geq \sum_{i,j=1}^b \min(\lceil m_i\theta\rceil m_j,\lceil m_j\theta\rceil m_i) - \sum_{i=1}^b \lceil m_i\theta \rceil.
\end{align}
When calculating writhes we will always use the trivialization $\tau_{\sp}$, so that by \eqref{eq:rotang} the rotation angle of the $k$-fold multiple of the short orbit on $\p E(1,a)$ is  $k/a$.

Note that  the writhe bounds are sharp (i.e. they are equalities) whenever the end partition  satisfies the ECH partition conditions: 
 see \cite[Remark~2.3.2(ii)]{Ghost}, \cite[\S3]{HuN}, and the discussion below about the ECH index.
 Writhe is a   measure of the relative twisting in the $3$-manifold $Y$ of the loop given by the slice $C\cap (\{T\}\times Y)$
 at level $T$ (for $|T|$ sufficiently large) around  the asympotic Reeb orbit $\al$, and is 
 only well-defined because of the way $C$ converges towards $\al$.
However, the fact that writhe is essentially a  
 topological measure means that it can be calculated 
 when one slices any  curve $C$ that lies sufficiently close to  part of a cylinder $[-R,R]\times \al$.  Thus, when we stretch the neck we can apply it to the ends of suitable \lq\lq curve portions'' as in the proof of Proposition~\ref{prop:multneg} below.

   The writhe bounds are also sharp
 for any individual end of a curve which, when taken on its own, satisfies the ECH partition condition for its given multiplicity.
In particular, by \eqref{eq:rotang} the writhe $\wr_\tau^-(\eta_m)$ of any negative end on $\eta_m$ (the $m$-fold iterate of the short orbit on $\p E_{\sk})$ is given by
\begin{align}\label{eq:wneg} 
\wr_{\tau_\sp}^-(\eta_m) = m-1.
\end{align}
More generally, for a curve with negative ends $\eta_\Pp: = (\eta_{m_1},\dots,\eta_{m_b})$ on a skinny ellipsoid forming the partition $\Pp = (m_1,\dots,m_b)$ with $m_1 \geq \dots \geq m_b$,\footnote
 {
Note that the papers  \cite{Hlect} and \cite{Ghost} typically list these multiplicities in increasing order.
} 
one can check that the writhe bound \eqref{eq:w-} gives
\begin{align}\label{eq:wnegPp} 
\wr_{\tau_\sp}^-(\eta_\Pp) \geq 2\delta(\Pp) + |\Pp| - b,
\end{align} 
where $\de(\Pp)$ is as in \eqref{eq:delta_def}.  A key point in the proof of Claim B below is that the above bound is also sharp for generic $J$.

\item {\bf Automatic transversality.}
Another key advantage of dimension four is the fact that pseudoholomorphic curves are sometimes automatically regular without any assumptions on the almost complex structure.
The most general version due to Wendl \cite{Wendl_aut} includes curves with punctures and singularities, although more basic versions for closed curves go back to Gromov and are sketched in  the proof of Lemma~\ref{lem:blow}.
The following simplified version will suffice for our purposes:

\begin{thm}[\cite{Wendl_aut}, see also \cite{HuN}]\label{thm:auttrans}
Let $X$ be a four-dimensional symplectic cobordism with each contact boundary component having nondegenerate Reeb flow, and let $J$ be any admissible almost complex structure on the symplectic completion of $X$.
Then any immersed $J$-holomorphic $C$ curve with Reeb orbit asymptotics is regular, provided that we have
\begin{align}\label{eq:aut_trans_cond}
2g(C) - 2 + h_+(C) < \ind(u),
\end{align}
where $h_+(C)$ denotes the number of ends asymptotic to positive hyperbolic orbits.
\end{thm}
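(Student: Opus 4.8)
The plan is to reduce Fredholm regularity of $C$ to surjectivity of a Cauchy--Riemann type operator on a line bundle, and then to run a zero-counting argument in the spirit of the adjunction formula. First I would recall that, since $C$ (parametrized by $u$) is immersed, there is a genuine normal line bundle $N_u$ over the punctured domain $\dot\Sigma$, and the standard splitting of the linearized operator $\mathbf D_u$ into a tangential and a normal part reduces regularity of $C$ --- as a point of the moduli space, where variations of the domain's conformal structure are allowed --- to surjectivity of the \emph{normal} operator $\mathbf D_N$ acting on sections of $N_u$, the tangential contribution being absorbed by Teichm\"uller deformations. Here one works in exponentially weighted Sobolev completions, using at each puncture a weight lying just inside the spectral gap of the asymptotic operator determined by the sign of the end; with this choice $\ind \mathbf D_N = \ind(u)$. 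By elliptic duality the cokernel of $\mathbf D_N$ is identified, via the $L^2$-pairing, with the kernel of the formal adjoint $\mathbf D_N^{*}$, which is again a Cauchy--Riemann type operator on a dual weighted line bundle, so it suffices to prove $\ker \mathbf D_N^{*} = 0$.

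The analytic core is the Carleman similarity principle: any $0 \neq v \in \ker \mathbf D_N^{*}$ satisfies a perturbed $\overline{\partial}$-equation, hence locally factors as a nonvanishing continuous function times a holomorphic function, so its zeros are isolated with strictly positive local index and the total algebraic count $Z(v) \geq 0$ is finite. Near each puncture the asymptotic analysis of Hofer--Wysocki--Zehnder forces $v$ to approach an eigenfunction of the asymptotic operator, so its winding number about the limiting Reeb orbit is controlled: bounded below at positive punctures and above at negative punctures by the extremal winding numbers of that operator, which are read off from the rotation angle (equivalently the Conley--Zehnder index) of the orbit. Here the parity dichotomy is essential --- at elliptic and negative hyperbolic ends the resulting contributions are nonnegative, whereas a positive hyperbolic end can contribute a defect of exactly $1$, which is precisely the source of the term $h_+(C)$.

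Combining these ingredients through the argument principle --- that is, computing the degree of $N_u$ relative to the trivializations at infinity furnished by $v$ itself and comparing with the weights used to define $\mathbf D_N$ --- should yield an identity of the form
\begin{align*}
2\,Z(v) + R(v) \;=\; \bigl(2g(C) - 2 + h_+(C)\bigr) - \ind(u),
\end{align*}
where $R(v) \geq 0$ assembles the nonnegative contributions coming from the ends that are not positive hyperbolic (and from any excess asymptotic winding). Since the left-hand side is nonnegative whenever a nonzero $v$ exists, the hypothesis $2g(C) - 2 + h_+(C) < \ind(u)$ forces $\ker \mathbf D_N^{*} = 0$; hence $\mathbf D_N$ is surjective and $C$ is regular.

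I expect the main obstacle to be the middle step: arranging the weighted functional-analytic framework so that $\ind \mathbf D_N$ is \emph{exactly} $\ind(u)$, and then pinning down the asymptotic winding estimates for elements of $\ker \mathbf D_N^{*}$ --- including the complete elliptic / positive-hyperbolic / negative-hyperbolic case analysis --- tightly enough that the Chern-number identity above emerges with the stated constant $2g-2+h_+$ and with a provably nonnegative remainder $R(v)$. The reduction to the normal operator and the similarity principle are comparatively formal; the real content is this Hofer--Wysocki--Zehnder-style asymptotic bookkeeping, which is exactly where Wendl's argument does its work.
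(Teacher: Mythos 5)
This theorem is quoted in the paper as an external result, cited to [Wendl\_aut] and [HuN] with no proof supplied, so there is no ``paper proof'' to compare against line by line. Your sketch correctly reconstructs the strategy of Wendl's proof: split off the tangential part using immersion and Teichm\"uller deformations, reduce regularity to surjectivity of the normal Cauchy--Riemann operator $\mathbf D_N$ on the normal line bundle (with the weighted setup giving $\ind \mathbf D_N = \ind(u)$), pass to the formal adjoint, invoke the Carleman similarity principle to get a nonnegative finite zero count, bound asymptotic winding numbers via the Hofer--Wysocki--Zehnder asymptotics and the rotation-angle/Conley--Zehnder-parity dichotomy (which is exactly what produces the $h_+$ defect term), and close by the argument principle. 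The displayed identity is stated at a heuristic level --- you present it as ``an identity of the form'' and you flag that extracting the exact constant $2g-2+h_+$ and a provably nonnegative remainder is the real content --- which is an honest and accurate assessment: that bookkeeping is precisely where the work in Wendl's paper happens. The one thing worth stressing is that the inequality $\ind \mathbf D_N = \ind(u)$ is not automatic but depends on choosing the asymptotic exponential weights compatibly at each puncture with the sign conventions used to compute $\ind(u)$; if the weight is taken on the wrong side of the spectral gap at even a single end, the index of $\mathbf D_N$ shifts by $\pm 1$ there and the final numerology of the theorem changes, so this choice has to be made carefully and consistently.
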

\NI In particular, note that the conditions of this theorem are satisfied whenever we have $g(C) = h_+(C) = \ind(C) = 0$, which is the most relevant case for this paper.
In this case, a standard corollary of automatic transversality states that all such curves count with positive sign (see \cite[Rmk.3.2.5]{JHOL}). 
As we argued in Lemma~\ref{lem:blow}, 
the same is true for index zero rational curves in dimension four which satisfy multibranched tangency constraints.

\item {\bf ECH index.}
 We also mention for completeness that any somewhere injective curve $C$ with Reeb orbit asymptotics has an {\bf ECH index} of the form
 $$I(C) := c_\tau(C) + Q_\tau(C) + \cz_\tau^I(C),$$
 where $\cz_\tau^I(C)$ is a certain sum of Conley--Zehnder index contributions at the ends of $C$. 
 By combining the definition of the ECH index with the relative adjunction formula, the Fredholm index formula, and the writhe bounds, one can prove the following ECH index inequality, which is a cornerstone of embedded contact homology:
 \begin{align*}
 I(C) - \ind(C) \geq 2\delta(C).
 \end{align*}
 
For any given simple Reeb orbit $\gamma$, by looking at all positive ends of $C$ which are asymptotic to some cover of $\gamma$, we get a partition $\Pp = (m_1,...,m_b)$. 
In the case that $I(C) = \ind(C)$ (e.g. if the ECH index is zero), this partition is uniquely determined by the total multiplicity $|\Pp|$ and the rotation angle $\theta$ of $\gamma$, and in this case we say that $C$ {\bf satisfies the ECH partition conditions.}
Indeed, there is a precise formula for this partition in terms of a certain lattice point count associated to the data $(|\Pp|,\theta)$ (see \cite{Hlect} for more details).
A similar situation holds for the negative ends of $C$.
In the general case with $I(C) > \ind(C)$, one can think of $\delta(C)$ as giving an upper bound for how far the actual partitions can differ from the ECH partitions.

For the case most relevant for us, if $C$ has positive ends $(\eta_{m_1},\dots,\eta_{m_b})$ on $\bdy E_\sk$ with total multiplicity $k = \sum_{i=1}^b m_i$, the corresponding ECH partition condition is $m_1 = \dots = m_b = 1$ with $b = k$.
Similarly, if $C$ has negative ends $(\eta_{m_1},\dots,\eta_{m_b})$ on $\bdy E_\sk$ with total multiplicity $k = \sum_{i=1}^b m_i$, the corresponding ECH partition condition is $m_1 = k$ with $b = 1$.
 \end{itemlist}

\begin{proof}[Proof of Proposition~\ref{prop:multneg}] 
We break up the proof into three steps. 

\MS

\NI {\bf Step 1:}  
{\it If $J$ is a generic admissible almost complex structure on the symplectic completion of $M \setminus \io(E_\sk)$, 
the number of elements in $\calM_{M\setminus E_\sk,A}^{J,\simp}\lll \Pp \rrr$  is finite and independent of the choice of $J$.}

\begin{proof}
Let $J_0,J_1$ be generic, admissible (see Definition~\ref{def:admiss})
  almost complex structures, and let $J_t$, $t\in [0,1]$ be a generic homotopy of admissible almost complex structures starting at $J_0$ and ending at $J_1$.
Similar to before, by a standard Sard--Smale argument we can assume that all somewhere injective curves in 
$\calM_{M\setminus E_\sk,A}^{J_i,\simp}\lll \Pp \rrr$ 
have nonnegative index for $i=0,1$ and that all somewhere injective curves in the $t$-parametrized moduli space
$\calM_{M\setminus E_\sk,A}^{\{J_t\},\simp}\lll \Pp \rrr$ have index at least $-1$.
In order to prove both finiteness of $\calM_{M\setminus E_\sk,A}^{J,\simp}\lll \Pp \rrr$ and independence of $J$, we need to show that the $t$-parametrized moduli space 
$\calM_{M\setminus \io(E_\sk),A}^{\{J_t\},\simp}\lll \Pp \rrr$ is already compact.
In light of the SFT compactness theorem, it suffices to show no sequence of curves in 
$\calM_{M\setminus E_\sk,A}^{\{J_t\},\simp}\lll \Pp \rrr$ converges to 
either to a multiply covered curve, or to
a building in
a boundary stratum of $\ovl{\calM}_{M \setminus E_\sk,A}^{\{J_t\}}\lll \Pp\rrr$.  

Observe that such a building would consist of
\begin{itemize}
\item
a top level in $M \setminus \io(E_\sk)$, possibly nodal 
or disconnected or multiply covered
\item 
some nonnegative number of symplectization levels in $\R \times \bdy E_\sk$.
\end{itemize}
Let us formally glue together all the symplectization levels along paired Reeb orbit ends, resulting in a collection of matched components $C_1,...,C_c$ in $\R \times \bdy E_\sk$.

The index of any component $C$ in $M \setminus \io(E_\sk)$ is nonnegative, and strictly positive unless $C$ is simple.
Indeed, if $C$ is a $\kappa$-fold cover of its underlying simple curve $\ovl{C}$,
then we have from \eqref{eq:Aind} that
\begin{align}\label{eq:Aind1}
\ind(C) = \kappa\, \ind(\ovl{C}) + 2\ka - 2.
\end{align}
We can assume that $\ind(\ovl{C})\ge -1$ and hence $\ind(\ovl{C})\ge 0$ since the index is necessarily even by \eqref{eq:Find}.
It follows that $\ind(C)$ is nonnegative, and we can only have $\ind(C)$ if $\kappa = 1$.
Thus the limit cannot be multiply covered.

Similarly, the index of any matched component $C_i$ 
in $\R\times \p E_{\sk}$ 
 is nonnegative, and strictly positive if $C$ has more than one positive end.
Indeed, by \eqref{eq:Find} and \eqref{eq:CZ}, if $C_i$ has  positive ends $\eta_{m_1},...,\eta_{m_k}$ and negative ends $\eta_{m_1'},...,\eta_{m_l'}$ ends, then we have
\begin{align}\label{eq:indCi}
\ind(C_i) = 2k - 2 + 2\sum_{i=1}^k m_i - 2\sum_{j=1}^{l} m_j'.
\end{align}
By nonnegativity of energy, we have 
\begin{align}\label{eq:energ}
\sum_{i=1}^k m_i - \sum_{j=1}^{l}m_j' \geq 0,
\end{align} 
and hence we have $\ind(C_i) \geq 0$, with equality only if $k = 1$.

We saw above that
each component of the limiting building 
has index $\ge -1$.  Thus  
it follows from the above index considerations that we must have
\begin{itemize}
\item the top level in $M \setminus \io(E_\sk)$ is smooth and connected with one end for each
matched component $C_1,\dots,C_c$;
\item the matched components $C_i$ necessarily have zero energy and one positive end,
and at least one must have more than one negative end, since otherwise they are all  trivial cylinders by
Lemma~\ref{lem:unique_cob_cyl}, which would  violate the stability condition of the SFT compactness theorem.
\end{itemize}

It therefore remains to rule out the case that a matched component $C_i$ has more than one negative end.
For this we will appeal to the relative adjunction formula and the aforementioned writhe estimates.
Note that these tools cannot be directly applied to the matched component $C_i$, but it turns out that they can be applied to an approximation of $C_i$ and this suffices.
Namely, suppose that $C_i$ has say positive end $\eta_m$ and negative ends $\eta_{m_1},\dots,\eta_{m_{k}}$ where we must have $\sum_{i=1}^{k} m_i = m$ since $\ind(C_i) = 0$.
By hypothesis, the entire building is approximated by a sequence of somewhere injective curves in ${M \setminus \io(E_\sk)}$. 
Now  
cut such an approximating  curve just above the neck (which we assume sufficiently long) 
in a region where the curve
is asymptotic to a braid around 
$\eta_m$.  Then 
one component $C_-$ of 
its lower part 
is a curve portion with boundary that is a close approximation to the matched component $C_i$, 
and has
 well-defined writhes $\wr_\tau^+$ and $\wr_\tau^-$ at its top and bottom.\footnote
{For a detailed write-up of an argument of this kind together with references that establish the required exponential convergence properties,  see \cite[\S3]{HuN}  and in particular \cite[Lem.3.2]{HuN}.
This refers to the writhe of the positive end of a curve that is cut just below the neck, but essentially the same analysis applies to the writhe of the negative end of a curve cut just above the neck.}
Moreover, because we cut just {\it above} the neck, 
the top end of $C_-$ approximates a curve with {\it negative} end on $\eta_m$, so that 
its top writhe $\wr_{\tau_\sp}^+(C_-)$ 
has lower bound given by  Equation~\eqref{eq:w-}.  Thus
 $$
\wr_{\tau_\sp}^+(C_-) \geq \lceil m\theta\rceil m - \lceil m\theta\rceil = m-1,
$$
 where $\theta$ is the rotation angle of $\eta_m$ with respect to $\tau_\sp$
 and so is arbitrarily small as in \eqref{eq:thetasp}.
In fact, by the explanation following Equation~\eqref{eq:w-}, this inequality is sharp, i.e. we have 
\begin{align}\label{eq:w+M}
\wr_{\tau_\sp}^+(C_-) = m-1.
\end{align}
 Similarly, we can use Equation~\eqref{eq:w-} to estimate 
 $\wr_{\tau_\sp}^-(C_-)$.
Assuming without loss of generality that $m_1\ge m_2\ge \cdots \ge m_{k}$ 
where $k>1$, we have
\begin{align}\label{eq:w-M} \notag
\wr_{\tau_\sp}^-(C_-)&\; \geq\; \sum_{1\le i\le j\le m} m_j - k\; \\ \notag
&\;=  
m + 2\sum_{1\le i<j \le m} m_j - k\\ 
&\; \ge\;  m+2-k.
\end{align}
In this situation we have $c_{\tau_{\sp}}(C_-) = 0$, $Q_{\tau_\sp}(C_-) = 0$, and $\chi(C_-) = 1-k$,
and hence the relative adjunction formula \eqref{eq:adjuc} gives
\begin{align}\label{eq:wcontrad} \notag
0& \le 2 \delta  = \chi(C_-) + \wr_\tau^+(C_-) - \wr_\tau^-(C_-) \\
&  \le  1-k + m-1 - (m+2-k) = -2
\end{align}
which is impossible.  
\end{proof}

\NI {\bf Step 2:}  {\it  For generic admissible $J$ on $M\less E_{\sk}$, every element in $\calM_{M\setminus \io(E_\sk),A}^{J,\simp}\lll \Pp \rrr$ is immersed and counts with positive sign.}

\begin{proof} 
The moduli space $\calM_{M \setminus E_\sk,A}^{J,\simp}\lll \Pp \rrr$  consists of somewhere injective curves,
and any non-immersed curve would have a cusp point (i.e. a point where the derivative vanishes). The arguments in
Lemma~\ref{lem:cusp_deg1} that show that cusps do not occur in generic $1$-parameter families of curves with tangency constraints apply equally well in the current situation since they are local in nature (both in the domain and target).
This means that condition \eqref{eq:aut_trans_cond} holds, so Wendl's automatic transversality theorem applies,
and it then follows as in \cite[Prop~3.15]{HK} that all of the curves in $\calM_{M \setminus E_\sk,A}^{J,\simp}\lll \Pp \rrr$ count positively.
The rough idea here is that the space of almost complex structures $J$ for which the moduli space $\calM_{M \setminus E_\sk,A}^{J,\simp}\lll \Pp \rrr$ is regular is {\em path connected}. Indeed, we can join any two almost complex structures $J_0,J_1$ by a generic family $J_t$, and the corresponding parametrized moduli space will induce a cobordism between $\calM_{M \setminus E_\sk,A}^{J_0,\simp}\lll \Pp \rrr$ and $\calM_{M \setminus E_\sk,A}^{J_1,\simp}\lll \Pp \rrr$. Typically this cobordism will undergo bifurcations at isolated values of $t$, but in the present situation automatic transversality implies that $\calM_{M \setminus E_\sk,A}^{J_t,\simp}\lll \Pp \rrr$ is regular for all $t$ and hence that no bifurcations can occur. 
This means that this cobordism is in fact trivial, so we get a one-to-one correspondence between the curves in $\calM_{M \setminus E_\sk,A}^{J_0,\simp}\lll \Pp \rrr$ and $\calM_{M \setminus E_\sk,A}^{J_1,\simp}\lll \Pp \rrr$.
From this it follows that the curves in the two moduli spaces $\calM_{M \setminus E_\sk,A}^{J_0,\simp}\lll \Pp \rrr$ and $\calM_{M \setminus E_\sk,A}^{J_1,\simp}\lll \Pp \rrr$ count with the same signs, and it is not hard to show that these signs must all be positive since we can deform $J$ so as to be integrable near any given curve $C$ 
provided that 
no more than two branches of $C$ intersect at any point; see also the proof of Lemma~\ref{lem:blow}.
\end{proof}
\MS

\NI {\bf Step 3:}  {\it The number of elements in $\calM_{M\setminus \io(E_\sk),A}^{J,\simp}\lll \Pp \rrr$ does not
depend on  the choice of embedding $\io$,  scale factor $\eps$ and shape parameter $s$.}

\begin{proof}  The independence of  the choice  of $\io$ and $\eps$
follows exactly as in Proposition~\ref{prop:oneend}.
As for the dependence on $s$, put $E_\sk = E(\eps,\eps s)$ and $\wt{E}_\sk = E(\wt{\eps},\wt{\eps}\wt{s})$,
 for 
$0 < \wt{\eps} < \eps$ 
and $0 < \wt{\eps}\wt{s} < \eps s$, and let $\wt{\io}$ denote the restriction of $\io$ to $\wt{E}_\sk$. 
As in the proof of Proposition~\ref{prop:s_indep}, 
 we consider the effect of stretching the neck of  $M \setminus \wt{\io}(\wt{E}_\sk)$ along $\bdy E_{\sk}$.
For $\wt{s} > |\Pp|$ and $\frac{\wt{\eps}}{\eps} > \frac{|\Pp|-1}{|\Pp|}$ 
 the analog of Equation~\eqref{eq:energ} still holds, and  
essentially the same argument as above shows that the only possible breaking is a two level building in $(E_\sk \setminus \wt{E}_\sk) \notccirc (M \setminus \io(E_\sk))$ consisting of a connected somewhere injective top level $C_{top}$ in $M \setminus \io(E_\sk)$ and a union of cylinders in $E_\sk \setminus \wt{E}_\sk$.
Below we  prove the following claims:
\begin{itemlist}\item[{\rm (A)}] Each component in $E_\sk \setminus \wt{E}_\sk$ is necessarily the unique cylinder from
$\eta_k$ to $\wt{\eta}_k$  for some $k \geq 1$.
\item [{\rm (B)}]  If the almost complex structure  $J$  on  $M \setminus \io(E_\sk)$ is generic, then
 two different curves in $M \setminus \wt{\io}(\wt{E}_\sk)$ cannot degenerate to the same building in $(E_\sk \setminus \wt{E}_\sk) \notccirc (M \setminus \io(E_\sk))$.
\end{itemlist}
Granted this, it follows that we have 
\begin{align}\label{eq:ineq}
\# \calM^{J,\simp}_{M \setminus \wt{E}_\sk,A}\lll \Pp \rrr \;\; \leq\;\; \# \calM^{J,\simp}_{M \setminus E_\sk,A}\lll \Pp \rrr,
\end{align}
in the case
\begin{align*} 
|\Pp| < \wt{s}, s,
 \;\;\;\;\; \frac{\eps(|\Pp|-1))}{|\Pp|} < \wt{\eps} < \min(\eps, \frac{s\eps}{\wt{s}}).
\end{align*}
In particular, if $\wt{s} \le s$, then we can choose $\wt{\eps}<\eps$ sufficiently close to $\eps$ to satisfy the lower bound.
Hence beceause the counts are independent of the choice of $\eps, \wt{\eps}$ the inequality \eqref{eq:ineq} holds in this case.
Similarly, the inequality \eqref{eq:ineq} holds if $s< \wt{s} < \frac {|\Pp|}{|\Pp|-1}$ since in this case we can also choose suitable $\eps, \wt{\eps}$.  Hence by repeating 
 this procedure, this inequality also holds for all $s< \wt{s}$.  But then we must have equality in \eqref{eq:ineq}.
 \end{proof}
\MS

\NI {\bf Proof of Claim A:} 
This is a more general version of 
 Lemma~\ref{lem:unique_cob_cyl} that holds in dimension four and applies with no restriction on $J$ provided
 that it is admissible in the sense of Definition~\ref{def:admiss}.
 First note that by Lemma~\ref{lem:unique_cob_cyl} the signed count of cylinders in $E_{\sk}\less \wt{E}_{\sk}$ from $\eta_k$ to $\wt{\eta}_k$  is one.   If  there is more than one such cylinder when $k=1$, 
 we may apply the relative adjunction formula \eqref{eq:adjuc} to their union to get
$$ \wr_\tau = \wr_\tau^+ - \wr_\tau^- =  2\delta \ge 0,
$$
while the writhe bounds \eqref{eq:w+} give
\begin{align*}
\wr_\tau^+ \leq 0, \qquad 
\wr_\tau^- \geq 2,
\end{align*}
which is a contradiction.  A similar argument
 rules out the existence of a somewhere injective cylinder in the symplectic completion of $E_\sk \setminus \wt{E}_\sk$ from $\eta_k$ to $\wt{\eta}_k$ for some $k \geq 2$:   the relative adjunction formula would give
$$\wr_\tau = \wr_\tau^+ - \wr_\tau^- = 2\delta \ge 0,$$
while the writhe bounds give
\begin{align*}
\wr_\tau^+ \leq 0, \qquad 
\wr_\tau^- \geq k-1,
\end{align*}
which is a contradiction for $k \geq 2$.

\MS

\NI {\bf Proof of Claim B:}  
Suppose by contradiction that two different curves in $M \setminus \wt{\io}(\wt{E}_\sk)$ degenerate under neck stretching to the same building in $(E_\sk \setminus \wt{E}_\sk) \notccirc (M \setminus \io(E_\sk))$.
Then in particular we can find compact curve portions $C_1$ and $C_2$ which closely approximate\footnote
{
I.e. they are cut off so close to their negative ends that the writhes of their lower boundaries are well defined and can be calculated  as in
the above proof of Proposition~\ref{prop:multneg},  Step 1.
}
 the same top level $C$ in $M \setminus \io(E_\sk)$. 
If $\delta(C)$ denotes the count of singularities of $C$, we must have $C_1 \cdot C_2 \geq 2\delta(C)$ (this is most evident when $C_1$ and $C_2$ are immersed, which holds generically).
We will show that this violates the relative adjunction formula applied to the pair $C_1 \cup C_2$.

Indeed, if $\Pp = (m_1,\dots,m_b)$ where $m_1\ge m_2\ge \dots \ge m_b$
we have defined
$$
|\Pp|: = \sum_{i=1}^b m_i, \qquad \de(\Pp) : =  \sum _{i=1}^{b}  (i-1)m_{i}.
$$
Combining the adjunction inequality for a curve $C\in \calM_{M\setminus \io(E_\sk),A}^{J,\simp}\lll \Pp \rrr$ with the writhe bound \eqref{eq:w-}, we get
\begin{align}\label{eq:claimB}
2\de(C) & = \chi(C) + Q_{\tau_\sp}(C) - c_{\tau_\sp}(C) - \wr_{\tau_\sp}^-(C)\\ \notag
& \leq (2-b) + A^2 - (c_1(A) -|\Pp|) - (|\Pp| - b + 2\de(\Pp))\\ \notag
& =  2 + A^2 - c_1(A) - 2 \de(\Pp).
\end{align}
The key point now is that when $J$ is generic the lower bound in \eqref{eq:w-} for $\wr_{\tau_\sp}^-(C)$ for negative ends on a skinny ellipsoid is exact.
We noted at the end of our discussion of the ECH index that any end on $\eta_m$ (for $m<s$) satisfies 
the ECH partition condition for a negative end on $\p E_{\sk}$.  Hence by \cite[Prop~6.1]{Huind} the writhe estimate for this single end is exact.   In particular, it has the predicted winding number,  namely $1=  \lceil m_i/s\rceil$.
 It is clearly explained in  \cite[\S6.3]{Huind}  how to calculate the linking numbers of the different ends of $C$ and hence $\wr_{\tau_\sp}^-(C)$.  
Each end has an asymptotic expansion in terms of eigenfunctions of the asymptotic operator $A$, and, by
\cite[\S3]{HWZ_propertiesI},  for each winding number there is a  corresponding two-dimensional space $\R(\xi_1,\xi_2)$ of eigenfunctions (corresponding to the lowest two positive eigenvalues of $A$) that give rise to trajectories with that winding number.  
Therefore the lowest terms in the asymptotic expansions for the ends of $C$ must have the form $c_1\xi_1+ c_2\xi_2$ for some $(c_1,c_2)\in \R^2\less \{(0,0)\}$.
  The proof of \cite[Lemma~6.9]{Huind}  shows that the lower bound for $\wr_{\tau_\sp}^-(C)$ is exact if 
 each end of $C$ gives rise to a different pair $(c_1,c_2) $.  It remains to observe that  since the family $(J_t)_{t\in [0,1]}$ is  generic, we may assume that this condition is satisfied since it has codimension $2$.
Hence we may conclude that 
 $$
 2\de(C)  = 2 + A^2 - c_1(A) - 2 \de(\Pp).
 $$

Similarly, if $C_1, C_2$ are two distinct curves in $\calM_{M\setminus \io(E_\sk),A}^{J,\simp}\lll \Pp \rrr$, the writhe bound \eqref{eq:w-} gives that
\begin{align*}
 \wr_{\tau_\sp}^-(C_1\cup C_2) = 4|\Pp| - 2b + 8 \de(\Pp) 
 \end{align*}
so that
\begin{align*}
2\de(C_1\cup C_2) 
& = 2(2-b) + 4A^2- 2(c_1(A) -|\Pp|) - (4|\Pp| - 2b + 8 \de(\Pp))\\
& =  4 + 4A^2 - 2c_1(A) -2 |\Pp| - 8 \de(\Pp).
\end{align*}
The count of singularities of the union is given by
$$\delta(C_1 \cup C_2) = \delta(C_1) + \delta(C_2) + C_1 \cdot C_2 = 2\delta(C) + C_1 \cdot C_2,$$
and therefore we have 
\begin{align*}
C_1 \cdot C_2 &= \left(2 + 2A^2 - c_1(A) - |\Pp| - 4\delta(\Pp)\right) - \left(2 + A^2 - c_1(A) - 2\delta(\Pp)\right)\\
&= A^2 - |\Pp| - 2\delta(\Pp).
\end{align*}
But this implies that $C_1 \cdot C_2 < 2\delta(C)$, which as we noted above is impossible if $C_1$ and $C_2$ are both very close to $C$. 
 This completes the proof of Claim B and hence the proof of Proposition~\ref{prop:multneg}.
\end{proof}

Given partitions $\Pp_1,\dots,\Pp_r$ we can also define the invariants
\begin{align}\label{eq:defmanyPp}
N_{M,A}^E\lll \Pp_1,..., \Pp_r\rrr \in \Z
\end{align}
in essentially the same way
as $N_{M\setminus E_{\sk},A}\lll\Pp\rrr$, using a disjoint collection of small skinny ellipsoids $E_{\sk}^1,\dots, E_{\sk}^r$ in $M$.
We omit the proof that the corresponding curve counts are independent of all choices since it is essentially the same as the proof of Proposition \ref{prop:multneg}  above, except with slightly more cumbersome notation.

\section{Relationships between constraints}\label{sec:relationships}

In this section we establish some fundamental relationships between the various curve counts defined in the previous two sections. Taken together these will form the backbone of the recursion algorithm in the next section.
In \S\ref{ss:same} we prove an equivalence between curve counts with multibranched tangency constraints and curve counts with ends on a skinny ellipsoid. 
Our proof circumvents the gluing analysis for curves with tangency constraints via an argument which utilizes automatic transversality and obstruction bundle gluing, and hence is only valid in dimension four.
Secondly, in \S\ref{ss:comb} we give a formula which describes how to replace curve counts with ends on two disjoint skinny ellipsoids with curve counts with ends on a single skinny ellipsoid.
In light of \S\ref{ss:same}, this equivalently describes how to combine multibranched tangency constraints at two distinct points in the target into multibranched tangency constraints at a single point. 

\subsection{Equivalence of tangency and  ellipsoidal constraints in dimension four}\label{ss:same}

Let $M$ be a  symplectic four-manifold and $A \in H_2(M;\Z)$ a homology class.
By the results of the previous two sections, for each partition $\Pp = (m_1,\dots,m_b)$ of $c_1(A) - 1$ we have two enumerative invariants:
\begin{itemize}
\item $N_{M,A}^T\lll \Pp\rrr$ counts curves in $M$ that satisfy a multibranched tangency constraint $\lll \T^{m_1-1}p,\dots,\T^{m_b-1}p\rrr$ at a point
\item $N_{M,A}^E\lll \Pp \rrr$ counts curves in $M \setminus E_\sk$ with negative ends $\eta_{m_1},\dots,\eta_{m_b}$.
\end{itemize}
Our main goal in this subsection is to prove Theorem~\ref{thm:counts} which we restate here for the convenience of the reader.  
Note that the coefficient
\begin{align*}
\Pp! := \frac{|\Pp|!}{m_1!\dots m_b!} = \frac{(m_1+\dots+m_b)!}{m_1!\dots m_b!}.
\end{align*}
below represents the number of ways of 
decomposing an ordered set with $|\Pp|$ elements into $b$ unordered parts of sizes $m_1,...,m_b$.

\begin{thm}\label{thm:T=E}
Let $M$ be a symplectic four-manifold and $A \in H_2(M;\Z)$ a homology class.
For any partition $\Pp \in \partitions_{c_1(A) -1}$, we have $$
N_{M,A}^T\lll \Pp \rrr = N_{M,A}^E \lll \Pp \rrr =: N_{M,A} \lll \Pp \rrr.
$$
Further, the number $N_{M,A}$ of $A$-curves through $c_1(A) - 1$ generic points  is given by:
$$
N_{M,A} = \sum_{\Pp\in \partitions_{c_1(A) -1}} \Pp!\, N_{M,A}\lll \Pp\rrr.
$$
\end{thm}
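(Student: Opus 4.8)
The plan is to prove the two statements of Theorem~\ref{thm:T=E} separately, tackling the equivalence $N^T_{M,A}\lll\Pp\rrr = N^E_{M,A}\lll\Pp\rrr$ first and then deriving the point-count formula as a consequence together with a neck-stretching argument that relates $N_{M,A}$ directly to ellipsoidal counts. For the equivalence, the strategy is to stretch the neck of $M$ along the boundary of a small skinny ellipsoid $E_\sk$ that has been placed so that its center is the point $p$ and the local divisor $D$ is arranged to be compatible with the ellipsoidal picture (e.g.\ $D$ tangent to a coordinate hyperplane). Starting from a curve $C$ counted by $N^T_{M,A}\lll\Pp\rrr$, which by Lemma~\ref{lem:blow} is immersed, regular, and has $b$ prescribed branches through $p$ with tangency orders $m_1,\dots,m_b$ to $D$, one analyzes how $C$ degenerates as the neck is stretched. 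The key point is that the portion of $C$ lying inside the ellipsoidal neighborhood must converge to a configuration whose top level in $M\less E_\sk$ is an honest somewhere-injective curve with $b$ negative ends asymptotic to $\eta_{m_1},\dots,\eta_{m_b}$ — this is exactly the local model that says ``contact order $m_i$ to $D$ at $p$'' becomes ``a negative end on the $m_i$-fold cover of the short Reeb orbit''. Conversely, given a curve counted by $N^E_{M,A}\lll\Pp\rrr$, filling in the ellipsoid (gluing back the planes/branched covers of trivial cylinders in $E_\sk$) should recover a curve with the prescribed multibranched tangency. The index computations \eqref{eq:indA1} and \eqref{eq:Aind} already match, and automatic transversality (Theorem~\ref{thm:auttrans}) together with Lemma~\ref{lem:blow} guarantees both sides count with positive sign, so it suffices to establish a sign-preserving bijection.

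The main obstacle, and the reason the proof must be done with care, is the gluing step: one is trying to glue a curve in $M\less E_\sk$ to a collection of curves inside $E_\sk$ along Reeb orbits $\eta_{m_i}$ with $m_i>1$, and these multiply-covered Reeb orbits force us out of the regime of standard transverse gluing into obstruction bundle gluing. The plan here follows the strategy already announced in the introduction: instead of analyzing the problematic direct gluing of tangency curves, use the obstruction bundle gluing results of Hutchings--Taubes~\cite{HuT} applied to the configuration consisting of the top-level curve, the planes in $E_\sk$, and the intermediate branched covers of trivial cylinders over $\eta_1$. Specifically, for each negative end on $\eta_{m_i}$ one glues in a plane in $E_\sk$ asymptotic to $\eta_{m_i}$ through a branched cover of the trivial cylinder over $\eta_1$, and the obstruction bundle gluing count for such a configuration is exactly $1$ (this is the ellipsoidal analog of the fact that there is a unique such filling, parallel to Lemma~\ref{lem:unique_cob_cyl} and Claim~A in the proof of Proposition~\ref{prop:multneg}). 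One must also rule out — again by the ECH writhe/adjunction machinery as in Steps~1 and~3 of Proposition~\ref{prop:multneg} — that any other degeneration can occur in the neck-stretching limit or in a generic one-parameter family, so that the bijection is genuinely bijective and invariant. I expect assembling this obstruction bundle gluing argument, and verifying that the gluing count is precisely $1$ so that no multiplicities are introduced, to be the technically hardest part.

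For the second statement, the plan is to express $N_{M,A}$ — the count of $A$-curves through $c_1(A)-1=m$ generic points — via a neck stretching in which all $m$ point constraints are placed inside a single small skinny ellipsoid $E_\sk$, with the points arranged along the Reeb direction in the manner of Cieliebak--Mohnke. As the neck is stretched, a curve through the $m$ points in $M$ must send some cluster of domain marked points into each branch that develops a negative end; by the index bounds of Section~\ref{sec:skin} (Lemmas~\ref{lem:index_bd}--\ref{lem:mc2} and their multi-end analogs, plus the writhe estimates) the only surviving limiting configurations are buildings whose top level is a somewhere-injective curve in $M\less E_\sk$ with negative ends forming some partition $\Pp=(m_1,\dots,m_b)$ of $m$, capped by planes in $E_\sk$. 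The combinatorial factor $\Pp! = \frac{m!}{m_1!\cdots m_b!}$ arises precisely because the $m$ ordered point constraints can be distributed among the $b$ unordered ends of sizes $m_1,\dots,m_b$ in $\Pp!$ ways, each giving rise (via the unique filling / obstruction bundle gluing count $1$) to one curve in $\calM^{J,\simp}_{M\less E_\sk,A}\lll\Pp\rrr$. Summing over all partitions $\Pp\in\partitions_m$ and using the first part of the theorem to replace $N^E_{M,A}\lll\Pp\rrr$ by $N_{M,A}\lll\Pp\rrr$ yields
$$
N_{M,A} = \sum_{\Pp\in\partitions_m} \Pp!\, N_{M,A}\lll\Pp\rrr.
$$
Here the subtle point to check carefully is that the stretching really does account for \emph{every} $A$-curve through the $m$ points exactly once and with the right sign — i.e.\ that no curve is lost to a bad degeneration and none is overcounted — which once again reduces to the index and writhe arguments established in Section~\ref{sec:skin} together with the obstruction bundle gluing input.
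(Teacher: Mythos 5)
Your proposal takes a genuinely different route from the paper, and the route you take runs directly into the technical obstacle that the paper's own argument is designed to avoid; this is a real gap, not merely a different approach.

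The paper proves the theorem by a \emph{cyclic chain of three inequalities}:
$$
N_{M,A}\;\;\leq\;\;\sum_\Pp \Pp!\,N^T_{M,A}\lll\Pp\rrr\;\;\leq\;\;\sum_\Pp \Pp!\,N^E_{M,A}\lll\Pp\rrr\;\;\leq\;\;N_{M,A},
$$
forcing all inequalities to be equalities. The first inequality comes from degenerating $m$ point constraints onto a single local divisor; the second from neck-stretching a tangency-constrained curve around $E_\sk$; and the third from gluing planes (each through one of the $m$ \emph{distinct points} $p_1,\dots,p_m$ inside $E_\sk$) onto an ellipsoidal curve via obstruction bundle gluing. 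Critically, at no stage does the paper glue a curve in $M\setminus E_\sk$ back to a tangency-constrained curve with marked point at $p$. The one gluing that is performed (Lemma~\ref{lem:glue_E}) produces a curve through $m$ generic points, where the bottom-level planes are simple, and the only nontrivial piece is the branched cover of a trivial cylinder handled by \cite[Thm~1.13, Example~1.28]{HuT}.

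Your proposal, by contrast, tries to prove $N^T_{M,A}\lll\Pp\rrr=N^E_{M,A}\lll\Pp\rrr$ by a \emph{direct bijection}: neck-stretch in one direction, then glue back in the other. The gap is in the gluing direction. If you want the glued curve to satisfy the constraint $\lll\T^\Pp p\rrr$ at the single point $p$, then by Lemma~\ref{lem:E_sk_curves} the bottom-level piece you must glue in over $\eta_{m_i}$ is the $m_i$-fold branched cover of the unique plane through $p$ — a highly multiply-covered, irregular curve whose linearized operator has a large cokernel and which ``satisfies'' the tangency constraint only in a degenerate way (its branch point is at $p$). This is precisely the nonstandard gluing problem that \S\ref{ss:same} explicitly sidesteps (``we will instead give a somewhat indirect argument''). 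Your invocation of Hutchings--Taubes obstruction bundle gluing describes the \emph{point-constraint} configuration of Lemma~\ref{lem:glue_E} (planes through $m_i$ distinct points joined via a branched cover of a trivial cylinder), which yields a closed curve through $m$ generic points — not a curve with the multibranched tangency constraint at $p$. So that framework does not produce the bijection you claim. Without this converse gluing, you have at best $N^T_{M,A}\lll\Pp\rrr\leq N^E_{M,A}\lll\Pp\rrr$ (the paper's Lemma~\ref{lem:T_to_E}), and you cannot close the argument for the first statement without bringing the point count $N_{M,A}$ into the loop as the paper does. Your outline for the second statement is essentially correct and parallels Lemma~\ref{lem:glue_E} and the $\Pp!$ combinatorics of Lemma~\ref{lem:push_ineq}, but on its own it does not rescue the first part.
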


The basic heuristic reason why Theorem~\ref{thm:T=E} holds is that essentially the only rigid punctured curves in a skinny ellipsoid which satisfy a $\lll \T^{m-1}p\rrr$ constraint are $m$-fold covers of the plane passing through $p$ and positively asymptotic to the simple short Reeb orbit in $\bdy E_\sk$. As a consequence, given a $\lll \T^{m-1}p\rrr$ constraint, surrounding it with $E_\sk$ and neck stretching along $\bdy E_\sk$ should replace it with a negative $\eta_m$ end; conversely, gluing should fill in a negative $\eta_m$ end with a $\lll \T^{m-1}p\rrr$ constraint in $E_\sk$. A similar heuristic applies for the multibranched constraints. However, we note that this type of gluing problem is somewhat nonstandard, since it involves curves satisfying tangency constraints. 
In particular, in the situations of interest we often want to glue a curve in $M \setminus E_\sk$ to a multiply covered curve in $E_\sk$ with a branch point at $p$, which satisfies the tangency constraint 
$\lll \T^\Pp p\rrr$ but in a rather degenerate way. 
In order to sidestep such issues, we will instead give a somewhat indirect argument which utilizes automatic regularity in dimension four and obstruction bundle gluing. 
These ideas 
will be placed in a broader framework in \S\ref{ss:comb}.

In more detail, this subsection is structured as follows:
\begin{itemlist}
\item 
In Lemma~\ref{lem:push_ineq}, we establish the inequality $$
N_{M,A}\;\; \leq\;\; \sum _{\Pp} \,\Pp!\, N_{M,A}^T \lll \Pp \rrr, 
$$
 where the sum is over all partitions $\Pp =(m_1,...,m_b)\in \partitions_m$ where $m={c_1(A) - 1}$.
The basic idea here is to push all of the points $p_1,\dots,p_m$ along $D$ until they all coincide with the point $p$.
We argue that, after compactifying this parametrized moduli space, every curve degenerates into a curve with a multibranched tangency constraint, and moreover no two geometrically distinct curves can degenerate to the same limit.
\item
In Lemma~\ref{lem:T_to_E}, we establish the inequality $N_{M,A}^T\lll \Pp \rrr \leq N_{M,A}^E \lll \Pp \rrr$.
This essentially follows by a neck stretching argument after surrounding the point $p$ by a small skinny ellipsoid $E_\sk$.
Here we rule out undesirable degenerations using essentially the same techniques as in \S\ref{ss:skin4D}.
\item 
Finally, in Lemma~\ref{lem:glue_E}, we establish the inequality $$
\sum_{\Pp} \,\Pp!\, N_{M,A}^E \lll \Pp\rrr \;\;\leq\;\; N_{M,A}.
$$
 Here the idea is that we can use obstruction bundle gluing as in \cite{HuT} to fill in all of the negative ends by a collection of holomorphic planes in $E_\sk$, each of which passes through one of the points $p_1,\dots,p_m$.
\end{itemlist}
\begin{proof}[Proof of Theorem~\ref{thm:T=E}]
By combining the aforementioned lemmas, we have
\begin{align*}
N_{M,A}\;\; \leq\;\; \sum_{\Pp} \,\Pp!\,N_{M,A}^T \lll \Pp \rrr \;\;\leq\;\; \sum_{\Pp} \Pp!\, N_{M,A}^E \lll \Pp \rrr \;\;\leq\;\; N_{M,A}. 
\end{align*}
Hence all of the inequalities are equalities; in particular, we must have $N_{M,A}^T \lll \Pp \rrr = N_{M,A}^E\lll \Pp \rrr$ for each $\Pp$.
\end{proof}

 \sss

\begin{lemma}\label{lem:push_ineq}
We have $N_{M,A} \leq \sum_{\Pp}\,\Pp!\, N_{M,A}^T \lll \Pp \rrr$, where the sum is over all partitions $\Pp \in \partitions_{c_1(A) - 1}$.
\end{lemma}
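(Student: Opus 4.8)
The plan is to realize the invariant $N_{M,A}$ on the left by choosing the $m := c_1(A)-1$ point constraints to all lie along a fixed local divisor $D$ through a point $p$, and then pushing them all together towards $p$. Concretely, fix a generic $J \in \Jj_D$ and pick distinct points $p_1(t),\dots,p_m(t) \in D$ depending on a parameter $t \in [0,1]$, with the $p_i(t)$ all distinct for $t < 1$ and all equal to $p$ at $t=1$. For $t<1$ the count of $A$-curves through $p_1(t),\dots,p_m(t)$ equals $N_{M,A}$ (independence of point locations), and I would assemble these into a one-parameter moduli space $\Mm := \bigcup_{t \in [0,1]} \Mm^{J,\simp}_{M,A,m}(p_1(t),\dots,p_m(t))$, a $1$-manifold whose $t<1$ slices have $N_{M,A}$ points.

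The core of the argument is to analyze the limit $t \to 1$ using Gromov compactness. A sequence of $A$-curves $C_t$ through $p_1(t),\dots,p_m(t)$ converges to some stable map $C$ in homology class $A$. Using semipositivity and genericity of $J$ exactly as in the proof of Proposition~\ref{prop:tanpseud}, the limit $C$ cannot have any multiply covered components contributing, and any genuine nodal degeneration away from $p$ is codimension two, hence does not occur in a generic one-parameter family. So the only way the limit can fail to be a smooth somewhere injective curve is that several of the marked points collide at $p$, possibly together with the formation of ghost bubbles at $p$. The key point — this is where Lemma~\ref{lem:cusp_deg} enters — is that when $k$ of the marked points collide at $p$ and the limiting curve has some number $b$ of branches through $p$ with contact orders $m_1 \geq \dots \geq m_b$ to $D$, the total contact order $\sum_j m_j$ is at least $k$, and in fact in the generic one-parameter family the limit must satisfy the constraint $\lll \T^{\Pp}p\rrr$ for a partition $\Pp = (m_1,\dots,m_b)$ of exactly $m$ (any excess contact order would be a codimension $\geq 2$ phenomenon). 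Thus each boundary point of $\Mm$ at $t=1$ is a curve counted by some $N^T_{M,A}\lll \Pp \rrr$ with $\Pp \in \partitions_m$.

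It then remains to count the multiplicity with which each such limiting curve $C$ (with ordered branches, i.e. contributing to $\wh N^T_{M,A}\lll \Pp\rrr$) arises as a limit. A curve $C$ satisfying $\lll \T^{m_1-1}p,\dots,\T^{m_b-1}p\rrr$ has $b$ branches at $p$ meeting $D$ with orders $m_1,\dots,m_b$; when we pull the $m$ marked points back off $p$ along $D$, each of the $m$ points must be distributed onto one of the $b$ branches, with exactly $m_i$ points going to the $i$th branch (since the local intersection multiplicity of that branch with $D$ is $m_i$, by \cite[Prop~7.1]{CM1}). The number of ways to partition the ordered set $\{p_1,\dots,p_m\}$ into labelled blocks of sizes $m_1,\dots,m_b$ is exactly $\Pp! = m!/(m_1!\cdots m_b!)$, and a standard gluing/implicit function theorem argument (smoothing each such configuration of marked points on the normalization, using the automatic transversality and positivity from Lemma~\ref{lem:blow} to guarantee each smoothing exists and is unique) shows that each of these $\Pp!$ labellings produces exactly one nearby curve through the perturbed points. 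Summing over unordered partitions and using $N^T_{M,A}\lll \Pp\rrr = \wh N^T_{M,A}\lll \Pp\rrr / |\Aut(\Pp)|$, we get that the number of $t=1$ boundary points of $\Mm$, counted with multiplicity, is $\sum_{\Pp \in \partitions_m} \Pp!\, N^T_{M,A}\lll \Pp\rrr$. Since $\Mm$ is a compact $1$-manifold with $N_{M,A}$ points in each $t<1$ slice, and the one-parameter family is a cobordism, we obtain $N_{M,A} \leq \sum_{\Pp} \Pp!\, N^T_{M,A}\lll \Pp\rrr$ (inequality rather than equality because a priori the limiting curve for a given $\Pp$-partition might be approached from ``fewer'' directions than expected, or distinct limits might be glued — but the key gluing step gives at least the stated count). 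The main obstacle is precisely this last bookkeeping step: showing that smoothing the collided marked points on a curve satisfying $\lll \T^\Pp p\rrr$ yields exactly $\Pp!$ genuinely distinct nearby solution curves, no more and no less; this requires the blow-up picture of Lemma~\ref{lem:blow} to control the local geometry of each branch and the transversality of the resulting solutions.
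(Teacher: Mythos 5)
Your overall framing --- pushing the $m$ point constraints along $D$ until they collide at $p$, using Lemma~\ref{lem:cusp_deg} and the index/codimension arguments from Proposition~\ref{prop:tanpseud} to control the limit, and then bookkeeping how the $m$ marked points distribute across the $b$ branches --- is essentially the same as the paper's up to the final multiplicity step, and that step is where your argument has a genuine gap.

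You try to close the counting by a gluing/implicit-function-theorem argument: you want to show that smoothing a limit curve $C$ satisfying $\lll \T^\Pp p\rrr$, for each of the $\Pp!$ labelled distributions of $\{p_1,\dots,p_m\}$ over the branches, produces exactly one nearby curve through $\bp_k$ for large $k$. This would prove a bijection, hence equality. But gluing curves along tangency constraints is precisely the analytic machinery this paper deliberately avoids: it is flagged as ``nonstandard'' at the start of \S\ref{ss:same}, and the whole structure of Theorem~\ref{thm:counts} (proving both inequalities separately and closing the cycle) exists to circumvent it. You cannot simply invoke ``automatic transversality and positivity from Lemma~\ref{lem:blow}'' to get uniqueness of the smoothing: Lemma~\ref{lem:blow} gives regularity of the limiting curve in the blown-up model, not a gluing theorem that smooths collided marked points with prescribed tangency.

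Moreover, for the inequality in the \emph{stated} direction you do not need (and should not try to prove) surjectivity or uniqueness of gluings. What you need is the opposite: that the assignment
\[
C_{k,i} \;\longmapsto\; \bigl(\ovl{C}_{\infty,i},\, \Qq_i\bigr)
\]
from curves through $\bp_k$ to pairs (pruned limit, ordered decomposition of $\{1,\dots,m\}$) is \emph{injective} for $k$ large. The paper proves this by a closed, four-dimensional intersection argument that you do not have: supposing $C' := C_{k,i}$ and $C'' := C_{k,j}$ have the same limit $C$ and the same decomposition $\Qq_i = \Qq_j$ with $i \neq j$, the adjunction formula gives $\delta(C) = \delta(A) = \tfrac12(A^2 - c_1(A)) + 1$ with $\delta(\Pp)$ of these double points at $p$ (Lemma~\ref{lem:dePp}); since $C'$ and $C''$ both approximate $C$, they share at least $2(\delta(A)-\delta(\Pp))$ transverse intersections away from $p$, while a careful branch-by-branch count (using that corresponding branches pass through the same subset of the $\bp_k$) gives at least $|\Pp| + 2\delta(\Pp)$ intersections near $p$; adding these forces $C' \cdot C'' \geq A^2 + 1$, a contradiction. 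This substitutes a local intersection-multiplicity estimate plus adjunction for the gluing analysis, and it is the step your proposal leaves as an unresolved ``main obstacle.'' Without it, you have not established the inequality.
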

\begin{proof}
Put $m = c_1(A) - 1$.
Let $D$ be a local divisor at $p \in M$, and let 
$\bp_1,\bp_2,\bp_3,\dots$ be a sequence in $D^{\times m} \subset M^{\times m}$ which converges to 
$\bp_{\infty} := (p,\dots,p)$.
Consider the moduli space $\calM_{M,A,m}^{J,\simp}$ of simple $A$-curves with $m$ marked points.
By semipositivity, for generic $J \in \Jj_D$ the evaluation map
$$
\ev_m:  \calM_{M,A,m}^{J,\simp} \to M^{\times m}
$$
is a pseudocycle of dimension $4m$,
and moreover we can assume that each of the points $\bp_k \in M^{\times m}$ is a regular value.
Then the signed count of points in $\ev_m^{-1}(\bp_k)$ is $N_{M,A}$ for each $k \in \Z_{> 0}$, and in fact by 
automatic transversality (c.f. Theorem~\ref{thm:auttrans}) this is also the unsigned count. 

By forgetting the constraints, we have a natural inclusion map $\ev_m^{-1}(\bp_k) \hookrightarrow \ovl{\calM}_{M,A,m}^J$ for $k \in \Z_{> 0}$.
Let $\lim_{k \rightarrow \infty}\ev_m^{-1}(\bp_k)$ denote the set of limit points as $k \rightarrow \infty$ in this ambient compact space.
That is, each element of $\lim_{k \rightarrow \infty}\ev_m^{-1}(\bp_k)$ is a limit $C_\infty = \lim_{k \rightarrow \infty}C_k$ of stable maps $C_k \in \ev_m^{-1}(\bp_k)$.
Note that each marked point of $C_\infty$ maps to $p$, although in general $C_\infty$ will be a nodal configuration consisting of one or more ghost components.
Since $J \in \Jj_D$ is generic, by \eqref{eq:ord}
and index considerations similar to those in the proof of Proposition~\ref{prop:tanpseud}, we can assume that
$C_\infty$ has exactly one nonconstant component, which must represent the homology class $A$.
Let us prune away all of the ghost components from $C_\infty$ and mark the nearby special points on 
the nonconstant component 
as in steps (1) and (2) in the proof of Proposition~\ref{prop:tanpseud}.
The resulting curve $\ovl{C}_\infty$ has marked points $z_1,\dots z_a$ for some $a \leq m$ (with some of these newly added) and satisfies the condition $\sum_{i=1}^a \ord(\ovl{C}_\infty,D;z_i) = m.$\footnote{Note that it is crucial here that all of the points involved in the sequence $\bp_1,\bp_2,\bp_3,\dots$ lie in $D$, so that the total contact order with $D$ is preserved by the limiting curve $\ovl{C}_\infty$. If we instead took an arbitrary sequence of points in $M^{\times m}$ limiting to $\bp_\infty$, the limiting curves could satisfy more complicated constraints which depend on the collision directions of the sequence and are unrelated to the divisor $D$.}
In other words, $\ovl{C}_\infty$ defines an element of $\bigcup_{\Pp \in \partitions_m} \calM^{J,\simp}_{M,A}\lll \T^\Pp p\rrr$.

Let $N := N_{M,A}$, and choose a labelling $(C_{k,i})_{i=1}^N$ of the $N$ elements in $\ev_m^{-1}(\bp_k)$ for each $k \in \Z_{> 0}$.
By passing to a subsequence, we may assume that each sequence $(C_{k,i})_{k  \geq 1}$ converges to a limiting curve 
$$
C_{\infty,i} \in \lim_{k \rightarrow \infty}\ev_m^{-1}(\bp_k).
$$
Recall from \eqref{eq:noHatN} that $N^T_{M,A}\lll \Pp \rrr$ is the count of curves whose branches through $p$ are {\em unordered}.
Note each sequence $C_{1,i},C_{2,i},C_{3,i},\dots$ determines not just a pruned limiting curve $\ovl{C}_{\infty,i}$, but also a 
decomposition $\Qq_i$ of $\{1,\dots,m\}$ into subsets whose sizes are given by a partition $\Pp: = (m_1,\dots,m_b)$ of $m$,
where $\Qq_i$ is determined  by recording which marked points of $C_{k,i}$ coalesce into each branch of $\ovl{C}_{\infty,i}$ through $p$.
Since $\Pp!$ is precisely the number of such decompositions $\Qq_i$ that correspond to a given partition $\Pp$,
it suffices to show that  if $\Qq_i = \Qq_j$ for some pair of  sequences $(C_{k,i})_{k \geq 1}$ and $(C_{k,j})_{k \geq 1}$ with the same limiting curve $C := \ovl{C}_{\infty}$ then we must have $i = j$.

To prove this last point, suppose by contradiction that we have $i \neq j$, and let
$C': = C_{k,i}, C'': = C_{k,j}$ for some large $k$.
 We will show that this is impossible by counting singularities and applying the adjunction formula.\footnote
{Note the analogy with the proof of Claim B of  Proposition~\ref{prop:multneg}.}
According to the adjunction formula, the count $\delta(C)$ of singularities of $C$ is given by
$$ \delta(C) = \delta(A) = \tfrac{1}{2}(A^2 - c_1(A)) + 1,$$
while the singularity at $p$ gives a contribution to $\de(\Cc)$ of $\delta(\Pp)$ by Lemma~\ref{lem:dePp}.
Therefore we can assume that $C$ has an additional $\delta(A)-\delta(\Pp)$ double points away from $p$,
and, since $C',C''$ are close approximations of $C$, they therefore  have at least $2(\delta(A) - \delta(\Pp))$ intersection points away from $p$.
We now claim that the singularity at $p$ contributes at least $|\Pp| + 2\delta(\Pp)$ intersection points between $C'$ and $C''$ near $p$.
To see this, denote the local branches of $C'$ and $C''$ near $p$ by $B'_1,\dots,B'_{b}$ and $B''_1,\dots,B''_{b}$ respectively, where as usual we assume $\Pp = (m_1,\dots, m_b)$ with $m_1\ge \dots \ge m_b$. 
By construction for each $\be\in \{1,\dots,b\}$ the branches $B'_\be$ and $B''_\be$ go through the same subset of the tuple $\bp_k$ of size $m_\be$, so that $B''_{\be}\cdot B'_{\be} \geq m_\be.$
  Further, 
if $\al< \be$ and $k$ is sufficiently large, then $B'_{\al}$ is very close to a curve that has contact order $m_\al\ge m_\be$ with $D$ at $p$ and so $B''_\be$, which by construction meets $D$ in $m_\be$ points near $p$, must also meet $B'_\al$ in  $m_\be$ points near $p$.
Therefore, for $\al <\be$ we have  $B'_{\al}\cdot B''_{\be} \geq m_\be$ and also 
$B''_{\al}\cdot B'_{\be} \geq m_\be$.  Thus for sufficiently large $k$  there are at least 
$$
\sum_\be  m_\be + 2\sum_\be (\be-1)m_\be = |\Pp| + 2\de(\Pp)
$$
intersection points of $C'$ with $C''$ near $p$ as claimed.
Therefore we have
\begin{align*}
C' \cdot C'' &\geq 2(\delta(A) - \delta(\Pp)) + |\Pp| + 2\delta(\Pp)\\
& \geq A^2 - c_1(A) + 2 + c_1(A) - 1\\ &\geq A^2 + 1,
\end{align*}
which is impossible.
\end{proof}

Our next task is to understand what happens when we surround a multibranched tangency constraint by a skinny ellipsoid and stretch the neck.
As a preliminary step, the next lemma shows that every index zero  curve in a skinny ellipsoid satisfying a multibranched tangency constraint $\lll\T^\Pp p\rrr$ is a multiple cover of the  unique plane though $p$.  In particular,
since the space of $|\Pp|$-fold covers is connected,    
the structure of $\Pp$ is not really seen.

\begin{lemma}\label{lem:E_sk_curves}
Let $C$ be a $J$-holomorphic punctured sphere in $E_\sk$ which has positive ends asymptotic to $\eta_{k_1},\dots,\eta_{k_a}$ for some $a,k_1,\dots,k_a \geq 1$. Assume that $C$  has index zero and satisfies a multibranched tangency constraint $\lll \Pp \rrr$ for some partition $\Pp = (m_1,\dots,m_b)$.
Then we must have $a = 1$ and $|\Pp| = k_1$; further $C$ is a $k_1$-fold cover of the unique plane passing through $p$ with top end on $\eta_1$.
\end{lemma}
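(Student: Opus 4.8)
The plan is to compute the index of $C$ in two ways and combine this with positivity of energy and the analysis of curves in $E_\sk$, treating $E_\sk$ as the symplectic completion of the ellipsoid. First I would write down the Fredholm index of $C$ using \eqref{eq:Find} together with the Conley--Zehnder computation \eqref{eq:CZ} for the short orbits $\eta_{k_i}$, remembering that a planar ``cap'' over $\eta_{k_i}$ in the symplectic completion of $E_\sk$ has $c_{\tau_\ex} = 0$ by \eqref{eq:Cspex}. Since $C$ has genus zero with $a$ positive ends and no negative ends, $\chi(C) = 2 - a$, so with the trivialization $\tau_\ex$ we get $\ind(C) = (n-3)(2-a) + 2c_{\tau_\ex}(C) + \sum_{i=1}^a (n-1 + 2k_i)$. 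Here $M$ is four dimensional so $n = 2$; plugging in gives $\ind(C) = -(2-a) + 0 + \sum_{i=1}^a (1 + 2k_i) = 2a - 2 + 2\sum k_i$ when $c_{\tau_\ex}(C) = 0$. (More precisely $c_{\tau_\ex}(C)$ need not vanish in general, but for a curve in $E_\sk$ it does, being a relative Chern number of a class that bounds inside the ball-like domain; this is exactly \eqref{eq:Cspex}.)

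Next I would bring in the tangency constraint. The constraint $\lll \T^\Pp p\rrr$ imposes codimension $2\sum_{i=1}^b m_i$ on the moduli space (one $-2$ for the point constraint at $p$ absorbed, the rest from tangency, exactly as in \eqref{eq:indA1}), so the constrained moduli space has dimension $\ind(C) - 2\sum m_i = 2a - 2 + 2\sum k_i - 2|\Pp|$. Setting this to zero, which is the hypothesis ``index zero,'' forces $a - 1 + \sum_{i=1}^a k_i - |\Pp| = 0$, i.e. $\sum k_i + (a-1) = |\Pp|$. On the other hand, the curve $C$ passing through $p$ with $b$ branches and total tangency data $\Pp$ must intersect the local divisor $D$ near $p$ with total order $|\Pp|$; since $D$ can be taken to be $\{z_1 = 0\}$ (or whichever coordinate hyperplane is relevant) and the positive ends of $C$ wrap the short Reeb orbit $\eta_1$, the total algebraic intersection of $C$ with $D = D_1$ is controlled by the multiplicities $k_i$ of the ends. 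Positivity of energy (or of intersection) gives $\sum_{i=1}^a k_i \geq |\Pp|$: each unit of contact order with $D$ contributes to the wrapping of at least one end around $\eta_1$. Combined with $\sum k_i + (a-1) = |\Pp| \leq \sum k_i$ this forces $a = 1$ and $|\Pp| = k_1$.

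Having reduced to the case of a single positive end $\eta_{k_1}$ with $k_1 = |\Pp|$ and $\ind(C) = 0$ after the constraint, I would finish by the standard classification of genus zero curves in a (skinny) ellipsoid. The relevant fact is that the moduli space of $J$-holomorphic planes in the completion of $E_\sk$ asymptotic to $\eta_1$ consists, for suitable $J$, of a unique plane through any prescribed point; this is essentially Lemma~\ref{lem:unique_cob_cyl} applied to the ellipsoid itself (or the analogous statement in \cite{CM1}, or a positivity-of-intersections argument with the coordinate divisors $D_i = \{z_i = 0\}$ that successively confines $C$ to lower dimensional ellipsoids). Then any punctured sphere with a single positive end on $\eta_{k_1}$ and zero energy beyond what is forced — here the energy of $C$ equals $k_1 \eps$, matching a $k_1$-fold multiple cover of the action-$\eps$ plane — must factor through that unique plane: a somewhere injective such curve would have too small an index, so $C$ is multiply covered, and a degree count on the asymptotics shows the cover has degree exactly $k_1$. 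I expect the main obstacle to be making the intersection/energy inequality $\sum k_i \geq |\Pp|$ rigorous in the presence of multiply covered components and the degenerate way a branched cover satisfies the tangency constraint — one has to be careful that the ``total contact order with $D$'' bookkeeping is not spoiled by branch points mapping to $p$ — but this is exactly the kind of estimate already handled via Lemma~\ref{lem:cusp_deg} and the local contact-order formalism of \cite{CM1}, so I would lean on those.
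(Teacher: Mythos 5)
Your argument takes a genuinely different route from the paper's, and the step you lean on hardest --- the inequality $\sum_{i=1}^a k_i \geq |\Pp|$ for an a priori arbitrary $C$ --- is precisely the nontrivial content, and it is not supplied by ``positivity of energy or of intersection'' in any elementary sense. The paper's proof runs in the opposite order: it first establishes, independently of the index or the tangency constraint, that any \emph{simple} curve in $\wh{E_\sk}$ with positive ends on short orbits has $a=1$ and $k_1=1$, by pitting the writhe bound $\wr_{\tau_\sp}^+(C) \leq 0$ of \eqref{eq:w+} against the relative adjunction formula \eqref{eq:adjuc}, which (using $c_{\tau_\sp}(C) = \sum_i k_i$ and $Q_{\tau_\sp}(C)=0$) gives $\wr_{\tau_\sp}^+(C) = \sum_i k_i + a - 2 + 2\delta(C)$. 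Only after the plane is pinned down (and its uniqueness established by the same adjunction calculation applied to a union of two such planes) does the index formula enter, applied to a $\kappa$-fold cover of that plane, where $|\Pp| \leq \kappa = \sum_i k_i$ is immediate from the branching over $p$. Your ordering (index, then positivity, then classification) cannot avoid this: to bound the contact order at $p$ by $\sum_i k_i$ you essentially need to know $C$ factors through the coordinate plane $\{z_2 = 0\}$, which one could obtain via a Siefring/Moreno--Siefring intersection argument with that divisor in the spirit of Lemma~\ref{lem:unique_cob_cyl} --- but that is not what you wrote, and Lemma~\ref{lem:cusp_deg}, which concerns how a tangency constraint redistributes onto nonconstant components as ghosts bubble off, is not the relevant tool here.

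There is also a concrete error in your closing classification step: you assert that ``a somewhere injective such curve would have too small an index,'' but a simple curve with a single positive end $\eta_{k_1}$ subject to $\lll \T^{k_1-1}p\rrr$ has constrained index exactly $0$, which is the expected dimension and rules nothing out. What actually excludes simple curves with $k_1 > 1$ is again the adjunction/writhe estimate (or the intersection-positivity argument), which your write-up never invokes. For what it is worth, your computation of the unconstrained index as $2a - 2 + 2\sum_i k_i$ (via \eqref{eq:Find}, \eqref{eq:CZ} and $c_{\tau_\ex}(C)=0$) is correct.
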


\begin{proof}
If $C$ is simple, we can estimate its top writhe from Equation~\ref{eq:w+} as $w^+_{\tau_\sp} \leq 0$ since
in this trivialization  $\lfloor m\theta\rfloor = 0$ for all $m<s$.  From the relative adjunction formula \eqref{eq:adjuc} we also have
\begin{align*}
w^+_{\tau_\sp} = \sum_{i=1}^a k_i + (a - 2) + 2\delta \geq \sum_{i=1}^a k_i + a - 2.
\end{align*}
Combining these inequalities, we must have $a = 1$ and $k_1 = 1$ as claimed. 
A similar calculation applied to the union $C$ of two distinct planes each with $a = k_1 = 1$ shows that in this case we would have $\de(C)<0$, which is impossible. 
Thus there is only one  possibility for $C$, namely the unique plane through $p$ with positive end  $\eta_1$.  

Now suppose that  $C$ is a $\ka$-fold cover of this unique plane $\ovl{C}$.  By \eqref{eq:Find}  and
 \eqref{eq:CZindn} 
we have 
$$
\ind(C) =  -1 + a + 2\sum_{i=1}^a k_i - 2 |\Pp|,
$$
where we subtracted $2 |\Pp|$ to take account of the constraint at $p$.  Since $|\Pp| \le \ka = \sum_{i=1}^a k_i$
and $\ind(C) = 0$ by hypothesis, we must have $a=1$ and $|\Pp| = \ka = k_1.$  
\end{proof}

\begin{lemma}\label{lem:T_to_E}
For any $\Pp \in \partitions_{c_1(A)-1}$ we have $N_{M,A}^T\lll \Pp \rrr \;\; \leq\;\; N_{M,A}^E \lll \Pp \rrr$.
\end{lemma}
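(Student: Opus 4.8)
The plan is to surround the tangency constraint point $p$ with a small skinny ellipsoid $E_\sk$ and stretch the neck along $\bdy E_\sk$, following the same template used to prove Propositions~\ref{prop:oneend}, \ref{prop:s_indep} and \ref{prop:multneg}. First I would fix a generic $J \in \Jj_D$ integrable near $p$, fix a symplectic embedding $\io: E_\sk \hookrightarrow M$ with $p \in \io(E_\sk)$ chosen so that the local divisor $D$ extends to one of the coordinate divisors $\{z_n = 0\}$ inside $E_\sk$, and then consider the one-parameter family $J_t$, $t\in[0,1)$, of almost complex structures realizing the neck stretching along $\bdy E_\sk$, chosen generic subject to being admissible and making the relevant coordinate divisors holomorphic. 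Every curve in $\calM^{J,\simp}_{M,A}\lll \T^\Pp p\rrr$ (which is already a finite set of points by Proposition~\ref{prop:Ppseudo} and Lemma~\ref{lem:blow}) then degenerates as $t\to 1$ to an SFT building.

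The key step is to analyze the possible limiting buildings. The building has a top level in $M \setminus \io(E_\sk)$ together with some symplectization levels in $\R\times\bdy E_\sk$ and some levels in $E_\sk$ (the part of the building meeting $p$ and satisfying the tangency constraint lives in the bottom piece $E_\sk$). Using the index estimates of \S\ref{ss:skinhigh}--\S\ref{ss:skin4D} exactly as in the proof of Proposition~\ref{prop:multneg}, Step 1 — the matched-component index bounds (Lemmas~\ref{lem:index_bd}, \ref{lem:mc0}, \ref{lem:mc1}, \ref{lem:mc2}), the fact that simple curves have nonnegative index for generic $J$, and the skinny condition (Lemma~\ref{lem:skin}) which forces all ends onto short orbits — I would argue that the top level must be a smooth connected somewhere injective curve $C_{\rm top}$ in $M\setminus\io(E_\sk)$ with some negative ends $\eta_{n_1},\dots,\eta_{n_c}$ on short orbits, all symplectization levels are trivial (hence absent by stability), and the piece inside $E_\sk$ is a disjoint union of $J$-holomorphic punctured spheres carrying the full tangency constraint $\lll\T^\Pp p\rrr$ among them. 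Now apply Lemma~\ref{lem:E_sk_curves}: each such index-zero piece in $E_\sk$ is a $k_i$-fold cover of the unique plane through $p$ with top end on $\eta_1$, carrying a sub-partition of $\Pp$ of total size $k_i$; and since the tangency constraint must be fully absorbed and the branches are located by which iterate of $\eta_1$ they limit to, the only possibility is that there is one such piece for each part $m_j$ of $\Pp$, giving $C_{\rm top} \in \calM^{J,\simp}_{M\setminus E_\sk, A}\lll \Pp\rrr$. This yields a map from $\calM^{J,\simp}_{M,A}\lll \T^\Pp p\rrr$ to $\calM^{J,\simp}_{M\setminus E_\sk,A}\lll\Pp\rrr$. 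Finally I would show this map is injective: if two distinct curves $C', C''$ satisfying $\lll\T^\Pp p\rrr$ degenerated to buildings with the same top level $C_{\rm top}$, then approximating curve portions $C_1, C_2$ near $C_{\rm top}$ would satisfy $C_1\cdot C_2 \geq 2\delta(C_{\rm top})$, and a relative-adjunction-plus-writhe computation in the style of Claim~B of Proposition~\ref{prop:multneg} (using that the writhe bound \eqref{eq:w-} is sharp for generic $J$, hence $\wr^-_{\tau_\sp}(\eta_\Pp)$ is exactly $2\delta(\Pp) + |\Pp| - b$) forces $C_1\cdot C_2 < 2\delta(C_{\rm top})$, a contradiction. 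This gives $N^T_{M,A}\lll\Pp\rrr \leq N^E_{M,A}\lll\Pp\rrr$.

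The main obstacle I anticipate is not the gluing (which we deliberately avoid here, only going one direction via degeneration), but rather controlling how the tangency constraint $\lll\T^\Pp p\rrr$ is distributed among the components of the part of the building lying in $E_\sk$ — in particular making precise the claim, via Lemma~\ref{lem:cusp_deg} and Lemma~\ref{lem:E_sk_curves} together, that the constraint cannot ``leak'' into extra ghost or bubble components inside $E_\sk$ in a way that produces a top-level curve with the wrong negative ends. One needs the index accounting to be tight enough that no slack is available: the total negative-end multiplicity $\sum n_i$ on the top level must equal $|\Pp|$, the number of ends must equal $b$, and each $n_i$ must equal the corresponding $m_i$. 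This follows by combining the sharp writhe estimate \eqref{eq:wnegPp} with the observation (from Lemma~\ref{lem:E_sk_curves}) that an index-zero piece in $E_\sk$ with $a$ positive ends of total multiplicity $k$ absorbing a tangency constraint of total order $|\Pp'|$ forces $a=1$ and $|\Pp'| = k$, so the pieces are forced to match up one-to-one with the parts of $\Pp$. I would also need to double-check orientations/signs, but since all curves count positively by Lemma~\ref{lem:blow} and Wendl's automatic transversality (Theorem~\ref{thm:auttrans}), the inequality of unsigned counts is all that is required.
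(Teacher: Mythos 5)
Your overall strategy is exactly the paper's: surround $p$ with a skinny ellipsoid, stretch the neck, use the index bounds and Lemma~\ref{lem:E_sk_curves} to constrain the limit, and close with an adjunction/writhe argument for injectivity (which is indeed Claim~B of Proposition~\ref{prop:multneg}, i.e.\ \eqref{eq:claimB}). However, there is a genuine gap at the point you yourself flag as ``the main obstacle,'' and your proposed resolution of it does not work.

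The claim you need is that if $\Pp = (m_1,\dots,m_b)$, then $C_{\rm top}$ has exactly $b$ negative ends with multiplicities $m_1,\dots,m_b$. You assert this follows from Lemma~\ref{lem:E_sk_curves} ``forces $a=1$ and $|\Pp'|=k$'' together with the writhe estimate~\eqref{eq:wnegPp}, concluding that ``the pieces are forced to match up one-to-one with the parts of $\Pp$.'' But Lemma~\ref{lem:E_sk_curves} does not force this: that lemma only rules out a bottom component having \emph{several positive ends}, and says the total tangency order it carries equals the multiplicity of its single top end. It is perfectly consistent with Lemma~\ref{lem:E_sk_curves} that, say, for $\Pp=(2,1)$ there is a \emph{single} bottom component (a $3$-fold cover of the plane) absorbing the full multi-branched constraint $\lll\T p,p\rrr$, with top end on $\eta_3$; then $C_{\rm top}$ would have one negative end on $\eta_3$ and you would prove the wrong inequality $N^T_{M,A}\lll(2,1)\rrr \le N^E_{M,A}\lll(3)\rrr$. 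Likewise, applying~\eqref{eq:wnegPp} to the (unknown, possibly coarsened) negative-end partition of $C_{\rm top}$ only gives a one-sided bound and does not by itself produce a contradiction, so ``combining'' these two facts does not yield the matching you need.

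What the paper actually does to close this is different from what you propose: it applies the relative adjunction formula \eqref{eq:adjuc} not to $C_{\rm top}$ but to the \emph{bottom} curve portion $C'$ that approximates a bottom component absorbing a subpartition $\Qq$ of $\Pp$ with more than one part. On the one hand, $C'$ satisfies a genuinely multi-branched tangency constraint near $p$, so Lemma~\ref{lem:dePp} forces $\delta(C')>0$. On the other hand, with $\tau=\tau_\sp$ one computes $\chi(C')=1$, $Q_\tau(C')=0$, $c_\tau(C')=|\Qq|$ by \eqref{eq:Cspex}, and $\wr_\tau(C')=|\Qq|-1$ by the sharp writhe bound~\eqref{eq:w+} at the top of the neck; plugging into \eqref{eq:adjuc} gives $2\delta(C')=1+0-|\Qq|+(|\Qq|-1)=0$, a contradiction. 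This forces each bottom component to absorb exactly one branch of $\Pp$, which pins down the negative ends of $C_{\rm top}$. You should replace your appeal to Lemma~\ref{lem:E_sk_curves} plus \eqref{eq:wnegPp} by this adjunction computation on the bottom curve portion, using Lemma~\ref{lem:dePp}.
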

\begin{proof}
Let $E_\sk \subset M$ denote a small skinny ellipsoid containing the point $p$.
We consider the effect of neck stretching along $\bdy E_\sk$.
Note that we can assume that the family of almost complex structures $J_t$, $t \in [0,1)$, realizing the neck stretching is constant near $p$, so that in particular $J_t$ is integrable near $p$ and $D$ is $J_t$-holomorphic. Moreover, we can assume that the limiting almost complex structure on $\wh{E_\sk}$ is generic, such that Lemma~\ref{lem:E_sk_curves} applies.
A priori, a limiting configuration as $t \rightarrow 1$ is a multilevel pseudoholomorphic building in the split symplectic cobordism $E_\sk \notccirc (M \setminus E_\sk)$, consisting of
\begin{enumerate}
\item a top level in $M \setminus E_\sk$
\item some number of levels in the symplectization $\R \times \bdy E_\sk$
\item a bottom level in the completion $\wh{E_\sk}$ of $E_\sk$.
\end{enumerate}

Let us formally glue together the curves in the symplectization levels to get a collection of (connected) matched components in $\R \times \bdy E_\sk$.
By equations \eqref{eq:indCi} and  \eqref{eq:energ}, it follows as in
 the proof of Proposition~\ref{prop:multneg} that all of these matched components have nonnegative index; and in fact this index  is strictly positive if there  is more than one positive end.
Similarly, by \eqref{eq:Aind1} all of the components in the top level have nonnegative index, and this is strictly positive unless they are simple.
Furthermore, observe that the constraint $\lll \Pp \rrr$ is distributed amongst the components in the bottom level. By Lemma~\ref{lem:E_sk_curves}, each of these components is a multiple cover of an embedded plane so that, if its top has multiplicity $m_i$, it   cannot satisfy a constraint $\lll \T^{\Pp_i}p \rrr$ with $|\Pp_i| > m_i$.  Hence it satisfies a constraint with $|\Pp_i| \le m_i$ and so has nonnegative index.  
Since the building has total index zero, it follows that all of these (matched) components must have index zero. In particular, each of the matched components in $\R \times \bdy E_\sk$ has exactly one positive end, each of the components in the top level is simple, and each component in the bottom level is a $k$-fold cover of the unique plane through $p$ with top asymptotic to $\eta_1$.   It follows that the top component $C_{top}$ is connected, and that each of the matched components in $E_{\sk}$ is a plane.

We now claim that $C_{top}$ has precisely $b =|\Pp|$ negative ends of multiplicities $m_1,\dots,m_b$.   To see this, note that  the limiting building can be approximated by a curve that may be cut just above the neck into a top portion  that approximates $C_{top}$ with $b'$ negative ends, and a bottom portion
with $b'$ connected components  each of which satisfies some part of the $\lll \T^\Pp p\rrr$ constraint.
More precisely, the partition $\Pp = (m_1,\dots,m_b)$ is divided into $b'$ subpartitions $(m'_{11},\dots, m'_{1k_1}),\dots, (m'_{b'1},\dots, m'_{b'k_{b'}})$ with  
$b = \sum_{i=1}^{b'} k_i$. 
 If $b'<b$ then at least one of these subpartitions, say $\Qq : = (m'_{11},\dots, m'_{1k_1}),$ contains $k_1>1$ elements.  
The corresponding bottom curve portion $C'$ satisfies a tangency constraint with at least two branches and so 
has  double points; more precisely, $\de(C') > 0$  by Lemma~\ref{lem:dePp}.  
Because $C'$ has index $0$, its top is an approximation to $\eta_{|\Qq|}$.
Thus the adjunction formula \eqref{eq:adjuc} for this curve portion (with $\tau = \tau_{\sp}$) gives
\begin{align*}
2\de(C') &= \chi(C') + Q_\tau(C') - c_\tau(C') + \wr_\tau(C')  \\
& = 1 + 0 - |\Qq| + (|\Qq|-1) = 0
\end{align*}
where we use \eqref{eq:Cspex} to calculate $c_\tau(C')$ and \eqref{eq:w+} 
to calculate  $\wr_\tau(C')$.
Thus, this scenario is impossible.  
Hence  $C_{top}$ has $b$ negative ends of multiplicities $m_1,\dots,m_b$.  

Thus, neck stretching associates to each curve in $\calM_{M,A}^{J_t,\simp}\lll \Pp \rrr$ with $t$ close to $1$ a curve in $\calM_{M \setminus E_\sk,A}^{J,\simp}\lll \Pp \rrr$.
Moreover, this association is injective, since if  two distinct $J_t$-holomorphic curves 
were very close to the same building their union would violate the relative adjunction formula
 exactly as in Claim B in Step 2 of the proof of Proposition~\ref{prop:multneg}. The key calculation here is \eqref{eq:claimB}.
\end{proof}

Here is the final ingredient needed to prove Theorem~\ref{thm:T=E}:

\begin{lemma}\label{lem:glue_E}
We have $\sum_{\Pp} \Pp!\, N_{M,A}^E \lll \Pp \rrr = N_{M,A}$, where the sum is over all partitions
$\Pp \in \partitions_{c_1(A) -1}$.
\end{lemma}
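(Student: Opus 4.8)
The plan is to establish the inequality
$N_{M,A}\ \ge\ \sum_{\Pp\in\partitions_m}\Pp!\,N_{M,A}^{E}\lll\Pp\rrr$
with $m:=c_1(A)-1$; combined with Lemmas~\ref{lem:push_ineq} and~\ref{lem:T_to_E} (which give $N_{M,A}\le\sum_{\Pp}\Pp!\,N_{M,A}^{T}\lll\Pp\rrr\le\sum_{\Pp}\Pp!\,N_{M,A}^{E}\lll\Pp\rrr$) this forces the claimed equality, and also yields $N_{M,A}^{T}\lll\Pp\rrr=N_{M,A}^{E}\lll\Pp\rrr$ for each $\Pp$. Fix a small $A$-skinny ellipsoid $\io\colon E_\sk\hookrightarrow M$ together with a generic admissible $J$ which is integrable near $\ov{\io(E_\sk)}$, and let $\Delta\subset E_\sk$ be the $J$-holomorphic disk bounding the short simple Reeb orbit $\eta_1$. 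Since $N_{M,A}=\gw_{M,A}\lll p_1,\dots,p_m\rrr$ and the invariants $N_{M,A}^{E}\lll\Pp\rrr$ are independent of all such choices, I may take the $m$ point constraints $\bp=(p_1,\dots,p_m)$ to be generic points of $\Delta$. By automatic transversality in dimension four (Theorem~\ref{thm:auttrans}) the closed $A$-curves through $\bp$ are then rigid, immersed with exactly $\delta(A)$ double points, and count with positive sign, their total number being $N_{M,A}$. I now neck stretch along $\bdy E_\sk$ via a generic family $J_t$, $t\in[0,1)$, which is constant near $\ov{E_\sk}$ (so that $J_t$ stays integrable there and $\Delta$ stays holomorphic), and analyze the limiting configurations as $t\to1$.

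The second step is to identify these limiting buildings. Using the SFT compactness theorem together with the index and energy estimates in the proof of Proposition~\ref{prop:multneg} — now allowing point constraints to sit on the lowest level — one shows that any such building consists of: a connected, somewhere injective index-zero top level $C_{top}$ in $M\setminus E_\sk$ whose negative ends lie on short orbits with multiplicities forming a partition $\Pp=(m_1,\dots,m_b)$ with $|\Pp|=m$, so that $C_{top}$ is counted by $N_{M,A}^{E}\lll\Pp\rrr$; for each end $\eta_{m_j}$ a branched cover $B_j$ of the trivial cylinder over $\eta_1$ with one positive end $\eta_{m_j}$ and $m_j$ negative ends $\eta_1$ (assembled from the symplectization levels); and a bottom level in $\wh{E_\sk}$ which, by the writhe and relative adjunction argument of Lemma~\ref{lem:E_sk_curves}, consists of $m$ copies of $\Delta$, one capping each negative $\eta_1$-end of the $B_j$. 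Since $\Delta$ passes through every $p_i$, the only remaining datum is the assignment of $\{p_1,\dots,p_m\}$ to these $m$ copies of $\Delta$; for an approximating $J_t$-curve this assignment is read off from which of its sheets near $\Delta$ contains each $p_i$, and it distributes the points into the ends of $C_{top}$ with $m_j$ points going to the $j$-th end. A direct count of such labelings — keeping in mind that $N_{M,A}^{E}\lll\Pp\rrr$ already counts equal-multiplicity ends unordered — shows that the number of limiting buildings equals exactly $\sum_{\Pp}\Pp!\,N_{M,A}^{E}\lll\Pp\rrr$.

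The final step is to feed each limiting building back to a closed curve, and this is where the main difficulty lies. Because the caps involve the multiple covers $B_j$ and the covers $m_j\Delta$, reconstructing a nearby $J_t$-holomorphic closed $A$-curve through $\bp$ is not a transverse gluing but an \emph{obstruction bundle gluing} problem in the sense of Hutchings--Taubes~\cite{HuT}: the obstruction bundle lives over the moduli space of the branched covers $B_j$, and the expectation — which I would verify by quoting their framework — is that, given that $C_{top}$ is regular by automatic transversality, that $\Delta$ and its covers are regular, and that the rotation angle of $\eta_1$ is arbitrarily small, the gluing coefficient of each building is $+1$ (this is precisely what reproduces the factor $\Pp!$ and nothing more). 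Granting this, every limiting building is the limit of (at least, in fact exactly) one closed curve through $\bp$ for $t$ close to $1$, and two distinct buildings produce curves that are either geometrically distinct or pass through the points along different sheets near $\Delta$; in the latter situation, counting local intersection points near $p$ and applying the adjunction formula exactly as in the proof of Lemma~\ref{lem:push_ineq} and the proof of Claim~B in Proposition~\ref{prop:multneg} shows that the two glued curves cannot coincide. Hence $N_{M,A}\ge\sum_{\Pp}\Pp!\,N_{M,A}^{E}\lll\Pp\rrr$, as needed. The principal obstacle is thus the obstruction bundle gluing input: confirming that the Hutchings--Taubes machinery applies to these ellipsoidal caps and that the relevant gluing coefficient is exactly $+1$.
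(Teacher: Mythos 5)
Your proposal reaches the conclusion by a route dual to the paper's, and both hinge on the same obstruction bundle gluing input from Hutchings--Taubes. The paper works entirely in the gluing direction: given a rigid curve $C\in\calM^{J,\simp}_{M\setminus E_\sk,A}\lll\Pp\rrr$ and a choice of how to distribute $p_1,\dots,p_m$ among its negative ends (giving the $\Pp!$ factor), it attaches to the $i$th end a building $B_i$ whose bottom level is $m_i$ rigid planes \emph{each through one of the chosen points}, and then applies \cite[Thm~1.13]{HuT} together with \cite[Example~1.28]{HuT} to conclude the gluing coefficient is $1$ and the correspondence is bijective. That gives the equality in one step. You instead run the degeneration direction first and then invoke gluing to come back, which is fine, but note that the SFT-compactness/degeneration analysis is not strictly needed for the inequality $N_{M,A}\ge\sum_\Pp\Pp!\,N^E_{M,A}\lll\Pp\rrr$: to glue you only need the \emph{list} of buildings coming from elements of $\calM^{J,\simp}_{M\setminus E_\sk,A}\lll\Pp\rrr$ with a choice of point assignment, not the converse statement that every closed curve limits to one of them.

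The more substantive issue is your choice to place the $m$ constraint points on a single disk $\Delta\subset E_\sk$. This forces the bottom level of every building to consist of $m$ copies of the \emph{same} curve $\Delta$, each of which passes through \emph{all} $m$ points. Two problems follow. First, the plane $\Delta$ has Fredholm index $2$ in $\wh{E_\sk}$; it only becomes rigid after one imposes a point constraint. With all points on $\Delta$ there is no natural way to assign one point per plane geometrically, so the bottom curves are not individually cut out to index $0$, and you are no longer in the rigid-to-rigid gluing situation that \cite[Thm~1.13]{HuT} treats: the point constraints would have to be imposed \emph{after} gluing, on the perturbed sheets of the glued curve, which is a different and more delicate problem than what is stated there. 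Second, the $\Pp!$ assignments become invisible at the level of the limiting building (all such buildings are literally the same geometric object), so counting ``buildings with assignments'' requires tracking the approximating sequences rather than the buildings themselves. The paper sidesteps both issues by taking $p_1,\dots,p_m$ at generic distinct locations in $E_\sk$: then each point singles out a unique index-$0$ plane, the $\Pp!$ choices of which points go to which negative end produce $\Pp!$ \emph{geometrically distinct} buildings, and the gluing data is rigid on every level. If you move the points off $\Delta$ your argument aligns with the paper's; as written, the gluing step has a genuine gap.

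Finally, you correctly identify the gluing coefficient as the key input but leave it unverified; the paper's resolution is the explicit reference \cite[Example~1.28]{HuT} (made applicable via the ECH partition conditions at the negative ends of $C_{top}$), which is worth naming since ``quoting the framework'' alone does not determine the coefficient.
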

\begin{proof}
Put $\Pp = (m_1,\dots,m_b)$ 
so that each curve $C \in \calM_{M \setminus E_\sk,A}^{J,\simp}\lll \Pp \rrr$ has precisely $b$ negative ends on $\bdy E_\sk$.
At the $i$th end, consider a pseudoholomorphic building $B_i$ of the following form:\begin{itemize}
\item top level in the symplectization $\R \times \bdy E_\sk$ consisting of a $m_i$-fold cover of the trivial cylinder over $\eta_{m_i}$, with a single positive end asymptotic $\eta_{m_i}$ and $m_i$ negative ends each asymptotic to $\eta_1$,
\item bottom level consisting of $m_i$ planes $P^i_1,\dots,P^i_{m_i}$ in 
the symplectic completion  $\wh{E_\sk}$, 
each with positive end asymptotic to $\eta_1$, and collectively passing through $m_i$ of the constraints $p_1,\dots,p_k$.
\end{itemize}
More precisely, note that there is a multi-dimensional family of such buildings, due to the moveable branch points of the multiple covers in $\R \times \bdy E_\sk$.
After choosing a partition of the point constraints $p_1,\dots,p_k$ into $b$ parts of sizes $m_1,\dots,m_b$ (there are $\Pp!$ possible choices), the collection of planes $\{P^i_j\;:\; 1 \leq i \leq b,\, 1 \leq j \leq m_i\}$ is uniquely determined. 

For each curve $C$ as above, since the ECH partition conditions are satisfied at each negative end, we can glue the building $B_i$ to the $i$th negative end of $C$ via obstruction bundle gluing as in \cite[Thm~1.13]{HuT} to produce a closed $A$-curve in $M$ passing through the points $p_1,\dots,p_k$.
In this situation, it follows from \cite[Example~1.28]{HuT} that the gluing coefficient is $1$, so this gluing is unique.
More precisely, this gluing procedure depends on a gluing parameter $\rho$ controlling the lengths of the necks, with associated almost complex structure $J_\rho$.
By performing this gluing for each curve in $\calM_{M \setminus E_\sk,A}^{J,\simp}\lll \Pp \rrr$ and taking the gluing parameter $\rho$ sufficiently large, this produces $\sum_{\Pp} \Pp!\, N_{M,A}^E \lll \Pp\rrr$ curves in 
$\calM_{M,A}^{J_\rho,\simp}$, where the factor $ \Pp!$  comes from the different ways of assigning the point constraints.  Since, for sufficiently large gluing parameter $\rho$,  gluing sets up a bijective correspondence between the  moduli space of  unglued buildings and the moduli space of $J_\rho$-holomorphic glued curves we find that
$$
\sum_{\Pp} \Pp!\, N_{M,A}^E \lll \Pp \rrr = N_{M,A}
$$
as claimed.
\end{proof}

\begin{cor}\label{cor:T=Er}  (i)  For all partitions $\Pp_1,\dots \Pp_r$ we have
$$
N^T_{M,A}\lll \Pp_1,\dots,\Pp_r\rrr = N^E_{M,A}\lll \Pp_1,\dots,\Pp_r\rrr
$$

\NI (ii) For partitions $\Pp_1,\Pp_2,...,\Pp_r$ with $\Pp_1 = (1^{\times b})$, we have
$$ 
N_{M,A}\lll \Pp_1,\Pp_2,...,\Pp_r\rrr = N_{\bl^1M,A-b[\E]}\lll \Pp_2,...,\Pp_r\rrr.
$$
\end{cor}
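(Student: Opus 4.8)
The plan is to derive both parts from material already established: part~(i) is the several-point analogue of Theorem~\ref{thm:T=E}, and part~(ii) is then immediate. Throughout we may assume $\sum_{i=1}^r |\Pp_i| = c_1(A)-1$, since otherwise all the invariants in sight vanish for index reasons and there is nothing to prove.

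For part~(i) I would run the three-step argument behind Theorem~\ref{thm:T=E} near $r$ disjoint points $p_1,\dots,p_r$ carrying local divisors $D_1,\dots,D_r$, and on the ellipsoidal side near $r$ disjoint small skinny ellipsoids $E^1_\sk,\dots,E^r_\sk$. Set $m_i := |\Pp_i|$, and for each $i$ choose a tuple $\bp^i=(p^i_1,\dots,p^i_{m_i})$ of points on $D_i$ converging to $p_i$; let $\widetilde N$ denote the count of $A$-curves through the resulting $c_1(A)-1$ generic points, which is well defined and generic-independent by semipositivity and automatic transversality exactly as in the proof of Lemma~\ref{lem:push_ineq}. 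First I would push $\bp^i \to (p_i,\dots,p_i)$ along $D_i$ simultaneously for all $i$: the analysis of Lemma~\ref{lem:push_ineq} localizes at each point, so a limiting curve has a single nonconstant component which near each $p_i$ picks up a tangency constraint $\lll\T^{\Pp_i'}p_i\rrr$ with $\Pp_i'\in\partitions_{m_i}$ (the total contact order with $D_i$ being preserved in the limit), and the adjunction estimate excludes two geometrically distinct sequences with the same limiting curve and the same coalescence pattern. This yields
\[
\widetilde N \;\le\; \sum_{\Pp_1'\in\partitions_{m_1}}\!\!\cdots\!\!\sum_{\Pp_r'\in\partitions_{m_r}} \Pp_1'!\cdots\Pp_r'!\; N^T_{M,A}\lll\Pp_1',\dots,\Pp_r'\rrr .
\]

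Next I would surround each $p_i$ by $E^i_\sk$ and neck stretch along all the $\bdy E^i_\sk$ at once. The argument of Lemma~\ref{lem:T_to_E}, together with Lemma~\ref{lem:E_sk_curves} applied near each $E^i_\sk$, is local at each ellipsoid and gives the termwise inequality $N^T_{M,A}\lll\Pp_1,\dots,\Pp_r\rrr \le N^E_{M,A}\lll\Pp_1,\dots,\Pp_r\rrr$ for every tuple of partitions. Finally, obstruction bundle gluing as in Lemma~\ref{lem:glue_E}, performed independently near each $E^i_\sk$ (the gluing coefficients all being $1$ by \cite[Example~1.28]{HuT}), produces
\[
\sum_{\Pp_1'}\!\cdots\!\sum_{\Pp_r'}\Pp_1'!\cdots\Pp_r'!\; N^E_{M,A}\lll\Pp_1',\dots,\Pp_r'\rrr \;=\; \widetilde N .
\]
Chaining the three displays shows the middle inequality becomes an equality after summing over all tuples; combined with the termwise inequality $N^T\le N^E$ this forces $N^T_{M,A}\lll\Pp_1,\dots,\Pp_r\rrr = N^E_{M,A}\lll\Pp_1,\dots,\Pp_r\rrr$ for each fixed tuple, which is~(i). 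Part~(ii) then requires no further work: $N_{M,A}\lll\cdots\rrr$ is by convention another name for $N^T_{M,A}\lll\cdots\rrr$, so the identity is exactly Corollary~\ref{cor:count_pos2}, and by~(i) it may equivalently be phrased with the ellipsoidal count $N^E$.

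I expect the main difficulty to be organizational rather than conceptual: one must verify carefully that the degeneration analysis at the different points $p_i$, and the gluing near the different ellipsoids $E^i_\sk$, genuinely decouple, so that the combinatorial multiplicities factor as $\Pp_1'!\cdots\Pp_r'!$ and the index estimates of \S\ref{ss:skin4D} (relative adjunction, writhe bounds, Lemmas~\ref{lem:index_bd}--\ref{lem:mc2} and their four-dimensional refinements) remain valid region by region. The one genuinely structural point worth isolating is the elementary observation that a termwise inequality together with equality of the two total sums forces termwise equality; this is what upgrades the ``summed'' identity coming from the push and gluing steps to the per-tuple statement asserted in~(i).
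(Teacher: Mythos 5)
Your proposal matches the paper's own argument. The paper's proof of~(i) simply asserts that the three-step argument for Theorem~\ref{thm:T=E} (push together, neck stretch, obstruction-bundle glue) extends to $r$ distinct points with ``the proof for $r>1$ essentially the same as that for $r=1$'' and leaves the verification to the reader; your write-up is precisely that verification, with the same summed-inequality-plus-termwise-inequality trick used to force termwise equality of $N^T$ and $N^E$. For~(ii) the paper likewise combines Theorem~\ref{thm:T=E} (to make the unadorned $N$ unambiguous) with Corollary~\ref{cor:count_pos2}, which is exactly what you do.
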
 
\begin{proof}
Claim (i)  is the generalization of Theorem~\ref{thm:T=E} to the case where  constraints are imposed at $r$ different points $p_1,\dots,p_r$, instead of just one point.  The proof for $r>1$ is essentially the same as that for $r=1$.  Further details are left to the reader. Claim (ii) follows by combining Theorem~\ref{thm:T=E} with Corollary~\ref{cor:count_pos2}.
\end{proof}

\begin{example}\label{ex:same}\rm  Consider the case $M = \C P^2$ and $A = d[L]$.  In this case, we abbreviate $(M,A)$ by the degree $d$ of $A$.  Thus $N_d$, for example, is the number of genus zero degree $d$ curves in $\C P^2$ through $3d-1$ points.
\MS

\NI
 (i) If $d=1,2$ then the only partition with $ N_d\lll{\Pp}\rrr\ne 0$   is $\Pp: = (3d-1)$, and we have  $N_d \lll (3d-1) \rrr=1$.
\MS

\NI (ii) If $d=3$, $N_3=12$ is the number of degree $3$ spheres  through $8$ generic points.   Since each such curve has one node,  the only possible partitions $\Pp$ have $\de(\Pp)\le 1$.  Hence $\Pp$ is  $(8)$ or $(7,1)$.    Note that both 
$N_3^E\lll (8)\rrr$ and $N_3^E\lll (7,1)\rrr$ are nonzero by Corollary~\ref{cor:count_pos} and Remark~\ref{rmk:count_pos}~(i).  Further,
Theorem~\ref{thm:T=E} implies that  
$$
N_3^E\lll (8)\rrr + 8 N_3^E\lll (7,1)\rrr= 12.
$$
  Hence we must have
 $$
N_3^E\lll (8)\rrr = 4, \quad N_3^E\lll (7,1)\rrr = 1.
$$   
Here is another noteworthy point. 
 If $C$ is a nodal cubic curve  that intersects $p$ with the maximal order $8$ then it satisfies the constraint
$\lll (8)\rrr$ unless the unique double point is at $\Pp$, in which case it satisfies the constraint $\lll (7,1)\rrr$.  However, if $C$ has a cusp that happens to go through $p$ then it is not clear how to interpret the constraint since some perturbations satisfy $\lll (7,1)\rrr$ while others satisfy $\lll (8)\rrr$.  As we point out in  Remark~\ref{rmk:tanpseud2}, this is not a problem since it does not happen for generic $J$.
\end{example}

\subsection{Combining constraints}\label{ss:comb}

Our goal for this subsection is to prove Theorem~\ref{thm:ppt_intro} from the introduction.
In the sequel, it will sometimes be convenient to represent partitions by Young diagrams (c.f. Example \ref{ex:Young_prod} below).
Let $\Y_k$ denote the set of Young diagrams with $k$ boxes. Thus we have 
$|\Y_1| = 1$, $|\Y_2| = 2$, $|\Y_3| = 3$, $|\Y_4| = 5$, $|\Y_5| = 7$, $|\Y_6| = 11$, and so on.
For $k \in \Z_{> 0}$, let $\Q\langle \Y_k\rangle$ denote the $|\Y_k|$-dimensional rational vector space spanned by $\Y_k$.
For $k,k' \in \Z_{>0}$, we define a map 
$$*: \Q\langle \Y_k\rangle \otimes \Q\langle \Y_{k'}\rangle \rightarrow \Q\langle \Y_{k+k'}\rangle$$
as follows.
Suppose that $y$ corresponds to the partition $(m_1,...,m_b)$ and $y'$ corresponds to the partition $(m_1',...,m_{b'}')$.
Choose subsets $S \subset \{1,...,b\}$ and $S' \subset \{1,...,b'\}$,
both of the same cardinality $|S| = |S'| = \ell$,
which we can write as $S = \{j_1,...,j_\ell\}$ for $j_1 < ... < j_\ell$ and
$S' = \{j_1',...,j_\ell'\}$ for $j_1' < ... < j_\ell'$.
The tuples $(m_{j_1},...,m_{j_\ell})$ and $(m_{j_1'}',...,m'_{j'_\ell})$ give the lengths of the corresponding rows of the Young diagrams $y$ and $y'$ respectively.
Given a permutation $\sigma \in \Sigma_\ell$, we now define $y *^\sigma_{S,S'} y'$ to be the 
 diagram with $b + b' - \ell$ rows, such that
\begin{itemize}
\item for $i = 1,...,\ell$, the $i$th row of $y *_{S,S'} y'$ has $m_{j_i} + m'_{\sigma(j_i)}$ blocks
\item the next $b - \ell$ rows are the remaining $b - \ell$ rows of $y$
\item the next $b' - \ell$ rows are the remaining $b' - \ell$ rows of $y'$,
\end{itemize}
after which we reorder the rows if necessary to get a valid Young diagram.
In other words, we combine $\ell$ rows of $y$ with $\ell$ rows of $y'$, and vertically stack the remaining rows.
Now define
\begin{align}\label{eq:*}
y * y': = \sum \left\{ y *^\sigma_{S,S'}y'\;\, \Big |\,  \begin{array}{ll}   \sigma \in \Sigma_\ell,  S \subset \{1,...,b\},\, S' \subset \{1,...,b'\},\\
  |S| = |S'| = \ell,\; \ell\ge 0 \end{array} \right\}
\end{align}
\NI
We denote the coefficients of $*$ with respect to the natural bases of Young diagrams by $\langle y * y, y''\rangle$
for $y \in \Y_k$, $y' \in \Y_{k'}$, and $y'' \in \Y_{k+k'}$,
or alternatively by $\langle \Pp * \Pp',\Pp''\rangle$ if $\Pp,\Pp',\Pp''$ are the partitions corresponding to $y,y',y''$ respectively.
Thus, 
\begin{align}\label{eq:*P}
 \Pp * \Pp'  = \sum_{\Pp'': |\Pp''| = |\Pp| + |\Pp'|}\;  \langle \Pp * \Pp',\Pp''\rangle\,\Pp''.
\end{align}

\begin{example}\label{ex:Young_prod}\rm
The partitions $(3,1,1)$ and $(2,2)$ correspond to Young diagrams
$y \in \Y_5$ and $y' \in \Y_4$ given by
\begin{align*}
{\Yvcentermath1 \tiny  y = \yng(3,1,1),\;\;\;\;\; y' = \yng(2,2)}\;,
\end{align*}
and we have 
\begin{align*}
{\Yvcentermath1 \tiny y * y' = \left(\;\yng(3,2,2,1,1)\;\right) + \left(2\;\yng(5,2,1,1) + 4\;\yng(3,3,2,1)\;\right) + \left(4\;\yng(5,3,1) + 2\;\yng(3,3,3)\;\right)}\;.
\end{align*}
Here the terms in parentheses correspond to the subsets having lengths $|S| = |S'| = 0,1,2$ respectively.\hfill$\er$
\end{example}

\begin{definition}\label{def:AutP} 
Given a partition $\Pp = (m_1,...,m_b)$, we denote by $\Aut(\Pp) \subset \Sigma_b$  the {\bf automorphism group} of $\Pp$, i.e. those permutations of the rows which leave the diagram fixed, and write $|\Aut(\Pp)|$ for its order.
\end{definition}

For example, for the partitions 
\begin{align*}
\Pp_1 = {\Yvcentermath1 \tiny  \yng(5)}\;,\;\;\;\;\;\Pp_2 = {\Yvcentermath1 \tiny  \yng(3,2,2,1,1)}\;,\;\;\;\;\; \Pp_3 = {\Yvcentermath1 \tiny  \yng(2,1,1,1)}\;,
\end{align*}
we have $|\Aut(\Pp_1)| = 1$, $|\Aut(\Pp_2)| = 4$, and $|\Aut(\Pp_3)| = 6$.
Our goal is to prove Theorem~\ref{thm:ppt_intro}, which we restate here for the convenience of the reader.

\begin{thm}\label{thm:ppt}
For any partitions $\Pp_1,...,\Pp_r$, we have
\begin{align*}
N_{M,A}\lll \Pp_1,\Pp_2,\Pp_3...,\Pp_r\rrr = \sum_{\Pp \in \partitions_{|\Pp_1| + |\Pp_2|}} \frac{\langle \Pp_1*\Pp_2,\Pp\rangle \;|\Aut(\Pp)|}{|\Aut(\Pp_1)| \; |\Aut(\Pp_2)|}   \;N_{M,A}\lll \Pp,\Pp_3,...,\Pp_r\rrr.
\end{align*}
\end{thm}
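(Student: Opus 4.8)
The plan is to deduce Theorem~\ref{thm:ppt} from the ellipsoidal picture of \S\ref{ss:skin4D}, via the equivalence $N_{M,A}\lll \Pp_1,\dots,\Pp_r\rrr = N^E_{M,A}\lll \Pp_1,\dots,\Pp_r\rrr$ of Corollary~\ref{cor:T=Er}(i). Since only the constraints at $p_1,p_2$ change, one works with the invariants defined by a disjoint collection of small skinny ellipsoids $E^1_\sk,\dots,E^r_\sk$, and it is harmless to suppress $\Pp_3,\dots,\Pp_r$ from the notation throughout: these constraints contribute the same auxiliary ellipsoids and the same inert curve components in every configuration below, so the identity proved for $r=2$ gives the general statement verbatim. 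Thus the task reduces to comparing $N^E_{M,A}\lll \Pp_1,\Pp_2\rrr$ with the invariants $N^E_{M,A}\lll \Pp\rrr$ for $\Pp\in\partitions_{|\Pp_1|+|\Pp_2|}=\partitions_{c_1(A)-1}$.

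First I would choose a small skinny ellipsoid $E_\sk$ whose interior contains $E^1_\sk\sqcup E^2_\sk$ as a (disconnected) subdomain, with $E_\sk$ still skinny for $A$, and stretch the neck of $M\setminus(E^1_\sk\sqcup E^2_\sk)$ along $\bdy E_\sk$, giving a family $J_t$, $t\in[0,1)$, of admissible almost complex structures. Exactly as in the proofs of Proposition~\ref{prop:multneg} and Lemma~\ref{lem:T_to_E} — using the index formula \eqref{eq:Find}, nonnegativity of energy, genericity of $J$, and the writhe/adjunction estimates of \S\ref{ss:skin4D} — the limiting buildings as $t\to1$ consist of: a connected simple top level $C'\in\calM^{J,\simp}_{M\setminus E_\sk,A}\lll\Pp\rrr$ for some $\Pp$; index-zero matched components with a single positive end in the symplectization levels; and a bottom level in $E_\sk\setminus(E^1_\sk\sqcup E^2_\sk)$. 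By the analogue of Lemma~\ref{lem:unique_cob_cyl} and Claim A of Proposition~\ref{prop:multneg}, each bottom component is either the unique cylinder from an $\eta_k$ on $\bdy E_\sk$ to an $\eta_k$ on $\bdy E^1_\sk$ or $\bdy E^2_\sk$, or a pair of pants from an $\eta_k$ on $\bdy E_\sk$ to an $\eta_{k_1}$ on $\bdy E^1_\sk$ and an $\eta_{k_2}$ on $\bdy E^2_\sk$ with $k=k_1+k_2$ (all other bottom components are ruled out by positivity of energy and index). Hence each building records $C'$ together with a combinatorial merging pattern: a choice of equal-size subsets $S\subset\{1,\dots,b_1\}$, $S'\subset\{1,\dots,b_2\}$ and a bijection $\sigma:S\to S'$ such that merging the corresponding rows of $\Pp_1$ and $\Pp_2$ (adding multiplicities) and vertically stacking the rest produces $\Pp$ — precisely one of the terms counted by $\langle\Pp_1*\Pp_2,\Pp\rangle$ in \eqref{eq:*}, once the ordering of branches is taken into account.

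The substance of the argument is running this correspondence backwards by gluing, working with the counts $\wh N^E$ of \emph{ordered}-branch curves of \eqref{eq:noHatN}. Given $C'\in\calM^{J,\simp}_{M\setminus E_\sk,A}\lll\Pp\rrr$ and a fixed merging pattern $(S,S',\sigma)$ realizing $\Pp$, I would attach to each negative end $\eta_k$ of $C'$ either the unique cylinder in $E_\sk\setminus E^i_\sk$ or the appropriate pair of pants in $E_\sk\setminus(E^1_\sk\sqcup E^2_\sk)$ — whose signed count is again $1$ by the same adjunction/writhe bookkeeping — and glue. The ECH partition conditions hold at every $\eta_k$ end, so the relevant gluing is the obstruction bundle gluing of Hutchings--Taubes with gluing coefficient $1$ by \cite[Example~1.28]{HuT}, exactly as in Lemma~\ref{lem:glue_E}; automatic transversality (Theorem~\ref{thm:auttrans}) and the positivity statements of Lemma~\ref{lem:blow} and Proposition~\ref{prop:multneg} guarantee that no curves are lost or created and that distinct buildings glue to distinct curves (the last point by an adjunction argument as in Claim~B of Proposition~\ref{prop:multneg}). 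For large gluing parameter this yields
\[
\wh N^E_{M,A}\lll\Pp_1,\Pp_2\rrr=\sum_{\Pp\in\partitions_{|\Pp_1|+|\Pp_2|}}\#\{(S,S',\sigma)\text{ realizing }\Pp\}\cdot\wh N^E_{M,A}\lll\Pp\rrr.
\]
The step I expect to be the main obstacle is precisely this one: confirming that the neck-stretched buildings remain transversely cut out once the branched pair-of-pants pieces are glued in, applying the Hutchings--Taubes count correctly in the presence of these branched covers of trivial cylinders, and checking that the number of merging patterns really equals $\langle\Pp_1*\Pp_2,\Pp\rangle$.

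Finally I would convert this into the stated identity. By \eqref{eq:noHatN}, $\wh N^E_{M,A}\lll\Pp_i\rrr=|\Aut(\Pp_i)|\,N^E_{M,A}\lll\Pp_i\rrr$ and $\wh N^E_{M,A}\lll\Pp\rrr=|\Aut(\Pp)|\,N^E_{M,A}\lll\Pp\rrr$, while the two-point invariant satisfies $\wh N^E_{M,A}\lll\Pp_1,\Pp_2\rrr=|\Aut(\Pp_1)|\,|\Aut(\Pp_2)|\,N^E_{M,A}\lll\Pp_1,\Pp_2\rrr$ since $p_1,p_2$ are distinct. Substituting, dividing out, and using $N^E=N$ (Corollary~\ref{cor:T=Er}(i)) gives
\[
N_{M,A}\lll\Pp_1,\Pp_2\rrr=\sum_{\Pp}\frac{\langle\Pp_1*\Pp_2,\Pp\rangle\,|\Aut(\Pp)|}{|\Aut(\Pp_1)|\,|\Aut(\Pp_2)|}\,N_{M,A}\lll\Pp\rrr,
\]
once $\#\{(S,S',\sigma)\text{ realizing }\Pp\}$ is identified with $\langle\Pp_1*\Pp_2,\Pp\rangle$ by unwinding the definition \eqref{eq:*}--\eqref{eq:*P} of the $*$-product. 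Reinstating the spectator partitions $\Pp_3,\dots,\Pp_r$ then yields Theorem~\ref{thm:ppt}.
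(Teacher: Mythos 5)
Your overall plan — reduce to the ordered invariants $\wh N^E$, neck stretch along a larger skinny ellipsoid $E_\sk$ containing $E^1_\sk\sqcup E^2_\sk$, classify the degenerations, glue back, and match the combinatorics with $\langle\Pp_1*\Pp_2,\Pp\rangle$ — is the same as the paper's. But two specific steps in your outline are wrong, and together they hide the actual content of the proof.

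First, you assert that the bottom level in $E_\sk\setminus(E^1_\sk\sqcup E^2_\sk)$ contains genuine pair-of-pants components with signed count $1$, ``by the same adjunction/writhe bookkeeping.'' The opposite is true: Lemma~\ref{lem:ppt_breakings}(iii) shows, precisely by a writhe/adjunction argument (for a somewhere injective pair of pants in the cobordism one gets $\wr_\tau^+\le 0$, $\wr_\tau^-\ge k_1+k_2-2$, $\chi=-1$, forcing $1+2\delta\le -k_1-k_2+2$), that the cobordism level contains \emph{only} cylinders. The pair-of-pants matched component is forced to be a two-level building: an index-$0$ branched cover of a trivial cylinder in $\R\times\bdy E_\sk$ on top of two cylinders in the cobordism. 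That branched cover lives in a $2$-dimensional family of branch points, so the compactified moduli space is not transversely cut out and the naive gluing picture you describe does not apply.

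Second, you appeal to \cite[Example~1.28]{HuT} to conclude that the gluing coefficient is $1$. The paper explicitly points out that this computation does not apply here, because the bottom cylinders in $E_\sk\setminus(E^1_\sk\sqcup E^2_\sk)$ are multiply covered, so the situation is not a ``gluing pair'' in the sense needed for the Hutchings--Taubes coefficient formula; one can only conclude that the coefficient depends solely on $(\Pp_1,\Pp_2,\Pp)$ (Lemma~\ref{lem:weak_ppt}). The missing ingredient, and the real work, is Lemma~\ref{lem:two_ends}: the unknown coefficients $C^{(m,m')}_{(m),(m')}$ and $C^{(m+m')}_{(m),(m')}$ are computed by constructing a model example --- the exceptional class $d[L]-(d-1)[\Ee_1]-[\Ee_2]-\cdots-[\Ee_{2d+1-m-m'}]$ in a blowup of $\CP^2$, where the relevant counts are all $0$ or $1$ by ECH index/partition reasoning, which pins down the coefficients and hence, by the combinatorial identification you do carry out, gives $C^\Pp_{\Pp_1,\Pp_2}=\langle\Pp_1*\Pp_2,\Pp\rangle$. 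Your proposal neither produces this model computation nor flags that something beyond the cited Hutchings--Taubes example is needed, so as written the argument does not go through.
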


The idea for proving Theorem \ref{thm:ppt} is as follows.
We will base our discussion on the invariant $N_{M,A}^E\lll \Pp_1,...,\Pp_r\rrr$, although a similar argument could perhaps be given using $N_{M,A}^T \lll \Pp_1,...,\Pp_r\rrr$ as well.
We find it convenient to introduce slightly modified invariants by
\begin{align}\label{eq:whN}
\wh{N}_{M,A} \lll \Pp_1,...,\Pp_r\rrr := |\Aut(\Pp_1)|\;...\;|\Aut(\Pp_r)|\; N_{M,A}\lll \Pp_1,...,\Pp_r\rrr
\end{align}
(with an added $T$ or $E$ superscript if we wish to emphasize which version of the 
invariant we are using).
Note that $\wh{N}_{M,A}\lll \Pp_1,...,\Pp_r\rrr$ can be interpret geometrically in the same way as $N_{M,A}\lll \Pp_1,...,\Pp_r\rrr$ except that we order all marked points satisfying the same constraint, or equivalently all punctures with the same  asymptotic Reeb orbit.\footnote
{Since we always write $\Pp = (m_1,\dots, m_b)$  with $m_1\ge m_2\ge \dots \ge m_b$ it suffices to order only the repeated entries in $\Pp$.}
 In particular, if $\Pp_i = (m_1^i,\dots,m_{b_i}^i)$,  the number
 $$
 \wh{N}_{M,A}^E\lll \Pp_1,...,\Pp_r\rrr
 $$
 is the count of $A$-curves in $M\less (\cup_{i=1}^r E_\sk^i)$ with ordered ends  $$
 \bigl(\eta^i_{m_j^i}\bigr)_{1\le j\le m_i}\quad \mbox{ on }\;\; \p E_\sk^i,
 $$
  where the ordering only affects the ends on the same orbit $\eta^i_{j}$ on $\p E_\sk^i$.

To keep the notation simple, we will assume that $r=2$ and prove the formula
\begin{align}\label{eq:NHat}
\wh{N}^E_{M,A}\lll \Pp_1,\Pp_2\rrr = \sum_{\Pp \in \partitions_{|\Pp_1| + |\Pp_2|}} \langle \Pp_1*\Pp_2,\Pp\rangle \cdot  \wh{N}^E_{M,A}\lll \Pp\rrr,
\end{align}
the general argument being essentially the same.

We can assume without loss of generality that the union of the small skinny ellipsoids $E_\sk^{1},E_{\sk}^{2}$ is contained in a third small skinny ellipsoid $E_{\sk}^{1,2}$. 
We perform a neck-stretching along $\bdy E_{\sk}^{1,2}$ and analyze how the curves corresponding to $\wh{N}_{M,A}^E\lll \Pp_1,\Pp_2\rrr$ degenerate.
Roughly, this has the effect of converting negative ends in $\bdy E_{\sk}^{1}$ and $\bdy E_{\sk}^{2}$ into negative ends in $\bdy E_{\sk}^{1,2}$.
A priori, the degenerations could be arbitrary pseudoholomorphic buildings in $\left(E_{\sk}^{1,2}\setminus (E_{\sk}^{1}\cup E_{\sk}^{2})\right) \notccirc \left(M \setminus E_{\sk}^{1,2}\right)$
consisting of
\begin{enumerate}[label=(\alph*)]
\item
a top level in $M \setminus E_{\sk}^{1,2}$;
\item 
some number of levels in the symplectization $\R \times \bdy E_{\sk}^{1,2}$;
\item
a level in $E_{\sk}^{1,2} \setminus (E_{\sk}^{1} \cup E_{\sk}^{2})$;
\item some number of levels in the symplectization $\R \times (\bdy E_{\sk}^{1} \cup \bdy E_{\sk}^{2})$.
\end{enumerate}
It turns out that we can greatly narrow down the possibilities via action and index considerations, together with the relative adjunction formula and techniques from ECH 
that place powerful (and perhaps unexpected) restrictions on the structure of the limit;
in particular, by (ii), (iii) below  the only branching of constraints occurs in the neck region rather than in the cobordism level.
The next lemma describes the possible limiting buildings in more detail.

\begin{lemma}\label{lem:ppt_breakings}
We can arrange that the limiting building has the following properties:
\begin{itemize}
\item[{\rm(i)}] the top level (a) is a connected somewhere injective curve of index $0$;
\item[{\rm(ii)}]  each component of a symplectization level (b) is either a trivial cylinder or an index zero branched cover of a trivial cylinder which is a pair of pants with one positive end and two negative ends;
\item[{\rm(iii)}]  each component of (c) is a cylinder; moreover there is a unique such cylinder between any  pair of ends of the same index; 
\item[{\rm(iv)}]  there are no symplectization levels (d).
\end{itemize}
\end{lemma}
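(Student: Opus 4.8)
The strategy is to study the building that arises from neck-stretching along $\bdy E_{\sk}^{1,2}$ using the same toolkit deployed in the proof of Proposition~\ref{prop:multneg} --- Fredholm index bounds, energy (action) inequalities, the relative adjunction formula, and the ECH writhe estimates --- together with the fact that the family of almost complex structures can be taken generic away from a fixed neighborhood of the ellipsoids. The first reduction is to cap off all the levels lying below the top level by formally gluing curve components into matched components in the various cobordism and symplectization levels, exactly as in \S\ref{ss:skinhigh}. As in Lemmas~\ref{lem:mc0}--\ref{lem:mc2}, every matched component in a symplectization level or in the intermediate cobordism level $E_{\sk}^{1,2}\setminus(E_{\sk}^{1}\cup E_{\sk}^{2})$ has nonnegative index, and the top level in $M\setminus E_{\sk}^{1,2}$ has index $\geq 0$ by genericity and semipositivity; since the whole building has total index $0$ and we are degenerating index-zero curves (counted by $\wh N^E_{M,A}\langle \Pp_1,\Pp_2\rangle$), every matched component must have index exactly $0$. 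This already forces each symplectization-level matched component to have exactly one positive end and each top-level component to be simple and connected, giving (i).

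\textbf{The symplectization levels (ii) and (iv).} For the levels in $\R\times\bdy E_{\sk}^{1,2}$, the index-zero-and-one-positive-end condition combined with the formula \eqref{eq:indCi} for curves in a symplectization over an ellipsoid boundary shows the component has zero energy, so it is a branched cover of a trivial cylinder; since the domain has genus zero and the cover has a single positive end, an Euler-characteristic count shows it is either the trivial cylinder or a pair-of-pants double cover, which is (ii). For the lowest levels (d) in $\R\times(\bdy E_{\sk}^{1}\cup\bdy E_{\sk}^{2})$: the key point is the action inequality. A component with negative ends on $\bdy E_{\sk}^{1}$ or $\bdy E_{\sk}^{2}$ and positive ends that must eventually be matched, via the cobordism level, to negative ends on $\bdy E_{\sk}^{1,2}$, is constrained by the fact that the negative ends carry \emph{all} the action present in $\Pp_1,\Pp_2$ while the intermediate cobordism contributes nonnegative energy; together with the $\R$-invariance of these levels (so trivial-cylinder levels are forbidden by SFT stability) this rules out any nontrivial level (d), giving (iv). The argument here parallels the analysis of the two-level building in the proof of Proposition~\ref{prop:s_indep}.

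\textbf{The intermediate cobordism level (iii), the main obstacle.} This is the crux. One must show each component of the level in $E_{\sk}^{1,2}\setminus(E_{\sk}^{1}\cup E_{\sk}^{2})$ is a cylinder from $\eta_k$ on one of the inner ellipsoid boundaries to $\eta_k$ on $\bdy E_{\sk}^{1,2}$, with exactly one such cylinder for each index $k$ occurring. This is the direct analogue of Claim~A in the proof of Proposition~\ref{prop:multneg} and of Lemma~\ref{lem:unique_cob_cyl}, and is handled the same way: any somewhere injective component in this cobordism which is not a single cylinder would, by the relative adjunction formula \eqref{eq:adjuc} applied to an appropriate curve portion cut just above/below the neck, be forced to have $2\delta<0$ once one plugs in the writhe bounds \eqref{eq:w+}, \eqref{eq:w-} for ends on a skinny ellipsoid (where the rotation angle is arbitrarily small, so $\lfloor m\theta\rfloor=0$ and $\lceil m\theta\rceil=1$). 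Uniqueness of the cylinder between a fixed pair of Reeb orbits of equal index is Lemma~\ref{lem:unique_cob_cyl} (for $\dim>4$) refined to the dimension-four statement exactly as in Claim~A, using that two distinct such cylinders would have a union violating adjunction. The subtle additional point compared with Proposition~\ref{prop:multneg} is that there are now \emph{two} inner ellipsoids, so a component of (c) could a priori have positive ends on both $\bdy E_{\sk}^{1}$ and $\bdy E_{\sk}^{2}$; but such a component would be a genuine pair of pants (or worse) in the cobordism, and the adjunction-plus-writhe computation, combined with the constraint that its index is $0$, again yields $2\delta<0$. This is the step that genuinely uses the four-dimensional machinery and cannot be bypassed; I expect verifying the sharpness of the writhe estimate for the relevant ends (invoking \cite[Prop~6.1]{Huind} and the ECH partition conditions, as in Claim~B of Proposition~\ref{prop:multneg}) to be the most delicate bookkeeping.

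\textbf{Conclusion of the lemma.} Once (i)--(iv) are in place, the limiting building is: a connected simple index-zero curve $C_{top}$ in $M\setminus E_{\sk}^{1,2}$ with some negative ends $\eta_{n_1},\dots,\eta_{n_c}$; directly beneath it, a collection of pairs-of-pants and trivial cylinders in $\R\times\bdy E_{\sk}^{1,2}$ that split each $\eta_{n_j}$ into at most two pieces (since each branched cover is a double cover); and beneath that, a union of plain cylinders carrying these pieces down to the ends of $C\in\calM^{J,\simp}_{M\setminus(E_{\sk}^1\cup E_{\sk}^2),A}\langle\Pp_1,\Pp_2\rangle$. Reading off which ends of $\Pp_1$ and $\Pp_2$ get paired through a pair of pants and which pass through unchanged recovers precisely one of the terms $y*^\sigma_{S,S'}y'$ in the definition \eqref{eq:*} of $\Pp_1*\Pp_2$, with the rows in $S,S'$ being the paired ones and the permutation $\sigma$ recording the pairing. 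This sets up the correspondence that will be used, in the next step of the proof of Theorem~\ref{thm:ppt}, to derive the counting formula \eqref{eq:NHat}; here we have only established the structural statement of Lemma~\ref{lem:ppt_breakings}.
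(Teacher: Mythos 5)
Your decomposition into matched components, the index-zero-forces-everything argument for (i), and the adjunction-plus-writhe argument for ruling out pairs of pants in the cobordism level (c) all track the paper. However, there are two genuine gaps in the argument, both stemming from over-reliance on index and energy considerations where the essential input is actually the relative adjunction formula combined with the writhe bounds.

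The Euler-characteristic argument you use for (ii) is wrong. For an index-zero matched component in $\R\times\bdy E_{\sk}^{1,2}$ with a single positive end $\eta_m$ and $l$ negative ends $\eta_{m_1},\dots,\eta_{m_l}$ with $\sum m_j = m$, one computes from \eqref{eq:indCi} that the index is $2(1) - 2 + 2m - 2\sum m_j = 0$ regardless of $l$, and the energy is zero regardless of $l$. Riemann--Hurwitz for a genus-zero $m$-fold cover of the cylinder with $1 + l$ punctures gives ramification $l - 1 \geq 0$, again with no constraint on $l$. So index-zero, genus-zero, one-positive-end, zero-energy branched covers of trivial cylinders with arbitrarily many negative ends exist, and your argument does not rule them out. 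In the paper, the constraint to at most two negative ends is achieved by showing that a matched component $C_j$ combining levels (b) and (c) has at most one negative end per inner ellipsoid; this is done by cutting off an approximating curve portion just above the neck, computing $c_{\tau_\sp}=0$, $Q_{\tau_\sp}=0$, $\chi=1-b$ for that portion, applying the relative adjunction formula \eqref{eq:adjuc}, and comparing the resulting equality for $2\delta$ against the writhe bounds \eqref{eq:wneg} and \eqref{eq:w-} to obtain a contradiction unless there is at most one negative end per ellipsoid. Your proposal only invokes this machinery for (iii), but it is equally the engine behind (ii). Also, calling the branched covers in (ii) "double covers" is inaccurate: the pair of pants from $\eta_m$ to $(\eta_{m_1},\eta_{m_2})$ is an $m$-fold cover.

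A similar gap appears in (iv). The action inequality you cite shows that any level (d) has total nonnegative energy, but a branched cover of a trivial cylinder in $\R\times\bdy E_{\sk}^1$ with one positive end $\eta_{m_1+m_2}$ and two negative ends $\eta_{m_1},\eta_{m_2}$ has exactly zero energy and index zero, so it is not excluded by an action argument or by SFT stability alone. The paper rules this out by the same curve-portion/writhe/adjunction argument, showing that a matched component in $\R\times\bdy E_{\sk}^i$ with a connected top and disconnected bottom forces $2\delta(C_-) \leq -2$ exactly as in the computation around \eqref{eq:w+M}, \eqref{eq:w-M}, \eqref{eq:wcontrad} in the proof of Proposition~\ref{prop:multneg}; only then does stability forbid an all-trivial-cylinder level. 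You should invoke this argument uniformly for (ii), (iii), and (iv), rather than reserving it for (iii).
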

\begin{proof}  We prove (i).   As in  Step 1 of Proposition~\ref{prop:multneg}, we can assume that each component of the top level  has nonnegative index and is somewhere injective.
Since the limiting building is connected and has one negative end, to see that its top level   is connected it suffices to show that every component in the lower levels  has exactly one positive end.

The argument  in  Step 1 of Proposition~\ref{prop:multneg} also shows 
that the index of each component of type (b) or (d) is nonnegative, and is strictly positive unless it has one positive end.   
We next prove a similar statement for the components of type (c)
 by an argument based on considerations of energy.
Consider a curve
$C$ in the cobordism $E_{\sk}^{1,2} \setminus (E_{\sk}^{1} \cup E_{\sk}^{2})$ with  positive ends $\eta_{m_1},...,\eta_{m_k}$ and negative ends $\eta_{m_1'},...,\eta_{m_l'}$.  Then, as in \eqref{eq:indCi} we have
\begin{align*}
\ind(C) = 2k - 2 + 2\sum_{i=1}^k m_i - 2\sum_{j=1}^{l} m_j'.
\end{align*}
Further, we may choose $s \gg s_1 + s_2$ so large that the ellipsoids $E(\eps',\eps' s_1)$ and $E(\eps',\eps' s_2)$ fit one after the other into $E(\eps,\eps s)$ with $\eps$ arbitrarily close to $\eps'$, and we may suppose the energy of the orbits $\eta_1^1, \eta_1^2$  to be so close to that of $\eta_1^{1,2}$ that positivity of energy implies that 
\begin{align*} 
\sum_{i=1}^k m_i - \sum_{j=1}^{l}m_j' \geq 0.
\end{align*} 
Hence we have $\ind(C) \geq 0$, with equality only if $k = 1$.
Since the whole building has index zero, all components must have index zero and  the components of type (b), (c) or (d) have a single positive end.

\MS

We next prove (iv).  We do this by showing that
all components of type (d) must also have one negative end.  If so, 
 they must all be trivial cylinders by Lemma~\ref{lem:mc2}, so that they do not occur.
 
  To see this, we use the fact that our building is a limit of somewhere injective curves $C_k$ in 
  (the completion of)
  $X: = M\less (E^1_{\sk} \cup E^2_{\sk})$.  Although we are taking a limit as the neck around $\p E^{1,2}_{\sk}$ is stretched, this neck is not  relevant to the bottom end of the limiting building; instead, we are concerned with understanding  the limit of a sequence of curves in one of the fixed negative ends  $(-\infty,0] \times \p E^i_{\sk} $ of $X$.  
  What governs this situation are results concerning SFT compactness for curves in $X$, and in particular the fact that in a converging sequence,  the curves $C_k$ break into separate \lq packets' $P_{k,\ell}: = C_k\cap \bigl(
  [t_{2\ell}, t_{2\ell-1}] \times \p E^i_{\sk}\bigr)$ of finite energy joined by increasingly long cylindrical \lq necks' $P'_{k,\ell}: = C_k\cap \bigl([t_{2\ell+1}, t_{2\ell}]\times \p E^i_{\sk}\bigr)$ that have very small energy and are such that each of its connected components is exponentially close to a braid around the limiting Reeb orbit.  
If  there is a 
matched component $C$ of the limiting building  in $\R\times \p E_{\sk}^i$  with more than one negative end,
then one of these finite energy packets, say $P_{k, \ell}$,  must have a component  with more than one negative end while its top end is connected as before.   Now, for sufficiently large $k$, look at the appropriate component $C'$ of 
$C_k\cap \bigl([t_{2\ell}, t_{2\ell-2}]\times \p E^i_{\sk}\bigr)$, which is the union of a  finite energy bottom piece with an almost constant top piece.    This is a curve portion both of whose ends approximate {\it from above} some multiple of the  orbit $\eta^i_1$.  Thus we are precisely in the situation considered in the second half of the proof of
Step 1 in Proposition~\ref{prop:multneg}, and the calculations in \eqref{eq:w+M}, \eqref{eq:w-M}, and \eqref{eq:wcontrad}  show that 
this situation does not occur. This completes the proof of (iv).
\MS

Now consider a matched component  $C_j$ that is a union of curves of types (b) and (c).  
Since as we saw above, there are no components of type (d), the bottom ends of $C_j$ form a subset 
of the bottom ends  of the original moduli space of curves. 
We  claim that $C_j$ has at most one negative end on each $\bdy E_{\sk}^{i}$ for $i=1,2$.
To justify this, suppose by contradiction that $C_j$ has positive end $\eta_k$ on $\p E_{\sk}^{1,2}$ and negative ends $\eta_{k_1},...,\eta_{k_b}$, where the first $r\ge 2$ ends lie on $\bdy E_{\sk}^{1}$ and the others lie on $\bdy E_{\sk}^{2}$.
Since $C_j$ has index zero, we must have $k = \sum_{i=1}^b k_i$.  
Arguing as in the proof of Proposition~\ref{prop:multneg},
 we can approximate the total building 
by a somewhere injective curve in $M \setminus (E_{\sk}^{1} \cup E_{\sk}^{2})$, and by cutting off at a suitable level just above the neck around $\p E^{1,2}_{\sk}$, we  may find a portion of that curve that approximates $C_j$. 
This curve portion has well-defined positive and negative writhes $\wr_\tau^+,\wr_\tau^-$ (where we take $\tau=\tau_{\sp}$), and since the positive end corresponds to a negative end of a somewhere injective curve in $M \setminus E_{\sk}^{1,2}$, by \eqref{eq:wneg} we have
$\wr_\tau^+ = k-1$.
Since the negative ends also correspond to negative ends of a somewhere injective curve in 
$M \setminus (E_{\sk}^{1}\cup E_{\sk}^{2})$, we have by \eqref{eq:w-} the lower bound 
\begin{align*}
\wr_\tau^- &\geq \sum_{i,j=1}^r \min\left(k_i,k_j\right) + \sum_{i,j=r+1}^b \min\left(k_i,k_j\right) - b.
\end{align*}
For this curve portion we also have $c_{\tau_{\sp}} = k-\sum_{i=1}^b k_i =0$ (by \eqref{eq:Cspex}), $\chi = 1-b$, and $Q = 0$;
so using the relative adjunction formula \eqref{eq:adjuc} we have 
\begin{align*}
 b-1 + 2\delta = \wr_\tau^+ - \wr_\tau^-  \leq  (k-1)+b - \sum_{i,j=1}^r \min\left(k_i,k_j\right) -\sum_{i,j=r+1}^b \min\left(k_i,k_j\right).
\end{align*}
Since $\de\ge 0$, for this to hold we need
\begin{align*}
k   = \sum_{i=1}^b k_i \ge  \sum_{i,j=1}^r \min\left(k_i,k_j\right) +\sum_{i,j=r+1}^b \min\left(k_i,k_j\right),
\end{align*}
which is false if either $r>1$ or $b-r>1$.
\MS

It follows that the matched component $C_j$ is either a cylinder or a pair of pants.  We next claim that  no  component of type (c) 
(i.e. one entirely contained in the cobordism level)
can be a  pair of pants.  This implies  (ii) and the first claim in (iii).  
As usual this follows by a writhe calculation, but this time one for a curve rather than a curve portion.  
Suppose by contradiction that there exists such a curve with positive end $\eta_k^{1,2}$ and negative ends $\eta^1_{k_1},\eta^2_{k_2}$ on $\bdy E_{\sk}^{1}, \bdy E_{\sk}^{2}$ respectively.
By index considerations we can assume that $k_1 + k_2 = k$,
and, after possibly passing to the underlying simple curve, we can also assume that this curve is somewhere injective.
Such a curve has $c_1 = 0$, $\chi = -1$, $Q = 0$, and the writhe bounds from \eqref{eq:w+}ff give 
$\wr_\tau^+ \leq 0$ and $\wr_\tau^- \geq k_1+k_2-2$.
The relative adjunction formula then gives
\begin{align*}
\wr_\tau = 1 + 2\delta \leq -k_1 -k_2 + 2,
\end{align*}
which is a contradiction.

Finally, we observe that for each $i=1,2$ and $k\ge 1$ there is a unique cylinder from $\eta_k^{1,2}$ to $\eta_k^i$.  This follows by Claim A of Proposition~\ref{prop:multneg}, since the proof given there is not affected by the presence of the second skinny ellipsoid.
This completes the proof of (iii) and hence of the lemma.
\end{proof}

Observe that each of the top components in Lemma \ref{lem:ppt_breakings} represents a summand of 
$\wh{N}_{M,A}^E\lll \Pp \rrr$ for $\Pp \in \partitions_{|\Pp_1|+|\Pp_2|}$ with $\langle \Pp_1 * \Pp_2,\Pp\rangle \neq 0$.
In fact, if it were true that each of the matched components $C_i$ existed uniquely as an honest pseudoholomorphic curve in $E_{\sk}^{1,2} \setminus (E_{\sk}^{1} \cup E_{\sk}^{2})$, Theorem \ref{thm:ppt} would follow immediately by considering all possible ways of gluing a curve  $C_{top}$ in $M \setminus E_{\sk}^{1,2}$ to a configuration of curves in $E_{\sk}^{1,2} \setminus (E_{\sk}^{1}\cup E_{\sk}^{2})$.
Unfortunately, many of the matched components in $E_{\sk}^{1,2} \setminus (E_{\sk}^{1}\cup E_{\sk}^{2})$ are necessarily represented by two-level buildings, since as we saw in Lemma~\ref{lem:ppt_breakings} there are no pair of pants  of type (c), i.e. in  the cobordism $E_{\sk}^{1,2} \setminus (E_{\sk}^{1} \cup E_{\sk}^{2})$ itself.
On the other hand, such a curve {\em can} be represented by a building, with top level a pair of pants $\Si$ in $\R \times \bdy E_{\sk}^{1,2}$ which is an index $0$ branched cover of a trivial cylinder, and bottom level two cylinders in $E_{\sk}^{1,2}\setminus (E_{\sk}^{1}\cup E_{\sk}^{2})$.
Note that the index $0$ branched cover $\Si$  in $\R \times \bdy E_{\sk}^{1,2}$ exists in a two-dimensional moduli space $\Mm_R$ (essentially because the branched point can be moved around), and hence it appears with higher-than-expected dimension.
This means we are not in a position to prove Theorem \ref{thm:ppt} by a standard gluing argument for regular moduli spaces.

Fortunately, we can use the method of  obstruction bundle gluing as described by Hutching--Taubes in  \cite[\S1.8]{HuT}.
As they explain, for each sufficiently large gluing parameter $\rho$, there is a bundle $\Oo$ over the moduli space $\Mm_\rho$ of branched covers and a section $\s: \Mm_\rho\to \Oo$ 
such that the preglued curve formed by attaching the branched cover $\Si$ to an appropriate end of a (regular, somewhere injective)  top curve $C_{top}$ and two bottom cylinders can be perturbed to an honest holomorphic curve exactly when $\s(\Si) = 0$.  The number of zeros of $\s$ is called the gluing coefficient of this problem. 
If $C_{top}$ has many ends, one has to perform this gluing operation at each of its ends.  The resulting gluing coefficient is the product of the coefficients at each end, i.e. the problem is local in the target, and also does not depend on the particular choice of $C_{top}$.    (Note that the cylinders are unique by Lemma~\ref{lem:ppt_breakings}.)
   Our current situation, though close to that considered in \cite[Thm~1.13]{HuT}, is not precisely the same because the bottom cylinders are multiply covered in general.\footnote
   {
   However, the top of each cylinder on $\eta_{m_i}$ does have a maximal partition, as is required by the definition of a gluing pair in \cite[Def~1.8]{HuT}.}
      Hence we cannot use their calculation of the gluing coefficient.  Nevertheless we can conclude that this coefficient does not depend on the choice of $C_{top}$.  
Thus, we have the following result.

\begin{lemma}\label{lem:weak_ppt}
For any partitions $\Pp_1,...,\Pp_r$, we have
\begin{align*}
\wh{N}^E_{M,A}\lll \Pp_1,\Pp_2,\Pp_3...,\Pp_r\rrr = \sum_{\Pp \in \partitions_{|\Pp_1| + |\Pp_2|}} C_{\Pp_1,\Pp_2}^{\Pp} \cdot  \wh{N}^E_{M,A}\lll \Pp,\Pp_3,...,\Pp_r\rrr
\end{align*}
for some coefficients $C_{\Pp_1,\Pp_2}^{\Pp}$ which depend only on the partitions $\Pp_1,\Pp_2,\Pp$.
Moreover $C_{\Pp_1,\Pp_2}^{\Pp}\ne 0$ only if $\langle \Pp_1*\Pp_2,\Pp\rangle\ne 0$. 
\end{lemma}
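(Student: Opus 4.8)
The plan is to prove Lemma~\ref{lem:weak_ppt} by a careful neck-stretching argument along $\bdy E^{1,2}_\sk$ together with the obstruction bundle gluing formalism of Hutchings--Taubes. The first step is to fix generic admissible almost complex structures and form the parametrized moduli space $\calM^{\{J_t\},\simp}_{M \setminus (E^1_\sk \cup E^2_\sk),A}\lll \Pp_1,\Pp_2,\Pp_3,\dots,\Pp_r\rrr$ realizing the neck stretching, and to invoke the SFT compactness theorem to describe its limit points as pseudoholomorphic buildings in the split cobordism. Lemma~\ref{lem:ppt_breakings} (which at this point I may assume, since it is stated earlier) then pins down the structure of these limiting buildings: the top level is a connected somewhere injective index-zero curve $C_{top}$ in $M \setminus E^{1,2}_\sk$ representing some $\wh N^E_{M,A}\lll \Pp,\Pp_3,\dots,\Pp_r\rrr$ with $\langle \Pp_1*\Pp_2,\Pp\rangle \neq 0$; the symplectization levels in $\R \times \bdy E^{1,2}_\sk$ consist of trivial cylinders and index-zero pair-of-pants branched covers of trivial cylinders; the cobordism level $E^{1,2}_\sk \setminus (E^1_\sk \cup E^2_\sk)$ consists only of cylinders, unique between any matched pair of ends of equal index; and there are no symplectization levels over $\bdy E^1_\sk \cup \bdy E^2_\sk$. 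This immediately gives the constraint $C^{\Pp}_{\Pp_1,\Pp_2}\neq 0 \implies \langle \Pp_1*\Pp_2,\Pp\rangle \neq 0$, since the negative ends of the building on $\bdy E^1_\sk$ and $\bdy E^2_\sk$ must assemble, via the branched covers over $\bdy E^{1,2}_\sk$, into exactly the combinatorial merging encoded by the $*$-product.

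The second step is to set up the gluing in the reverse direction: given a regular somewhere injective curve $C_{top}$ in $M \setminus E^{1,2}_\sk$ with negative ends forming a partition $\Pp$, and given the (unique) cylinders in the cobordism level together with the branched-cover pairs of pants in $\R \times \bdy E^{1,2}_\sk$, I would apply the obstruction bundle gluing of \cite[\S1.8]{HuT}. For each negative end of $C_{top}$ there is an obstruction bundle $\Oo$ over the relevant moduli space of branched covers $\Mm_\rho$ and a section $\s$ whose zero count gives a local gluing coefficient; because the gluing problem is local in the target (occurring independently at each end of $C_{top}$), the total gluing coefficient is a product of these local ones and, crucially, is independent of $C_{top}$ and depends only on the partition data $\Pp_1,\Pp_2,\Pp$. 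Defining $C^\Pp_{\Pp_1,\Pp_2}$ to be this coefficient (times any combinatorial factor coming from the ordering conventions implicit in $\wh N^E$), the standard fact that obstruction bundle gluing sets up a bijection between the zero set of $\s$ over all buildings and the $J_\rho$-holomorphic glued curves for $\rho$ large yields the displayed identity.

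The main obstacle is the mismatch between our situation and the precise hypotheses of \cite[Thm~1.13]{HuT}: the bottom cylinders are multiply covered in general, so our collection of curves is not literally a ``gluing pair'' in the sense of \cite[Def~1.8]{HuT}, and we cannot simply quote their explicit formula for the gluing coefficient. I would address this by observing (as the footnote in the excerpt already signals) that the top of each multiply covered cylinder over $\eta_{m_i}$ does carry the maximal ECH partition, which is the feature actually needed for the obstruction-bundle machinery to run, and by arguing that the relevant Fredholm and transversality setup --- the branched cover moduli space $\Mm_\rho$, the obstruction bundle, and the section $\s$ --- goes through verbatim in our setting with the only change being that we do not get a closed-form value for $\#\s^{-1}(0)$. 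What we do retain, and all we need for Lemma~\ref{lem:weak_ppt}, is that this count is finite, independent of the gluing parameter for $\rho$ large, and independent of the choice of $C_{top}$; the actual determination of $C^\Pp_{\Pp_1,\Pp_2} = \langle \Pp_1*\Pp_2,\Pp\rangle$ is then deferred to the subsequent comparison with Theorem~\ref{thm:T=E} and the combinatorics of the $*$-product. A secondary technical point to handle is bookkeeping the automorphism factors: since $\wh N^E$ counts curves with ordered ends on each fixed Reeb orbit, I need to check that the orderings on the $\bdy E^1_\sk$- and $\bdy E^2_\sk$-ends on the building side match up correctly with the ordering on the $\bdy E^{1,2}_\sk$-ends on the glued side, which is exactly what makes the coefficient on the right-hand side a pure structure constant rather than something depending on $A$ or $M$.
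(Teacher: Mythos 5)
Your proposal follows essentially the same route as the paper: neck-stretch along $\bdy E^{1,2}_\sk$, invoke Lemma~\ref{lem:ppt_breakings} to constrain the limiting buildings (which gives the support condition $C^\Pp_{\Pp_1,\Pp_2}\neq 0 \Rightarrow \langle\Pp_1*\Pp_2,\Pp\rangle\neq 0$), and then run obstruction bundle gluing as in \cite[\S1.8]{HuT}, noting that locality of the gluing problem in the target makes the coefficient a product over ends that is independent of $C_{top}$. You also correctly flag both the multiply-covered-cylinder mismatch with \cite[Thm~1.13]{HuT} (and its resolution via the maximal ECH partition at the cylinder tops) and the end-ordering bookkeeping for $\wh N^E$, exactly as the paper does.
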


The upshot is that it suffices to show that each of the coefficients $C_{\Pp_1,\Pp_2}^{\Pp}$ is equal to $\langle \Pp_1*\Pp_2,\Pp\rangle$.
Note that, by Lemma \ref{lem:ppt_breakings}, all of the relevant gluings are either standard gluings along cylindrical ends or obstruction bundle gluing involving an index $0$ branched cover which is a pair of pants.
The following lemma is the main remaining ingredient for the proof of Theorem \ref{thm:ppt}.

\begin{lemma}\label{lem:two_ends}
For any $m,m' \geq 1$, we have
\begin{align*}
\wh{N}^E_{M,A}\lll (m),(m')\rrr = \wh{N}^E_{M,A}\lll (m,m')\rrr + \wh{N}^E_{M,A}\lll (m+m')\rrr.
\end{align*}
\end{lemma}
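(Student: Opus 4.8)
The plan is to deduce the identity from the combination principle already in hand, Lemma~\ref{lem:weak_ppt}, plus a computation of the two relevant gluing coefficients carried out in a conveniently chosen test geometry. First I would specialize Lemma~\ref{lem:weak_ppt} to $r=2$, $\Pp_1=(m)$, $\Pp_2=(m')$. The product $(m)*(m')$ has exactly two terms, namely $(m,m')$ (from subsets of size $\ell=0$ in \eqref{eq:*}) and $(m+m')$ (from subsets of size $\ell=1$), each with coefficient $1$; so the lemma gives
\begin{align*}
\wh N^E_{M,A}\lll (m),(m')\rrr \;=\; C^{(m,m')}_{(m),(m')}\,\wh N^E_{M,A}\lll (m,m')\rrr \;+\; C^{(m+m')}_{(m),(m')}\,\wh N^E_{M,A}\lll (m+m')\rrr ,
\end{align*}
with universal coefficients depending only on the partitions. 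It then remains to show that both coefficients equal $1$.

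For $C^{(m,m')}_{(m),(m')}$ I would argue directly from Lemma~\ref{lem:ppt_breakings}: the limiting buildings contributing here have top level a somewhere injective $C_{top}\in\calM_{M\setminus E_\sk^{1,2},A}^{J,\simp}\lll(m,m')\rrr$, whose negative ends on $\eta_m^{1,2},\eta_{m'}^{1,2}$ are capped by the unique cylinders running to $\eta_m^1\subset\bdy E_\sk^1$ and $\eta_{m'}^2\subset\bdy E_\sk^2$ (uniqueness by Claim~A in the proof of Proposition~\ref{prop:multneg}). These cylinders are immersed, and regular by automatic transversality (Theorem~\ref{thm:auttrans}, since $g=h_+=\ind=0$); as no branched cover of a trivial cylinder is involved, this is a standard gluing along cylindrical ends in the sense of \cite[Thm.2.54]{Pardcnct}, with gluing coefficient $1$. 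Hence $C^{(m,m')}_{(m),(m')}=1$.

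For $C^{(m+m')}_{(m),(m')}$ I would exploit that the coefficient is universal (independent of $M$, $A$, and of $C_{top}$), so it may be evaluated in any one example. Take $M=\CP^1\times\CP^1$ and $A=[B]+j[F]$ with $j=(m+m'-1)/2$ if $m+m'$ is odd, and $M=\bl^1(\CP^1\times\CP^1)$, $A=[B]+j[F]-[\E]$ with $j=(m+m')/2$ (blowing up a point lying on a bidegree $(1,j)$ curve) if $m+m'$ is even. In both cases $M$ is semipositive, $A$ is represented by an embedded symplectic rational sphere with $A^2=c_1(A)-2$ and $c_1(A)=m+m'+1$, and $N_{M,A}=1$ since a bidegree $(1,j)$ rational curve is pinned down by its values at $2j+1=c_1(A)-1$ points (together with the blown-up point in the even case). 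For such $(M,A)$ the iterated blowup classes $A-\sum_{l=1}^{m+m'}[\E_l]$ and $A-\sum_{l=1}^m[\E_l]-\sum_{l=1}^{m'}[\E'_l]$ each have self-intersection $-1$ and first Chern number $1$, hence are exceptional; so by Lemma~\ref{lem:blow} (and its multi-point extension) together with Remark~\ref{rmk:relGW}(i) we get $N^T_{M,A}\lll(m+m')\rrr=1$ and, via Corollary~\ref{cor:T=Er}(i), $\wh N^E_{M,A}\lll(m),(m')\rrr=N^T_{M,A}\lll(m),(m')\rrr=1$. On the other hand, Theorem~\ref{thm:T=E} gives $1=N_{M,A}=\sum_{\Pp\in\partitions_{m+m'}}\Pp!\,N^T_{M,A}\lll\Pp\rrr$ with every summand nonnegative (Lemma~\ref{lem:blow}); since the term $\Pp=(m+m')$ already contributes $1$, all other terms vanish, so in particular $\wh N^E_{M,A}\lll(m,m')\rrr=|\Aut((m,m'))|\,N^T_{M,A}\lll(m,m')\rrr=0$. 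Substituting into the displayed identity yields $1=C^{(m,m')}_{(m),(m')}\cdot 0+C^{(m+m')}_{(m),(m')}\cdot 1$, hence $C^{(m+m')}_{(m),(m')}=1$, which completes the proof.

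The genuinely hard part is in fact already discharged by the machinery preceding the lemma: establishing via Lemma~\ref{lem:ppt_breakings} that the limiting buildings are only of the two listed types, and that the (generically multiply covered, hence obstruction-bundle) gluing of $C_{top}$ to the index-zero pair-of-pants branched cover over $\eta_1^{1,2}$ produces a well-defined coefficient independent of $C_{top}$, à la Hutchings--Taubes. Granting that, the only remaining subtlety for Lemma~\ref{lem:two_ends} is the bookkeeping in the test example --- checking $N_{M,A}=1$, that the iterated blowup classes are exceptional, and that nonnegativity of the $N^T\lll\Pp\rrr$ forces $N^T_{M,A}\lll(m,m')\rrr=0$ --- which neatly sidesteps any direct Euler-class computation of the obstruction bundle.
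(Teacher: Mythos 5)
Your overall strategy is the same as the paper's: specialize Lemma~\ref{lem:weak_ppt} to $(m),(m')$, pin down $C^{(m,m')}_{(m),(m')}=1$ by the uniqueness of the cylinders from Lemma~\ref{lem:ppt_breakings} and standard gluing, and then extract $C^{(m+m')}_{(m),(m')}=1$ from a model $(M,A)$ with $N_{M,A}=1$, $c_1(A)=m+m'+1$, $A^2=c_1(A)-2$. Where you depart from the paper is in how the model invariants are pinned down. The paper actually does the work inside the SFT picture: it shows by a relative-adjunction/writhe estimate that no somewhere injective $A$-curve in $M\setminus E_\sk^{1,2}$ can have two negative ends on the same skinny ellipsoid (so the two-level degeneration is absent), and by a second writhe estimate that the three-level top curve is unique; i.e.\ it computes $\wh N^E\lll(m),(m')\rrr=1$, $\wh N^E\lll(m,m')\rrr=0$, $\wh N^E\lll(m+m')\rrr=1$ via ECH-type arguments inside the degeneration. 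You instead compute these numbers before stretching, from Theorem~\ref{thm:T=E} together with the nonnegativity of the $N^T\lll\Pp\rrr$ from Lemma~\ref{lem:blow}: since $N_{M,A}=\sum_\Pp\Pp!\,N^T\lll\Pp\rrr=1$ and the $(m+m')$ term already contributes $1$ (by Corollary~\ref{cor:count_pos} / the exceptional-class argument), every other term, in particular $N^T\lll(m,m')\rrr$, must vanish; and you get $\wh N^E\lll(m),(m')\rrr=1$ from Corollary~\ref{cor:T=Er}(i) and the multi-point analog of Lemma~\ref{lem:blow}. This is a cleaner and more conceptual route that trades writhe estimates for the already-established positivity and the $\Pp!$ identity, at the cost of leaning on the $r>1$ case of Corollary~\ref{cor:T=Er}, whose proof the paper only sketches. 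Two small points to tighten: the step ``self-intersection $-1$ and first Chern number $1$, hence exceptional'' is not automatic from the numerics alone --- you should add that these classes are realized by the (embedded) proper transform of the unique embedded $A$-curve under the iterated blowup of Lemma~\ref{lem:blow}, which is what makes them exceptional spheres (exactly the fact Remark~\ref{rmk:relGW}(i) is formulated to use); and you invoke ``the multi-point extension'' of Lemma~\ref{lem:blow} for both counts, but it is only needed for $N^T_{M,A}\lll(m),(m')\rrr$, the one-point count $N^T_{M,A}\lll(m+m')\rrr$ needing only Lemma~\ref{lem:blow} as stated. Your choice of model ($\CP^1\times\CP^1$ or its one-point blowup according to the parity of $m+m'$) is a perfectly workable substitute for the paper's $\bl^{2d+1-m-m'}\CP^2$ example; both produce the required numerics $c_1(A)=m+m'+1$, $A^2=m+m'-1$, $N_{M,A}=1$.
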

\begin{proof}
According to Lemma \ref{lem:weak_ppt}, we have
\begin{align}\label{eq:weak_ppt_pair}
\wh{N}_{M,A}^E \lll (m),(m')\rrr = C_{(m),(m')}^{(m,m')}\wh{N}_{M,A}^E\lll (m,m')\rrr + C_{(m),(m')}^{(m+m')} \wh{N}^E_{M,A}\lll (m+m')\rrr
\end{align}
with the coefficients $C_{(m),(m')}^{(m,m')}$ and $C_{(m),(m')}^{(m+m')}$ a priori unknown.
We saw in Lemma~\ref{lem:ppt_breakings}
that
for $i=1,2$, there is a unique cylinder in $E_{\sk}^{1,2} \setminus (E_{\sk}^{1} \cup E_{\sk}^{2})$ from
 $\eta^{1,2}_m$ on $\bdy E_{\sk}^{1,2}$  to $\eta^i_m$ on $\bdy E_{\sk}^{i}$.
It follows that 
for each matching of ends  of multiplicities $m, m'$  of   a somewhere injective curve $C_{top}$ in $M\setminus \p E_{\sk}^{1,2}$ with ends of these multiplicities on the two ellipsoids  
$\p E_{\sk}^i, i={1,2}$
there is a unique\footnote
{
Note that because the cylinder is a $m$-fold covering  it has only one rather than $m$ different  pregluings to a top curve.
One could also argue that this coefficient is one by using a model example as in the next paragraph.}
 way to glue in this pair of regular cylinders to 
 $C_{top}$ 
  in order to produce a curve in $M \setminus (E_{\sk}^{1} \cup E_{\sk}^{2})$ with negative ends
  on $\bdy E_{\sk}^{1}\sqcup \bdy E_{\sk}^{2}$.  If $m \ne m'$ there is only one way to make this matching once the appropriate ends of $C_{top}$ are chosen.
  However, if $m=m'$ there are two ways to make this matching for each pair of ends of $C_{top}$.  But if we order the ends of $C_{top}$ that have the same multiplicity, then again we can make a unique matching since the two ellipsoids $E_{\sk}^1, E_{\sk}^2$ are ordered.\footnote
  {Equivalently, the partitions $\Pp_1 = (m)$ and $\Pp_2 = (m')$ are ordered.}   
 Hence there is a bijective correspondence between the curves $C_{top}$ (with ordered ends) counted by $\wh{N}^E_{M,A}\lll (m,m')\rrr$ and the curves  counted by
 $\wh{N}^E_{M,A}\lll (m),(m')\rrr$,.
Thus the coefficient $C_{(m),(m')}^{(m,m')}$ is equal to $1$.
\MS

As for the coefficient $C_{(m),(m')}^{(m+m')}$, we can determine it by using a model example.
Pick $d$ such that $2d \geq m + m'$.
Let $M$ denote the blowup $\bl^{2d+1-m-m'}\CP^2$, and let $A$ denote the homology class $$
A: = d [L] - (d-1)[\E_1] - [\E_2] - ... - [\E_{2d+1-m-m'}] \in H_2(M;\Z).
$$
We claim that there is a unique embedded curve in $M$ in the homology class $A$ passing through pairwise distinct points $p_1,...,p_m,p_1',...,p_{m'}' \in M$.
 i.e. we have $$
N_{M,A}\lll p_1,...,p_m,p_1',...,p_{m'}' \rrr = 1.
$$
To see this, consider  the blowup $M' = \bl^{2d+1}\CP^2$ and the homology class 
$$
A' = d [L] - (d-1)[\E_1] - [\E_2] - ... - [\E_{2d+1}] \in H_2(M';\Z).
$$
Note that $A'$ is the class of an exceptional sphere,\footnote
{
One can see this either by directly constructing a degree $d$ curve of genus $0$ with a multiple point  of order $d-1$ from $d$ suitably intersecting lines, and then blowing up appropriately, or by reducing this class to $[\Ee_1]$ by a  sequence of Cremona transformations; cf \cite[Ch~13.4]{JHOL},}
which means that the Gromov--Witten invariant $\gw_{M',A'}$ is equal to $1$.
The claim then follows by Corollary~\ref{cor:count_pos2}  by trading the last $m+m'$ blowup constraints of $A'$ for point constraints.

Next, we choose  small skinny ellipsoids with $E_{\sk}^{1} \cup E_{\sk}^{2} \subset E_{\sk}^{1,2}$ which are disjoint from the exceptional divisors $\E_1,...,\E_{2d+1-m-m'}$ in $M: = \bl^{2d+1-m-m'}\CP^2$.  Then 
put the points $p_1,...,p_m \in E_{\sk}^{1}$ and $p_1',...,p_{m'}' \in E_{\sk}^{2}$, and 
then stretch the neck around the boundary $\p E_{\sk}^{1} \cup \p E_{\sk}^{2}$.  
We claim that the unique $A$-curve converges to a
rational curve $C_{top}$  in $M \setminus (E_{\sk}^{1} \cup E_{\sk}^{2}) $ in the homology class $A$ with one negative end $\eta_m$ on $\bdy E_{\sk}^{1}$ and one negative end $\eta_{m'}$ on $\bdy E_{\sk}^{2}$.
The easiest way to see this is to note that the $A$-curve, because it is a embedded sphere of Fredholm index zero, has ECH index zero, so that when the neck is stretched it must decompose into constituent curves that satisfy the ECH partition conditions, which means in this case that there is one negative end on each skinny ellipsoid.   Relevant references are \cite{Hlect} and \cite[\S2.2]{Ghost}.
Alternatively, one can deduce this by arguing from the adjunction formula, estimating the writhe  as in the proof of 
Lemma~\ref{lem:ppt_breakings}.

To see that there is a unique  curve $C_{top}$ in the top level with the specified ends, suppose by contradiction that there were two such curves.
Then by \eqref{eq:c1sp} their union would satisfy
\begin{align*}
&c_{\tau_{\sp}} = 2(c_1(A) - (m_1+m_2)) =  2 \\
&\chi = 0,   \qquad Q_{\tau_{\sp}} = 4A\cdot A = -4 + 4m + 4m',
\end{align*}
and so the relative adjunction formula \eqref{eq:adjuc} would give 
$$ 
\wr_{\tau_{\sp}}^- = -6 + 4m + 4m'  - 2\delta.
$$
On the other hand, the lower bound for $\wr_\tau^-$ from  \eqref{eq:w-} gives
$\wr_{\tau_{\sp}}^- \geq 4m + 4m' - 4$, which is a contradiction.
\MS

Finally, we consider how this curve $C_{top}$  degenerates as we stretch the neck along $\bdy E_{\sk}^{1,2}$. 
According to Lemma \ref{lem:ppt_breakings}, the limiting buildings have one of the following types:
\begin{itemize}\item 
they  have two levels, 
where the top curve in
$M \setminus E_{\sk}^{1,2}$ has two negative ends that are each joined to a cylinder in $E_{\sk}^{1,2}\setminus (E_\sk^1 \cup E_\sk^2)$, or 
\item they have three levels, where the top curve has 
negative end on $\eta^{1,2}_{m+m'}$, there is a pair of pants in $\R \times \bdy E_{\sk}^{1,2}$ with negative ends on $\eta^{1,2}_m, \eta^{1,2}_{m'}, $ and then two cylinders in $E_{\sk}^{1,2} \setminus (E_{\sk}^1 \cup E_{\sk}^2)$.
\end{itemize}
We claim that the first situation cannot occur, since there are no somewhere injective curves in $M \setminus E_\sk^{1,2}$ in class $A$  with two negative ends $\eta_m$ and $\eta_{m'}$ on the {\it same} ellipsoid $\p E^{1,2}_{\sk}$.
Indeed, for such a cylinder the relative adjunction formula would give
$$
\wr_{\tau_{\sp}}^- = -2 + m + m' - 2\delta,
$$
while the writhe bound from \eqref{eq:w-} would give 
$$\wr_{\tau_{\sp}}^- \geq m + m' + 2\min(m,m') -2,
$$
 which is impossible.\footnote
 {
 Again, one can bypass this argument by saying that the top curve, because it has ECH index $0$  has to satisfy the ECH partition conditions, and hence has only one negative end.} 

It follows that the limiting building must have three levels.
As before, we can show that this top level has a unique representative.
Indeed, if two distinct such planes existed, the relative adjunction formula applied to their union would give 
$$
\wr_{\tau_{\sp}}^- = -4 + 4m + 4m' - 2\delta,
$$
 while the writhe bound would give 
 $$
 \wr_{\tau_{\sp}}^- \geq 4m + 4m' - 2,
 $$ 
 which is again impossible.
Thus the formula \eqref{eq:weak_ppt_pair}
reduces in this situation to 
$$ 
1 = C_{(m),(m')}^{(m,m')} \cdot 0 + C_{(m),(m')}^{(m+m')} \cdot 1,
$$
from which it follows that $C_{(m),(m')}^{(m+m')} = 1$.
\end{proof}

\begin{proof}[Proof of Theorem \ref{thm:ppt}]  We saw above that it suffices to prove the identity
\begin{align}\label{eq:NHat1}
\wh{N}^E_{M,A}\lll \Pp_1,\Pp_2\rrr = \sum_{\Pp \in \partitions_{|\Pp_1| + |\Pp_2|}} \langle \Pp_1*\Pp_2,\Pp\rangle \cdot  \wh{N}^E_{M,A}\lll \Pp\rrr.
\end{align}
Notice that this involves the invariants $\wh{N}^E_{M,A}$  that count curves in which the negative ends on the same ellipsoid are ordered. 
By Lemma \ref{lem:weak_ppt}, it suffices to show that $$
C_{\Pp_1,\Pp_2}^\Pp = \langle \Pp_1 * \Pp_2,\Pp\rangle
$$
for all partitions $\Pp_1,\Pp_2,\Pp$.
Note that it follows from Lemma \ref{lem:ppt_breakings} that we have $C_{\Pp_1,\Pp_2}^\Pp = 0$ whenever $ \langle \Pp_1 * \Pp_2,\Pp\rangle = 0$.

In general, for each triple $\Pp, \Pp_1,\Pp_2$ the number $\langle \Pp_1 * \Pp_2,\Pp\rangle$ is precisely the number of relevant configurations of matched components in $E_{\sk}^{1,2} \setminus (E_{\sk}^{1} \cup E_{\sk}^{2})$ with top end corresponding to the partition $\Pp$ and bottom ends in $E_{\sk}^{i}$ corresponding to the partition $\Pp_i$ for $i = 1,2$. To see this, note
that each such matched component is either a cylinder corresponding to a single level building or a pair of pants represented by a two level building.  Thus each configuration of matched components pairs each entry $m_i$  of $\Pp$ either with a pair $(m, m')\in \Pp_1\times \Pp_2$ with $m+m' = m_i$ or with a single entry $m_i$ in either $\Pp_1$ or $\Pp_2$.  
Each such configuration of cylinders and pairs of pants uniquely determines the subsets $S,S'$ of ends on $\p E_\sk^1$ and $\p E_\sk^2$ that are summed (since they are the \lq feet' of the pairs of pants) and also the permutation $\si$ that determines how the subsets $S, S'$  are paired up.  Further, because   
the configuration determines an injection from the set of ends on $\p E_{sk}^i$ to those on $\p E_\sk^{1,2}$,
an ordering of the entries   in  $\Pp$ uniquely determines an ordering of the entries in $\Pp_i$   for $1,2$.  
Thus the sum on the right hand side of \eqref{eq:NHat1} is the number of buildings that when glued will yield the curves counted on the LHS. 

 It remains to note that each gluing coefficient is $1$.
Following Lemma \ref{lem:two_ends}, in the cylinder case we have a unique representative and are in the situation of standard gluing, while in the case of a pair of pants we have obstruction bundle gluing with gluing coefficient also equal to $1$.
\end{proof}

\section{A recursive algorithm}\label{sec:recur}

In this final section we discuss the recursive algorithm from the introduction. 
In \S\ref{ss:recur1} we  discuss some basic properties of the recursion and present some example computations. Subsequently, in \S\ref{ss:recur2} we describe the algorithm in detail.

\subsection{Overview and computations}\label{ss:recur1}
 Let $M$ be a closed symplectic $4$-manifold and let $A \in H_2(M;\Z)$ be a homology class.
By the results of \S\ref{ss:tan_multi}, we have well-defined integer invariants
$$
N_{M,A} \lll (\T^{m_1^1-1}p_1,...,\T^{m^1_{b_1}-1}p_1),...,(\T^{m_1^r-1}p_r,...,\T^{m^r_{b_r}-1}p_r) \rrr 
$$
for all $r \geq 0$, $b_1,...,b_r \geq 1$ and $m^i_j \geq 1$ with $1 \leq j \leq b_i$.
For each $i$, we will assume that we have $m_1^i \geq ... \geq m_{b_i}^i$, and we write the corresponding partition as $\Pp_i = (m^i_1,\dots,m^i_{b_i})$,
so that the above invariant can also be written more succinctly as
$$
N_{M,A}\lll \Pp_1,...,\Pp_r\rrr \quad\mbox{ or } \quad N_{M,A}\lll (m_1^1,...,m_{b_1}^1),\dots,(m_1^r,...,m_{b_r}^r)\rrr 
$$
Recall that for basic index reasons this invariant can only be nontrivial when we have
$$ 
-1 + c_1(A) - \sum\limits_{i=1}^r\sum\limits_{j=1}^{b_i}m^i_j = 0.
$$

The following theorem is proved in \ref{ss:recur2}.  

\begin{thm}\label{thm:recur}
There is explicit recursive algorithm which describes the invariant $N_{M,A}\lll \Pp_1,\dots,\Pp_r\rrr$ as a linear combination of invariants of the form $N_{M,A}\lll \Pp_1',\dots,\Pp_{r'}'\rrr$, where 
each of the partitions $\Pp_i'$ is of the type $(1,\dots,1)$.
 \end{thm}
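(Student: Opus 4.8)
\textbf{Proof plan for Theorem~\ref{thm:recur}.}
The plan is to run the combination formula of Theorem~\ref{thm:ppt} ``in reverse'' to split off tangency constraints into smaller pieces until every surviving partition has all parts equal to $1$. The basic observation is that Theorem~\ref{thm:ppt} expresses $N_{M,A}\lll \Pp_1,\Pp_2,\Pp_3,\dots,\Pp_r\rrr$ as a linear combination of terms $N_{M,A}\lll \Pp,\Pp_3,\dots,\Pp_r\rrr$ with $|\Pp|=|\Pp_1|+|\Pp_2|$ and $\langle \Pp_1*\Pp_2,\Pp\rangle\neq 0$. Reading this as a linear system in the ``merged'' invariants $N_{M,A}\lll \Pp,\Pp_3,\dots,\Pp_r\rrr$ and the ``split'' invariant on the left, the key algebraic input is that for a fixed $\Pp\in\partitions_k$ one can choose $\Pp_1,\Pp_2$ with $|\Pp_1|+|\Pp_2|=k$ so that, among all $\Pp'\in\partitions_k$ appearing on the right, $\Pp$ is the unique maximal one in a suitable partial order (e.g.\ dominance/refinement order on partitions of $k$). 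Concretely, one reduces the largest part: if $\Pp=(m_1,\dots,m_b)$ with $m_1\ge 2$, take $\Pp_1=(m_1-1,m_2,\dots,m_b)$ and $\Pp_2=(1)$; then $\langle\Pp_1*\Pp_2,\Pp'\rangle\ne 0$ forces $\Pp'$ to be either $\Pp$ itself (obtained by adding the single box back onto the first row) or a strictly ``more spread out'' partition obtained by placing the box in a new row or a shorter row.

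First I would set up the relevant combinatorial order. I would record the multiset of parts of a partition of $k$ and use the refinement order: $\Pp'\preceq \Pp$ if $\Pp'$ is obtained from $\Pp$ by repeatedly splitting parts. I would then verify the triangularity statement: for the choice $\Pp_1=(m_1-1,m_2,\dots,m_b)$, $\Pp_2=(1)$, every $\Pp'$ with $\langle\Pp_1*\Pp_2,\Pp'\rangle\ne0$ satisfies $\Pp'\preceq\Pp$, with equality achieved exactly once (the summand in \eqref{eq:*} with $S,S'$ chosen to merge the singleton $(1)$ with the first row of $\Pp_1$, giving coefficient equal to the number of such merges, which is nonzero). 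This uses only the explicit description of $*$ in \eqref{eq:*}--\eqref{eq:*P} together with Definition~\ref{def:AutP}. Granting this, the formula of Theorem~\ref{thm:ppt} becomes, after dividing through by the nonzero leading coefficient,
\begin{align*}
N_{M,A}\lll \Pp,\Pp_3,\dots,\Pp_r\rrr = c\, N_{M,A}\lll \Pp_1,\Pp_2,\Pp_3,\dots,\Pp_r\rrr + \sum_{\Pp'\prec \Pp} c_{\Pp'}\, N_{M,A}\lll \Pp',\Pp_3,\dots,\Pp_r\rrr,
\end{align*}
for explicit rational constants $c,c_{\Pp'}$. The term $N_{M,A}\lll \Pp_1,\Pp_2,\Pp_3,\dots,\Pp_r\rrr$ has one more ``slot'' but the total number of boxes distributed among the first two slots is unchanged, while the remaining terms have strictly smaller partitions in that slot.

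Next I would describe the recursion itself and prove termination. Define a complexity measure on a tuple $(\Pp_1,\dots,\Pp_r)$, for instance the pair
$$\Big(\textstyle\sum_i \delta(\Pp_i),\ \sum_i (\max \text{ part of }\Pp_i)\Big)$$
ordered lexicographically (with $\delta$ as in \eqref{eq:delta_def}); note $\delta(\Pp)=0$ precisely when all parts of $\Pp$ equal $1$. Each application of the identity above replaces a constraint $\Pp$ having a part $\ge 2$ by constraints that are either (a) a pair $\Pp_1,\Pp_2$ whose union of parts is a refinement splitting off one box — so each of $\Pp_1,\Pp_2$ is $\prec\Pp$ hence has strictly smaller $\delta$ — or (b) a single partition $\Pp'\prec\Pp$, again with strictly smaller $\delta$. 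Hence the complexity strictly decreases, and the recursion terminates with all partitions equal to $(1,\dots,1)$. I would also note that whenever some $\Pp_i=(1,\dots,1)$ we may instead invoke Corollary~\ref{cor:T=Er}(ii) to pass to a blowup, which keeps all partitions of the desired form and does not interfere with the termination argument; this is the connection to Theorem~\ref{thm:main_thm} but is not strictly needed for Theorem~\ref{thm:recur} as stated.

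\textbf{Main obstacle.} The genuine content is the combinatorial triangularity of the coefficients $\langle \Pp_1*\Pp_2,\Pp'\rangle$ with respect to a well-founded order, together with the nonvanishing of the leading coefficient $c$ — i.e.\ showing that the particular split $\Pp=(\Pp_1\text{ with last box removed})\sqcup(1)$ really does appear with nonzero coefficient in $\Pp_1*\Pp_2$ and that everything else that appears is strictly smaller. I expect this to reduce to a direct but slightly fiddly inspection of \eqref{eq:*}: one must check that merging the singleton row of $\Pp_2=(1)$ onto any row of $\Pp_1$ other than a copy of the top row yields a partition $\prec\Pp$, and that leaving it as a separate row also yields something $\prec\Pp$ (since $\Pp$ has one fewer part). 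Handling the automorphism-group normalizing factors $|\Aut(\Pp)|/(|\Aut(\Pp_1)|\,|\Aut(\Pp_2)|)$ so that the leading coefficient is manifestly positive is the only other point requiring care, and is routine. Once this is in place, the algorithm and its termination follow formally.
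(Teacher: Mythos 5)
Your plan rests on a triangularity claim that is false, and the failure occurs already in the smallest nontrivial case. Take $\Pp=(2,2)$, so your prescription gives $\Pp_1=(2,1)$, $\Pp_2=(1)$. Reading off \eqref{eq:*}, the product $\Pp_1*\Pp_2$ contains the three terms $(2,1,1)$, $(2,2)$, and $(3,1)$: the last comes from merging the singleton onto the top row of $\Pp_1$. The partition $(3,1)$ is not a refinement of $(2,2)$, and it dominates $(2,2)$ in the dominance order, so there is no well-founded order for which $\Pp$ is the unique maximal element of the support of $\Pp_1*\Pp_2$. The same thing happens whenever the two largest parts of $\Pp$ differ by at most one, e.g.\ $\Pp=(3,3)$ produces $(4,2)$. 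Consequently ``dividing through by the leading coefficient'' does not express $N_{M,A}\lll\Pp,-\rrr$ in terms of strictly smaller data; it introduces a second unknown $N_{M,A}\lll (3,1),-\rrr$ of the same size $|\Pp|$, and there is no single-box (or indeed single-row) split for which the resulting relation isolates $\Pp$. A secondary issue is your termination measure: for $\Pp_1=(m_1-1,m_2,\dots,m_b)$ with $m_1-1\ge m_2$ one computes $\delta(\Pp_1)+\delta(\Pp_2)=\delta(\Pp)$, so the proposed lexicographic complexity does not strictly decrease either.

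The paper's proof is genuinely different: it does not try to triangularize. It fixes $k=|\Pp|$, applies Theorem~\ref{thm:ppt} with $\Pp_1=(m_1)$ equal to the \emph{entire top row} and $\Pp_2=(m_2,\dots,m_b)$ (formula \eqref{eq:ppt}), and thereby couples \emph{all} partitions of $k$ into one linear system $\vec{v}=(w_1,0,\dots,0)^T+A_k\vec{w}$ (equation \eqref{eq:w_to_v}). The matrix $A_k$ is only upper Hessenberg (nonzero entries at $j\ge i-1$), not upper triangular — e.g.\ $A_4$ in Example~\ref{ex:A_4} has $a_{2,2}=a_{3,3}=0$ — so one cannot read off invertibility from the shape alone. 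The real content is Lemma~\ref{lem:invertible}, an inductive determinant computation $\det(A_k)=\pm(k-1)!$ obtained by matching a sub-block of $A_k$ with $A_{k-1}$ and doing row reduction. Termination is then governed by the paper's complexity measure \eqref{eq:invar}, which records the \emph{maximal total tangency order} at any single point (and the number of points attaining it), not $\sum_i\delta(\Pp_i)$. If you want to keep your plan's spirit, you would have to replace the triangularity claim and the resulting back-substitution by the global invertibility of $A_k$; that is exactly what makes Lemma~\ref{lem:invertible} the key step rather than a ``slightly fiddly inspection.''
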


\begin{cor}\label{cor:recur}
The invariant $N_{M,A}\lll {\Pp_1},\dots,\Pp_r\rrr$ can be written as an explicit linear combination
of blowup Gromov--Witten invariants of the form
$\gw_{\bl^a M, A - n_1[\E_1]-...-n_a[\E_a]}$
for $a, n_1,...,n_a> 0$.
\end{cor}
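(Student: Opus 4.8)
\textbf{Proof proposal for Corollary~\ref{cor:recur}.}
The plan is to deduce the corollary directly from Theorem~\ref{thm:recur} together with the blowup identity of Corollary~\ref{cor:T=Er}~(ii), so that all the real work is pushed into those earlier results. First I would invoke Theorem~\ref{thm:recur}: it rewrites $N_{M,A}\lll \Pp_1,\dots,\Pp_r\rrr$ as an explicit (rational, a priori) linear combination of invariants $N_{M,A}\lll \Pp'_1,\dots,\Pp'_{r'}\rrr$ in which every partition $\Pp'_i$ is of the form $(1^{\times b_i})$ for some $b_i\geq 1$. Note that the homology class $A$ is never changed by the recursion, and that each surviving term still satisfies the index constraint $c_1(A)-1=\sum_i b_i$, since the recursion is built out of iterated applications of Theorem~\ref{thm:ppt}, which preserves the total weight $\sum_i|\Pp_i|$.

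Next I would dispose of each surviving term $N_{M,A}\lll (1^{\times b_1}),\dots,(1^{\times b_{r'}})\rrr$. Applying Corollary~\ref{cor:T=Er}~(ii) (equivalently Corollary~\ref{cor:count_pos2}) with $\Pp_1=(1^{\times b_1})$ converts this into $N_{\bl^1 M,\,A-b_1[\E_1]}\lll (1^{\times b_2}),\dots,(1^{\times b_{r'}})\rrr$; iterating $r'$ times strips off all the remaining constraints and produces the constraint-free, marked-point-free count $N_{\bl^{r'}M,\,\wt A}\lll\,\rrr$ with $\wt A:=A-b_1[\E_1]-\dots-b_{r'}[\E_{r'}]$. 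Blowing up once and subtracting $b_i[\E_i]$ lowers $c_1$ by exactly $b_i$, so $c_1(\wt A)=c_1(A)-\sum_i b_i=1=3-n$ in dimension four; this is precisely the range in which the $k=0$ invariants are defined via the divisor-axiom normalization after \eqref{eq:k=0} (cf.\ \eqref{eq:k=00}), and under that definition $N_{\bl^{r'}M,\,\wt A}\lll\,\rrr=\gw_{\bl^{r'}M,\,\wt A}$. Substituting back and collecting like terms yields the asserted explicit linear combination of blowup Gromov--Witten invariants $\gw_{\bl^a M,\,A-n_1[\E_1]-\dots-n_a[\E_a]}$ with $a,n_1,\dots,n_a>0$.

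The only routine point requiring care is the bookkeeping: one must track how the number of blowups $a$ and the multiplicities $n_i$ are generated along each branch of the recursion of Theorem~\ref{thm:recur}, verify that the index identity $c_1(\wt A)=1$ holds for every term that actually appears (so that each appeal to the $k=0$ definition and to Corollary~\ref{cor:T=Er}~(ii) is legitimate), and observe that the recursion terminates because $\sum_i|\Pp_i|$ is fixed by $A$ and Theorem~\ref{thm:recur} is explicitly finite. I do not expect any genuine obstacle at the level of this corollary; the substantive difficulties are all upstream, in Theorem~\ref{thm:recur} (§\ref{ss:recur2}) and in the combination formula Theorem~\ref{thm:ppt}. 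Finally, I would note that the reduction is effective: the relevant blowup Gromov--Witten invariants are computable --- for $M=\CP^2$ via the algorithm of G\"ottsche--Pandharipande \cite{GP} --- so Corollary~\ref{cor:recur} immediately implies Theorem~\ref{thm:main_thm}.
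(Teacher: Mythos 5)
Your proposal is correct and follows exactly the route the paper takes: the paper's proof is the one-line observation that Corollary~\ref{cor:recur} follows from Theorem~\ref{thm:recur} by repeated application of Corollary~\ref{cor:T=Er}~(ii), and your write-up simply spells out that iteration (and the index bookkeeping, which the paper leaves implicit) in more detail.
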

\begin{proof}    This follows from Theorem~\ref{thm:recur} by repeatedly applying Corollary~\ref{cor:T=Er}~(ii).
\end{proof}

In the case $M = \CP^2$, G\"ottsche--Pandharipande \cite[Theorem 3.6]{GP} used associativity of the quantum cup product to give an explicit recursive algorithm for the rational blowup Gromov--Witten invariants $\gw_{\bl^r M,A - \sum_{i=1}^s n_i[\E_i]}$.
More generally, Gathmann \cite{Gath_blowup} also gave a recursive algorithm which computes the rational Gromov--Witten invariants of $\bl^r M$ for any convex projective variety $M$ in terms of the rational Gromov--Witten invariants of $M$. 
One can combine this with our recursion from Theorem \ref{thm:recur} to completely compute the invariants 
$N_{\CP^2,d [L]}\lll \Pp_1,\dots,\Pp_r\rrr$.

\begin{example}[{\bf Computations for $\CP^2$}]\rm
Table \ref{table:CP2_1} shows the invariants $N_d\lll \T^{3d-2}p\rrr$ for small values of $d$.
For purposes of comparison, we also include the numbers $N_d\lll p_1,...,p_{3d-1}\rrr$ computed by Kontsevich's recursion formula, as well as the analogous descendent invariant 
$\gw_{\CP^2,d[L]}\lll \psi^{3d-2}\rrr$ with a full gravitational descendant constraint at a point.
Table \ref{table:CP2_2} shows the nonzero invariants $N_d\lll \T^{m_1-1}p,...,\T^{m_b-1}p \rrr$ for small values of $d$.
Note for most partitions $(m_1,...,m_b)$ these invariants are zero by the adjunction inequality.  For further comment on these numbers see Remark~\ref{rmk:dualpart}.
All of these computations were made using a computer program.\footnote{
A Python implementation of the recursive algorithm is available by request or on the website of KS.}

\begin{table}[h]
\begin{tabular}{|l|l|l|l|}
\hline
d & $N_{d}\lll \T^{3d-2}p\rrr$ & $N_{d}\lll p_1,...,p_{3d-1}\rrr$ & (3d-2)!$\gw_{\CP^2,d[L]}\lll \psi^{3d-2}p\rrr$ \\ \hline
1 & 1 & 1 & 1 \\ \hline
2 & 1 & 1 & 3 \\ \hline
3 & 4 & 12 & 70/3 \\ \hline
4 & 26 & 620 & 525/2\\ \hline
5 & 217 & 87304 & 18018/5 \\ \hline
6 & 2110 & 26312976 & 56056 \\ \hline
7 & 22744 & 14616808192 & 6651216/7 \\ \hline
8 & 264057 & 13525751027392 & 68590665/4 \\ \hline
9 & 3242395 & 19385778269260800 & 2921454250/9 \\ \hline
\end{tabular}
\caption{Counts of degree $d$ curves in $\CP^2$ with a full index tangency constraint. For comparison we also include the analogous counts with generic point  constraints, and also with a full index gravitational descendant condition at a point. Note that we have $\gw_{\CP^2,d[L]}\lll \psi^{3d-2}p\rrr = (d!)^{-3}$ \cite{givental_Jfunc}.}
\label{table:CP2_1}
\end{table}

\begin{table}[h]
\begin{tabular}{|l|l|l|l|l|l|}
\hline
$N_1\lll(2)\rrr$ & 1 & $N_5\lll(12,1,1)\rrr$ & 34 & $N_6\lll(11,6)\rrr$ & 56 \\ \hline
$N_2\lll(5)\rrr$ & 1 & $N_5\lll(12,2)\rrr$ & 57 & $N_6\lll(12,3,1,1)\rrr$ & 15 \\ \hline
$N_3\lll(7,1)\rrr$ & 1 & $N_5\lll(13,1)\rrr$ & 182 & $N_6\lll(12,3,2)\rrr$ & 25 \\ \hline
$N_3\lll(8)\rrr$ & 4 & $N_5\lll(14)\rrr$ & 217 & $N_6\lll(12,4,1)\rrr$ & 84 \\ \hline
$N_4\lll(8,3)\rrr$ & 1 & $N_6\lll(8,8,1)\rrr$ & 1 & $N_6\lll(12,5)\rrr$ & 114 \\ \hline
$N_4\lll(9,1,1)\rrr$ & 1 & $N_6\lll(9,6,2)\rrr$ & 1 & $N_6\lll(13,1,1,1,1)\rrr$ & 1 \\ \hline
$N_4\lll(9,2)\rrr$ & 3 & $N_6\lll(9,7,1)\rrr$ & 4 & $N_6\lll(13,2,1,1)\rrr$ & 31 \\ \hline
$N_4\lll(10,1)\rrr$ & 14 & $N_6\lll(9,8)\rrr$ & 13 & $N_6\lll(13,2,2)\rrr$ & 32 \\ \hline
$N_4\lll(11)\rrr$ & 26 & $N_6\lll(10,4,3)\rrr$ & 1 & $N_6\lll(13,3,1)\rrr$ & 210 \\ \hline
$N_5\lll(8,6)\rrr$ & 1 & $N_6\lll(10,5,1,1)\rrr$ & 1 & $N_6\lll(13,4)\rrr$ & 230 \\ \hline
$N_5\lll(9,4,1)\rrr$ & 1 & $N_6\lll(10,5,2)\rrr$ & 2 & $N_6\lll(14,1,1,1)\rrr$ & 69 \\ \hline
$N_5\lll(9,5)\rrr$ & 3 & $N_6\lll(10,6,1)\rrr$ & 14 & $N_6\lll(14,2,1)\rrr$ & 418 \\ \hline
$N_5\lll(10,3,1)\rrr$ & 5 & $N_6\lll(10,7)\rrr$ & 22 & $N_6\lll(14,3)\rrr$ & 487 \\ \hline
$N_5\lll(10,4)\rrr$ & 9 & $N_6\lll(11,3,3)\rrr$ & 4 & $N_6\lll(15,1,1)\rrr$ & 771 \\ \hline
$N_5\lll(11,1,1,1)\rrr$ & 1 & $N_6\lll(11,4,1,1)\rrr$ & 5 & $N_6\lll(15,2)\rrr$ & 892 \\ \hline
$N_5\lll(11,2,1)\rrr$ & 12 & $N_6\lll(11,4,2)\rrr$ & 6 & $N_6\lll(16,1)\rrr$ & 2414 \\ \hline
$N_5\lll(11,3)\rrr$ & 27 & $N_6\lll(11,5,1)\rrr$ & 34 & $N_6\lll(17)\rrr$ & 2110 \\ \hline
\end{tabular}
\caption{The nonzero invariants of the form $N_{d}\lll \Pp \rrr$ for curves of degree up to  $6$.}
\label{table:CP2_2}
\end{table}
\end{example}

\begin{example}[Computations for $\CP^1 \times \CP^1$]\rm
Another important example is the case of $M = \CP^1 \times \CP^1$, which is used in \cite[\S6.2.3]{HSC} to give upper bounds (sometimes sharp) for symplectic capacities of polydisks.
In the case of curves of bidegree $(d,0)$ with $d>1$, all invariants 
$$
N_{\CP^1 \times \CP^1,d [L_1]}\lll \Pp_1,...,\Pp_r\rrr 
$$ 
vanish because they count somewhere injective curves, while the class $d[L_1]$ has no such representatives.   (Any somewhere injective curve in class $A = d[L]$ can be perturbed to be symplectically immersed, and hence would have normal line bundle of Chern class $2d-2$ and thus $A\cdot A>0$.)  Thus, in this case all terms in the  recursion vanish. 

Next, note that all curves of bidegree $(d,1)$ are embedded.  (This again follows from known properties of spheres in $\CP^1 \times \CP^1$.)  Hence 
 $$
 \gw_{\bl^a M,A - n_1[\E_1]-...-n_a[\E_a]} = 0
 $$
  unless $n_1\le 1$ for all $i$. Further, 
 $N_{M,d [L_1] + [L_2]}\lll \Pp_1,...,\Pp_r\rrr = 0$  whenever some $\Pp_i = (m_1^i,\dots,m_{b_i}^i)$  has length $b_i>1$
 and hence $\de(\Pp_i)>0$.
Therefore,  since there is a unique curve of bidegree $(d,1)$ through any generic set of $2d+1$ points, the recursion formula shows that
 $$ 
 N_{M,d [L_1] + [L_2]}\lll (m_1),\dots, (m_r)\rrr 
=\begin{cases} 1  &\mbox {  if  } \sum_i m_i  =  2d+1\\ 
0 &\mbox{ otherwise.} \end{cases}
 $$
 
 In general, recall that there is a symplectomorphism between the blowup  of $\CP^1 \times \CP^1$ at $a \geq 1$ points and the blowup of $\CP^2$ at $a+1$ points.  Indeed, 
let $[L]$ denote the line class in $\bl^2 \CP^2$ and let $[\E_1],[\E_2]$ denote the two exceptional divisor classes.
 Similarly, denote the two line classes in $\bl^1(\CP^1 \times \CP^1)$ by $[L_1]$ and $[L_2]$ and the exceptional divisor class by $[\E]$.
There is a symplectomorphism from $\bl^1 (\CP^1 \times \CP^1)$ to $\bl^2 \CP^2$ whose action on homology sends $[L_1]$ to $[L] - [\E_1]$, $[L_2]$ to $[L] - [\E_2]$ and $[\E]$ to $[L] - [\E_1] - [\E_2]$.
Using this, we can reduce blowup Gromov--Witten invariants of $\CP^1 \times \CP^1$ to corresponding blowup Gromov--Witten invariants of $\CP^2$.
\end{example}

We now outline the main properties of the algorithm.
We define the {\bf  complexity} $\Cc(\Ii)$ of an invariant
\begin{align*}
\Ii: = N_{M,A} \lll (m_1^1,... m_{b_1}^1),\dots, (m_1^r,... m_{b_r}^r)\rrr
\end{align*}
with $m^i_j \geq 2$ for some $i,j$, to be
\begin{align}\label{eq:invar}
\Cc(\Ii) : = \max\limits_{i\in I}\Bigl(\sum\limits_{j=1}^{b_i}m^i_j\Bigr), \qquad I: =\bigl \{i\in \{1,\dots,r\} \ \big| \  \exists j, m^i_j \geq 2\bigr\} 
\end{align}
and set $\Cc(\Ii): = 1$ if all $m^i_j=1$.
Further we say that $\Ii'$ has 
 {\bf strictly smaller complexity} than $\Ii$
 if either 
 \begin{itemize}\item $\Cc(\Ii') < \Cc(\Ii)$, or 
 \item  $\Cc(\Ii') = \Cc(\Ii)\ge 2$, and the number of indices $i$ for which the maximum for $\Ii'$ is achieved is smaller
 than the corresponding number for $\Ii$.
 \end{itemize}
For  example, the complexity of $N_{M,A} \lll (3,1,1)\rrr$ is $5$,
while that of  $N_{M,A} \lll (1,1,1), (2)\rrr$ 
is $2$.
The main ingredient for the recursion underlying Theorem \ref{thm:recur} is the following formula, which is a 
special case of Theorem~\ref{thm:ppt}:
\begin{align}\label{eq:ppt}
& \wh{N}_{M,A}\lll (m),(m_1,\dots,m_b),-\rrr  \; = \; 
\wh{N}_{M,A}\lll (m, m_1,\dots,m_b),-\rrr + \\
&\notag\qquad \qquad  
\sum_{i=1}^b \wh{N}_{M,A} \lll (m_1, m + m_i, \dots m_b),-\rrr.
\end{align}
Notice that the statement involves the numbers $\wh N$  defined in \eqref{eq:whN} rather than the geometrically defined numbers $N$ considered above.  (The reason why it is easiest to work with $\wh N$ should be clear from the proof of Lemma~\ref{lem:two_ends}.)
Here, as before, the symbol $-$ denotes any given additional constraints taking place away from the first two points $q$ and $p$.
By applying this formula ``in reverse'', we will show how to trade invariants involving tangency conditions at say $j$ distinct points in $M$ for invariants with strictly smaller complexity but now involving $j+1$ distinct points in $M$. 
By iterating this finitely many times, we reduce down to invariants of the form 
$$
\wh N\lll (1^{\times n_1},\dots, 1^{\times n_a})\rrr
$$
 for some $a , n_1,\dots, n_a \geq 1$.
 
 The resulting algorithm is fully described in \S\ref{ss:recur2}.  It is generally too computationally intensive to implement by hand except in simple cases. However, one can improve it by feeding in extra information as in the following examples.

\begin{example}\label{ex:deg3} \rm {(Degree $3$ curves in $\CP^2$)}
Since immersed rational curves of degree $3$ have just one node,  the only nonzero invariants of the form \eqref{eq:invar} 
have  $b_i>1$ for at most one $i$, say $i=1$.  Moreover we must have $\de(\Pp_1) \le 1$.  (Recall from  Lemma~\ref{lem:dePp}  that $\de(\Pp)$ is the number of double points of a generic perturbation of a curve satisfying the constraint $\lll\Pp\rrr$.) Thus the only partitions that can arise at a point are $(1)$, $(m)$, or $(m-1,1)$ for some $1 \leq m \leq 8$.
Applying \eqref{eq:ppt} iteratively, and abbreviating $\wh{N}_{3}(\dots); = \wh{N}_{\CP^2,3[L]}(\dots)$,   gives 
\begin{align*}
12 \; & = 
\wh N_3\lll q_1,\dots,q_7,p\rrr \\
& = \wh{N}_{3} \lll q_1,\dots,q_6, (p,p)\rrr + \wh{N}_{3} \lll q_1,\dots,q_6, (\T p)\rrr \\
& = 3 \wh{N}_{3} \lll q_1,\dots,q_5, (\T p,p)\rrr + \wh{N}_{3} \lll q_1,\dots,q_5, (\T^2p)\rrr \\
& = 4 \wh{N}_3 \lll q_1,\dots,q_4, (\T^2 p,p)\rrr + \wh{N}_{3} \lll q_1,\dots,q_4, (\T^3p)\rrr \\
& = \dots\\
& = 8 \wh{N}_{3} \lll  (\T^6 p,p)\rrr + \wh{N}_{3} \lll (\T^7 p)\rrr\\
&  = 8 \wh{N}_{3} \lll  (7,1)\rrr + \wh{N}_{3} \lll (8)\rrr .
\end{align*}
Next recall from Corollary~\ref{cor:T=Er}~(ii) that 
\begin{align*}
\wh{N}_3 \lll q_1,\dots,q_6, (p,p)\rrr & = 2 {N}_{\CP^2,3[L]} \lll q_1,\dots,q_6, (p,p)\rrr \\
& =  2 \gw_{\Bl^7 \CP^2 , 3[L] - [\E_1] - \dots - 2[\E_7]}= 2
\end{align*}
Hence the recursion formula gives
\begin{align*}
2& = \wh{N}_{3} \lll q_1,\dots,q_6, (p,p)\rrr\\
& = 2\wh{N}_{3} \lll q_1,\dots,q_5, (\T p,p)\rrr \\
&= 2\wh{N}_{3} \lll q_1,\dots,q_4, (\T^2 p,p)\rrr \\
&= \dots \\
& = 2\wh{N}_{3} \lll (\T^6p,p)\rrr\;\;  =\;\;  2\wh{N}_{3} \lll (7,1)\rrr.
\end{align*}
Thus 
$$
N_{3} \lll(8)\rrr = 4,
$$
as claimed in Table~\ref{table:CP2_2}.  This calculation should be compared with that in Example~\ref{ex:same} which has  more geometric input but  is less systematic. 

We now compare the above invariants with the Caporaso--Harris calculation that there are $7$ rational cubics that are tangent to a  given line $L$ to order $2$ at a given point $p\in L$ and go through  $5$ other generic points.  It is most natural to compare this count with our invariant $
\wh{N}_{3} \lll q_1,\dots,q_5, (\T^2p)\rrr$.  The  recursion formula shows that
\begin{align*}
10 &= \wh{N}_{3} \lll q_1,\dots,q_6, \T p\rrr \\
& = 
\wh{N}_{3} \lll q_1,\dots,q_5,  (\T p, p)\rrr+
\wh{N}_{3} \lll q_1,\dots,q_5, (\T^2p)\rrr \\
& = 1 + \wh{N}_{3} \lll q_1,\dots,q_5, (\T^2p)\rrr.
\end{align*}
Therefore,  $\wh{N}_{3} \lll q_1,\dots,q_5, (\T^2p)\rrr = 9$. 
To understand why $\wh{N}_{3} \lll q_1,\dots,q_5, (\T^2p)\rrr$ is larger than the Caporaso--Harris count, observe that the condition that the local divisor $D$ extend to a $J$-holomorphic line $L$  is not satisfied by a generic element in $\Jj_D$.  If $J\in \Jj_D$ does have this property then some of the curves counted by $\wh{N}_{3} \lll q_1,\dots,q_5, (\T^2p)\rrr = 9$ could be degenerate.  Indeed there is a unique conic  $Q$ through $q_1,\dots,q_5$ and its union with a line can be parametrized as a degree three genus zero stable map in two essentially different ways, depending on which of the two intersection points $Q\cap L$ is designated as the image of the unique node in the domain of the stable map.  These two curves do contribute to $\wh{N}_{3} \lll q_1,\dots,q_5, (\T^2p)\rrr$, however they do not contribute to the Caporaso--Harris count.
This example clearly illustrates that considering tangency to a line $L$ instead of
to a generic local divisor $D$  makes a real difference to the invariant.  
\hfill$\er$
\end{example}

\begin{rmk}\rm\label{rmk:dualpart}  (Partitions $\Pp$ with $N_{M,A}\lll\Pp\rrr = 1$).   It is not hard to check that
if $\Qq = (n_1,\dots,n_a)$ is the dual partition to $\Pp = (m_1,\dots,m_b)$ in the sense of
Remark~\ref{rmk:dual} then
$$
\sum_{j=1}^a n_j^2 = |\Pp|+ 2\de(\Pp),  
$$
where, by Lemma~\ref{lem:dePp}, $\de(\Pp)$ is the number of  double points near $p$ of an $A$-curve that satisfies the constraint 
$\lll\T^\Pp p\rrr$.  If $\de(\Pp) = \de(A)  = \frac 12 (A^2-c_1(A)) + 1$ (where $\de(A)$ is as in \eqref{eq:deA}),
then the immersed $A$-curves that satisfy $\lll \T^\Pp p\rrr $ have no other double points.  
Therefore
$\de(A_\Pp) = 0$, which implies that, if in addition $|\Pp|  = c_1(A) - 1$, 
the class $A_\Pp$ must be the class of an exceptional sphere.
In such a case, if one puts  all the  blow up points inside a skinny ellipsoid and  stretches the neck,
 then the arguments in \S\ref{ss:same} should adapt to show that the unique curve in class $A_\Pp$  must decompose into a building whose top in $M\less E_\sk$ has bottom partition $\Pp$; in other words, one should have   $N^E_d\lll\Pp\rrr = 1$.
Conversely, if one looks at Table~\ref{table:CP2_2}, it turns out that for every partition $\Pp$ with $N_d\lll\Pp\rrr = 1$ the dual class $A_\Pp$ is an exceptional class.

Note that in general the number of $A$-curves satisfying the constraint $\lll \T^\Pp p\rrr$ is different from the count of curves in the dual class $A_\Pp: = A - \sum_{j=1}^a n_j[\E_j]$. Indeed,  if one blows up at $a$ distinct points inside a single skinny  ellipsoid and 
looks to see what happens to a curve in class $A_\Pp$ as one
 stretches the neck,  the partition given by the ends of the top level of the split curve  need not in general equal $\Pp$ since some of the double points of the $A_\Pp$-curve can move into the neck. 
 (The possible partitions in this situation can be deduced
 from Theorem~\ref{thm:ppt}.)    For example, if $\Pp = (8)$ and $A = 3[L]$ in $\C P^2$ then 
$$
N_3\lll \T^\Pp p \rrr = 4,\quad\mbox{ while } \;\;  \gw_{3[L]-\sum_{i=1}^8 [\Ee_i]} = 12.
$$
  Explicit examples of this neck stretching process may be found in \cite[\S3.5]{Ghost}, that discusses the case of degree $d$ curves in $\C P^2$ for $d=3,4$ with end on the (skinny) ellipsoid $E(1,x)$ for $x> 3d-1$. 
See Remark~\ref{rmk:relGW} for a different  geometric approach to this question.\hfill$\er$
\end{rmk}

\begin{remark}\rm (i)
Since the formula \eqref{eq:ppt} is based only on local considerations near the point $p$, one could also formulate a version of Theorem \ref{thm:recur} for punctured pseudoholomorphic curves in four-dimensional symplectic cobordisms. However, in this case the statement is somewhat more involved, since in general such counts do not define numerical invariants but rather chain maps between chain complexes, 
which necessitates working in the framework of \cite{HSC} except in special cases.
\MS

\NI
(ii)  We do not know whether there is a natural analog of Theorem \ref{thm:recur} for higher dimensional symplectic manifolds. However, see \cite{Gath_blowup} for a higher dimensional analog of Formula \eqref{eq:ppt} in the case of $\CP^n$ and a single first order tangency constraint.\hfill$\er$
\end{remark}
\MS

\subsection{Existence of the algorithm}\label{ss:recur2}
As in \S\ref{ss:comb}, it will be convenient to represent partitions by Young diagrams.
We define a (total) ordering of $\Y_k$ using the rule that for $y,y' \in \Y_k$, we have $y' > y$ if for some $i$ the $i$th row (from the top) of $y'$ has more boxes than the $i$th row of $y$, and for all $j < i$, the $j$th rows of $y$ and $y'$ have the same number of boxes.
For example, the ordering of $\Y_5$ is given by
\begin{align*}
{\Yvcentermath1 \tiny \yng(1,1,1,1,1) < \yng(2,1,1,1) < \yng(2,2,1) < \yng(3,1,1) < \yng(3,2) < \yng(4,1) < \yng(5)}.
\end{align*}
For some given $k \in \Z_{> 0}$, let $y_1,...,y_{|\Y_k|}$ denote the elements of $\Y_k$ in increasing order.
Let $\Ynonhor_k := \{y_1,...,y_{|\Y_k|-1}\}$ denote the subset of $\Y_k$ consisting of all Young diagrams except for the horizontal one,
and let $\Ynonver_k := \{y_2,...,y_{|\Y_k|}\}$ denote the subspace of $\Y_k$ consisting of all Young diagrams except for the vertical one.
Let $V_k := \Q\langle \Ynonhor_k \rangle$ denote the $(|\Y_k|-1)$-dimensional rational vector space with basis $\Ynonhor$, and similarly put $W_k := \Q \langle \Ynonver_k\rangle$.
We define a linear map $\Phi_k: W_k \rightarrow V_k$ as follows:
\begin{itemize}
\item For a Young diagram $y \in \Ynonver_k \cap \Ynonhor_k$, the matrix coefficient $\langle \Phi_k(y),y\rangle$ is $1$.
\item For distinct Young diagrams $y \neq y'$ with $y \in \Ynonver_k$ and $y' \in \Ynonhor_k$, the matrix coefficient $\langle \Phi_k(y),y'\rangle$ is the number of ways of removing the top row of $y'$ and adding it to the end of one of the lower rows of $y'$ such that the result is $y$ (after reordering the rows to obtain a valid Young diagram).
\end{itemize}
Using the natural bases of $W_k$ and $V_k$ by Young diagrams, we can also represent $\Phi_k$ by a
$(|\Y_k|-1)\times(|\Y_k|-1)$ matrix, which we denote by $A_k$.
Here the entry $\langle \Phi_k(y),y' \rangle$ occurs in the row labeled by $y'$ and the column labeled by $y$ (see the examples below). 
Observe that for each $y \in \Ynonver_k$, $\Phi_k(y)$ is a linear combination of Young diagrams
$y' \in \Ynonhor_k$ with $y' \leq y$.
This translates into the fact that the matrix $A$ is 
almost upper triangular; in fact it has nonzero entries $a_{i,j}$ only when $j\ge i-1$.
We will show below that, with respect to suitable choices of bases, the map $\Phi_k$ realizes the equation \eqref{eq:ppt}, with the rows of $A_k$ indexing constraints of the form 
$\lll(m),(m_1,...,m_b),-\rrr$
and the columns of $A_k$ indexing constraints of the form  
$\lll(m, m_1,...,m_b),-\rrr$.
\begin{example}\label{ex:A_4}\rm
For $k = 4$, we have $|\Y_4| = 5$, and corresponding Young diagrams $y_1 < ... < y_5$, with $y_1$ the vertical Young diagram and $y_5$ the horizontal Young diagram.
Then $A_4$ is the following $4 \times 4$ matrix:
\begin{align*}
\begin{blockarray}{cccccc}
&& {\tiny\yng(2,1,1)} & {\tiny\yng(2,2)} & {\tiny\yng(3,1)} & {\tiny\yng(4)}\\ 
&&y_2&y_3&y_4&y_5\\ 
\begin{block}{cc(cccc)}
{\tiny\yng(1,1,1,1)}&y_1&3&0&0&0\\
{\tiny\yng(2,1,1)}&y_2&1&0&2&0\\
{\tiny\yng(2,2)}&y_3&0&1&0&1\\
 {\tiny\yng(3,1)}&y_4&0&0&1&1\\
\end{block}
\end{blockarray}
\end{align*}
One can easily check that $\det(A_4) = -6$, and in particular $A_4$ is invertible.\hfill$\er$
\end{example}

The following combinatorial lemma generalizes the above example to arbitrary positive integers $k$ and will be central for our recursion algorithm.
\begin{lemma}\label{lem:invertible}
For all $k \in \Z_{> 0}$, we have $\det(A_k) = \pm(k-1)!$. In particular, $A_k$ is invertible.
\end{lemma}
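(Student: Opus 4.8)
The plan is to compute $\det(A_k)$ by exhibiting enough structure in the matrix $A_k$ to reduce it to a triangular or block-triangular form. Recall that $A_k$ is indexed (in both rows and columns) by $\Ynonhor_k = \{y_1,\dots,y_{|\Y_k|-1}\}$ for the columns and $\Ynonver_k = \{y_2,\dots,y_{|\Y_k|}\}$ for the rows, but after the identification that pairs up the shared diagrams these are square matrices of size $(|\Y_k|-1)$; the crucial observation already recorded in the excerpt is that $A_k$ has nonzero entries $a_{i,j}$ only for $j \geq i-1$. So it is "almost upper triangular": the only subdiagonal entries are $a_{i,i-1}$. Such a matrix has a determinant that can be expanded along the subdiagonal.

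\textbf{Key steps.} First I would pin down precisely the subdiagonal entries $a_{i,i-1}$, i.e. the coefficient $\langle \Phi_k(y_{i}), y_{i-1}\rangle$ expressing how $\Phi_k$ of the $i$th diagram hits the $(i-1)$st diagram in the total order. Because the total order on $\Y_k$ is the standard "reverse lexicographic on rows" order, consecutive diagrams $y_{i-1} < y_i$ differ in a very controlled way, and the matrix-coefficient rule (remove the top row of $y'$ and append it to a lower row to obtain $y$) should force $a_{i,i-1}$ to be a simple explicit quantity — I expect it to be the multiplicity with which a given row-move is available, which for consecutive diagrams will typically be $1$ or a small integer. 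Second, I would identify the diagonal entries $a_{i,i} = \langle \Phi_k(y_i), y_i\rangle$, which by the first bullet of the definition of $\Phi_k$ equal $1$ whenever $y_i \in \Ynonver_k \cap \Ynonhor_k$, i.e. for all but possibly the extreme indices. Third, with the superdiagonal-plus-diagonal structure understood, I would expand the determinant: an "upper Hessenberg" matrix (zero below the first subdiagonal) has $\det = \sum$ over certain "paths," but more efficiently one can do a cofactor expansion down the first column or exploit that the matrix becomes genuinely triangular after one observes that the only way to pick a nonzero term in the Leibniz expansion avoiding the diagonal is to use a single "descent" $a_{i,i-1}$. The upshot should be a telescoping product. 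I anticipate that after setting it up correctly $\det(A_k)$ collapses to a product $\prod_{j} c_j$ of the few genuinely non-unit entries, and that product equals $\pm(k-1)!$ — most likely because the relevant entries, read off from the combinatorics of appending the top row of a diagram with one box in its top row to lower rows, range over $2,3,\dots,k-1$, and an extra factor of $1$ from the $y_1$ column (which by Example~\ref{ex:A_4} contributes $k-1$ via the entry $\langle\Phi_k(\text{vertical shifted}),\text{vertical}\rangle$, the "$3$" in the displayed $A_4$).

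\textbf{Main obstacle.} The hard part will be organizing the expansion of this Hessenberg-type determinant so that the cancellations are transparent rather than a brute-force induction. Concretely: the matrix is not literally triangular, so one cannot just multiply the diagonal; and the subdiagonal entries, while nonzero, interact with the above-diagonal part. I expect the cleanest route is an induction on $|\Y_k|$ (or a Laplace expansion along the last row, which has only two nonzero entries by the $j \geq i-1$ constraint, namely $a_{N,N-1}$ and $a_{N,N}$, where $N = |\Y_k|-1$), reducing $\det(A_k)$ to a signed combination of two smaller minors, one of which is again of the same Hessenberg shape and the other of which is triangular. Carrying this recursion and bookkeeping the sign to land exactly on $\pm(k-1)!$, rather than some other factorial-type expression, is where the real care is needed; I would verify the base cases $k=1,2,3,4$ (the last from Example~\ref{ex:A_4}, where $\det A_4 = -6 = -3!$) to fix the constant and the sign convention before pushing the induction through.
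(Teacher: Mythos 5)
Your starting observation — the Hessenberg structure $a_{i,j}=0$ for $j<i-1$ — is correct, and so is the reading of the subdiagonal entries: $a_{i,i-1}=\langle\Phi_k(y_i),y_i\rangle$ is exactly a ``diagonal'' coefficient of $\Phi_k$ and equals $1$ for every $i\geq 2$. You also correctly spot that $a_{1,1}=k-1$ gives a clean factor from the first row. But the plan stalls at the critical point, and the speculation that the determinant ``collapses to a product of the few genuinely non-unit entries\dots which range over $2,3,\dots,k-1$'' is false: the only entry of $A_k$ equal to $k-1$ is $a_{1,1}$, and the smaller factorials do not appear as entries of $A_k$ at all — they come recursively from $A_{k-1}$.

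The gap is that a bare Laplace/Hessenberg expansion does not produce the right recursion. Expanding along the first row (or the first column, or the last row) gives minors whose index set is a subset of $\Y_k$, not $\Y_{k-1}$; these minors are of size $|\Y_k|-2$, not $|\Y_{k-1}|-1$, and there is no clean identification with $A_{k-1}$. (Try it in the $k=5$ example: deleting the first row and column of $A_5$ leaves a $5\times5$ matrix which is not $A_4$.) The key idea you are missing is a \emph{structural} embedding of $A_{k-1}$ into $A_k$: the paper uses the injection $\iota:\Y_{k-1}\to\Y_k$ that adds one box to the top row, and observes that the submatrix of $A_k$ with rows indexed by $\iota(\Ynonhor_{k-1})$ and columns by $\iota(\Ynonver_{k-1})$ is literally equal to $A_{k-1}$. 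The induction then proceeds by row operations that (i) isolate the entry $a_{1,1}=k-1$ in the first column, and (ii) block-decouple the $\iota$-indexed $A_{k-1}$-submatrix from a remaining unitriangular block $C_k^\perp$, giving $\det(A_k)=\pm(k-1)\cdot\det(A_{k-1})\cdot\det(C_k^\perp)=\pm(k-1)!$. Without some device of this kind — a map relating $\Y_{k-1}$ to a specific subset of $\Y_k$ compatible with the combinatorial rule defining $\Phi_k$ — your Hessenberg expansion has nothing to telescope against, since $|\Y_k|$ (the partition numbers) do not decrease by one.

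A smaller but real issue: you propose expanding along the last row, which indeed has only two nonzero entries $a_{N,N-1}$ and $a_{N,N}$, and you hope one of the resulting minors is triangular. Neither is. The minor from deleting the last row and column is again Hessenberg of the \emph{same} size $|\Y_k|-2$ over a subset of $\Y_k$ (not $\Y_{k-1}$), and the other minor has its columns shifted in a way that destroys triangularity. So the ``telescoping product'' you anticipate does not materialize from this expansion alone; the $\iota$-identification is what supplies the missing structure.
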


\noindent Taking Lemma \ref{lem:invertible} for granted for the moment, we now describe the recursion algorithm and prove Theorem \ref{thm:recur}.

\begin{proof}[Proof of Theorem \ref{thm:recur}]    We will work with the invariants $\wh N$ rather than $N$, since the recursion  is more transparent in this notation. 
Consider an invariant of the form
$$\wh N_{M,A} \lll (m^1_1,...,m^1_{b_1}),\dots,(m^r_1,...,m^r_{b_r})\rrr.
$$
Assume that $m^i_j \geq 2$ for some $i,j$.
After  
rearranging the constraints so that the first one has a maximal  sum $\sum\limits_{j=1}^{b_i}m^i_j$
we can write such an invariant more succinctly as 
$$
\wh N_{M,A} \lll (m_1,...,m_{b_1}),-\rrr,
$$
where we also assume without loss of generality that we have $m_1 \geq ... \geq m_b$.
Our goal is to write any such invariant as a linear combination of invariants of strictly smaller complexity.

Put $k = \sum_{i=1}^b m_i$.
Note that the tuple $(m_1,...,m_b)$ naturally corresponds to a Young diagram $y \in \Y_k$ such that the $i$th row has $m_i$ blocks. 
As before, let $y_1,...,y_{|\Y_k|} \in \Y_k$ denote the ordered list of Young diagrams with $k$ boxes.
Now let $w_1,...,w_{|\Y_k|}$ denote all of the corresponding invariants of the form 
$\wh{N}_{M,A} \lll \Pp,-\rrr$
corresponding to some partition $\Pp$ of $k$.
Recall here that $\wh{N}_{M,A} \lll \Pp,-\rrr$ is defined to be simply $N_{M,A} \lll \Pp,-\rrr$ times the extra combinatorial factor $|\Aut(\Pp)|$. 
We thus have 
\begin{align*}
& w_1 \; = \wh{N}_{M,A} \lll \underbrace{(1,...,1)}_k,-\rrr\\
& w_2 \; = \wh{N}_{M,A} \lll (2,\underbrace{1,...,1}_{k-2}),-\rrr\\
&\quad \dots\\
& w_{|\Y_k|} \;=\wh{N}_{M,A}\lll (k),-\rrr.
\end{align*}
Similarly, let $v_1,...,v_k$ denote the analogous invariants given by replacing 
\begin{align*}
\wh{N}_{M,A} \lll (m_1,...,m_b),-\rrr \quad \text{with}\quad 
\wh{N}_{M,A} \lll (m_1),(m_2,...,m_b),-\rrr.
\end{align*}
That is, we have 
\begin{align*}
& v_1 \; = \wh{N}_{M,A} \lll (1),\underbrace{(1,...,1)}_{k-1},-\rrr\\
& v_2 \; = \wh{N}_{M,A} \lll (2),\underbrace{(1,...,1)}_{k-2},-\rrr\\
&\quad \dots\\
& v_{|\Y_k|} \; = \wh{N}_{M,A} \lll (k),-\rrr.
\end{align*}

\NI
Note that we have $v_{|\Y_k|} = w_{|\Y_k|}$, and that, because we are decomposing one of the constraints of maximal complexity,  each of the invariants $v_1,...,v_{|\Y_k|-1}$ has strictly smaller complexity than each of the invariants $w_2,...,w_{|\Y_k|}$.
By applying Formula \eqref{eq:ppt} once for each of the invariants $v_1,...,v_{|\Y_k|-1}$, we find that the column vectors $\vec{v} := (v_1,...,v_{|\Y_k|-1})^T$ and $\vec{w} := (w_2,...,w_{|\Y_k|})^T$ are related by the following equation:
\begin{align}\label{eq:w_to_v}
 \vec{v} = (w_1,0,...,0)^T + A_k \vec{w}.
\end{align}
In other words, each $v_i$ is a sum of terms $A_{ij} w_j$ where the coefficient $A_{ij}$ is the number of ways of removing the top row of the Young diagram for $v_i$ and adding it to one of the lower rows to obtain (after rearrangement) the Young diagram for $w_j$.   
Then, since $A_k$ is invertible by Lemma \ref{lem:invertible}, we have
\begin{align}\label{eq:recur2}
\vec{w} = A_k^{-1}\left(\vec{v} - (w_1,0,...,0)^T\right),
\end{align}
which gives the desired recursion.
\end{proof}

\begin{remark}\rm
A noteworthy feature of the above recursion is that the coefficients of the relation \eqref{eq:recur2} are not in general integers.  However, all the invariants $N_{M,A}\lll\Pp\rrr$ are of course integers.
\hfill$\er$\end{remark}

\begin{example}\rm
In the case $k = 4$ (c.f. Example~\ref{ex:A_4}), equation~\eqref{eq:w_to_v} amounts to the following system of equations:
\begin{align*}
\wh{N}_{M,A}\lll (q),(p,p,p),-\rrr  &\;=\; \wh{N}_{M,A}\lll (p,p,p,p),-\rrr + 3\wh{N}_{M,A}\lll (\T p,p,p),-\rrr \\
\wh{N}_{M,A}\lll (\T q),(p,p),-\rrr &\;=\; \wh{N}_{M,A}\lll (\T p,p,p),-\rrr + 2\wh{N}_{M,A}\lll (\T^2 p,p),-\rrr \\ 
\wh{N}_{M,A}\lll (\T q),(\T p),-\rrr &\;=\; \wh{N}_{M,A}\lll (\T p,\T p),-\rrr + \wh{N}_{M,A}\lll (\T^3 p),-\rrr \\
\wh{N}_{M,A}\lll (\T^2 q),(p),-\rrr &\;=\; \wh{N}_{M,A}\lll (\T^2 p,p),-\rrr  + \wh{N}_{M,A}\lll (\T^3 p),-\rrr.
\end{align*}
The invariants on the right hand sides   all have complexity four, apart from the blow up invariant
$\wh{N}_{M,A}\lll (p,p,p,p),-\rrr$ which is assumed known, while the invariants on the left hand side have complexity at most three.   Further,  one can solve for 
\begin{align*}
\wh{N}_{M,A}\lll (\T p,p,p),-\rrr,& \quad \wh{N}_{M,A}\lll  (\T^2 p,p),-\rrr,\\
 \wh{N}_{M,A}\lll  (\T^3 p) ,-\rrr, & \quad \wh{N}_{M,A}\lll  (\T p,\T p),-\rrr
\end{align*}
 in turn in 
terms of other invariants that all have strictly smaller complexity.\hfill$\er$
\end{example}

\MS

\begin{proof}[Proof of Lemma \ref{lem:invertible}]
Assume by induction that we have $\det(A_{k-1}) = \pm (k-2)!$.
Let $\iota: \Y_{k-1} \rightarrow \Y_k$ denote the injective order-preserving set map which sends a Young diagram $y \in \Y_{k-1}$ to the Young diagram $\iota(y) \in \Y_k$ obtained by adding a single box to the top row of $y$.
Thus the image of $\iota$ consists of all diagrams in which the top row is strictly longer than the second row. 
Let $A_k'$ denote the $(|\Y_{k-1}|-1) \times (|\Y_{k-1}|-1)$ submatrix of $A_k$ corresponding to rows from $\iota(\Ynonhor_{k-1})$ and columns from $\iota(\Ynonver_{k-1})$.
It is not hard to check that
 $A_k'$ coincides with the matrix $A_{k-1}$, and in particular has determinant $\pm(k-2)!$ by our inductive hypothesis.

Also, one can readily check that the rows of $A_k$ corresponding to $\iota(\Ynonhor_{k-1})$ have nonzero entries only in the columns corresponding to $\iota(\Ynonver_{k-1})$ as well as the column corresponding to $y_2 \in \Ynonver_k$.
Moreover, the row of $A_k$ corresponding to $y_1 \in \Ynonhor_k$ consists of the entry $k-1$ in the first column and zeroes elsewhere.
This means that we can perform type III elementary row operations (i.e. adding a multiple of a row to another row) to transform $A_k$ to a matrix $B_k$ without creating any new nonzero entries, such that the first column of $B_k$ consists of the entry $k-1$ in the first row and zeroes elsewhere.
Note that in this process the submatrix $A_k'$ remains unchanged.
Next,  since the rows of $B_k$ corresponding to $\iota(\Ynonhor_{k-1})$ are nonzero only in the columns corresponding to $\iota(\Ynonver_{k-1})$, we can perform type III elementary row operations to transform $B_k$ to a matrix $C_k$ without creating any new nonzero entries, such that the columns of $C_k$ corresponding to $\iota(\Ynonver_{k-1})$ are nonzero only in the rows corresponding to $\iota(\Ynonhor_{k-1})$.  Here we use the fact that $A_k'$ is invertible,and so can be assumed to be upper triangular.   For example, in Example~\ref{ex:A_5} below we first clear the term $1$ in the place $(2,1)$ and then clear the terms in the third row and columns $4,5$.

After we have done this the resulting matrix $C_k$ decomposes into a product.  More precisely, 
let $C_k^\perp$ denote the $(|\Y_k| - |\Y_{k-1}|-1)\times (|\Y_k| - |\Y_{k-1}|-1)$ submatrix of $C_k$ with rows corresponding to $\Ynonhor_k \setminus \left(\iota(\Ynonhor_{k-1}) \cup\{y_1\}\right)$ and columns corresponding to $\Ynonver_k \setminus \left(\iota(\Ynonver_{k-1}) \cup\{y_1\}\right)$.  
Then  $\det(A_k) = \pm k! \det(C_k^\perp)$.
Moreover, we have that $C_k^\perp$ is upper triangular with all diagonal entries $1$, and hence $\det(C_k^\perp) = 1$.
\end{proof}

\begin{example}\label{ex:A_5}\rm
For $k = 5$, we have $|\Y_5| = 7$ and corresponding Young diagrams $y_1 < ... < y_7$ as above, with $y_1$ the vertical Young diagram and $y_7$ the horizontal Young diagram.
Then $A_5$ is the following $6\times 6$ matrix:
\begin{align*}
\begin{blockarray}{cccccccc}
&& {\tiny\yng(2,1,1,1)}&{\tiny\yng(2,2,1)}&{\tiny\yng(3,1,1)} & {\tiny\yng(3,2)} & {\tiny\yng(4,1)} & {\tiny\yng(5)} \\
&& y_2&y_3&y_4&y_5&y_6&y_7 \\
\begin{block}{cc(cccccc)}
{\tiny\yng(1,1,1,1,1)}&y_1&4& 0 & 0 & 0 & 0 & 0 \\
{\tiny\yng(2,1,1,1)}&y_2&1& 0 & \boxed{3} & \boxed{0} & \boxed{0} & \boxed{0} \\
{\tiny\yng(2,2,1)}&y_3&0& 1 & 0 & 1 & 1 & 0 \\
{\tiny\yng(3,1,1)}&y_4&0& 0 & \boxed{1} & \boxed{0} & \boxed{2} & \boxed{0} \\
{\tiny\yng(3,2)}&y_5&0& 0 & \boxed{0} & \boxed{1} & \boxed{0} & \boxed{1} \\
{\tiny\yng(4,1)}&y_6&0& 0 & \boxed{0} & \boxed{0} & \boxed{1} & \boxed{1} \\
\end{block}
\end{blockarray}
\end{align*}
Again, one can easily check that $\det(A_5) = \pm 4!$, and in particular $A_5$ is invertible.
Here the boxed entries correspond to the submatrix $A_5'$, which one can check agrees with the matrix $A_4$ from Example \ref{ex:A_4} above.\hfill$\er$
\end{example}

\bibliographystyle{plain}

\bibliography{denum_bib}

\end{document}